\newcommand{\co}[1]{{#1}}
  \newtheorem{theorem}{Theorem}
  \newtheorem{lemma}[theorem]{Lemma}
  \newtheorem{proposition}[theorem]{Proposition}
  \newtheorem{remark}[theorem]{Remark}
  \newtheorem{corollary}[theorem]{Corollary}
\DeclarePairedDelimiter\floor{\lfloor}{\rfloor}
\newcommand{\E}{\mathbb{E}}
\newcommand{\V}{\mathbb{V}}
\newcommand{\pa}[1]{\left(#1\right)}
\newcommand{\cro}[1]{\left[#1\right]}
\newcommand{\Po}{\mathbb{P}}
\begin{document}

\begin{frontmatter}

\title{Local minimax rates for closeness testing of discrete distributions}
\runtitle{Instance-optimal closeness testing}

\begin{aug}
	\author[A]{\fnms{Joseph} \snm{Lam-Weil}\ead[label=e1]{joseph.lam@ovgu.de}},
	\author[A]{\fnms{Alexandra} \snm{Carpentier}\ead[label=e2]{alexandra.carpentier@ovgu.de}}
	\and
	\author[B]{\fnms{Bharath K.} \snm{Sriperumbudur}\ead[label=e3]{bks18@psu.edu}}
	
	\runauthor{J. Lam-Weil et al.}
	
	\address[A]{Magdeburg University, Germany, \printead{e1}, \printead{e2}}
		
	\address[B]{Pennsylvania State University, USA, \printead{e3}}
\end{aug}

\begin{abstract}
\\
We consider the closeness testing problem for discrete distributions. The goal is to distinguish whether two samples are drawn from the same unspecified distribution, or whether their respective distributions are separated in $L_1$-norm. In this paper, we focus on adapting the rate to the shape of the underlying distributions, i.e.~we consider \textit{a local minimax setting}. We provide, to the best of our knowledge, the first local minimax rate for the separation distance up to logarithmic factors, together with a test that achieves it. In view of the rate, closeness testing turns out to be substantially harder than the related one-sample testing problem over a wide range of cases. 
\end{abstract}


\begin{keyword}
\kwd{Local minimax optimality}
\kwd{Closeness testing}
\kwd{Two-sample}
\kwd{Instance optimal}
\kwd{Discrete distributions}
\kwd{Hypothesis testing}
\kwd{Composite-composite testing}
\end{keyword}
\textit{MSC 2010:} 62F03, 62G10, 62F35.


\end{frontmatter}

%
%
%
%

%
%
%


\section{Introduction}
Closeness testing, also known as two-sample testing or equivalence testing, amounts to testing whether two sets of samples are drawn from the same unknown distribution. The null hypothesis of this statistical testing problem is true when both distributions are the same. In the alternative hypothesis, they are different and separated in $L_1$-norm. For a fixed number of samples, the goal is to find out how close both distributions can get to one another and still be distinguishable, depending on the shape of one of the distributions. In the following, we provide a formal setting for this problem.


\subsection{Setting}
\label{sec:setting}

Let $d \in \mathbb N^*$. We define the set of vectors of size $d$ that correspond to multinomial distributions over $d$ categories as 
$$\mathbf P = \left\{\pi\in (\mathbb{R}^+)^{d}, \sum_{i\leq d} \pi_i = 1\right\}.$$
Let $\pi \in \mathbf P$. Define for any $i \in \mathbb Z$
$$S_\pi(i) = \{j \in \{1,\ldots,d\}: \pi_j \in [2^{-i}, 2^{-i+1})\}.$$
Define
\begin{align*}
\mathbf P_\pi = \Bigg\{q \in \mathbf P:
&\forall i\in \mathbb Z, \frac{|S_\pi(i)|}{2}  \leq \sum_{j = i-1}^{i+1} |S_q(j)| \leq \frac32 \sum_{j = i-2}^{i+2} |S_\pi(j)|
\Bigg\},
\end{align*}
which represents a class of probability vectors very similar to $\pi$. Indeed, for any $q \in \mathbf P_\pi$, the discrete level sets $S_q$ and $S_\pi$ are close in size. 

Let $p\in\mathbf P$, $q\in \mathbf P_\pi$ and $k \in \mathbb N^*$. The independent sample sets $(\mathcal X,\mathcal Y)$ are obtained from the following two multinomial distributions.
\begin{equation}\label{eq:model}
\mathcal X \sim \mathcal M(k, p),~~~ \mathcal Y \sim \mathcal M(k, q),
\end{equation}
where $\mathcal M$ is the multinomial distribution. That is, for $i \leq k,$ we have independent $\mathcal X_i$ taking value $j \in \{1,\ldots,d\}$ with probability $p_j$ -- respectively, $\mathcal Y_i = j$ with probability $q_j$. In what follows, we write $\mathbb P_{p,q}$ for the probability associated to $(\mathcal X, \mathcal Y)$.

For any vector $x \in \mathbb R^d$, let $\|x\|_t = (\sum_{i = 1}^d |x_i|^t)^{1/t}$ denote the $L_t$-norm of $x$ for any $0<t<\infty$. For a fixed $\rho >0$, we formalize the closeness testing problem as
\begin{equation}\label{eq:prob}
\mathcal H^{(\text{Clo})}_{0}(\pi): p = q,~~q \in \mathbf P_\pi,~~ \textrm{versus}~~ \mathcal H^{(\text{Clo})}_{1}(\pi,\rho): \|p - q\|_1 \geq \rho,~~q \in \mathbf P_\pi, p\in \mathbf P,
\end{equation}
from the observations of $(\mathcal X,\mathcal Y)$. Here $\rho$ represents a \textit{separation distance} between $p$ and $q$ which amounts to assuming that the null and alternative hypotheses are different enough. For any hypothesis $\mathcal H$, we introduce the corresponding set $H$ such that elements of $H$ satisfy the hypothesis $\mathcal H$. So $H_0^{\text{(Clo)}}(\pi) = \{(p,q) \in \mathbf P_\pi^2: p=q\}$ and $H_1^{\text{(Clo)}}(\pi,\rho) = \{(p,q) \in \mathbf P \times \mathbf P_\pi: \|p - q\|_1 \geq \rho\}$.

\begin{remark}
To the best of our knowledge, closeness testing has never known any formal definition as a hypothesis testing problem. Our formalization satisfies a few important criteria for the purpose of instance-optimal closeness testing. Firstly, the null hypothesis is composite as well as the alternative hypothesis, in contrast to identity testing whose null hypothesis is simple. That is, under any one of both hypotheses from Equation~\eqref{eq:prob}, $q$ is allowed to be quite different from $\pi$, since there is no relation in the ordering of their entries. So there does not exist any test exploiting the full knowledge of $p$ or $q$, and our problem is inherently harder than identity testing, where $q = \pi$, presented in \citep{valiant2017automatic} and \citep{balakrishnan2017hypothesis}. Secondly, 
$q\in \mathbf P_\pi$ is still related to $\pi$ in the sense discussed above, so that our results can be instance-optimal and depend on $\pi$. The results can vary greatly depending on $\pi$ and a worst-case study from \cite{chan2014optimal} does not guarantee an optimal test in all cases. Intuitively, if $\pi$ is the uniform distribution for example, the testing problem is more difficult than if $\pi$ just has a few entries with non-zero probability. We want to capture this dependence on the distribution, as \citep{valiant2017automatic} and \citep{balakrishnan2017hypothesis} do for one-sample testing. Finally, we provide in Section~\ref{sec:discussion} a discussion on how this formalization could be generalized to other set-ups.

\end{remark}

\noindent

It is clear from Equation~\eqref{eq:prob} that the vectors that are too close to $q$ are removed from the alternative hypothesis. With $\pi$ fixed, we want to find the smallest $\rho$ such that both hypotheses are still distinguishable. 
The notion of distinguishability of both hypotheses is formalized by the definition of error risk and separation distance. The error risk is the sum of type I and type II error probabilities. For any separation distance $\rho > 0$ and probability vector $\pi$, we can define some testing problem from a set couple $(H_0(\pi)$, $H_1(\pi,\rho))$ -- e.g.,~$H_0^{\text{(Clo)}}(\pi)$ and $H_1^{\text{(Clo)}}(\pi, \rho)$ for Equation~\eqref{eq:prob}. Then the separation distance given a test $\varphi$ based on a sample set $(\mathcal X, \mathcal Y)$ is
$$
R(H_0(\pi),H_1(\pi,\rho),\varphi;k) = \sup_{(p,q)\in H_{0}(\pi)} \mathbb{P}_{p,q}(\varphi(\mathcal X, \mathcal Y)=1) + \sup_{(p,q)\in H_{1}(\pi,\rho)} \mathbb{P}_{p,q}(\varphi(\mathcal X, \mathcal Y)=0),
$$
where we remind that $\mathbb{P}_{p,q}$ is the probability measure associated with $(\mathcal X,\mathcal Y)$. Then, fixing some $\gamma\in (0,1)$, we say that a testing problem can be solved with error smaller than $\gamma$, if we can construct a uniformly $\gamma$-consistent test, that is, if there exists $\varphi$ such that:
$$
R(H_0(\pi),H_1(\pi,\rho),\varphi;k) \leq \gamma.
$$
Clearly, $\rho \mapsto R(H_0(\pi),H_1(\pi,\rho),\varphi;k)$ is non-increasing, and greater or equal to one when $\rho=0$. Define the separation distance for some fixed $\gamma\in (0,1)$ as
$$
\rho_\gamma(H_0,H_1,\varphi;\pi,k) = \inf\{\rho>0 : R(H_0(\pi),H_1(\pi,\rho),\varphi;k) \leq \gamma\}.
$$
A good test $\varphi$ is characterized by a small separation distance. So we define the local minimax separation distance, also known as local critical radius, as
$$
\rho_\gamma^*(H_0,H_1; \pi,k) = \inf_\varphi \rho_\gamma(H_0,H_1,\varphi;\pi,k).
$$
Besides, it is possible to consider the global minimax separation distance defined as
$$\sup_{\pi \in \mathbf P} \rho_\gamma^*(H_0,H_1; \pi,k).$$
A worst-case analysis is sufficient for finding the global minimax separation distance, so it is a weaker result than finding the local minimax separation distance.

A lot of relevant results from the literature that we present in Section~\ref{sec:literature} come from the field of property testing in computer science. So although our paper focuses on rates in separation distance, we will link this concept with that of sample complexity, favoured in computer science.
Sample complexity corresponds to the number of samples that are necessary and sufficient in order to achieve a certain testing error for a fixed separation distance. Formally, 
for a fixed $\rho$,
since $k \mapsto R(H_0(\pi),H_1(\pi,\rho),\varphi; k)$ is non-increasing, the sample complexity for some fixed $\gamma\in (0,1)$ is defined as
$$
k_\gamma(H_0,H_1,\varphi;\pi,\rho) = \inf\{k \in \mathbb N : R(H_0(\pi),H_1(\pi,\rho),\varphi; k) \leq \gamma\}.
$$
Then the minimax sample complexity is 
$$
k_\gamma^*(H_0,H_1;\pi,\rho) = \inf_\varphi k_\gamma(H_0,H_1,\varphi;\pi,\rho).
$$
So the local minimax sample complexity is written as $k_\gamma^*(H_0,H_1;\pi,\rho)$. And the global minimax sample complexity is $\sup_\pi k_\gamma^*(H_0,H_1;\pi,\rho)$. If $\rho_\gamma^*$ or $k_\gamma^*$ are invertible, then it is possible to obtain one from the other. Let us define the inverses $\rho \mapsto (\rho_\gamma^*)^{-1}(H_0,H_1;\pi,\rho)$ and $k \mapsto (k_\gamma^*)^{-1}(H_0,H_1;\pi,k)$. Then $(\rho_\gamma^*)^{-1}(H_0,H_1;\pi,\rho) = k_\gamma^*(H_0,H_1;\pi,\rho)$ and reciprocally.

 \paragraph{Additional notations.} We introduce the following notations. 
 For a vector $u\in \mathbb R^d$, let $s$ be a permutation of $\{1, \ldots, d\}$ such that $u_{s(1)} \geq u_{s(2)} \geq \ldots \geq u_{s(d)}$. We write $u_{(.)}:=u_{s(.)}$. 
Set also
$J_{u} = \min_{j \leq d} \big\{j : u_{(j)} \leq \frac{1}{k}\big\}.$



%
%
%

\subsection{Literature review}
\label{sec:literature}

Hypothesis testing is a classical statistical problem and we refer the reader to \citep{neyman1933ix} and \citep{lehmann2006testing} for a more global perspective on the problem. In parallel to the study of hypothesis testing, there exists a broad literature on the related problem of property testing tackled by the theoretical computer science community, with seminal papers like \citep{rubinfeld1996robust}, \citep{goldreich1998property}. 

In earlier studies, tests were built based on good asymptotic properties like having asymptotically normal limits, but this criterion often fails to produce tests which are efficient in high-dimensional cases notably, as stated in \citep{balakrishnan2017hypothesisreview}. An alternative and popular take on the study of hypothesis testing is minimax optimality, with the seminal work of \citep{ingster2012nonparametric} on goodness-of-fit testing.
%
%

The problem of goodness-of-fit testing, also known as identity testing, or one-sample testing, consists in distinguishing whether a sample set $\mathcal X \sim \mathcal M(k,p)$ is drawn from a specified distribution $\pi \in \mathbf P$, versus a composite alternative separated from the null in $L_1$-distance. We formalize it as follows.
\begin{equation}
\label{eq:idTest}
\mathcal H^{(\text{Id})}_{0}(\pi): p = \pi, ~~~~~ \textrm{versus}~~~~~ \mathcal H^{(\text{Id})}_{1}(\pi,\rho): \|p-\pi\|_1 \geq \rho,~ p\in \mathbf P.
\end{equation}
We define the corresponding sets $H_0^{\text{(Id)}}(\pi) = \{(p,q) \in \mathbf P^2: p=q=\pi\}$ and $H_1^{\text{(Id)}}(\pi,\rho) = \{(p,q) \in \mathbf P^2: q=\pi, \|p - q\|_1 \geq \rho\}$.
We will consider local and global minimax rates for both the separation distance and sample complexity. 
Indeed, if either the number of sample points or the $L_1$-separation between the distributions is reduced, then the problem becomes more difficult. Thus, it is possible to parametrize the difficulty of the problem using either the number of sample points $k$ or the $L_1$-separation distance $\rho$. Tables~\ref{tab:idTestGlobal} and \ref{tab:idTestLocal} capture the existing results in the literature on the global and local minimax separation distance and sample complexity for goodness-of-fit testing.
%
\begin{table}[h]
\caption{Global minimax separation distance and sample complexity obtained for identity testing defined in Equation~\eqref{eq:idTest}: $\sup_\pi \rho_\gamma^*(H^{(\text{Id})}_0,H^{(\text{Id})}_1; \pi,k)$ and $\sup_\pi k^*_\gamma(H^{(\text{Id})}_0,H^{(\text{Id})}_1;\pi,\rho)$. The rates are only worst-case considerations, presented up to some constant depending only on $\gamma$.}
\label{tab:idTestGlobal}
\centering
\begin{tabular}{|c|c|c|}
\hline
        & Separation distance & Sample complexity \\ \hline
\citep{paninski2008coincidence} &  $d^{1/4}/\sqrt{k}$  &     $\sqrt{d}/\rho^2$   \\ \hline
\end{tabular}
\end{table}

\begin{table}[h]

\caption{Local minimax separation distance and sample complexity obtained for identity testing defined in Equation~\eqref{eq:idTest}: $\rho_\gamma^*(H^{(\text{Id})}_0,H^{(\text{Id})}_1; \pi,k)$ and $k^*_\gamma(H^{(\text{Id})}_0,H^{(\text{Id})}_1;\pi,\rho)$. The rates depend on the distribution in the null hypothesis and are up to some constant depending only on $\gamma$. Here $(x)_+= \max(0,x)$, and $\mathbf 1\{A_i\}$ equals 1 if $A_i$ is true and 0 otherwise. So $\|\pi_{(i)}^{2/3} \mathbf 1\{2 \leq i < m\}\|_1 = \sum_{2 \leq i < m} \pi_{(i)}^{2/3}$ and $\|(\pi_{(i)} \mathbf 1\{i\geq m\})_i\|_1 = \sum_{i \geq m} \pi_{(i)}$.}
\label{tab:idTestLocal}
\centering
\begin{tabular}{|c|c|}
\hline
        & \citep{valiant2017automatic}\\ \hline
Separation distance  &  $\min_{m}\cro{\frac{\|(\pi_{(i)}^{2/3}\mathbf 1\{2\leq i< m\})_i\|_1^{3/4}}{\sqrt{k}} \lor \frac{1}{k} \vee \|(\pi_{(i)}\mathbf 1\{i\geq m\})_i\|_1}$  \\ \hline
Sample complexity &     $\min_m\cro{\frac{1}{\rho} \vee \frac{\|(\pi_{(i)}^{2/3}\mathbf 1\{2\leq i < m\})_i\|_1^{3/2}}{(\rho-\|(\pi_{(i)} \mathbf 1\{i\geq m\})_i\|_1)_+^2}}$   \\ \hline
\end{tabular}
\end{table}

\noindent Similarly for two-sample testing, Tables~\ref{tab:probGlobal} and \ref{tab:probLocal} capture the existing results in the literature on the global and local minimax separation distance and sample complexity.
\begin{table}[h]
\caption{Bounds on the global minimax separation distance and sample complexity obtained for closeness testing defined in Equation~\eqref{eq:prob}: $\sup_\pi \rho_\gamma^*(H^{(\text{Clo})}_0,H^{(\text{Clo})}_1;\pi,k)$ and $\sup_\pi k^*_\gamma(H^{(\text{Clo})}_0,H^{(\text{Clo})}_1;\pi,\rho)$, up to some constant depending only on $\gamma$. The result in~\citep{batu2000testing} is only an upper bound (UB). The result in~\citep{chan2014optimal} provides matching upper and lower bounds, and is hence global minimax optimal.}
\label{tab:probGlobal}
\centering
\begin{tabular}{|c|c|c|}
\hline
        & Separation distance & Sample complexity \\ \hline
\citep{batu2000testing} (UB only) &  $d^{1/6}\log(d)^{1/4}/k^{1/4}$  &     $d^{2/3}\log (d)/\rho^4$   \\ \hline
 \citep{chan2014optimal}  (minimax) & $\frac{d^{1/2}}{k^{3/4}} \vee \frac{d^{1/4}}{k^{1/2}}$  &     $\frac{d^{2/3}}{\rho^{4/3}} \vee \frac{d^{1/2}}{\rho^{2}}$  \\ \hline
\end{tabular}
\end{table}

\begin{table}[h]
\caption{Upper bounds (UB) on the local minimax separation distance and sample complexity obtained for closeness testing defined in Equation~\eqref{eq:prob}: $\rho_\gamma^*(H^{(\text{Clo})}_0,H^{(\text{Clo})}_1; \pi,k)$ and $k^*_\gamma(H^{(\text{Clo})}_0,H^{(\text{Clo})}_1;\pi,\rho)$. The rates are problem-dependent, even though both distributions are unknown. It is presented up to some poly$\log(dk)$ for the separation distance and up to a poly$\log(d/\rho)$ for the sample complexity. We present here a corollary from their Proposition~2.14, applied to our closeness testing problem - which is not defined in~\citep{diakonikolas2016new}.}
\label{tab:probLocal}
\centering
\begin{tabular}{|c|c|}
\hline
        & \citep{diakonikolas2016new}\\ \hline
Separation distance (UB only)  &  $\frac{\|\mathbf 1\{\pi < 1/k\}\|_1^{1/2}\|\pi^2 \mathbf 1\{\pi < 1/k\}\|_1^{1/4}}{\sqrt{k}} \vee \frac{\|\pi^{2/3}\|_1^{3/4}}{\sqrt{k}}$  \\ \hline
Sample complexity (UB only) &     $\min_m\pa{m \vee \frac{\|\mathbf 1\{\pi < 1/m\})\|_1\|\pi^2 \mathbf 1\{\pi < 1/m\}\|_1^{1/2}}{\rho^2} \vee \frac{\|\pi^{2/3}\|_1^{3/2}}{\rho^2}}$   \\ \hline
\end{tabular}
\end{table}

First, let us consider the results obtained for the classical problem of identity testing presented in Equation~\eqref{eq:idTest}. 
An upper bound on the global minimax sample complexity is given in \citep{paninski2008coincidence} and tightened in \citep{valiant2017automatic} for the class of multinomial distributions over a support of size $d$. The meaning of the global minimax sample complexity listed in Table~\ref{tab:idTestGlobal} is that an optimal algorithm will be able to test with fixed non-trivial probability, using $\sqrt{d}/\rho^2$ samples, up to some explicit constant. This sample complexity can be translated into the separation distance presented in the same table, as justified in Section~\ref{sec:setting}. The global minimax sample complexity is a worst-case analysis, that is, it corresponds to the rate obtained for the hardest problem overall. In the case of identity testing, the uniform distribution is the hardest distribution to test against.

From the observation that the sample complexity might take values substantially different from that of the worst case, the concept of minimaxity has been refined in recent lines of research. One such refinement corresponds to local minimaxity, also known as instance-optimality, where the local minimax sample complexity depends on $\pi$. Local minimax sample complexity and separation distance for identity testing are presented in Table~\ref{tab:idTestLocal}. \citep{valiant2017automatic}
obtains the local minimax sample complexity for Problem~\eqref{eq:idTest}. \citep{balakrishnan2017hypothesis} makes their test more practical and expresses the rate in terms of local minimax separation distance. The reformulation of their bounds, presented in Table~\ref{tab:idTestLocal}, comes from our Proposition~\ref{th:fromVal}. Note that the dependences in $d$ in the local minimax sample complexity and separation distance are contained in the vector norms. Finally, \citep{balakrishnan2017hypothesis} also obtains the local minimax sample complexity and separation distance for identity testing in the continuous case with Lipschitz densities, but we focus on the discrete case here. 

Let us now consider the literature involving closeness testing, for which we provide a formalization in Equation~\eqref{eq:prob}. The global minimax sample complexity and separation distance are summarized in Table~\ref{tab:probGlobal}, and upper bounds on the local minimax sample complexity and separation distance are given in Table~\ref{tab:probLocal}. In the case of closeness testing, \citep{batu2000testing} proposes a test and obtains a loose upper bound on the global minimax sample complexity. The actual global minimax sample complexity has been identified in \citep{chan2014optimal}, using the tools developed in \citep{valiant2011testing}. A very interesting message from \citep{chan2014optimal} is that there exists a substantial difference between identity testing and closeness testing, and that the latter is harder. It is interesting to note that while the uniform distribution is the most difficult distribution to test in identity testing, $\pi$ can be chosen in a different appropriate way in order to worsen the sample complexity and separation distance in closeness testing. 

Again, distribution-dependent minimax sample complexity and separation distance might differ greatly from global minimax sample complexity and separation distance, respectively. Attempts at obtaining finer results have been made for closeness testing of continuous distributions. Indeed, a large variety of classes of distributions can be defined in the continuous case and it makes sense to obtain minimax rates over rather small classes of distributions. In \citep{diakonikolas2017near}, the authors focus on closeness testing over the class of piecewise constant probability distributions (referred to as $h$-histograms) and obtain minimax near-optimal testers. In the same way, in \citep{diakonikolas2015optimal}, the authors display optimal closeness testers for various families of structured distributions, with an emphasis on continuous distributions. 

Now, as explained in the review of  \citep{balakrishnan2017hypothesisreview}, the definition of local minimaxity in closeness testing is more involved than in identity testing, and it is in fact an interesting open problem that we focus on in this paper. The difficulty arises from the fact that both distributions are unknown, although we would like the minimax sample complexity and separation distance to depend on them. Indeed, in contrast to Problem~\eqref{eq:idTest} whose null hypothesis is simple, Problem~\eqref{eq:prob} is composite-composite. So there is the additional difficulty of having to adapt to the unknown vector $q$. Now, the existence and the size of a difference in the local minimax rates between both problems depending on $\pi$ are open questions. We remind that \citep{chan2014optimal} sheds light on such a gap, but only in the worst case of $\pi$, whereas we look for instance-based minimax optimality. 

\citep{diakonikolas2016new} constructs a test for closeness testing with sample complexity adaptive to one of the distributions $(p,q)$, when either $p=q$ or $\|p-q\|_1 \geq \rho$. Their Proposition~2.14 states that their test achieves a sample complexity of $$\min_m \pa{m + \frac{\|\mathbf 1\{q < 1/m\})\|_1\|q^2 \mathbf 1\{q < 1/m\}\|_1^{1/2}}{\rho^2} + \frac{\|q^{2/3}\|_1^{3/2}}{\rho^2} }.$$ As explained at the end of Section~\ref{sec:setting}, their sample complexity can be translated into a separation distance, which is useful as a comparison with our own results. So in Table~\ref{tab:probLocal}, we present a corollary of their Proposition~2.14 in order to obtain a separation distance corresponding to an upper bound on $\rho_\gamma^*(H^{(\text{Clo})}_0,H^{(\text{Clo})}_1; \pi,k)$ in our setting. Our corollary relies on the definition of $\mathbf P_\pi$. Indeed, $q\in \mathbf P_\pi$ has level sets with similar sizes to those of $\pi$, and therefore $q$ has similar (partial) norms to $\pi$ up to a multiplicative constant. The sample complexity from \cite{diakonikolas2016new} matches the global minimax sample complexity $\sup_\pi k^*_\gamma(H^{(\text{Clo})}_0,H^{(\text{Clo})}_1;\pi,\rho)$ obtained in \citep{chan2014optimal} for some choice of $\pi$ and $m$. However they do not introduce any lower bound dependent on $\pi$, so the only known local lower bound comes from \cite{valiant2017automatic}, which is found for the problem of identity testing in Equation~\eqref{eq:idTest}. But the lower bound from \cite{valiant2017automatic} does not match the upper bound in~\cite{diakonikolas2016new} .

We now mention recent alternative viewpoints on the study of identity and closeness testing. In \citep{acharya2012competitive}, the authors compare their closeness tester against an oracle tester which is given the knowledge of the underlying distribution $q$. When an oracle tester needs $k$ samples, their closeness tester needs $k^{3/2}$ samples. Otherwise, some studies have been made in closeness testing when the number of sample points for both distributions is not constrained to be the same in \citep{bhattacharya2015testing}, \citep{diakonikolas2016new} and \citep{kim2018robust}. What is more, \citep{diakonikolas2017sample} works on identity testing in the high probability case, instead of a fixed probability as it is done usually. That is to say, the authors introduce a global minimax optimal identity tester which discriminates both hypotheses with probability converging to $1$.




\subsection{Contributions}
\label{sec:contributions}

The following are the major contributions of this work:

\begin{itemize}
\item We provide a lower bound on the local minimax separation distance for closeness testing presented in Equation~\eqref{eq:prob} -- see Equation~\eqref{eq:minmax} for $u=2.001$. 
\item We propose a test providing an upper bound that nearly matches the obtained lower bound for $u=1/2$. 
So it is local minimax near-optimal for closeness testing, but the test is also practical, since it does not take $\pi$ as a parameter even though the upper bound optimally depends on $\pi$.
\item We point out the similarities and differences in regimes with local minimax identity testing.
\end{itemize}
More precisely we prove in Theorems~\ref{th:upper} and \ref{th:lowerall} that the local minimax separation distance $\rho_\gamma^*(H^{(\text{Clo})}_0,H^{(\text{Clo})}_1; \pi,k)$ up to some poly$\log(dk)$ is
\begin{align}
\label{eq:minmax}
\begin{split}
&\min_{I \geq J_\pi}\Bigg[\frac{\sqrt{I}}{k}
\lor \Big(\sqrt{\frac{I}{k}}\|\pi^2\exp(-uk\pi)\|_1^{1/4}\Big) \lor  \|(\pi_{(i)}\mathbf 1\{i \geq I\})_i\|_1 \Bigg]\\
&\lor \frac{\Big\|(\pi_{(i)}^{2/3}\mathbf 1\{i \leq J_\pi\})_i\Big\|_1^{3/4}}{\sqrt{k}} \lor \sqrt{\frac{1}{k}},
\end{split}
\end{align}
where $J_\pi$ and $\pi_{(.)}$ are defined in Section~\ref{sec:setting}, $u=2.001$ for the lower bound and $u=1/2$ for the upper bound. The exponential and the powers are applied element-wise.
Let $I^*$ denote an $I$ where the minimum in Equation~\eqref{eq:minmax} is reached.

The local minimax separation distance $\rho_\gamma^*(H^{(Id)}_0,H^{(Id)}_1; \pi,k)$ obtained in \citep{valiant2017automatic,balakrishnan2017hypothesis} is
$$
\min_{m}\cro{\frac{\|(\pi_{(i)}^{2/3}\mathbf 1\{2\leq i< m\})_i\|_1^{3/4}}{\sqrt{k}} \lor \frac{1}{k} \vee \|(\pi_{(i)}\mathbf 1\{i\geq m\})_i\|_1}.
$$ 
We compare it with Equation~\eqref{eq:minmax}. Indeed, as explained in Proposition~\ref{th:fromVal}, $\rho_\gamma^*(H^{(Id)}_0,H^{(Id)}_1; \pi,k)$ also represents a lower bound on $\rho_\gamma^*(H^{(\text{Clo})}_0,H^{(\text{Clo})}_1; \pi,k)$. Let $m^*$ denote an $m$ where the minimum is reached.

Table~\ref{tab:rates} references the local minimax optimal separation distance we obtain for the closeness testing problem defined in Equation~\eqref{eq:prob} and compares it with the upper bound from \citep{diakonikolas2016new} and the local minimax optimal separation distance for identity testing found in \citep{valiant2017automatic,balakrishnan2017hypothesis}. In order to build Table~\ref{tab:rates}, 
we classify the coefficients of $\pi$ depending on their size and the corresponding contribution to the separation distance. As illustrated by the table, our local minimax separation distance fleshes out three main regimes. Looking at the coefficients of $\pi$, for the indices smaller than $J_\pi$, the part of the separation distance corresponding to them is $$\frac{\Big\|(\pi_{(i)}^{2/3}\mathbf 1\{i \leq J_\pi\})_i\Big\|_1^{3/4}}{\sqrt{k}}.$$ As for the indices greater than $I^*$, the part of the separation distance corresponding to them is $ \|(\pi_{(i)}\mathbf 1\{i \geq I\})_i\|_1$. And so regarding the contribution of the indices smaller than $J_\pi$ or greater than $m^*$ to the separation distance, the regimes are the same as in the local minimax separation distance $\rho_\gamma^*(H^{(Id)}_0,H^{(Id)}_1; \pi,k)$ for identity testing from \citep{valiant2017automatic}. However regarding the coefficients corresponding to the indices between $J_\pi$ and $I^*$ the local minimax separation distance for closeness testing is not of same order as for identity testing. The difference concerning local minimax separation distances between identity testing (\citep{valiant2017automatic,balakrishnan2017hypothesis}) and closeness testing then lies in the indices between $J_\pi$ and $m^*$. \citep{chan2014optimal} also notes a difference in the global minimax rates between both problems and we have refined this intuition to make it depend on $\pi$.

We now detail the comparison with the paper~\citep{diakonikolas2016new} that also studies the problem of local closeness testing. The authors of \citep{diakonikolas2016new} also obtain an upper bound on the local minimax separation distance  for closeness testing. Although it is adaptive to $\pi$ and matching the one from \citep{chan2014optimal} in the worst case, their upper bound is not local minimax optimal. In fact, they capture two of the three different phases we describe in Table~\ref{tab:rates}. But their regime corresponding to the very small coefficients, with indices greater than $I^*$, can be made tighter, matching the local minimax separation distance in identity testing.

We further illustrate the difference between the local minimax separation distance that we present and the upper bound from \cite{diakonikolas2016new} with the following example. Take $d = k^4 + 2$ and $0<h<1/2$.
 Let $\pi_1 = 1/2$, $\pi_2 = 1/2 - h$, for any $3\leq i \leq d,$ $\pi_i = h/k^4$. Then, up to multiplicative constants  depending on $\gamma$, our paper leads to the minimax separation distance $1/\sqrt{k} + h$, whereas \cite{diakonikolas2016new} obtains 
$k^{1/2}$ which leads to the trivial upper bound of $1$. This highlights a gap in their upper bound with respect to the local minimax rate in some specific regimes. However, our main contribution with respect to~\citep{diakonikolas2016new}  and to the rest of the literature is our lower bound. It is the evidence from a local perspective that two sample testing is more difficult than identity testing.

Our paper is organized as follows. In Section~\ref{sec:upper}, an upper bound on the local minimax separation distance for Problem~\eqref{eq:prob} is presented. This will entail the construction of a test based on multiple subtests. In Section~\ref{sec:lower}, a lower bound that matches the upper bound up to logarithmic factors is proposed. Finally, the Appendix contains the proofs of all the results presented in this paper.

\begin{table}[h]
\caption{Comparison of the upper bounds on the local minimax separation distances depending on which term dominates. Note that $J_\pi \leq I^* \leq m^*$, by definition of these quantities. Each index $i$ belongs to some index range $U$ and $\pi_{(i)}$ contributes to the separation distance rate differently depending on the index range $U$. The notation $|U|$ refers to the number of elements in $U$. Current paper corresponds to the local minimax separation distance that we prove for closeness testing as defined in Equation~\eqref{eq:prob} with $u=2.001$ for the lower bound and $u = 1/2$ for the upper bound. \citep{valiant2017automatic,balakrishnan2017hypothesis} present the local minimax separation distance found for identity testing defined in Equation~\eqref{eq:idTest}, which corresponds to a lower bound for Problem~\eqref{eq:prob}. \citep{diakonikolas2016new} presents a rate which corresponds to an upper bound for Problem~\eqref{eq:prob}. All the separation distances are presented up to $\log$-factors. $\mathbf 1_U$ is the indicator function of $U$ applied elementwise.}
\label{tab:rates}
\centering
\begin{tabular}{|c||c|c|}
\hline
Index range & $U=\{1,\ldots,J_\pi\}$    & $U=\{J_\pi, \ldots,I^*\}$ \\ \hline
Contribution of the terms & \multicolumn{2}{c|}{} \\ \hline \hline
Current paper &  $\frac{\Big\|\pi_{(.)}^{2/3}\mathbf 1_U\Big\|_1^{3/4}}{\sqrt{k}}$ & $\sqrt{\frac{|U|}{k}}\Big[\|\pi^2\exp(-uk\pi)\|_1^{1/4} \lor \frac{1}{\sqrt{k}}\Big]  $ \\ \hline
Lower bound from \citep{valiant2017automatic,balakrishnan2017hypothesis} &  $\frac{\Big\|\pi_{(.)}^{2/3}\mathbf 1_U\Big\|_1^{3/4}}{\sqrt{k}}$ & $\frac{\Big\|\pi_{(.)}^{2/3}\mathbf 1_U\Big\|_1^{3/4}}{\sqrt{k}}$ \\ \hline
Upper bound from \citep{diakonikolas2016new} &  $\frac{\Big\|\pi_{(.)}^{2/3}\mathbf 1_U\Big\|_1^{3/4}}{\sqrt{k}}$ & $\sqrt{\frac{|U|}{k}} \|\pi_{(.)}^2 \mathbf 1_U\|_1^{1/4}$ \\ \hline
\end{tabular}
\begin{tabular}{|c||c|c|}
\hline
Index range &  $U=\{I^*, \ldots,m^*\}$ & $U=\{m^*,\ldots,d\}$ \\ \hline
Contribution of the terms & \multicolumn{2}{c|}{}  \\ \hline \hline
Current paper & $\Big\|\pi_{(.)}\mathbf 1_U\Big\|_1$ & $\Big\|\pi_{(.)}\mathbf 1_U\Big\|_1$\\ \hline
Lower bound from \citep{valiant2017automatic,balakrishnan2017hypothesis}  & $\frac{\Big\|\pi_{(.)}^{2/3}\mathbf 1_U\Big\|_1^{3/4}}{\sqrt{k}}$ & $\Big\|\pi_{(.)}\mathbf 1_U\Big\|_1$\\ \hline
Upper bound from \citep{diakonikolas2016new} & $\sqrt{\frac{|U|}{k}} \|\pi_{(.)}^2 \mathbf 1_U\|_1^{1/4}$ & $\sqrt{\frac{|U|}{k}} \|\pi_{(.)}^2 \mathbf 1_U\|_1^{1/4}$\\ \hline
\end{tabular}
\end{table}

\section{Upper bound}\label{sec:upper}

In this section, we build a test composed of several tests for Problem~\eqref{eq:prob}. One of them is related to the test introduced in the context of identity testing in~\cite{valiant2017automatic,balakrishnan2017hypothesis}. The others complement this test, in particular regarding what happens for smaller masses. 
Here, as explained in the setting, we observe the following independent sample sets.
$$
\mathcal X \sim \mathcal{M}(k,p), \quad \mathcal Y \sim \mathcal{M}(k,q).
$$
Assume from now on that $k \geq 3$. These two sample sets can each be split into $3$ independent sample sets. That is, for any $j \leq 3$ and $i\leq d$, we consider the independent sample sets
$$
\mathcal X^{(j)} \sim \mathcal{M}(\bar k,p), \quad \mathcal Y^{(j)} \sim \mathcal{M}(\bar k,q),
$$ 
where $\bar k = \floor{k/3}$. 

We will then apply a Poissonization trick in order to consider independent Poisson random variables instead of independent multinomial random variables -- see Section~\ref{ss:poisson} in the Appendix for the precise derivations. Firstly, for $j \in \{1, 2,3\}$ and $m \in \{1,2\}$, let $\bar k_m^{(j)}$ follow $\mathcal P(2\bar k/3)$ independently. Note that by concentration of Poisson random variables, we have with probability larger than $1 - 6 \exp(-\bar k/12)$, that $\bar k_m^{(j)} \leq \bar k$ for all $j \in \{1, 2,3\}$ and $m \in \{1,2\}$ at the same time. With this in mind, we define the following $6d$ counts. For any $i \leq d$ and $j \in \{1, 2,3\}$, let
$$
X_i^{(j)} = \sum_{r \leq \bar k_1^{(j)} \land \bar k} \mathbf 1\{\mathcal X^{(j)}_r = i\}, \quad  Y^{(j)}_i = \sum_{r \leq \bar k_2^{(j)}\land \bar k} \mathbf 1\{\mathcal Y^{(j)}_r = i\}.
$$
By definition of Poisson random variables, on the large probability event such that $\bar k_m^{(j)} \leq \bar k$ for all $j \in \{1, 2,3\}$ and $m \in \{1,2\}$ at the same time, we have that the $X_i^{(j)}$ coincide with independent $\mathcal P(2p_i \bar k/3)$ and that $Y_i^{(j)}$ coincide with independent $\mathcal P(2q_i \bar k/3))$, for $i\leq d$ and $j\leq 3$. Sample splitting and Poissonization allow for simpler derivations of guarantees for tests and they can be done without loss of generality in our setting. That is why we will construct our tests based on $(X^{(j)}, Y^{(j)})_{j \leq 3}$. All the probability statements in this section will be with respect to $(\bar k_m^{(j)})_{m \leq 2, j \leq 3}$ and $(X^{(j)}, Y^{(j)})_{j \leq 3}$.


Sections~\ref{ss:pretest}--\ref{ss:L1} introduce the individual tests as well as their guarantees. Section~\ref{ss:testFinal} combines all the tests into one and produces a problem-dependent upper bound for our setting.
The proofs for the upper bound are compiled in Appendix~\ref{sec:proofUpper}.





The general strategy behind our construction is to readjust the test presented in \cite{valiant2017automatic} to make it fit to our setting. Indeed, both distributions are unknown in our case, making instance-based minimax optimality all the more complicated. Instead of knowing $\pi$ directly, it is estimated up to some multiplicative constant when possible. This will induce a gap with the local minimax separation distances for identity testing presented in \cite{valiant2017automatic,balakrishnan2017hypothesis}. Using other tests, we offset this gap partially. However, as shown in the lower bound, the upper bound is local minimax optimal and the difference in separation distances highlighted in the upper bound is actually fundamental to the problem of closeness testing, making it harder than identity testing in some regimes.

\subsection{Pre-test: Detection of divergences coordinate-wise}
\label{ss:pretest}

We first define a pre-test. It is an initial test designed to detect cases where some coordinates of $p$ and $q$ are very different from one another. It relies on the $L_{\infty}$-distance between the observations.

Let $c>0$, $\hat{q} = (Y^{(3)} \lor 1)/\bar k$ and  $\hat{p} = (X^{(3)} \lor 1)/\bar k$, where the maximum is taken element-wise.
The pre-test is defined as 
$$
\varphi_\infty(X^{(3)}, Y^{(3)},c,k,d) = \begin{cases}
1,\text{ if there exists } i: |\hat p_i - \hat{q}_i| \geq c\sqrt{\frac{\hat q_i \log(\hat q_i^{-1} \land k)}{k}} + c\frac{ \log(k)}{k}.\\
0,\text{  otherwise.}
\end{cases}
$$
In order to simplify the notations, we will just write $\varphi_\infty(c)$ in the future. 

\begin{proposition}\label{th:pretest}
Let $\delta \in (0,1)$. Then there exist $c_{\delta,\infty}>0, \tilde c_{\delta,\infty}>0$ large enough depending only on $\delta$ such that the following holds.
\begin{itemize}
\item  If $p=q$, then with probability larger than $1 - 2\delta - 7k^{-1} - 6\exp(-k/100)$,
$$\varphi_{\infty}(c_{\delta,\infty})=0.$$ 
\item If there exists $i \leq d$ such that 
$$|p_i - q_i| \geq \tilde c_{\delta,\infty} \sqrt{q_i \frac{\log(q_i^{-1} \land k)}{k}} + \tilde c_{\delta,\infty}\frac{\log(k)}{k},$$
then with probability larger than $1 - 2\delta - 7k^{-1} - 6\exp(-k/100)$,
$$\varphi_{\infty}(c_{\delta,\infty})=1.$$
\end{itemize}

\end{proposition}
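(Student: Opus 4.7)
The plan is to reduce both assertions to concentration inequalities for independent Poisson counts, combined with a level-set union bound whose resolution is calibrated to the magnitudes of the entries of $q$. I first condition on the Poissonization event $\{\bar k_m^{(j)}\leq \bar k\ \forall j,m\}$, whose complement has probability at most $6\exp(-\bar k/12)$ by the standard Chernoff bound for Poissons and accounts for the $6\exp(-k/100)$ term. On that event, $(X_i^{(3)})_{i\leq d}$ and $(Y_i^{(3)})_{i\leq d}$ are independent Poissons with means $2\bar k p_i/3$ and $2\bar k q_i/3$, so one-dimensional concentration tools apply directly.

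Next I establish the preliminary two-sided multiplicative control
\begin{equation*}
\tfrac{1}{2}\bigl(q_i \vee 1/\bar k\bigr) \leq \hat q_i \leq 2\bigl(q_i \vee 1/\bar k\bigr) \quad \text{and analogously for } \hat p_i,
\end{equation*}
uniformly over $i$. This follows from a Chernoff bound for each $Y_i^{(3)}$: when $q_i$ is much larger than $1/\bar k$ the count concentrates tightly around its mean, and when $q_i$ is much smaller the floor in $\hat q_i = (Y_i^{(3)}\vee 1)/\bar k$ dominates. Union-bounding at the level-set scale — there are at most $2^{\ell+1}$ indices with $q_i\in[2^{-\ell-1},2^{-\ell})$ — gives total failure probability at most $O(1/k)$, matching the $7/k$ in the statement. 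This control lets me replace $\log(\hat q_i^{-1}\wedge k)$ by $\log(q_i^{-1}\wedge k)$ in the threshold up to a universal multiplicative constant, which is absorbed in $c_{\delta,\infty}$ or $\tilde c_{\delta,\infty}$.

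The core probabilistic step is the Bernstein inequality for the difference $X_i^{(3)}-Y_i^{(3)}$ of two independent Poissons with means $\lambda_{p,i},\lambda_{q,i}$: for each $t>0$,
\begin{equation*}
\mathbb P\Big(\bigl|(X_i^{(3)}-Y_i^{(3)})-(\lambda_{p,i}-\lambda_{q,i})\bigr| \geq C\sqrt{(\lambda_{p,i}+\lambda_{q,i})\,t} + C t\Big)\leq 2e^{-t}.
\end{equation*}
To take a union bound I allocate the probability budget according to level sets: indices $i$ with $q_i\in[2^{-\ell-1},2^{-\ell})$ receive weight $\delta_i = \delta\cdot 2^{-\ell}/(\ell+1)^2$, summing to $O(\delta)$ over all $i$. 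The corresponding exponent $t_i = \log(1/\delta_i) \asymp \log(q_i^{-1}\wedge k)+\log(1/\delta)$ produces, after dividing by $\bar k$, exactly the threshold $C\sqrt{q_i \log(q_i^{-1}\wedge k)/k}+C\log(k)/k$ (the saturation at $\log k$ absorbs indices with $q_i < 1/k$). Under the null $\lambda_{p,i}=\lambda_{q,i}$, so the Bernstein bound forces $|\hat p_i-\hat q_i|$ below the threshold simultaneously for every $i$, yielding part 1 with the $2\delta$ coming from the two-sided union bound.

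For part 2, fix the witnessing coordinate $i_0$ in the hypothesis. Applying the one-index concentration bounds for $X_{i_0}^{(3)}$ and $Y_{i_0}^{(3)}$ separately (total cost $2\delta$) together with the reverse triangle inequality
\begin{equation*}
|\hat p_{i_0}-\hat q_{i_0}| \geq |p_{i_0}-q_{i_0}| - |\hat p_{i_0}-p_{i_0}| - |\hat q_{i_0}-q_{i_0}|,
\end{equation*}
each deviation term is bounded by $O\bigl(\sqrt{q_{i_0}\log(q_{i_0}^{-1}\wedge k)/k}+\log(k)/k\bigr)$, so choosing $\tilde c_{\delta,\infty}$ large enough relative to the Bernstein constants forces the pretest to fire at $i_0$. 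The main obstacle is the data-dependent form $\log(\hat q_i^{-1}\wedge k)$ in the threshold: Bernstein naturally produces the true-parameter version $\log(q_i^{-1}\wedge k)$, and translating between the two relies entirely on the preliminary multiplicative control of $\hat q_i$, with the incurred constants absorbed into $c_{\delta,\infty}$ and $\tilde c_{\delta,\infty}$.
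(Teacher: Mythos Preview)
Your plan is essentially the same as the paper's: Poissonize, apply sub-exponential concentration coordinatewise with a magnitude-adapted union bound, and use a multiplicative sandwich on $\hat q_i$ to translate between the empirical threshold $\log(\hat q_i^{-1}\wedge k)$ and the population one $\log(q_i^{-1}\wedge k)$. The differences are cosmetic: the paper allocates $\tilde\delta_i=q_i\delta$ (so the exponent is $\log(1/(q_i\delta))$) rather than your level-set weights, and it applies the concentration lemma to $\hat p$ and $\hat q$ separately rather than to their difference; both choices are equivalent up to constants.

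There is one point that needs repair. Your uniform sandwich $\tfrac12(q_i\vee 1/\bar k)\leq\hat q_i\leq 2(q_i\vee 1/\bar k)$ with total failure probability $O(1/k)$ is not true as stated. For an index with $q_i\asymp 1/\bar k$ the Poisson mean of $Y_i^{(3)}$ is $O(1)$, so the event $\{\hat q_i>2(q_i\vee 1/\bar k)\}$ has probability bounded away from zero, and there can be $\Theta(k)$ such indices; the union bound blows up, not shrinks to $O(1/k)$. The paper's fix is to raise the floor from $1/\bar k$ to $a=16\log(2k/\delta)/k$ and prove instead $(q_i\vee a)/4\leq \hat q_i\vee a\leq 3(q_i\vee a)$: indices with $q_i\geq a$ have Poisson mean $\gtrsim\log k$ so each tail is $k^{-c}$ and the union bound closes, while indices with $q_i<a$ are already saturated at $a$. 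Your conclusion survives unchanged because both $\log((\hat q_i\vee a)^{-1}\wedge k)$ and $\log((q_i\vee a)^{-1}\wedge k)$ are comparable, and the additive $c\,\log(k)/k$ in the test threshold absorbs the shift of floor from $1/\bar k$ to $a$.
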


\subsection{Definition of the $2/3$-test on large coefficients}\label{ss:23}
We now consider a test that is related to the one in~\cite{valiant2017automatic,balakrishnan2017hypothesis} based on a weighted $L_{2}$-norm. But here, the weights are constructed empirically. Such an empirical twist on an existing test in order to obtain adaptive results was also explored in~\cite{diakonikolas2016new}. The objective is to detect differences in the coefficients that are larger than $1/k$ in an efficient way.

Let $c>0$. Set
\begin{equation}
\label{eq:T1}
T_{2/3} = \sum_{i \leq d} \hat{q}_i^{-2/3} (X^{(1)}_i-Y^{(1)}_i) (X^{(2)}_i-Y^{(2)}_i).
\end{equation}
We also define $\hat t_{2/3} =  \sqrt{k^{-2/3} \|(Y^{(1)})^{2/3}\|_1} + 1,$  and 
$$\varphi_{2/3}(c) := \varphi_{2/3}(X^{(1)}, Y^{(1)},X^{(2)}, Y^{(2)}, X^{(3)}, \hat q, c, k, d) = \mathbf 1\{T_{2/3} \geq c \hat t_{2/3}\}.$$
\begin{proposition}\label{th:test1}
Let $\delta >0$. Let $c_{\delta,\infty}$ defined as in Proposition~\eqref{th:pretest} and $k \geq 4\pa{80e^4/\sqrt{\delta/2}}^3$. Then there exist $c_{\delta,2/3}>0, \tilde c_{\delta,2/3}>0$ large enough depending only on $\delta$ such that the following holds.
\begin{itemize}
\item  If $p=q$, then with probability larger than $1 - 3\delta - 7 k^{-1} - 6\exp(-k/100)$,
$$\varphi_{2/3}(c_{\delta,{2/3}})=0~~~~~\mathrm{and}~~~~~\varphi_\infty(c_{\delta,\infty}) = 0.$$ 
\item If
$$\Big\| (p-q) \mathbf 1\{kq \geq 1\} \Big\|_1^2 \geq \frac{\tilde c_{\delta,{2/3}}}{k} \Big(\Big\|q^2 \frac{1}{(q\lor k^{-1})^{4/3}}\Big\|_1^{3/2}\lor \Big\|q^2 \frac{1}{(q\lor k^{-1})^{4/3}}\Big\|_1\Big),$$
then with probability larger than $1 - 3\delta - 7k^{-1} - 6\exp(-k/100)$,
$$\varphi_{2/3}(c_{\delta,{2/3}})=1~~~~~\mathrm{or}~~~~~\varphi_\infty(c_{\delta,\infty}) = 1.$$
\end{itemize}
\end{proposition}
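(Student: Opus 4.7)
The plan is to condition on $X^{(3)}$ (equivalently on $\hat q$), which by Poissonization and sample splitting is independent of the counts $(X^{(j)},Y^{(j)})_{j=1,2}$ feeding $T_{2/3}$. Conditional on $\hat q$, each difference $X_i^{(j)}-Y_i^{(j)}$ has mean $\tfrac{2\bar k}{3}(p_i-q_i)$ and variance $\tfrac{2\bar k}{3}(p_i+q_i)$, so a direct computation yields
\[
\mathbb{E}[T_{2/3}\mid\hat q]=\Bigl(\tfrac{2\bar k}{3}\Bigr)^2\sum_{i}\hat q_i^{-2/3}(p_i-q_i)^2,
\]
and a conditional variance equal to a sum over $i$ of $\hat q_i^{-4/3}$ times a polynomial in $p_i+q_i$ and $(p_i-q_i)^2$. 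A first preparatory step is then to establish, using Poisson tail bounds on $Y^{(3)}$, that on an event of probability at least $1-\delta-O(k^{-1})$ one has $\hat q_i \asymp q_i$ simultaneously for every $i$ with $kq_i\geq 1$, while elsewhere $\hat q_i^{-2/3}$ is capped at $\bar k^{2/3}$ by the $\lor 1$ floor in the definition of $\hat q$. A parallel concentration argument for $\|(Y^{(1)})^{2/3}\|_1$ ties the empirical threshold $\hat t_{2/3}$ to the deterministic quantity $\|q^{2}/(q\lor k^{-1})^{4/3}\|_1^{1/2}+1$ up to constants.

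With these couplings in hand, the null part is a single Chebyshev bound: under $p=q$ the conditional mean vanishes, and the conditional variance reduces to a constant multiple of $\bar k^2\sum_i q_i^{2/3}\mathbf{1}\{kq_i\geq 1\}$, which matches the squared threshold scale. The alternative is the substantive case. If $\varphi_\infty$ fires, there is nothing to prove; otherwise, by the contrapositive of Proposition~\ref{th:pretest}, no coordinate has $|p_i-q_i|$ substantially larger than $\sqrt{q_i\log(k)/k}+\log(k)/k$, which bounds the higher-order $(p_i-q_i)^4$ contributions and keeps the variance of the same order $\bar k^2\|q^{2/3}\mathbf{1}\{kq\geq 1\}\|_1$ as in the null. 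Meanwhile, Cauchy--Schwarz on $S=\{i:kq_i\geq 1\}$ gives
\[
\sum_{i\in S}\hat q_i^{-2/3}(p_i-q_i)^2\;\gtrsim\;\frac{\|(p-q)\mathbf{1}_S\|_1^{2}}{\|q^{2/3}\mathbf{1}_S\|_1},
\]
so the assumed lower bound on $\|(p-q)\mathbf{1}\{kq\geq 1\}\|_1^{2}$ forces $\mathbb{E}[T_{2/3}\mid\hat q]$ to exceed $c_{\delta,2/3}\hat t_{2/3}$ by an arbitrarily large multiplicative factor, tuned through $\tilde c_{\delta,2/3}$. A second Chebyshev then yields $\varphi_{2/3}=1$ with probability at least $1-\delta$, and intersecting with the good events collected so far delivers the announced $1-3\delta-7k^{-1}-6\exp(-k/100)$ bound.

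The main obstacle is that the random weights $\hat q_i^{-2/3}$ replace the oracle weights $q_i^{-2/3}$ used in~\citep{valiant2017automatic,balakrishnan2017hypothesis}: on the low-mass part $\{kq_i<1\}$ the ratio $q_i/\hat q_i$ is very skewed, and those coordinates must neither pollute the signal under the alternative nor inflate the variance under the null. The argument handles this jointly through (i)~the elementwise $\lor 1$ flooring in the definition of $\hat q$, which caps $\hat q_i^{-2/3}$ at $\bar k^{2/3}$, (ii)~the pre-test, which rules out large single-coordinate discrepancies and thereby tames the fourth-moment contributions, and (iii)~the restriction of the separation hypothesis to the regime $\{kq\geq 1\}$, which is precisely where Poisson concentration makes the empirical weights trustworthy and where the lower bound on separation compensates for the lost small-mass coordinates.
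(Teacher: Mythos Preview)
Your overall architecture (Chebyshev on $T_{2/3}$, Cauchy--Schwarz to pass from the $L_1$ hypothesis to a weighted $L_2$ signal, and the pre-test to tame the fourth-order terms under the alternative) matches the paper. There is, however, a genuine gap in the preparatory step. First a small slip: $\hat q$ is built from $Y^{(3)}$, not $X^{(3)}$. More importantly, the claim that ``on an event of probability at least $1-\delta-O(k^{-1})$ one has $\hat q_i\asymp q_i$ simultaneously for every $i$ with $kq_i\ge 1$'' is false. Coordinates with $kq_i$ bounded (say $kq_i\in[1,C]$) have Poisson counts $Y_i^{(3)}$ of mean $O(1)$, hence $O(1)$ relative fluctuations; there can be up to $\Theta(k)$ such coordinates, and over them the ratio $\hat q_i/q_i$ ranges from a constant down to $1/\bar k\cdot k=O(1)$ on one side, but up to order $\log k/\log\log k$ on the other (the maximum of $k$ i.i.d.\ Poisson$(O(1))$ variables). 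No union bound gives two-sided multiplicative control with a $k$-free constant. If you push your conditional argument through, the null conditional variance $\sum_i\hat q_i^{-4/3}q_i^2$ and the Cauchy--Schwarz denominator $\sum_{i\in S}\hat q_i^{2/3}$ each pick up a polylogarithmic factor, which the proposition (constants depending only on $\delta$) does not allow.

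The paper sidesteps this by \emph{not} conditioning on $\hat q$. It takes the full expectation and variance of $T_{2/3}$ over all three subsamples, using the pointwise moment bounds
\[
\tfrac12\bigl((e^2\lambda)\vee 1\bigr)^{-r}\;\le\;\mathbb E\bigl[(Z\vee 1)^{-r}\bigr]\;\le\;6\bigl(e^2/(\lambda\vee 1)\bigr)^{r},\qquad Z\sim\mathcal P(\lambda),\ r\in\{2/3,4/3\},
\]
(Lemma~\ref{lem:estim}), which give $\mathbb E^{(3)}[\hat q_i^{-r}]\asymp (q_i\vee k^{-1})^{-r}$ with absolute constants. This yields $\mathbb E\,T_{2/3}\gtrsim k^2\|\Delta^2/(q\vee k^{-1})^{2/3}\|_1$ and $\mathbb V\,T_{2/3}\lesssim k^2\|q^2/(q\vee k^{-1})^{4/3}\|_1+\ldots$ with no log factors, after which the Chebyshev/Cauchy--Schwarz/pre-test steps proceed exactly as you outlined. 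The empirical threshold is handled by separate (and easier) moment bounds on $Z^{2/3}$ and $Z^{4/3}$ (Lemmas~\ref{lem:bound2.3}--\ref{lem:bound4.3}), concentrating the \emph{sum} $\|(Y^{(1)})^{2/3}\|_1$ rather than any individual $\hat q_i$. In short: replace your high-probability pointwise control of $\hat q_i$ by these expectation bounds on negative moments of floored Poissons, and the rest of your plan goes through cleanly.
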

This proposition provides an upper bound on the local minimax separation distance. It is related to the upper bound on the local minimax sample complexity obtained in Proposition~2.14 in~\cite{diakonikolas2016new}, and Table~\ref{tab:rates} makes a detailed comparison between the results obtained in~\cite{diakonikolas2016new} and ours. The authors of \cite{diakonikolas2016new} partition the distribution into different empirical level sets. Then for each level set, they apply a standard $L_2$-test to the pseudo-distributions restricted to that level set. In contrast, we apply only one test with appropriate weights. This is analog with comparing the max test to the $2/3$ test both depicted in \cite{balakrishnan2017hypothesis}. In the test statistic from \cite{diakonikolas2016new}, the partitioning of the distributions is empirical. Comparatively, we modified the $2/3$-test statistic in order to make the weights empirical.
\begin{remark}
	\begin{itemize}
		\item 	Let us compare $T_{2/3}$ defined in Equation~\eqref{eq:T1} with the test statistic presented in \cite{valiant2017automatic}: 
		\begin{equation}
			\label{eq:23IdTest}
				\sum_i \frac{(X_i - kq_i)^2 - X_i}{q_i^{2/3}}.
		\end{equation}
		We start by explaining their construction. Equation~\eqref{eq:23IdTest} is a modified chi-squared statistic producing a \textit{local} minimax optimal test for identity testing. Now, in closeness testing, $q$ is unknown. That is the reason why we estimate $q$ using $\hat q$ and ensure its value cannot be $0$ in the denominator. This constraint leads to rates in separation distance which are different from those obtained with the statistic from Equation~\eqref{eq:23IdTest} for identity testing. Our other tests tackle the case corresponding to $Y^{(3)}=0$ as well as possible, but the rates will remain worse than those in identity testing. Such a gap will prove to be intrinsic to closeness testing as we find a lower bound our upper bound.
		\item The threshold $\hat t_{2/3}$ associated with the definition of $\varphi_{2/3}$ is stochastic. But if $\pi$ is known, then the problem can be reduced to identity testing and the threshold can be made deterministic as in \cite{valiant2017automatic,balakrishnan2017hypothesis}, with value $\sqrt{\sum \pi_i^{2/3}} + 1$.
	\end{itemize}
\end{remark}


\subsection{Definition of the $L_2$-test for intermediate coefficients}
\label{ss:L2}

We now construct a test for intermediate coefficients, i.e., those that are too small to have weights computed in a meaningful way using the method in Section~\ref{ss:23}. For these coefficients, we simply suggest an $L_2$-test that is related to the one carried out in~\cite{chan2014optimal,diakonikolas2016new}. And we apply this test only on coordinates that we empirically find as being small.

Set 
\begin{equation}
	\label{eq:T2}
	T_2 = \sum_{i \leq d}   (X^{(1)}_i-Y^{(1)}_i) (X^{(2)}_i-Y^{(2)}_i) \mathbf 1\{Y^{(3)}_i = 0\},
\end{equation}
and
$$\hat t_2 = \sqrt{\|Y^{(1)}Y^{(2)} \mathbf 1\{Y^{(3)}=0\}\|_1} + \log(k)^2.$$ Write
$\varphi_2(c) := \varphi_2(X^{(1)}, Y^{(1)},X^{(2)}, Y^{(2)}, Y^{(3)}, c, k, d) = \mathbf 1\{T_2 \geq c \hat t_2\}.$

\begin{proposition}\label{th:test2}
Let $\delta \in (0,1)$. Let $c_{\delta,\infty}$ defined as in Proposition~\eqref{th:pretest} and assume $\varphi_\infty(c_{\delta,\infty}) = 0$. We write $s(.)$ such that $q_{s(.)} = q_{(.)}$. There exist $c_{\delta,2}>0, \tilde c_{\delta,2}>0$ large enough depending only on $\delta$ such that the following holds.
\begin{itemize}
\item  If $p=q$, then with probability larger than $1 - 3\delta - 7k^{-1} - 6\exp(-k/100)$,
$$\varphi_{2}(c_{\delta,{2}})=0~~~~~\mathrm{and}~~~~~\varphi_\infty(c_{\delta,\infty}) = 0.$$ 
\item If there exists $I \geq J_q$ such that
$$
\pa{\sum_{i = J_q}^I |p_{s(i)} - q_{s(i)}|}^2 \geq c_{\delta,2} \frac{I-J_q}{k}\Big[\frac{\log^2(k)}{k} \lor \Big(\sqrt{\|q^2\exp(-kq)\|_1}\Big)\Big],
$$
then with probability larger than $1 - 3\delta - 7k^{-1} - 6\exp(-k/100)$,
$$\varphi_{2}(c_{\delta,{2}})=1~~~~~\mathrm{or}~~~~~\varphi_\infty(c_{\delta,\infty}) = 1.$$
\end{itemize}
\end{proposition}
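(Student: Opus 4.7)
The plan is to work under the Poissonized model, where $X_i^{(j)} \sim \mathcal P(2\bar k p_i/3)$ and $Y_i^{(j)} \sim \mathcal P(2\bar k q_i/3)$ are all independent, and to exploit the fact that $Y^{(3)}$ is independent of the remaining four sample sets. I would condition on $Y^{(3)}$, which freezes the random index set $E := \{i : Y_i^{(3)} = 0\}$: then $T_2$ becomes a sum of independent products of Poisson differences over $i \in E$ with all moments computable in closed form, while $\hat t_2$ depends only on $(Y^{(1)}, Y^{(2)}, Y^{(3)})$ and is therefore independent of the $X^{(j)}$'s.

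For the null, writing $A_i^{(j)} := X_i^{(j)} - Y_i^{(j)}$, I would note that under $p = q$ these are centered and independent across $j$, so $\mathbb E[T_2 \mid Y^{(3)}] = 0$ and $\mathrm{Var}(T_2 \mid Y^{(3)}) = (16\bar k^2/9) \sum_{i \in E} q_i^2$. Since $\mathbb E[Y_i^{(1)} Y_i^{(2)} \mid Y^{(3)}] = (2\bar k q_i/3)^2$ on $E$, a short concentration argument for sums of products of independent Poissons would show that with probability $1 - O(k^{-1})$,
$$
\hat t_2 \;\gtrsim\; \bar k \Big(\sum_{i \in E} q_i^2\Big)^{1/2} + \log(k)^2,
$$
which is of the same order as $\sqrt{\mathrm{Var}(T_2 \mid Y^{(3)})} + \log(k)^2$. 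I would then invoke conditional Chebyshev to control $|T_2|$ by $c \hat t_2$, using the additive $\log(k)^2$ term to absorb a Bernstein-type tail for the integer-valued sum in the regime where the variance is too small for Chebyshev to be sharp.

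For the alternative, I would use the assumption $\varphi_\infty(c_{\delta,\infty}) = 0$ to obtain the deterministic pointwise bound $|p_i - q_i| \lesssim \sqrt{q_i \log(q_i^{-1} \land k)/k} + \log(k)/k$ on every coordinate. Setting $U := \{s(i) : J_q \leq i \leq I\}$, the definition of $J_q$ guarantees that every $i \in U$ has $q_i \leq 1/k$, so $\mathbb P(i \in E) = e^{-2\bar k q_i/3} \geq e^{-2/3}$, and Cauchy-Schwarz gives $\sum_{i \in U}(p_i - q_i)^2 \geq \Delta^2/|U|$ with $\Delta := \sum_{i = J_q}^I |p_{s(i)} - q_{s(i)}|$. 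Taking expectation over $Y^{(3)}$ in the conditional-mean identity $\mathbb E[T_2 \mid Y^{(3)}] = (2\bar k/3)^2 \sum_{i \in E}(p_i - q_i)^2$ and plugging in the hypothesized lower bound on $\Delta^2$, I would obtain
$$
\mathbb E[T_2] \;\geq\; e^{-2/3}\,\tfrac{4\bar k^2}{9}\,\frac{\Delta^2}{|U|} \;\gtrsim\; \bar k \Big[\log(k)^2 \lor \bar k \sqrt{\|q^2 e^{-kq}\|_1}\Big],
$$
which dominates the concentrated value of $\hat t_2$ by any prescribed multiplicative constant once $c_{\delta,2}$ is chosen large enough.

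The main obstacle will be bounding the variance of $T_2$ under the alternative, where the three contributions $\mu_i^2 \sigma_i^2$, $\sigma_i^2 \mu_i^2$ and $\sigma_i^4$ (with $\mu_i = (2\bar k/3)(p_i - q_i)$ and $\sigma_i^2 = (2\bar k/3)(p_i + q_i)$) must all be dominated by $(\mathbb E T_2)^2$. The pretest should ensure that the signal-times-noise cross terms are controlled by the signal squared (the usual Ingster trick for $L_2$-type statistics). A two-stage concentration would then transfer the conditional bounds to unconditional ones: first concentrate $\hat t_2$ and $\sum_{i \in E} q_i^2$ using only $Y^{(3)}$, then concentrate $T_2$ given $E$ using $(X^{(j)}, Y^{(j)})_{j \leq 2}$, and union-bound with the Poissonization event to obtain the stated error budget $3\delta + 7/k + 6\exp(-k/100)$.
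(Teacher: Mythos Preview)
Your plan is essentially the paper's argument, with one organizational difference and one small misconception.

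\textbf{Organizational difference.} You condition on $Y^{(3)}$ and work with the random set $E$ and the random quantity $\sum_{i\in E}q_i^2$. The paper instead takes unconditional moments throughout: $\mathbb E T_2 = k^2\|\Delta^2 e^{-kq}\|_1$ and $\mathbb V T_2 \le 4k^2\|q^2 e^{-kq}\|_1 + 4k^2\|\Delta^2 e^{-kq}\|_1 + 4k^4\|\Delta^4 e^{-kq}\|_1$, and separately concentrates the threshold around the same deterministic benchmark, proving $\big|\|Y^{(1)}Y^{(2)}\mathbf 1\{Y^{(3)}=0\}\|_1 - \|(kq)^2 e^{-kq}\|_1\big| \le \delta^{-1/2}\big(\tfrac12\|(kq)^2e^{-kq}\|_1 + 1005\log^4 k\big)$. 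This avoids one layer of concentration (you never need $\sum_{i\in E}q_i^2$ to be close to its mean) and makes the comparison between $T_2$ and $\hat t_2$ a comparison of two quantities against the same deterministic number.

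\textbf{Role of the $\log^2 k$ term.} You describe the additive $\log^2 k$ as absorbing a Bernstein tail of $T_2$ when the variance is small. In the paper it plays no role in controlling $T_2$; plain Chebyshev suffices there. The $\log^4 k$ (hence $\log^2 k$ after the square root) arises in the threshold concentration: when bounding $\mathbb V\big(\|Y^{(1)}Y^{(2)}\mathbf 1\{Y^{(3)}=0\}\|_1\big)$, the contribution of indices with $q_i \ge 5\log(k)/k$ to $\|q^4 e^{-kq}\|_1$ is handled by a crude cardinality bound and produces the polylog additive term.

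\textbf{Alternative-hypothesis variance.} Your sketch is correct that the pretest bound $|\Delta_i|\lesssim \sqrt{q_i\log k/k}\vee \log k/k$ is what tames the cross terms. The paper splits according to whether $kq_i \ge 2\log k$ or not to bound $k^2\sqrt{\|\Delta^4 e^{-kq}\|_1} \le \tilde c_\delta\log k\,(1 + k\sqrt{\|\Delta^2 e^{-kq}\|_1})$, and then each of the three variance pieces is compared to $\mathbb E T_2$ term by term, using the assumed lower bound $k^2\|\Delta^2 e^{-kq}\|_1 \ge C[\log^2 k \vee k\sqrt{\|q^2 e^{-kq}\|_1}]$ (obtained exactly as you say via Cauchy--Schwarz on $U$). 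This is the most technical piece; your one-line invocation of ``the usual Ingster trick'' is right in spirit but hides the case split.
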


This test based on the $L_2$-statistic tackles a particular regime where the coefficients of the distribution $q$ are neither too small nor too large. 
Such an application of an $L_2$-statistic to an $L_1$-closeness testing problem is reminiscent of \cite{chan2014optimal,diakonikolas2016new}. In particular, like in \cite{diakonikolas2016new}, we restrict the application of this test to a section of the distribution that is constructed empirically.

\begin{remark}
	\begin{itemize}
		\item Note that $T_2$ defined in Equation~\eqref{eq:T2} is based on the $L_2$-separation between both samples in the same way as $T_{2/3}$ defined in Equation~\eqref{eq:T1}. However, $T_2$ is not reweighted since it focuses on the case when $Y^{(3)}=0$. This is comparable to the test statistic presented in \cite{diakonikolas2016new}. Indeed, their statistic is not rescaled using  the values of $q$, but they partition it regrouping coefficients of $q$ of the same order instead. Our statistic amounts to doing just that, except that we focus on smaller coefficients only and we partition $q$ empirically.
		\item Once again, we define an empirical threshold $\hat t_2$. With the knowledge of $\pi$, we would obtain the following deterministic threshold instead: $\sqrt{\sum_{\pi_i \leq k} (k \pi_i)^2} + \log^2(k)$.
	\end{itemize}
\end{remark}

\subsection{Definition of the $L_1$-test for small coefficients}
\label{ss:L1}

Finally we define another test to exclude situations where the $L_1$-norm of the small coefficients in $p$ and $q$ are very different.

Set
$$T_1 = \sum_{i \leq d} (X^{(1)}_i-Y^{(1)}_i) \mathbf 1\{Y^{(3)}_i = 0\}.$$
Write $\varphi_1(c) := \varphi_1(X^{(1)}, Y^{(1)}, Y^{(3)}, c, k, d) = \mathbf 1\{T_1 \geq c \sqrt{k}\}$. 

\begin{proposition}\label{th:test3}
Let $\delta \in (0,1)$. Let $c_{\delta,\infty}, \tilde c_{\delta,\infty}$ defined as in Proposition~\eqref{th:pretest}. Assume $k \geq 13 \delta^{-1}(1+ 9 \tilde c_{\delta,\infty} \log (k/3) /2)^2$. We write $s$ such that $q_{s(.)} = q_{(.)}$. Then there exist $c_{\delta,1}>0, \tilde c_{\delta,1}>0$ large enough depending only on $\delta$ such that the following holds.
\begin{itemize}
\item  If $p=q$, then with probability larger than $1 - 3\delta - 7k^{-1} - 6\exp(-k/100)$,
$$\varphi_{1}(c_{\delta,{1}})=0~~~~~\mathrm{and}~~~~~\varphi_\infty(c_{\delta,\infty}) = 0.$$ 
\item If  
$$\Big\| (p-q) \mathbf 1\{kq \geq 1\} \Big\|_1^2 \geq \frac{\tilde c_{\delta,{1}}}{k} \Big(\Big\|q^2 \frac{1}{(q\lor k^{-1})^{4/3}}\Big\|_1^{3/2}\lor \Big\|q^2 \frac{1}{(q\lor k^{-1})^{4/3}}\Big\|_1\Big),$$
then with probability larger than $1 - 3\delta - 7k^{-1} - 6\exp(-k/100)$,
$$\varphi_{1}(c_{\delta,{1}})=1~~~~~\mathrm{or}~~~~~\varphi_\infty(c_{\delta,\infty}) = 1.$$
\end{itemize}
\end{proposition}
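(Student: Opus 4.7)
The plan is to exploit the Poissonization setup of Section~\ref{sec:upper}: outside of an event of probability at most $6\exp(-k/100)$ on which some $\bar k_m^{(j)}$ exceeds $\bar k$, the counts $X^{(1)}_i$, $Y^{(1)}_i$, $Y^{(3)}_i$ are mutually independent Poissons with rates $2\bar k p_i/3$, $2\bar k q_i/3$ and $2\bar k q_i/3$ respectively. Then $T_1$ is a sum of independent coordinate-wise contributions, and the analysis reduces to computing its first two moments and invoking Chebyshev's inequality.

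\textbf{Null case.} When $p=q$, independence together with $\mathbb E X^{(1)}_i = \mathbb E Y^{(1)}_i$ gives $\mathbb E[T_1]=0$. Since $\mathbf 1\{Y^{(3)}_i=0\}$ is independent of $X^{(1)}_i - Y^{(1)}_i$ with $\mathbb P(Y^{(3)}_i=0) = e^{-2\bar k q_i/3}$, I obtain
\[
\mathrm{Var}(T_1) = \frac{4\bar k}{3}\sum_i q_i\, e^{-2\bar k q_i/3} \leq \frac{4\bar k}{3},
\]
by $\sum_i q_i = 1$. Chebyshev then yields $\mathbb P(|T_1|\geq c_{\delta,1}\sqrt k)\leq \delta$ whenever $c_{\delta,1}$ is a large enough multiple of $\delta^{-1/2}$; a union bound with the null control of $\varphi_\infty$ from Proposition~\ref{th:pretest} and the Poissonization event produces the stated $1-3\delta - 7k^{-1} - 6\exp(-k/100)$ probability.

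\textbf{Alternative case.} Since $\varphi_1$ is one-sided, I would argue assuming the signal lies in the direction of $\varphi_1$ (otherwise one applies the symmetric argument to $-T_1$, as in the related constructions of~\citep{valiant2017automatic,balakrishnan2017hypothesis}). Under Poissonization,
\[
\mathbb E[T_1] = \frac{2\bar k}{3}\sum_i (p_i - q_i)\, e^{-2\bar k q_i/3},
\]
whose weights are $\Theta(1)$ on the small coefficients $kq_i = O(1)$ and exponentially small on the large ones, so $T_1$ collects essentially the $L_1$-discrepancy of $p$ and $q$ on the small-mass coordinates, which is the regime this test targets. Conditioning on $\varphi_\infty(c_{\delta,\infty})=0$ (if it equals $1$, the disjunction is satisfied for free), the contrapositive of Proposition~\ref{th:pretest} gives $|p_i-q_i|\leq \tilde c_{\delta,\infty}\sqrt{q_i\log(q_i^{-1}\wedge k)/k}+\tilde c_{\delta,\infty}\log(k)/k$ coordinate-wise; summing this deterministic bound against the exponentially muted weights on the large-mass coordinates shows that their aggregate cancellation of the signal is at most of order $\sqrt k$. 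The quantitative assumption $k\geq 13\delta^{-1}(1+9\tilde c_{\delta,\infty}\log(k/3)/2)^2$ is precisely what keeps this cancellation budget safely below the signal size granted by the alternative inequality. Once $|\mathbb E[T_1]|$ is lower bounded by a sufficiently large multiple of $\sqrt k$, the same variance bound as in the null case (up to constants, since $\mathrm{Var}(X^{(1)}_i - Y^{(1)}_i) = 2\bar k(p_i + q_i)/3$ and $\sum_i (p_i + q_i) = 2$) combined with Chebyshev closes the argument.

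The main obstacle is this cancellation bookkeeping: because $\sum_i(p_i-q_i)=0$, the exponentially suppressed contributions from large-mass coordinates could in principle erode the small-mass signal, and quantifying this via the $L_\infty$-bound from $\varphi_\infty$ is what forces both the pre-test and the lower bound on $k$ into the hypothesis.
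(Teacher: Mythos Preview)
Your null analysis is correct and mirrors the paper's.

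For the alternative there are two gaps. First, your variance bound is incomplete: the $i$-th summand of $T_1$ has second moment $\big[k(p_i+q_i)+k^2\Delta_i^2\big]\,e^{-kq_i}$, so an additional $k^2\|\Delta^2 e^{-kq}\|_1$ term appears on top of the $k(p_i+q_i)e^{-kq_i}$ contribution you keep. The paper handles this extra term by splitting on whether $q_i$ exceeds $\log(k)/k$ and inserting the coordinate-wise pre-test bound $|\Delta_i|\leq c_\delta\big(\sqrt{q_i\log(k)/k}\lor \log(k)/k\big)$; this is precisely where the $\log(k)$-dependent lower bound on $k$ in the hypothesis is consumed, to keep the resulting standard deviation dominated by $\mathbb E T_1$.

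Second, and more fundamentally, the alternative condition written in the proposition concerns $\|(p-q)\mathbf 1\{kq\geq 1\}\|_1$, i.e.\ the \emph{large}-mass coordinates, and is verbatim the condition of Proposition~\ref{th:test1}; the paper's own proof of Proposition~\ref{th:test3} does not work from that condition. Instead it assumes, for some $I\geq J_q$,
\[
\|\Delta(\mathbf 1\{i\geq I\})_i\|_1 \;\geq\; C\big[\,\|q(\mathbf 1\{i\geq I\})_i\|_1 \lor \sqrt{\log(k)/k}\,\big],
\qquad
\|\Delta(\mathbf 1\{i\geq I\})_i\|_1 \;\geq\; 2\|\Delta(\mathbf 1\{i<I\})_i\|_1,
\]
and then uses the pointwise inequality $p_i+q_i\geq |\Delta_i|$ to deduce $\sum_{i\geq I}\Delta_i\geq (C-2)\sum_{i\geq I}q_i$. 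This forces the signed sum $\sum_i\Delta_i e^{-kq_i}$ to be \emph{positive} and of order at least $C\sqrt k$, which is exactly what the one-sided test $\varphi_1=\mathbf 1\{T_1\geq c\sqrt k\}$ requires; there is no symmetric argument on $-T_1$ available, because the test never inspects $-T_1$. Your sketch does not isolate this sign mechanism, and without it the lower bound on $\mathbb E T_1$---and hence the whole alternative analysis---does not go through.
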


As stated in Proposition~\ref{th:test3}, this test captures the case of large $L_1$-deviation at places where $p$ and $q$ have small coefficients. This is mainly interesting for cases where there are extremely many small coefficients, making a very crude test the most meaningful tool to use. The pathological cases addressed here contribute to the differences in separation distances with \cite{diakonikolas2016new}.

\subsection{Combination of the four tests}
\label{ss:testFinal}

To conclude, we combine all four tests by taking the maximum value that they output, effectively rejecting the null hypothesis whenever one of the tests is rejected.

Let
$\varphi(c_\infty,c_{2/3},c_2,c_1) = \varphi_{\infty}(c_{\infty}) \lor \varphi_{2/3}(c_{2/3}) \lor \varphi_2(c_2) \lor \varphi_1(c_1),$
where $c_{\infty}, c_{2/3},c_{2},c_{1}>0$.

\begin{theorem}\label{th:upper}
Let $\delta < 1$. There exist $c_{\delta,\infty},c_{\delta,{2/3}},c_{\delta,2},c_{\delta,1}, \tilde c_{\delta,\infty}, \tilde c_\delta >0$ that depend only on $\delta$ such that the following holds. Let $k \geq \cro{13 \delta^{-1}(1+ 9 \tilde c_{\delta,\infty} \log (k/3) /2)^2} \vee \cro{3\pa{80e^4/\sqrt{\delta/2}}^3}$.

\begin{itemize}
\item  If $p=q$, then with probability larger than $1-5\delta-7k^{-1}- 6\exp(-k/100)$,
$$\varphi(c_{\delta,\infty},c_{\delta,2/3},c_{\delta,2},c_{\delta,1})=0.$$ 
\item If  
\begin{align*}
&\sum |p_i-q_i| \\
&\quad\quad\geq \tilde c_\delta \Bigg\{\min_{I \geq J_\pi}\Bigg[\Big(\sqrt{I}\frac{\log(k)}{k}\Big)
\lor \Big(\frac{\sqrt{I}}{\sqrt{k}}\|q^2\exp(-kq)\|_1^{1/4}\Big) \lor  \|(q_{(i)}\mathbf 1\{i \geq I\})_i\|_1 \Bigg] \Bigg\} \\
&\quad\quad\quad\quad\quad\quad\quad\lor \Big[ \frac{\Big\|q^{2} \frac{1}{(q \lor k^{-1})^{4/3}}\Big\|_1^{3/4}}{\sqrt{k}} \Big] \lor \Big[ \sqrt{\frac{\log(k)}{k}}\Big].
\end{align*}
then with probability larger than $1-5\delta-7k^{-1}- 6\exp(-k/100)$,
$$\varphi(c_{\delta,\infty},c_{\delta,2/3},c_{\delta,2},c_{\delta,1})=1.$$
\end{itemize}
\end{theorem}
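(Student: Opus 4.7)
The plan is to assemble the four propositions into the combined test through a union bound for the null and through a regime-based decomposition of $\|p-q\|_1$ for the alternative.

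\textbf{Null case.} Work on the intersection $\mathcal E$ of the four good events from Propositions~\ref{th:pretest}, \ref{th:test1}, \ref{th:test2}, \ref{th:test3}. The key accounting observation is that Propositions~\ref{th:test1}, \ref{th:test2}, \ref{th:test3} each already include the event $\{\varphi_\infty = 0\}$, so the $2\delta$ contribution from the pre-test is shared between them rather than added three times. After isolating the ``own'' $\delta$ contributed by each of $\varphi_{2/3}$, $\varphi_2$, $\varphi_1$, the union bound gives $\mathbb P(\mathcal E^c) \leq 2\delta + 3\delta + 7k^{-1} + 6\exp(-k/100)$, matching the $5\delta$ claimed; on $\mathcal E$, when $p=q$ each indicator vanishes, hence $\varphi = 0$.

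\textbf{Alternative case.} I would argue by contraposition on $\mathcal E$: assume $\varphi_\infty = \varphi_{2/3} = \varphi_2 = \varphi_1 = 0$ and bound $\|p-q\|_1$ from above by the right-hand side of the theorem. Let $s$ sort $q$ in decreasing order and fix $I \geq J_q$; then decompose
\[
\|p-q\|_1 \;=\; \underbrace{\sum_{i < J_q} |p_{s(i)} - q_{s(i)}|}_{A}  \;+\; \underbrace{\sum_{i = J_q}^{I} |p_{s(i)} - q_{s(i)}|}_{B}  \;+\; \underbrace{\sum_{i > I} |p_{s(i)} - q_{s(i)}|}_{C}.
\]
Bound $A$ via the contrapositive of Proposition~\ref{th:test1}: since $\varphi_{2/3} = 0$, we get $A \lesssim \|q^2/(q \lor k^{-1})^{4/3}\|_1^{3/4}/\sqrt{k}$, which is exactly the $2/3$-norm term in the statement. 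Bound $B$ via the contrapositive of Proposition~\ref{th:test2}: since $\varphi_2 = 0$, for every $I \geq J_q$ we get $B \lesssim \sqrt{(I-J_q)/k}\bigl[\log(k)/\sqrt{k} \lor \|q^2 \exp(-kq)\|_1^{1/4}\bigr]$, matching the two $\sqrt{I}$-shaped terms inside the $\min_I$.

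For $C$, split $|p_{s(i)} - q_{s(i)}|$ into its negative and positive parts. The negative part sums trivially to at most $\|(q_{(i)} \mathbf 1\{i \geq I\})_i\|_1$, which is the remaining term inside the $\min_I$. The positive part represents excess $p$-mass on indices where $q$ is small; here I would use that $\varphi_\infty = 0$ combined with Poisson concentration of $Y^{(3)}$ forces $Y_i^{(3)} = 0$ with high probability whenever $q_i \lesssim 1/k$, so that the $T_1$-statistic in Proposition~\ref{th:test3} actually sees those coordinates; then $\varphi_1 = 0$ yields a bound absorbed by $\sqrt{\log(k)/k}$ and the $2/3$-norm term already accounted for in $A$. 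Finally I would optimize over $I \geq J_q$; since $q \in \mathbf P_\pi$, the level-set closeness of $q$ and $\pi$ allows one to relate $J_q$ and $J_\pi$ by a constant factor, which is absorbed into $\tilde c_\delta$, thereby converting $\min_{I \geq J_q}$ into $\min_{I \geq J_\pi}$.

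The main obstacle is the tail piece $C$: reconciling the one-sided nature of $\varphi_1$ and $\varphi_{2/3}$ with the two-sided $L_1$-distance, and showing that the empirical thresholds $\hat t_{2/3}$ and $\hat t_2$ and the empirical weights $\hat q$ translate cleanly into the population quantities $\|q^2/(q\lor k^{-1})^{4/3}\|_1$ and $\|q^2\exp(-kq)\|_1$ that appear in the statement. This requires Poisson concentration for $Y^{(1)}$ and $Y^{(3)}$ uniformly across all coefficients (not just the large ones), and careful treatment of the low-probability event $\bar k_m^{(j)} > \bar k$ via the $6\exp(-k/100)$ correction already present in each of the four propositions.
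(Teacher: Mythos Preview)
Your overall strategy---union bound on the null, and on the alternative the three-range decomposition $\|\Delta\|_1=A+B+C$ with each range controlled by the contrapositive of one proposition---is exactly what the paper does (its proof of Theorem~\ref{th:upper} is a one-paragraph assembly of Propositions~\ref{th:test1}--\ref{th:test3}), and your null-case accounting of the shared $2\delta$ from the pre-test is correct.

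The one genuine gap is your handling of the positive part of $C$. The claim that ``$Y_i^{(3)}=0$ with high probability whenever $q_i\lesssim 1/k$'' fails uniformly over $i$: each such coordinate satisfies $\mathbb P(Y_i^{(3)}=0)=e^{-kq_i}\ge e^{-1}$, but there can be of order $k$ such coordinates, so the conjunction has vanishing probability and cannot be fed into a contrapositive. Two clean fixes are available. First, an elementary one you overlooked: since $p,q\in\mathbf P$ we have $\sum_i\Delta_i=0$, hence $\sum_{i>I}\Delta_{s(i)}=-\sum_{i\le I}\Delta_{s(i)}$, and writing $C=2\cdot(\text{negative part})+\sum_{i>I}\Delta_{s(i)}$ yields
\[
C\ \le\ 2\,\|(q_{(i)}\mathbf 1\{i>I\})_i\|_1+A+B
\]
deterministically, so $\|\Delta\|_1\le 2(A+B)+2\|(q_{(i)}\mathbf 1\{i>I\})_i\|_1$ without invoking $\varphi_1$ at all. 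Second, the paper's route: the \emph{proof} of Proposition~\ref{th:test3} (its printed hypothesis is a copy of Proposition~\ref{th:test1}'s; the actual conditions appear only in the proof) establishes that $\varphi_1=1$ whenever both $\|\Delta\mathbf 1\{i\ge I\}\|_1\ge c\bigl[\|q\mathbf 1\{i\ge I\}\|_1\vee\sqrt{\log(k)/k}\bigr]$ and $\|\Delta\mathbf 1\{i\ge I\}\|_1\ge 2\|\Delta\mathbf 1\{i<I\}\|_1$. The contrapositive is a dichotomy: either $C\le c\bigl[\|q\mathbf 1\{i\ge I\}\|_1\vee\sqrt{\log(k)/k}\bigr]$, or $C<2(A+B)$; both branches feed straight into the $\min_I$. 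Either fix closes the argument, and your final paragraph about translating empirical thresholds to population quantities is unnecessary at this stage---those translations are already carried out inside Corollaries~\ref{cor:thresh1} and~\ref{cor:thresh2} and baked into the propositions you are quoting.
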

Then the theorem can be formulated as the following upper bound.

\begin{corollary}\label{cor:upper}
Let $\gamma>0$. There exists a constant $c_\gamma>0$ that depends only on $\gamma$ such that

\begin{align*} 
\rho_\gamma^*(&H^{(\emph{Clo})}_0,H^{(\emph{Clo})}_1; \pi,k)\\
 &\leq c_\gamma  \Bigg\{\min_{I \geq J_\pi}\Bigg[\Big(\sqrt{I}\frac{\log(k)}{k}\Big)
\lor \Big(\frac{\sqrt{I}}{\sqrt{k}}\|\pi^2\exp(-k\pi/2)\|_1^{1/4}\Big) \lor  \|(\pi_{(i)}\mathbf 1\{i \geq I\})_i\|_1 \Bigg]\Bigg\} \\
&\lor \left[ \frac{\Big\|(\pi_{(i)}^{2/3}\mathbf 1\{i \leq J_\pi\})_i \Big\|_1^{3/4}}{\sqrt{k}} \right] \lor \Big[ \sqrt{\frac{\log(k)}{k}}\Big].
\end{align*}

\end{corollary}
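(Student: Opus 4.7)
\medskip

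\noindent\textbf{Proof proposal.} The plan is to deduce Corollary~\ref{cor:upper} from Theorem~\ref{th:upper} in two steps: first convert the high-probability guarantee into a bound on the separation distance $\rho_\gamma^\ast$, then translate the $q$-dependent quantities appearing in the theorem into the $\pi$-dependent quantities appearing in the corollary, uniformly over $q\in\mathbf{P}_\pi$. Throughout, the constant $c_\gamma$ will be chosen to absorb both the probability budget and the level-set comparison constants coming from the definition of $\mathbf{P}_\pi$.

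\medskip

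\noindent First, fix $\delta=\gamma/20$ in Theorem~\ref{th:upper} and assume $k$ is at least the threshold $k_0(\gamma)$ appearing there. Since $R(H_0^{(\text{Clo})},H_1^{(\text{Clo})}(\pi,\rho),\varphi;k)$ is the sum of the worst-case type I and type II probabilities, the theorem gives $R\le 10\delta+14/k+12\exp(-k/100)\le\gamma$ for $k$ large enough as a function of $\gamma$, as soon as $\rho$ exceeds the right-hand side of Theorem~\ref{th:upper} (uniformly in $q\in\mathbf{P}_\pi$). For $k<k_0(\gamma)$, one enlarges $c_\gamma$ so that $c_\gamma\sqrt{\log(k)/k}\ge\sqrt{2}$; this makes the bound in Corollary~\ref{cor:upper} trivially valid since $\|p-q\|_1\le 2$ always. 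Hence it suffices to bound the right-hand side of Theorem~\ref{th:upper}, taken as a supremum over $q\in\mathbf{P}_\pi$, by the right-hand side of Corollary~\ref{cor:upper}.

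\medskip

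\noindent The core of the argument is the transfer lemma: for every $q\in\mathbf{P}_\pi$, every level-set-based functional of $q$ is comparable to the same functional of $\pi$ up to absolute multiplicative constants and to shifts by at most a bounded number of dyadic levels. Concretely, the definition of $\mathbf{P}_\pi$ gives $|S_q(l)|\asymp\sum_{|j-l|\le 2}|S_\pi(j)|$, which implies $J_q\le J_\pi+O(1)$ and that for any monotone $F:\mathbb R_+\to\mathbb R_+$,
\begin{equation*}
\sum_i F(q_i)\,\mathbf 1\{q_i\in A\}\ \le\ C\sum_i F(\pi_i)\,\mathbf 1\{\pi_i\in A'\},
\end{equation*}
where $A'$ is the dyadic $2$-enlargement of $A$. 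Applying this with $F(x)=x^{2/3}$ on $\{x\ge 1/k\}$ bounds $\|q^2/(q\vee k^{-1})^{4/3}\|_1^{3/4}$ by $C\|(\pi_{(i)}^{2/3}\mathbf 1\{i\le J_\pi\})_i\|_1^{3/4}$ (after absorbing the contribution of $\{q<1/k\}$, which is at most $k^{1/3}$, into $\sqrt{\log(k)/k}$ and into the exponential term below up to a constant). Applying it with $F(x)=x^2 e^{-kx}$ gives $\|q^2 e^{-kq}\|_1\le C\|\pi^2 e^{-k\pi/2}\|_1$, since in a neighbouring level set $\pi\ge q/C'$ and the factor $1/2$ in the exponent comfortably absorbs the constant $C'$. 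The tail term $\|(q_{(i)}\mathbf 1\{i\ge I\})_i\|_1$ is controlled by $\|(\pi_{(i)}\mathbf 1\{i\ge I/C\})_i\|_1$ by the same level-set comparison, and re-indexing $I$ by a constant factor yields the form in the corollary. Combining these inequalities and the fact that $J_q$ and $J_\pi$ differ by a constant number of levels makes the $\min_{I\ge J_q}$ of Theorem~\ref{th:upper} no larger than a constant multiple of the $\min_{I\ge J_\pi}$ of Corollary~\ref{cor:upper}.

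\medskip

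\noindent The main obstacle I expect is the careful bookkeeping in the last paragraph: the shifts by a bounded number of dyadic levels cause constants to propagate through the exponential, through the ordering $\pi_{(\cdot)}$, and through the choice of $I$. One has to verify that each of these shifts can be absorbed into $c_\gamma$ and into the factor $1/2$ in $\exp(-k\pi/2)$; the slack between $\exp(-k q)$ and $\exp(-k\pi/2)$ is exactly designed to swallow the worst-case ratio $q/\pi$ allowed by $\mathbf{P}_\pi$. Once this is done, taking the supremum over $q\in\mathbf{P}_\pi$ in Theorem~\ref{th:upper} yields Corollary~\ref{cor:upper} with $c_\gamma$ equal to the product of $\tilde c_\delta$, the transfer constants, and the trivialisation constant for small $k$.
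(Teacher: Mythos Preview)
Your proposal is correct and matches the paper's (implicit) approach. The paper's proof section for Theorem~\ref{th:upper} and Corollary~\ref{cor:upper} is a single short paragraph that only restates the conclusion of combining Propositions~\ref{th:test1}--\ref{th:test3}; the passage from the $q$-dependent bound to the $\pi$-dependent one via the level-set comparison in $\mathbf P_\pi$ is left entirely implicit, so your two-step plan (fix $\delta$ as a function of $\gamma$, then transfer $q\mapsto\pi$) is precisely what the paper intends but does not spell out.

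One point where your outline is slightly optimistic: the claim that the factor $1/2$ in $\exp(-k\pi/2)$ ``comfortably'' absorbs the level-set shift deserves care. A direct dyadic computation shows that the $\pm 2$-level spread in the definition of $\mathbf P_\pi$ can push the exponent ratio to roughly $1/8$, which alone would require a factor smaller than $1/2$. What rescues the stated constant is that the ``$k$'' appearing in $\exp(-kq)$ in Theorem~\ref{th:upper} is the Poissonized sample size $2\bar k/3\approx 2k/9$ (the paper explicitly adopts the convention of writing $k$ for $2\bar k/3$ at the end of Section~\ref{ss:poisson}); combining this with the level-set shift brings you back under $1/2$ for the original $k$. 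You correctly flag the bookkeeping as the main obstacle, and this is exactly where the care has to go.
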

Thus, once we have aggregated all four tests, we end up with an upper bound on the local minimax separation distance for closeness testing defined in Equation~\eqref{eq:prob}. Most importantly, the knowledge of $\pi$ is not exploited by the test. So our method reaches the displayed rate adaptively to $\pi$. That is, the separation distance does not just consider the worst $\pi$. Instead, it depends on $\pi$ although it is not an input parameter in the test.
In Table~\ref{tab:rates}, the contributions of the different coefficients from $\pi$ are summarized into different regimes, along with the regimes obtained in \cite{valiant2017automatic} and \cite{diakonikolas2016new}. Our upper bound improves upon that of \cite{diakonikolas2016new} as emphasized in Section~\ref{sec:contributions}. We manage to obtain separation distances comparable to those found in identity testing defined in Equation~\eqref{eq:idTest}. In particular, the terms $\|(\pi_{(i)}\mathbf 1\{i \geq I\})_i\|_1$ and $\frac{\Big\|(\pi_{(i)}^{2/3}\mathbf 1\{i \leq J_\pi\})_i \Big\|_1^{3/4}}{\sqrt{k}}$ can also be found in identity testing. However, the differences that we point out in the upper bound turn out to be fundamental to closeness testing. Indeed, we present a matching lower bound in the following section, which represents our main contribution.

\section{Lower bound}
\label{sec:lower}
This section will focus on the presentation of a lower bound on the local minimax separation distance for closeness testing defined in Equation~\eqref{eq:prob}. Since the lower bound will match the upper bound previously presented, our test will turn out to be local minimax optimal.

\begin{theorem}\label{th:lowerall}
Let $\pi\in \mathbf{P}$ and $\gamma,v>0$. Assume $k \geq 2^8.$ There exists a constant $c_{\gamma,v}>0$ that depends only on $\gamma,v$ such that the following holds.
\begin{align*}
\rho_\gamma^*(H^{(\emph{Clo})}_0,H^{(\emph{Clo})}_1; \pi,k) &\geq  c_{\gamma,v}  \Bigg\{\min_{I \geq J_\pi}\Bigg[\frac{\sqrt{I}}{k}
\lor \Big(\sqrt{\frac{I}{k}}\|\pi^2\exp(-(2+v)k\pi)\|_1^{1/4}\Big) \\
&\lor  \|(\pi_{(i)}\mathbf 1\{i \geq I\})_i\|_1 \Bigg]\Bigg\}\lor \frac{\Big\|(\pi_{(i)}^{2/3}\mathbf 1\{i \leq J_\pi\})_i \Big\|_1^{3/4}}{\sqrt{k}} \lor \sqrt{\frac{1}{k}}.
\end{align*}
\end{theorem}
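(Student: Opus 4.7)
The plan is to combine two kinds of lower bound arguments: one inheriting from the identity-testing lower bound, and one genuinely two-sample.

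First, by Proposition~\ref{th:fromVal} any closeness tester can be used as an identity tester (by generating fresh samples from the known $\pi$ in place of samples from $q$), so the identity-testing separation distance is a lower bound on the closeness-testing separation distance. This transfers the minimax bound of \citep{valiant2017automatic, balakrishnan2017hypothesis} and yields three of the five displayed terms: $\|(\pi_{(i)}^{2/3}\mathbf 1\{i \leq J_\pi\})_i\|_1^{3/4}/\sqrt{k}$, $\|(\pi_{(i)}\mathbf 1\{i \geq I\})_i\|_1$, and the universal $\sqrt{1/k}$ floor.

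The heart of the proof is the construction for the two new terms $\sqrt{I}/k$ and $\sqrt{I/k}\|\pi^2\exp(-(2+v)k\pi)\|_1^{1/4}$, which do \emph{not} come from identity testing and genuinely exploit that both distributions are unknown. I would work in the Poissonized model $X_i \sim \mathrm{Poi}(kp_i)$, $Y_i \sim \mathrm{Poi}(kq_i)$, independent across $i \leq d$ (as in Section~\ref{ss:poisson}). Fix $I \geq J_\pi$ and let $S$ index $I$ small-mass coordinates of $\pi$. Under the null take $(p_0,q_0) = (\pi,\pi)$; under the alternative consider the prior $(p_\varepsilon, q_\varepsilon) = (\pi + a\,\varepsilon\,\mathbf 1_S, \pi - a\,\varepsilon\,\mathbf 1_S)$ with i.i.d. Rademacher signs $\varepsilon = (\varepsilon_i)_{i\in S}$ and a profile $a = (a_i)_{i\in S}$ to be tuned; a benign rebalancing on one heavy coordinate ensures $p_\varepsilon, q_\varepsilon \in \mathbf P$ and $q_\varepsilon \in \mathbf P_\pi$ while preserving $\|p_\varepsilon - q_\varepsilon\|_1 = 2\sum_{i\in S}a_i$. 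The core computation is an Ingster-type bound on the $\chi^2$-divergence between the null and the alternative mixture, which factorizes coordinatewise:
\begin{equation*}
1 + \chi^2 = \prod_{i \in S}\mathbb E_{\varepsilon_i,\varepsilon'_i}\mathbb E_{(X_i,Y_i)\sim\mathrm{Poi}(k\pi_i)^{\otimes 2}}\Big[\Big(\tfrac{p_{\varepsilon,i} p_{\varepsilon',i}}{\pi_i^2}\Big)^{X_i}\Big(\tfrac{q_{\varepsilon,i} q_{\varepsilon',i}}{\pi_i^2}\Big)^{Y_i}\Big].
\end{equation*}
Evaluating each Poisson MGF and averaging over the signs reduces the right-hand side to $\prod_{i\in S}\cosh(2k a_i^2/\pi_i)$. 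Two choices of $a$ then yield the two target rates. A deterministic two-point choice $a_i \asymp \pi_i$ uniformly on $S$, combined with the exact Poisson $\chi^2$-identity, gives $\rho \lesssim \sqrt{\sum_{i\in S}\pi_i/k} \leq \sqrt{I}/k$ in the small-mass regime $k\pi_i \leq 1$. For the refined term, using the sharper bound $\cosh(x)-1 \leq (x^2/2)\mathrm{e}^{x}$ introduces an exponential factor $\mathrm{e}^{(2+v)k\pi_i}$ in the effective constraint; optimizing $\sum_{i\in S} a_i$ subject to $\sum_i k^2 a_i^4 \pi_i^{-2}\mathrm{e}^{(2+v)k\pi_i} \lesssim 1$ and minimizing over $I \geq J_\pi$ produces the rate $\sqrt{I/k}\|\pi^2\exp(-(2+v)k\pi)\|_1^{1/4}$.

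The main obstacle I expect is twofold. First, recovering the exponential weight with the correct constant $2+v$ rather than a cruder one: this is a delicate cosh-versus-Poisson-MGF analysis, with the slack $v>0$ absorbing the gap between $\log\cosh(x)$ and $x^2/2$ for moderate $x$; it can be taken arbitrarily small at the cost of a larger $c_{\gamma,v}$, which is exactly the freedom the statement allows. Second, enforcing $q_\varepsilon \in \mathbf P_\pi$ when $a_i$ is comparable to $\pi_i$: the perturbed coordinates may jump dyadic level sets. I would handle this via a splitting trick, replacing a heavy coordinate of $\pi$ by several identical auxiliary copies that absorb the sign-imbalance and the perturbation, so that $q_\varepsilon$'s level sets coincide with those of $\pi$ up to the factor-of-two slack allowed by $\mathbf P_\pi$, without affecting the $L_1$ gap.
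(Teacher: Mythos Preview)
Your proposal has a fundamental conceptual gap in the construction for the genuinely two-sample terms. Under your null, $(p_0,q_0)=(\pi,\pi)$ is \emph{deterministic}; under your alternative, $p$ and $q$ are symmetric Rademacher perturbations of $\pi$. This is essentially a two-sided identity-testing construction. Your $\chi^2$ computation $\prod_{i\in S}\cosh(2ka_i^2/\pi_i)$ is correct for this prior, but optimizing $\sum a_i$ subject to $\sum_i k^2a_i^4/\pi_i^2\lesssim 1$ yields exactly the identity-testing rate $\big(\sum_{i\in S}\pi_i^{2/3}\big)^{3/4}/\sqrt{k}$, not the target $\sqrt{I/k}\,\|\pi^2e^{-(2+v)k\pi}\|_1^{1/4}$. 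The whole point of Theorem~\ref{th:lowerall} is that this target exceeds the identity rate on a range of indices, so no construction with a fixed null can reach it. Moreover, your claimed route to the exponential weight is incorrect: from $\cosh(x)-1\le(x^2/2)e^x$ with $x=2ka_i^2/\pi_i$ you get a factor $e^{+2ka_i^2/\pi_i}$, not $e^{-(2+v)k\pi_i}$; even with $a_i\asymp\pi_i$ this is $e^{+ck\pi_i}$, the wrong sign.

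The missing idea, and the crux of the paper's proof (Propositions~\ref{th:lower} and~\ref{th:lower2}, Lemma~\ref{lem:TV01marginal}), is to make $q$ \emph{itself random under the null}: on a designated index set $\mathcal A$, each $q_i$ is drawn i.i.d.\ uniformly from the multiset $\{\pi_j:j\in\mathcal A\}$, so the tester does not know which coordinate carries which mass. Under $H_1$, $p$ is a multiplicative perturbation $q_i(1+\xi_i)$ of this same random $q$. The relevant null and alternative are thus \emph{mixtures} $\nu_0=\int\mathcal P(\theta)^{\otimes 2}\,d\mathcal U_\lambda(\theta)$ and $\nu_1=\int\mathcal P(\theta)\otimes\mathcal P(\theta(1+\xi_\theta))\,d\mathcal U_\lambda(\theta)$, and the $\chi^2$ between them has denominators $\int\theta^{m+m'}e^{-2\theta}\,d\mathcal U_\lambda(\theta)$: it is precisely this mixture structure that produces the $e^{-2(1+v)k\pi}$ weight in the constraint of Lemma~\ref{lem:lower}. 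The perturbation sizes $\varepsilon_i^*$ are then chosen (via the index $I_{v,\pi}$ and the case analysis in that lemma) to saturate this constraint and deliver the stated $L_1$ separation. Your symmetric-perturbation prior misses this mechanism entirely.
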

The details of the proof can be found in Section~\ref{sec:proofLower} of the Appendix. But we provide the intuition through the following sketch of the proof.

\paragraph{Sketch of the proof of Theorem~\ref{th:lowerall}.}
The construction of the lower bound can be decomposed into three propositions.
We first state Proposition~\ref{th:fromVal}, which is a corollary from \cite{valiant2017automatic,balakrishnan2017hypothesis} and it will provide an initial lower bound on the local minimax separation distance. We will refine this lower bound using Propositions~\ref{th:lower} and \ref{th:lower2}, which constitute our main contributions. The general strategy is the same for both propositions. At first, we reduce the testing problem to a smaller one that is difficult enough and which is not yet covered by Proposition~\ref{th:fromVal}. Afterwards, the idea is to hide the discrepancies between distributions in the smaller coefficients, which is justified by the thresholding effect already witnessed in the upper bound. Indeed coefficients corresponding to low probabilities have a great chance of generating 0's. So the information on the coefficients being small to different degrees is lost.
\endproof
\medskip

\noindent
Proposition~\ref{th:fromVal} relies on the fact that two-sample testing is at least as hard as its one-sample counterpart. It is also the most convenient formulation of the local minimax separation distance from \cite{valiant2017automatic,balakrishnan2017hypothesis} in order to compare it with our results.

\begin{proposition}
\label{th:fromVal}
Let $\pi\in \mathbf{P}$ and $\gamma>0$. There exists a constant $c_\gamma>0$ that depends only on $\gamma$ such that
\begin{align*}
\rho_\gamma^*(H^{(\emph{Clo})}_0,&~H^{(\emph{Clo})}_1; \pi,k) \geq c_\gamma \min_{m}\left[\frac{\|(\pi_{(i)}^{2/3}\mathbf 1\{2 \leq i< m\})_i\|_1^{3/4}}{\sqrt{k}} \lor \frac{1}{k} \vee \|(\pi_{(i)}\mathbf 1\{i\geq m\})_i\|_1\right].
\end{align*}
\end{proposition}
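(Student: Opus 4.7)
The plan is to establish a reduction showing that closeness testing with unknown $q \in \mathbf{P}_\pi$ is at least as hard as identity testing against the specified distribution $\pi$, and then to invoke the local minimax lower bound of \cite{valiant2017automatic,balakrishnan2017hypothesis} for identity testing. The key observation is that the identity testing problem corresponds to the special case of closeness testing where one of the two samples is drawn from the known distribution $\pi$, and $\pi$ itself trivially belongs to $\mathbf{P}_\pi$.

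\textbf{Step 1 (Reduction).} First I would verify that $\pi \in \mathbf{P}_\pi$: setting $q = \pi$ in the defining inequalities, we have $|S_\pi(i)|/2 \leq |S_\pi(i)| \leq \sum_{j=i-1}^{i+1}|S_\pi(j)| \leq \tfrac{3}{2}\sum_{j=i-2}^{i+2}|S_\pi(j)|$, so the constraints hold. Next, suppose $\varphi$ is any test for the closeness problem $(H_0^{(\text{Clo})}(\pi), H_1^{(\text{Clo})}(\pi, \rho))$ based on $k$ samples from each of two unknown multinomials, achieving risk at most $\gamma$. Define an identity tester $\tilde\varphi$ as follows: given $\mathcal{X} \sim \mathcal{M}(k, p)$, draw $\mathcal{Y} \sim \mathcal{M}(k, \pi)$ independently (this is possible because $\pi$ is known in the identity testing setting) and output $\tilde\varphi(\mathcal{X}) := \varphi(\mathcal{X}, \mathcal{Y})$.

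\textbf{Step 2 (Risk translation).} I would then check that $\tilde\varphi$ inherits the risk guarantee of $\varphi$. Under the identity testing null $p = \pi$, the joint law of $(\mathcal{X}, \mathcal{Y})$ coincides with that under the closeness testing null $p = q = \pi$, and $(p, \pi) \in H_0^{(\text{Clo})}(\pi)$ since $\pi \in \mathbf{P}_\pi$. Under the identity testing alternative $\|p - \pi\|_1 \geq \rho$, the pair $(p, \pi)$ satisfies $\|p - \pi\|_1 \geq \rho$ with $\pi \in \mathbf{P}_\pi$, so $(p, \pi) \in H_1^{(\text{Clo})}(\pi, \rho)$. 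Consequently,
\[
R(H_0^{(\text{Id})}(\pi), H_1^{(\text{Id})}(\pi, \rho), \tilde\varphi; k) \leq R(H_0^{(\text{Clo})}(\pi), H_1^{(\text{Clo})}(\pi, \rho), \varphi; k).
\]
Taking the infimum over $\varphi$ on both sides and then inverting to separation distances yields
\[
\rho_\gamma^*(H_0^{(\text{Id})}, H_1^{(\text{Id})}; \pi, k) \leq \rho_\gamma^*(H_0^{(\text{Clo})}, H_1^{(\text{Clo})}; \pi, k).
\]

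\textbf{Step 3 (Invoke the identity testing lower bound).} The local minimax separation distance for identity testing, established in \cite{valiant2017automatic,balakrishnan2017hypothesis} and recalled in Table~\ref{tab:idTestLocal}, is bounded below (up to a constant depending only on $\gamma$) by
\[
\min_{m}\left[\frac{\|(\pi_{(i)}^{2/3}\mathbf{1}\{2 \leq i < m\})_i\|_1^{3/4}}{\sqrt{k}} \lor \frac{1}{k} \lor \|(\pi_{(i)}\mathbf{1}\{i \geq m\})_i\|_1\right].
\]
Combining this with the previous inequality concludes the proof. The only mildly delicate point is the reformulation of the bound from \cite{valiant2017automatic} (stated in their work in terms of sample complexity) into the separation distance form above; this is a direct inversion justified at the end of Section~\ref{sec:setting}, and presents no real obstacle beyond careful bookkeeping of the indexing and of the role of the dominant terms $\min_m[\ldots]$.
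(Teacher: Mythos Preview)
Your proposal is correct and follows essentially the same approach as the paper: both arguments observe that identity testing against $\pi$ is the special case $q=\pi$ of the closeness problem (with $\pi\in\mathbf{P}_\pi$ trivially), so the local minimax lower bound of \cite{valiant2017automatic,balakrishnan2017hypothesis} transfers directly. Your write-up is in fact more explicit than the paper's, which simply invokes Theorem~1 of \cite{balakrishnan2017hypothesis} and takes $q=\pi$.
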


\noindent
The next proposition is a novel construction, which settles the case for small coefficients. 
\begin{proposition}\label{th:lower}
	Consider some $\pi\in \mathbf{P}$ and $\gamma >0$. Set for $v\geq 0$ and with the convention $\min_{j \leq d} \emptyset =d$,
\begin{align*}
I_{v,\pi} &= \min_{J_\pi \leq j \leq d} \Big\{ \{j : \pi_{(j)} \leq \sqrt{C_\pi/j}\} \cap \{ j : \sum_{i\geq j} \exp(-2k\pi_{(i)}) \pi_{(i)}^2 \leq C_\pi\}\\
&\cap \{ j :\sum_{i \geq j} \pi_{(i)} \leq \sum_{J_\pi \leq i < j} \pi_{(i)} \}\Big\},
\end{align*}
where $C_\pi = \frac{\sqrt{\sum_i \pi_i^2\exp(-2(1+v)k\pi_i)}}{k}$.  There exist constants $c_{\gamma,v}, c_{\gamma,v}', c_{\gamma,v}''>0$ that depend only on $\gamma,v$ such that the following holds.
 Assume that $\|\pi^2\exp(-2(1+v)k\pi)\|_2^2 \geq  \frac{c_{\gamma,v}''}{k^2}$ and $k \geq 2^8$. Then
\begin{align*}
\rho_\gamma^*(H^{(\emph{Clo})}_0,&~H^{(\emph{Clo})}_1; \pi,k) \\
&\quad\quad\geq c_{\gamma,v} \Bigg[ \Big[\|(\pi_{(i)} \mathbf 1\{i \geq I_{v,\pi}\})_i\|_1 \lor \frac{I_{v,\pi}-J_\pi}{\sqrt{I_{v,\pi}k}} \|\pi^2 \exp(-2(1+v)\pi)\|_1^{1/4}\Big]\\
&\quad\quad\quad\quad\quad\quad\land \|(\pi_{(i)}\mathbf 1\{i \geq J_\pi\})_i\|_1\Bigg] - \frac{c_{\gamma,v}'}{\sqrt{k}},
\end{align*}
where $J_\pi$ is defined at the end of Section~\ref{sec:setting}.
\end{proposition}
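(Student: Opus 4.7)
The plan is to apply Le Cam's two-point method with randomized priors on $(p,q)$, after Poissonizing the samples as in Section~\ref{ss:poisson}. I would construct a prior $\nu_0$ supported on $H_0^{(\text{Clo})}(\pi)=\{(p,q):p=q\}$ and a prior $\nu_1$ supported on pairs with $\|p-q\|_1 \geq \rho$, then show that the two induced mixture laws over $(\mathcal X,\mathcal Y)$ have $\chi^2$-divergence bounded by a constant whenever $\rho$ stays below the claimed threshold. Both priors will perturb $\pi$ only on indices $i \geq J_\pi$, where by definition $\pi_{(i)} \leq 1/k$: on those coordinates the Poisson counts vanish with probability at least $e^{-1}$, so most of the perturbation is washed out by noise. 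This is the thresholding effect mentioned in the sketch, and it is what makes closeness testing strictly harder than identity testing in this range --- since $q$ itself is unknown, a coordinate that is almost surely $0$ in both samples carries essentially no usable information about $|p_i - q_i|$.

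For the concrete construction I would sort $\pi$ and run the two-point argument twice, once tailored to each of the two terms inside the bracket. In both cases the prior assigns to each relevant sorted index $i$ independent Rademacher signs $\varepsilon_i^{(p)}, \varepsilon_i^{(q)}$ coupled so that $\varepsilon_i^{(p)} = \varepsilon_i^{(q)}$ under $\nu_0$ and independent under $\nu_1$. To recover the term $\|(\pi_{(i)}\mathbf 1\{i \geq I_{v,\pi}\})_i\|_1$, I would put full-amplitude perturbations on the tail block $S_{\mathrm{tail}} = \{I_{v,\pi},\ldots,d\}$, so each coordinate of $p$ (resp.\ $q$) is either $2\pi_{(i)}$ or $0$; since $k\pi_{(i)} \ll 1$ on $S_{\mathrm{tail}}$ the doubling versus the zeroing is indistinguishable from the Poisson samples. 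To recover the middle term, I would place a perturbation of small constant relative amplitude $\alpha$ on $S_{\mathrm{mid}} = \{J_\pi,\ldots,I_{v,\pi}-1\}$, producing an $L_1$-gap of order $\alpha\sum_{i \in S_{\mathrm{mid}}} \pi_{(i)}|\varepsilon_i^{(p)} - \varepsilon_i^{(q)}|$. Mass conservation is enforced by a paired-coordinate construction in which indices of each block are pooled into consecutive pairs of equal $\pi$-mass whose perturbations have opposite signs, so that $\sum_i p_i = \sum_i q_i = 1$ exactly. The infimum with $\|(\pi_{(i)}\mathbf 1\{i \geq J_\pi\})_i\|_1$ in the bound reflects the obvious ceiling that one cannot produce more $L_1$-separation than the total perturbable mass.

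For the $\chi^2$ analysis, Poissonization decouples across coordinates, and a direct computation shows that the per-coordinate contribution under the middle construction is of order $k^2\pi_{(i)}^4 \exp(-2(1+v)k\pi_{(i)})$; summing yields $k^2\|\pi^2 \exp(-(1+v)k\pi)\mathbf 1_{S_{\mathrm{mid}}}\|_2^2$, after which Cauchy--Schwarz against the cardinality $|S_{\mathrm{mid}}| = I_{v,\pi}-J_\pi$ gives the fourth-root factor $\|\pi^2\exp(-2(1+v)k\pi)\|_1^{1/4}$ that appears in the bound. The three defining conditions on $I_{v,\pi}$ are calibrated precisely so that the corresponding sums are controlled by $C_\pi$ and the total $\chi^2$ stays bounded by a constant depending only on $\gamma$ and $v$. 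The main obstacle will be ensuring that the random $q$ actually belongs to $\mathbf P_\pi$: I must argue that with high probability the dyadic level sets $S_q(i)$ satisfy the size comparability conditions defining $\mathbf P_\pi$. I would choose $\alpha$ small enough that $\pi_{s(i)}(1\pm\alpha)$ stays within the same or an adjacent dyadic bin as $\pi_{s(i)}$, verify the $|S_q(i)|$ bounds via Bernstein-type concentration, and condition on the resulting event, paying a TV cost absorbed into the slack $c_{\gamma,v}'/\sqrt{k}$. The hypothesis $\|\pi^2\exp(-2(1+v)k\pi)\|_2^2 \geq c_{\gamma,v}''/k^2$ is precisely what prevents the middle construction from being degenerate, so that it really delivers the asserted separation, and the mild assumption $k \geq 2^8$ lets me subsume Poisson concentration constants into $c_{\gamma,v}$.
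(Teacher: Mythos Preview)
There is a genuine gap in how you randomize $q$. By taking $q_{(i)}=\pi_{(i)}(1\pm\alpha)$ with a small constant $\alpha$, each $q_i$ stays within a fixed multiplicative window of $\pi_{(i)}$; the only uncertainty you create about $q$ is a binary sign per coordinate. The paper's construction is qualitatively stronger: on a subset $\mathcal A$ of coordinates it draws each $q_i$ \emph{uniformly from the multiset $\{\pi_j:j\in\mathcal A\}$}, so the tester has no idea which value of $\pi$ occupies which position. Under the alternative one then sets $p_i=q_i(1+\xi_i)$ with coordinate-dependent amplitudes $\varepsilon_i^*$ engineered through Lemma~\ref{lem:lower}. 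The $\chi^2$ bound (Lemma~\ref{lem:TV01marginal}) exploits this mixing in an essential way: its denominators are mixture moments $\int \theta^{m+m'}e^{-2\theta}\,d\mathcal U_\lambda(\theta)$, averages over \emph{all} the $\pi_j$'s in $\mathcal A$, and it is precisely this cross-coordinate averaging --- absent from any per-coordinate Rademacher scheme --- that supports the perturbation sizes yielding the separation $(I_{v,\pi}-J_\pi)\sqrt{C_\pi/I_{v,\pi}}$ beyond the identity-testing rate already supplied by Proposition~\ref{th:fromVal}.

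Your per-coordinate $\chi^2$ estimate is also inconsistent with the construction you describe. With fixed relative amplitude $\alpha$ and $\lambda=k\pi_{(i)}\le 1$, writing $f_\pm$ for the Poisson$(\lambda(1\pm\alpha))$ mass functions, one has $\nu_{0,i}-\nu_{1,i}=\tfrac14(f_+-f_-)\otimes(f_+-f_-)$ and hence
\[
\chi^2(\nu_{0,i},\nu_{1,i})=\tfrac14\Bigl[\sum_m\frac{(f_+(m)-f_-(m))^2}{f_+(m)+f_-(m)}\Bigr]^2\asymp\alpha^4\lambda^2=\alpha^4k^2\pi_{(i)}^2.
\]
There is no extra $\pi_{(i)}^2$ factor and no exponential damping when $\alpha$ is constant, so the claimed contribution $k^2\pi_{(i)}^4\exp(-2(1+v)k\pi_{(i)})$ and the subsequent Cauchy--Schwarz step are unjustified. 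Summing the correct contribution over $S_{\mathrm{mid}}$ and inverting forces $\alpha^4\lesssim 1/\bigl(k^2\sum_{i\in S_{\mathrm{mid}}}\pi_{(i)}^2\bigr)$, and the resulting $L_1$-separation $\alpha\sum_{i\in S_{\mathrm{mid}}}\pi_{(i)}$ does not in general reach the target in the proposition.
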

The proof of this proposition and the following one is based on a classical Bayesian approach for minimax lower bounds. It heavily relies on explicit choices of prior distributions over the couples $(p,q)$ either corresponding to hypothesis set $H^{(\text{Clo})}_0(\pi)$ or $H^{(\text{Clo})}_1(\pi, \rho)$. The goal is then to show that the chosen priors are so close that the risk $R(H^{(\text{Clo})}_0,H^{(\text{Clo})}_1,\varphi;\rho, k)$ is at least as large as $\gamma$ for a fixed $k$. Details on the general approach are provided in Appendix~\ref{sec:bayes}.

The brunt of our contribution relies on the definition of appropriate priors. The priors are enforced to have support in $\mathbf P^2$, as detailed in the proof with ideas related to the Poissonization trick. But this is only a technical difficulty which is not fundamental from an information theoretic perspective.
A more crucial step regards constructing prior distributions on "non-normalised" versions of the vectors $(p,q)$. We use the notation $(\tilde p, \tilde q)$ for the "non-normalised" vectors associated with the prior distributions.

Let us now present the prior distributions on the parameters $(\tilde p,\tilde q)$ defined for the proof of Proposition~\ref{th:lower}. $\pi$ and $k$ are fixed, and the priors critically revolve around $\pi$ and perturbations thereof in order to obtain a local minimax optimal lower bound. We start by defining an index set $\mathcal A$ corresponding to a subset of elements of $\pi$ containing a fixed proportion of each significant level set $S_\pi$. Then $\mathcal A$ is a set of indices such that $(\pi_i)_{i \in \mathcal A}$ is a vector with a similar shape to $\pi$ and the elements from $\mathcal A^C$ can be used in order to define normalised $(p,q)$. 

Under both the null and the alternative hypotheses, the prior distributions are defined such that for any $i \in \mathcal A^C$, $\tilde p_i = \tilde q_i = \pi_i$. We now consider the definition of $(\tilde p, \tilde q)$ on $\mathcal A$. Under any of both hypotheses, the elements of $\tilde q$ restricted to $\mathcal A$ are taken at random uniformly from the elements of $\pi$ restricted to $\mathcal A$.
\begin{itemize}
\item Under the null hypothesis, $\tilde p$ is set equal to $\tilde q$.
\item Under the alternative hypothesis, 
$\tilde p$ is a stochastic vector that differs from $\tilde q$ in the following way: 
\begin{itemize}
\item All coordinates larger than $1/k$ are set equal to those of $\tilde q$.
\item  For the other coordinates, set 
$\tilde p_{i} = \tilde q_{i}(1+\xi_i)$, where $\xi_i$ is uniform on $\{-\varepsilon_i^*, \varepsilon_i^*\}$, and $\varepsilon_i^*$ is defined in an implicit way in Lemma~\ref{lem:lower}.
\end{itemize}
\end{itemize}
The quantities $\varepsilon_i^*$'s are defined to satisfy the conditions in Lemma~\ref{lem:lower}. The intuition associated with those conditions are the following.
	\begin{itemize}
		\item There is no deviation for the larger coefficients, i.e., $\tilde p = \tilde q$ for coefficients larger than $1/k$.
		\item The $L_2$-separation and the $L_\infty$-distance between $\tilde p$ and $\tilde q$ are upper bounded with high probability, making the discrepancy hard to detect.
		\item The $L_1$-distance between $\tilde p$ and $\tilde q$  is lower bounded with high probability by the local minimax separation distance to be proven.
		\item The way $\tilde q$ deviates from $\pi$ creates some uncertainty. This makes it difficult to leverage any knowledge on $\tilde q$ for constructing the test besides the fact that (the normalised version of) $\tilde q$ is in $\mathbf P_\pi$.
	\end{itemize}


Finally, the following proposition complements Proposition~\ref{th:lower} in the case where the tail coefficients are very small.
\begin{proposition}\label{th:lower2}
Let $\pi\in \mathbf{P}$ and $\gamma, v>0$. There exist constants $\tilde c_{\gamma,v}, c_{\gamma,v}, c_{\gamma,v}'>0$ that depend only on $\gamma,v$ such that the following holds. Assume that $\|\pi^2 \exp(-2(1+v)\pi)\|_1 \leq \tilde c_{\gamma,v}/k^2$. Then
$$\rho_\gamma^*(H^{(\emph{Clo})}_0,H^{(\emph{Clo})}_1; \pi,k) \geq c_{\gamma,v} \|(\pi_{(i)} \mathbf 1\{i \geq J_\pi)_i\}\|_1 - c_{\gamma,v}'\sqrt{\|(\pi_{(i)}^2 \mathbf 1\{i\geq J_\pi\})_i\|_1},$$ 
\noindent
where $J_\pi$ is defined in Section~\ref{sec:setting}.
\end{proposition}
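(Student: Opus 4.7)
The plan is to invoke the Bayesian reduction recalled in Appendix~\ref{sec:bayes}: I exhibit two priors $\nu_0,\nu_1$ supported respectively on $H_0^{(\text{Clo})}(\pi)$ and on $H_1^{(\text{Clo})}(\pi,\rho)$ for $\rho$ of the claimed magnitude, and bound the total variation between the induced observation laws by a quantity strictly less than $1-\gamma$. Since Proposition~\ref{th:lower} already takes care of the regime in which $\|\pi^2\exp(-2(1+v)k\pi)\|_1$ is not too small, here I only need to address the complementary regime identified by the hypothesis, where the tail of $\pi$ is so flat that any $L_2$ or $L_\infty$ signal is wiped out; the whole construction will place its randomness on the tail indices $i\geq J_\pi$, while head coordinates stay deterministic and identical under both priors.

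For $\nu_0$ take $p=q=\pi$, which trivially sits in $H_0^{(\text{Clo})}(\pi)$. For $\nu_1$ take $q=\pi$ (so the constraint $q\in\mathbf P_\pi$ is automatic), and build $p$ by pairing up the tail indices into disjoint pairs $(i_{2j-1},i_{2j})$ drawn within a common level set $S_\pi(\cdot)$ so that $\pi_{i_{2j-1}}$ and $\pi_{i_{2j}}$ lie within a factor $2$; then, independently for each pair, draw a Rademacher $\xi_j$ and set $(p_{i_{2j-1}},p_{i_{2j}})=(\pi_{i_{2j-1}}+\pi_{i_{2j}},0)$ if $\xi_j=+1$, else $(0,\pi_{i_{2j-1}}+\pi_{i_{2j}})$; leave any leftover coordinate (at most one per level set) fixed at $\pi_i$. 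This $p$ is always a probability vector, and a direct computation exploiting the level-set comparability gives $\|p-q\|_1\geq \tfrac{2}{3}\|(\pi_{(i)}\mathbf 1\{i\geq J_\pi\})_i\|_1 - O\bigl(\sqrt{\|(\pi_{(i)}^2\mathbf 1\{i\geq J_\pi\})_i\|_1}\bigr)$, the correction coming from Cauchy--Schwarz applied to the at-most-one-per-level-set leftovers.

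The core estimate is the $\chi^2$-divergence between the two observation distributions. After Poissonizing as in Appendix~\ref{ss:poisson}, the $\mathcal Y$-sample has identical law under both priors while the tail of the $\mathcal X$-sample factorizes across pairs. For a generic pair with $a=\pi_{i_{2j-1}}$, $b=\pi_{i_{2j}}$, the pair $(X_{i_{2j-1}},X_{i_{2j}})$ is $\mathrm{Poi}(ka)\otimes\mathrm{Poi}(kb)$ under $\nu_0$ and the equal mixture of $\mathrm{Poi}(k(a+b))\otimes\delta_0$ and $\delta_0\otimes\mathrm{Poi}(k(a+b))$ under $\nu_1$. Writing out the two p.m.f.'s shows that the $(0,0)$ masses match exactly and that, in the balanced case $a=b$, the per-pair $\chi^2$ equals $\cosh(2ka)-1$, which is at most a constant times $(ka)^2$ on the range $ka\leq 1$; the latter holds on the tail by definition of $J_\pi$. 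Tensorizing across pairs, the total $\chi^2$ is at most $Ck^2\sum_{i\geq J_\pi}\pi_i^2$; since $\exp(-2(1+v)\pi_i)\geq 1-2(1+v)/k$ on the tail, the hypothesis $\|\pi^2\exp(-2(1+v)\pi)\|_1\leq \tilde c_{\gamma,v}/k^2$ yields $\sum_{i\geq J_\pi}\pi_i^2\leq C'\tilde c_{\gamma,v}/k^2$, hence a total $\chi^2$ of at most $C''\tilde c_{\gamma,v}$, which a small enough choice of $\tilde c_{\gamma,v}$ drives below any prescribed threshold; a $\chi^2$-to-TV conversion then concludes.

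The main obstacle I expect is the level-set bookkeeping: the leading-order cancellation in the $\chi^2$ expansion is only exact when $a=b$, so the pairing must stay inside a single $S_\pi(\cdot)$-band, and the leftover coordinates (at most one per level set) are precisely what produce the subtracted $\sqrt{\|(\pi_{(i)}^2\mathbf 1\{i\geq J_\pi\})_i\|_1}$ correction in the conclusion. The unbalanced-pair case $a\neq b$ within a common level set costs only an extra absolute constant in the per-pair $\chi^2$ bound (since $a/b,\,b/a\in[1/2,2]$), and the passage from the Poissonized model to the genuine multinomial one goes through on the event $\{\bar k_m^{(j)}\leq\bar k\}$, whose complement has probability at most $6\exp(-\bar k/12)$ as recalled in Section~\ref{sec:upper}.
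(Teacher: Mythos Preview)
Your pairing construction has a genuine gap at the $\chi^2$ step for unbalanced pairs. For a pair with Poisson rates $\lambda=ka$ and $\mu=kb$ under $\nu_0$, versus the equal mixture of $\mathrm{Poi}(\lambda+\mu)\otimes\delta_0$ and $\delta_0\otimes\mathrm{Poi}(\lambda+\mu)$ under $\nu_1$, the exact per-pair divergence is
\[
\chi^2+1=\tfrac12 e^{-(\lambda+\mu)}+\tfrac14 e^{(\lambda+\mu)\mu/\lambda}+\tfrac14 e^{(\lambda+\mu)\lambda/\mu}.
\]
Expanding for small $\lambda,\mu$, the constant is $1$ but the first-order term does \emph{not} cancel when $\lambda\neq\mu$: it equals $\tfrac{(\lambda+\mu)(\lambda-\mu)^2}{4\lambda\mu}$. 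Within a level set $\lambda/\mu\in[1/2,2]$, so this term is of order $\lambda+\mu=k(a+b)$, not $k^2(a^2+b^2)$. Even with the most favourable (sorted-consecutive) pairing, the sum of these linear contributions over a single level set is bounded below only by a quantity of order $k\pi_{(\min)}$ for that level, and summing over the tail levels gives a total that can stay bounded away from zero no matter how small $\tilde c_{\gamma,v}$ is made. Concretely, if a tail level set contains $N/2$ coordinates at $a$ and $N/2$ at $b$ with $b/a$ bounded away from~$1$, an $(a,b)$-pairing gives total $\chi^2$ of order $N\cdot k(a+b)$, which the hypothesis on $k^2\sum\pi_i^2$ does not control. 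Your claim that ``the unbalanced-pair case costs only an extra absolute constant'' is therefore false, and with it the tensorized $\chi^2$ bound.

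A clean repair that keeps $q=\pi$ fixed is to drop the pairing and instead set each tail $p_i$ \emph{independently} to $0$ or $2\pi_i$ with equal probability: the per-coordinate $\chi^2$ against $\mathrm{Poi}(k\pi_i)$ is then exactly $\cosh(k\pi_i)-1\leq C(k\pi_i)^2$, and tensorization gives the desired $k^2\sum_{i\geq J_\pi}\pi_i^2$. You then need a reservoir of mass to renormalize $p$ into $\mathbf P$, and the subtracted $\sqrt{\|\pi^2\mathbf 1\{i\geq J_\pi\}\|_1}$ comes from Chebyshev on the random $\|p-q\|_1$ rather than from leftovers. For comparison, the paper takes a different route: it also randomizes $q$ (uniform over the values $\{\pi_i:i\in\mathcal A\}$) and sets each tail $p_i$ to $0$ or a single \emph{common} value $2\bar m/k$ independently of $q_i$; this makes the per-coordinate observation laws exchangeable across $i$ and permits a direct total-variation bound (Lemma~\ref{lem:TV01marginalBis}) in place of any $\chi^2$ computation.
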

This proposition refines Proposition~\ref{th:lower} in the specific case where $\|\pi^2 \exp(-2(1+v)\pi)\|_1$ is small, and the construction of the priors is related, but simpler. 
Combining Propositions~\ref{th:fromVal}, \ref{th:lower} and \ref{th:lower2} lead to the lower bound in Theorem~\ref{th:lowerall}.

Thus a lower bound is constructed for the local minimax separation distance, which characterizes the difficulty of closeness testing defined in Equation~\eqref{eq:prob}.  In fact, the lower bound matches the upper bound up to log terms. Thus, we have a good envelope of the local minimax rate. We firstly conclude explicitly that there exist some $\pi$ such that $\rho_\gamma^*(H^{(\text{Clo})}_0,H^{(\text{Clo})}_1; \pi,k) > \rho_\gamma^*(H^{(\text{Id})}_0,H^{(\text{Id})}_1;\pi, k)$, that is, two-sample testing is strictly harder than one-sample testing for some distributions $\pi$. Secondly, the result highlights the location of the gap in further detail than the worst-case study of \cite{chan2014optimal}. We provide a detailed comparison between results in Section~\ref{sec:contributions} using Table~\ref{tab:rates}.




\section{Conclusion \& Discussion}
\label{sec:discussion}
In this paper, we have established the local minimax near-optimal separation distance for 
the closeness testing problem defined in Equation~\eqref{eq:prob}. It represents 
the first near-tight lower bound for local minimax closeness testing, and the first test that matches it up to log terms. 
The minimax rate is adaptive to $\pi$ in the following sense. The test we construct only takes samples from $p$ and $q$, but its testing rate optimally depends on $\pi$, as evidenced by the lower bound. The construction of the lower bound heavily relies on our formalization of closeness testing from Equation~\eqref{eq:prob}. Such a formalization is critically different from its identity testing counterpart, because of $q$ remaining unfixed. So we end up considering a testing problem, where both hypotheses are composite.
Comparing our local minimax separation distance with the one achievable in local minimax identity testing, a gap can be noted. Indeed, closeness testing turns out to be more difficult, especially when there are terms which are rather small without being negligible (corresponding to the indices between $J_\pi$ and $m^*$). But it is also noteworthy that both rates match otherwise.

On the horizon, the corresponding local minimax sample complexity for closeness testing has yet to be found. Besides, the upper bound could be made tighter in order to bridge the gap caused by the log factors. 
Finally, our analysis focuses on discrete distributions but the formalization of the problem of closeness testing presented in this paper generalizes well to other settings. Indeed, our formalization relies on $q$ being restrained to the set $\mathbf P_\pi$. Now, an analog set to $\mathbf P_\pi$ can be defined with an additional regularity condition in a continuous setting. So the extension of our study to densities still remains a major direction to be explored and it would be interesting to extend our paper as \cite{balakrishnan2017hypothesis} does for \cite{valiant2017automatic} in the context of identity testing.

\newpage
\appendix
\section{Preliminary results on the Poisson distribution}

The proofs to our theorems will be provided for Poisson distributions which can be translated into results for multinomial distributions. Similar considerations of independent Poisson samples in order to simplify the proofs are made in \cite{chan2014optimal,valiant2017automatic}.

We first provide an equivalence result between the samples from a multinomial distribution and samples from independent Poisson distributions.


\begin{theorem}
\label{th:multinPoisson}
Let $k \in \mathbb R^+$, $p \in \mathbf P$. Let $\hat k\sim \mathcal P (k)$. Let the conditional distribution of $\xi$ be $\mathcal{M}(\hat k, p)$, conditionally on $\hat k$. For any $i \leq d$, we have $X_i = \sum_{j=1}^{\hat k} \mathbf 1\{\xi_j = i\}$. Then we have independent
$$
X_i \sim \mathcal P (kp_i).
$$
\end{theorem}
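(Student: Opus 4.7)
}
The plan is to verify the claim by direct computation of the joint probability mass function of $(X_1,\ldots,X_d)$ and to observe that it factorises as the product of Poisson probability mass functions with parameters $kp_1,\ldots,kp_d$. Since a product pmf characterises joint distributions on $\mathbb N^d$, this simultaneously establishes independence and the marginal Poisson law.

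First, I would fix arbitrary nonnegative integers $n_1,\ldots,n_d$ and set $N=\sum_{i=1}^d n_i$. By the tower property and the fact that, conditionally on $\hat k=N$, the vector $(X_1,\ldots,X_d)$ is multinomially distributed with parameters $(N,p)$, I would write
\begin{align*}
\mathbb P(X_1=n_1,\ldots,X_d=n_d)
&= \mathbb P(\hat k = N)\,\mathbb P\bigl(X_1=n_1,\ldots,X_d=n_d \mid \hat k=N\bigr) \\
&= \frac{e^{-k}k^{N}}{N!}\cdot\frac{N!}{\prod_{i=1}^d n_i!}\prod_{i=1}^d p_i^{n_i},
\end{align*}
where the only nonzero contribution from the conditioning comes from $\hat k=N$, since the event $\{X_1=n_1,\ldots,X_d=n_d\}$ forces $\sum_i X_i = N$.

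Next I would simplify: cancel $N!$ and redistribute $k^{N}=\prod_i k^{n_i}$ among the factors to obtain $\prod_i (kp_i)^{n_i}/n_i!$. The key identity is then the use of $\sum_{i=1}^d p_i = 1$, which allows me to split the exponential as $e^{-k}=\prod_{i=1}^d e^{-kp_i}$. After recombining, the joint pmf becomes
\begin{equation*}
\mathbb P(X_1=n_1,\ldots,X_d=n_d) = \prod_{i=1}^d \frac{e^{-kp_i}(kp_i)^{n_i}}{n_i!},
\end{equation*}
which is exactly the product of the Poisson$(kp_i)$ pmfs evaluated at $n_i$.

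Finally, to close the argument, I would invoke the standard fact that if a joint law on $\mathbb N^d$ factorises as the product of univariate pmfs, then the coordinates are independent with those marginals; applying this gives the desired $X_i \sim \mathcal P(kp_i)$ independently. There is no real obstacle here — the only mildly delicate point is bookkeeping, namely making sure that the multinomial pmf is applied with $N$ equal to the sum of the $n_i$ and that the sum-to-one condition on $p$ is the ingredient that turns $e^{-k}$ into the required product; everything else is routine algebra.
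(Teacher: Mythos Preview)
Your proof is correct and is essentially the same as the paper's: the paper also computes the joint pmf and shows it factorises as $\prod_i \mathbb P(X_i=x_i)$. The only difference is that the paper additionally verifies the marginal law of $X_1$ separately via a characteristic-function computation before turning to the joint pmf, whereas you extract both the marginals and independence from the single factorisation step --- which is slightly more economical and entirely sufficient.
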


\begin{proof}
We first show that $X_j \sim \mathcal P (kp_j)$ for any $j$.
Let $\mathbf i$ be such that $\mathbf i^2 = -1$. We write the characteristic function of $X_j$ for any $t$:
$$
\E(e^{\mathbf itX_1}) = \E( \E(e^{\mathbf itX_1} | \hat k)  ) = \sum_{j \geq 0} ((1 + p_1 (e^{\mathbf it} - 1))^j \frac{k^j}{j!} e^{-k} = \exp(kp_1(e^{\mathbf it} - 1)),
$$
which corresponds to the characteristic function associated with $\mathcal P (kp_1)$.

Then let us prove that $(X_j)_{j\leq d}$ are independent.
We have
\begin{align*}
\Po(X_1 = x_1,\ldots,X_d = x_d) &= \sum_{l \geq 0} \Po(X_1=x_1,\ldots,X_d=x_d | \hat k = l) \Po(\hat k = l)\\
&= \sum_l \frac{l!}{x_1! \ldots x_d!} p_1^{x_1} \ldots p_d^{x_d} \mathbf{1}\{\sum_{i=1}^d x_i = l\} \frac{k^l}{l!} e^{-k}\\
&= \frac{(\sum_i x_i)!}{x_1! \ldots x_d!} p_1^{x_1} \ldots p_d^{x_d} \frac{k^{\sum_{i=1}^d x_i}}{(\sum_{i=1}^d x_i)!} e^{-k}\\
&= \prod_{i=1}^d \Po(X_i=x_i).
\end{align*}
\end{proof}
\begin{remark}
Note that for any $\lambda_1 > 0$, there exists $k \in \mathbb R^+$ and $p_1 \geq 0$ such that $\lambda_1 = k p_1$. And for any $c>0$, we have $\lambda_1 = (ck) (p_1/c)$. In particular, there exists $c$ such that $\sum_{i=1}^d p_i/c \leq 1$.
\end{remark}

%
%
%
The following lemma states that Poisson samples concentrate around their mean.
\begin{lemma}
\label{lem:poissonConcentration}
If $Z\sim \mathcal P(\lambda)$, where $\lambda > 0$,
$$\mathbb P(|Z-\lambda|\geq \lambda/2) \leq 2\exp\left(-\frac{\lambda}{12}\right).$$
\end{lemma}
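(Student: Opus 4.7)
The plan is to prove this by the standard Chernoff (exponential Markov) method, treating the upper and lower deviations separately and then combining them by a union bound. The key input is the moment generating function of a Poisson: for $Z \sim \mathcal{P}(\lambda)$ and any $t \in \mathbb{R}$, $\mathbb{E}[e^{tZ}] = \exp(\lambda(e^t - 1))$.

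First I would bound the upper tail. Writing
$\mathbb{P}(Z \geq 3\lambda/2) \leq e^{-3t\lambda/2}\,\mathbb{E}[e^{tZ}] = \exp\!\bigl(\lambda(e^{t} - 1 - 3t/2)\bigr)$
for any $t > 0$, I would optimize the exponent in $t$. The derivative $e^{t} - 3/2$ vanishes at $t^\star = \ln(3/2)$, giving the bound $\exp\!\bigl(-\lambda((3/2)\ln(3/2) - 1/2)\bigr)$. A short numerical check shows $(3/2)\ln(3/2) - 1/2 \geq 1/12$, so $\mathbb{P}(Z \geq 3\lambda/2) \leq \exp(-\lambda/12)$.

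Next, for the lower tail, I would use the dual inequality
$\mathbb{P}(Z \leq \lambda/2) \leq e^{t\lambda/2}\,\mathbb{E}[e^{-tZ}] = \exp\!\bigl(\lambda(e^{-t} - 1 + t/2)\bigr)$
for $t > 0$. Optimizing, the derivative $-e^{-t} + 1/2$ vanishes at $t^\star = \ln 2$, producing the bound $\exp\!\bigl(-\lambda((1/2) - (1/2)\ln 2)\bigr)$. Since $(1-\ln 2)/2 \geq 1/12$ (indeed $(1-\ln 2)/2 \approx 0.153$), we again obtain $\mathbb{P}(Z \leq \lambda/2) \leq \exp(-\lambda/12)$. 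Combining with a union bound then yields $\mathbb{P}(|Z-\lambda|\geq \lambda/2)\leq 2\exp(-\lambda/12)$, as desired.

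There is really no conceptual obstacle here; the only non-mechanical step is checking the two numerical inequalities $(3/2)\ln(3/2) - 1/2 \geq 1/12$ and $(1-\ln 2)/2 \geq 1/12$, which could alternatively be avoided by using the cruder bounds $e^t \leq 1 + t + t^2$ on $[0,1]$ (yielding a Bernstein-type estimate) and then verifying the constants by direct comparison. Either route is routine.
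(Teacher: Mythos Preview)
Your proof is correct and follows essentially the same route as the paper: both rely on the exponential-moment (Chernoff) method for Poisson tails. The paper simply quotes the resulting Bernstein-type bound $\mathbb P(|Z-\lambda|\geq t)\leq 2\exp\!\bigl(-t^2/(2(\lambda+t))\bigr)$ as known and plugs in $t=\lambda/2$, whereas you carry out the optimization from scratch on each tail; the content is the same.
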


\begin{proof}
If $Z\sim \mathcal P(\lambda)$, where $\lambda > 0$, we have, by concentration of the Poisson random variables, that for any $t \geq 0$,
$$\mathbb P(|Z-\lambda|\geq t) \leq 2\exp\left(-\frac{t^2}{2(\lambda+t)}\right).$$
In particular,
$$\mathbb P(|Z-\lambda|\geq \lambda/2) \leq 2\exp\left(-\frac{\lambda}{12}\right).$$ 
\end{proof}

\section{Proof of the upper bounds: Propositions~\ref{th:pretest}, \ref{th:test1}, \ref{th:test2}, \ref{th:test3} and Theorem~\ref{th:upper}}
\label{sec:proofUpper}

For any $i\in\{1,2,3\}$, we write $\mathbb E^{(i)}, \mathbb V^{(i)}$ for the expectation and variance with respect to $(X^{(i)},Y^{(i)})$ and $\bar k_m^{(i)}$. 
$\mathbb E$ and $\mathbb V$ denote the expectation and variance with respect to all sample sets and all $\bar k_m^{(j)}$. We write for all $i \leq d$, $\Delta_i = p_i - q_i.$ Assume without loss of generality that $q$ is ordered such that $q_1 \geq q_2 \geq \ldots \geq q_d$. We remind the reader about the following notation: $\pi_{(1)} \geq \pi_{(2)} \geq \ldots \geq \pi_{(d)}$. Throughout Section~\ref{sec:proofUpper}, let $I \geq J_q$ and we write $J:=J_q$.


\subsection{From multinomial samples to independent Poisson samples}\label{ss:poisson}
%
%

Let $\hat k \sim \mathcal P(k)$.  We define the following independent random variables $\mathcal Z_1,\ldots,\mathcal Z_{\hat k}$ each taking value in $\{1,\ldots,d\}$ according to the probability vector $p$, and we set $m = \floor{3k/2}\wedge\hat k$.

We define $Z_i = \sum_{j=1}^{m} \mathbf 1\{\mathcal Z_j = i\}$ and $\tilde Z_i = \sum_{j=1}^{\hat k} \mathbf 1\{\mathcal Z_j = i\}$  for any $i \leq d$.
By Theorem~\ref{th:multinPoisson}, we have independent
$$
\tilde Z_i \sim \mathcal{P}(kp_i),
$$
for any $i \leq d.$ Note that $(\tilde Z_i)_i$ coincides with $(Z_i)_i$ on the event where $\hat k \leq \floor{3k/2}$. 

Also, we have by Lemma~\eqref{lem:poissonConcentration}
$$
\mathbb P(\hat k \leq 3k/2) \geq 1-\exp\left(-\frac{k}{12}\right).
$$
And so on an event of probability larger than $1-\exp\left(-\frac{k}{12}\right)$, the $(Z_i)_i$ coincides with the $(\tilde Z_i)_i$, i.e.~with independent $\mathcal{P}(kp_i)$ samples.

Applying this to each of our sample sets $\mathcal X^{(j)}, \mathcal Y^{(j)}$ respectively associated with $\bar k_m^{(j)}$ for $j \in \{1,2,3\}$ and $m \in \{1,2\}$, we finally obtain that on an event of probability larger than $1-6\exp(-\bar k/18)$, $(X^{(j)}_i)_i$ coincides with independent $\mathcal{P}(\bar k p_i/6)$, and $(Y^{(j)}_i)_i$ coincides with independent $\mathcal{P}(2\bar k q_i/3)$.

From this point on, we will therefore assume that
$$
X^{(j)} \sim \mathcal{P}(2\bar k p/3), \quad Y^{(j)} \sim \mathcal{P}(2\bar kq/3),
$$
and that they are independent accross $j$. In what follows we will only consider events intersected with that event of probability larger than $1-6\exp(-k/18)$ where $\bar k_m^{(j)} \leq \bar k$.
In what follows, since we always reason up to multiplicative constants, we will write $k$ instead of $2\bar k/3$ to simplify notations.

\subsection{Proof of Proposition~\ref{th:pretest}}

In order to derive the guarantees on the pre-test stated in Proposition~\ref{th:pretest}, we first provide the following lemma. The deviation from a Poisson random variable to its expected value will be bounded depending on the outcome of the random variable, and then depending on its expected value.

\begin{lemma}\label{lem:pretest}
Let $\lambda \in (\mathbb R^+)^d$ such that $\sum_i \lambda_i = k$. Let independent $Z_i \sim \mathcal P(\lambda_i)$ for any $i \leq d$. Let $\bar z = Z/k$.
Let $\delta \in (0,1)$ and $a:=16\frac{\log(2k/\delta)}{k}$. With probability larger than $1-\delta - k^{-1}$ and for all $i \leq d$ we have 
\begin{align*}
|\bar z_i - \lambda_i/k| &\leq 2\sqrt{\frac{12(\bar z_i \lor a)\log(96\log(2k/\delta)(\bar z_i^{-1} \land a^{-1})/\delta)}{k}} \\
  &\quad+ 2\frac{\log(96\log(2k/\delta)(a^{-1})/\delta)}{k},
\end{align*}
and 
\begin{align*}
&2\sqrt{\frac{12(\bar z_i \lor a)\log(96\log(2k/\delta)(\bar z_i^{-1} \land a^{-1})/\delta)}{k}}+  2\frac{\log(96\log(2k/\delta)(a^{-1}/\delta)}{k}\\
&\quad \quad \quad \quad \quad \quad \quad \quad \quad 
 \leq 12\sqrt{\frac{((\lambda_i/k) \lor a) \log\Big(384\log(2k/\delta)\big((\lambda_i/k)^{-1} \land a^{-1}\big)/\delta\Big)}{k}} \\
&\quad\quad \quad \quad \quad \quad \quad \quad \quad \quad + 2\frac{\log\Big(384\log(2k/\delta)a^{-1}/\delta\Big)}{k}.
\end{align*}
\end{lemma}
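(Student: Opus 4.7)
The plan is to start from the Bernstein tail bound for a Poisson variable,
$$\mathbb{P}\bigl(|Z_i-\lambda_i|\geq 2\sqrt{\lambda_i u}+2u\bigr)\leq 2e^{-u}\qquad (u>0),$$
applied coordinate-wise with a $u_i$ tuned to the size of $\lambda_i$, and then to stitch the per-coordinate statements into one that is uniform in $i\leq d$. The central difficulty is that $d$ may be arbitrary, so a naive union bound is unavailable; only the global constraint $\sum_i\lambda_i=k$ can be exploited.

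First I would split the indices into a heavy set $H:=\{i:\lambda_i\geq ak/2\}$ and its complement $L$, where $a=16\log(2k/\delta)/k$. The constraint $\sum_i\lambda_i=k$ forces $|H|\leq 2/a=k/(8\log(2k/\delta))$, so applying Bernstein with $u_i=\log(2|H|/\delta)\asymp\log(k/\delta)$ and union-bounding over $H$ yields, with probability at least $1-\delta/2$, the inequality $|Z_i-\lambda_i|\leq 2\sqrt{12\,\lambda_i u_i}+2u_i$ for every $i\in H$. On this event $\bar z_i\gtrsim a$, so $\bar z_i^{-1}\land a^{-1}=\bar z_i^{-1}$ and $u_i$ fits inside a $\log(C\log(2k/\delta)\bar z_i^{-1}/\delta)$; absorbing the remaining numerical factors recovers the lemma statement on $H$.

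The main obstacle is $L$, which may be of unbounded cardinality. My plan is to show that with probability at least $1-k^{-1}$ every $i\in L$ satisfies $Z_i\leq m$, with $m:=\lceil C'\log(k/\delta)\rceil$ for an absolute constant $C'$. Using the crude Poisson tail $\mathbb{P}(Z_i\geq m)\leq e^{-\lambda_i}\lambda_i^m/m!$ (valid for $m\geq\lambda_i$), writing $\lambda_i^m e^{-\lambda_i}=\lambda_i\cdot\lambda_i^{m-1}e^{-\lambda_i}$, and using that $x\mapsto x^{m-1}e^{-x}$ is non-increasing on $[m-1,\infty)$ (so on $[0,ak/2]$ as soon as $m\gtrsim ak$), one gets
$$\sum_{i\in L}\mathbb{P}(Z_i\geq m)\leq\frac{1}{m!}\sum_{i\in L}\lambda_i(ak/2)^{m-1}e^{-ak/2}\leq\frac{k(ak/2)^{m-1}e^{-ak/2}}{m!},$$
which, using $m!\geq(m/e)^m$ and taking $C'$ large enough, is at most $k^{-1}$. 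On this event every $i\in L$ has $\lambda_i\leq ak/2\lesssim\log(k/\delta)$ and $Z_i\leq m\lesssim\log(k/\delta)$, hence $|Z_i-\lambda_i|\lesssim\log(k/\delta)$; a direct comparison shows this deterministic quantity is at most $k$ times the lemma's right-hand side evaluated in the regime $\bar z_i\leq a$, which after multiplication by $k$ is of order $\sqrt{\log(2k/\delta)\log(6k/\delta)}+\log(6k/\delta)$.

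The second inequality is purely arithmetic. From the first inequality one has $\bar z_i\leq\lambda_i/k+\sqrt{C_1(\bar z_i\lor a)u/k}+C_2 u/k$ with $u=\log(\cdots)\lesssim ak$; AM-GM on the middle term produces $\bar z_i\lor a\leq C_3\bigl((\lambda_i/k)\lor a\bigr)$, and inverting gives $\bar z_i^{-1}\land a^{-1}\geq C_3^{-1}\bigl((\lambda_i/k)^{-1}\land a^{-1}\bigr)$. Substituting these monotone replacements into the first inequality and inflating the multiplicative constants $12\mapsto 48$ and $96\mapsto 384$ to absorb $C_3$ then yields the claimed bound in terms of $\lambda_i/k$.
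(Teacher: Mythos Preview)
Your argument has a genuine gap on the heavy set $H$. You claim that your confidence level $u_i=\log(2|H|/\delta)$ ``fits inside a $\log(C\log(2k/\delta)\bar z_i^{-1}/\delta)$'', but this fails when $\bar z_i$ is large. Concretely, $|H|\leq 2/a\asymp k/\log(2k/\delta)$ gives $u_i\asymp\log(k/\delta)$, whereas the lemma's square-root term carries only $\log\bigl(\log(2k/\delta)\,\bar z_i^{-1}/\delta\bigr)$; for an index with $\bar z_i$ of order $1$ (possible whenever some $\lambda_i$ is of order $k$) the target log is of order $\log(\log(k)/\delta)$, which is strictly smaller than your $u_i$ by an unbounded factor $\log k/\log\log k$. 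A cardinality bound on $H$ cannot produce a per-coordinate confidence that \emph{adapts} to $\lambda_i$ (or $\bar z_i$), so no amount of constant-absorbing rescues this step as written.

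The paper handles this with a different union-bound device that you are missing: it allocates to each coordinate with $\lambda_i>k^{-2}$ the budget $\tilde\delta_i=\lambda_i\delta/k$, so that $\sum_i\tilde\delta_i\leq\delta$ by the constraint $\sum_i\lambda_i=k$, and then applies Bernstein with $u_i=\log(2/\tilde\delta_i)=\log(2k/(\lambda_i\delta))$. This is precisely what produces the $\log(\lambda_i^{-1})$ (hence, after the crude sandwich $\bar z_i\vee a\asymp(\lambda_i/k)\vee a$, the $\log(\bar z_i^{-1})$) dependence inside the square root. Your framework can be repaired by a dyadic stratification of $H$ according to the size of $\lambda_i$, but the weighted-budget trick is both shorter and what the exact constants in the statement reflect.

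Your light-set argument and your derivation of the second inequality are sound in spirit (modulo a slip: you need $x\mapsto x^{m-1}e^{-x}$ \emph{increasing} on $[0,m-1]$, not decreasing, to bound by the value at $ak/2$; the inequality you wrote is nonetheless correct). The paper's split is simpler there too: it cuts at $\lambda_i\leq k^{-2}$ and shows $Z_i\leq 1$ for all such $i$ with probability $\geq 1-k^{-1}$ via $\sum_i\lambda_i^2\leq k^{-2}\sum_i\lambda_i=k^{-1}$, avoiding the tail computation with $m$.
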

So an immediate corollary to this lemma is the following:
\begin{corollary}
With probability larger than $1-2\delta - 2k^{-1}-6\exp(-k/18)$, \co{$\varphi_{\infty}(c_{\delta,\infty})=1$} if there exists $i\leq d$ such that
\begin{itemize}
\item if $q_i \geq 16\frac{\log(2k/\delta)}{k}$:
$$|\Delta_i| \geq 50\sqrt{\frac{q_i\log\Big(384\log(2k/\delta)q_i^{-1}/\delta\Big)}{k}} + 300\frac{\log\Big(384\log(2k/\delta)a^{-1}/\delta\Big)}{k}.$$
\item if $q_i \leq 16\frac{\log(2k/\delta)}{k}$: 
$$|\Delta_i| \geq 50\sqrt{\frac{a \log\Big(384\log(2k/\delta)a^{-1}/\delta\Big)}{k}} + 300\frac{\log\Big(384\log(2k/\delta)a^{-1}/\delta\Big)}{k}.$$
\end{itemize}
If $\Delta=0$, then $\varphi_{\infty}(c_{\delta,\infty})=0$ with probability larger than $1 -2 \delta - 2k^{-1}-6\exp(-k/18)$. 
\end{corollary}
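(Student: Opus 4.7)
The plan is to deduce this corollary as a direct application of Lemma~\ref{lem:pretest} to both the $X^{(3)}$ and $Y^{(3)}$ samples. By the Poissonization argument of Section~\ref{ss:poisson}, up to an event of probability at most $6\exp(-k/18)$, we may assume that $X^{(3)}$ and $Y^{(3)}$ are vectors of independent Poisson variables with means $\lambda_i = kp_i$ and $\lambda_i = kq_i$ respectively. Applying Lemma~\ref{lem:pretest} once to $\hat p = (X^{(3)}\lor 1)/\bar k$ and once to $\hat q = (Y^{(3)}\lor 1)/\bar k$, followed by a union bound, yields an event $\mathcal E$ of probability at least $1 - 2\delta - 2k^{-1} - 6\exp(-k/18)$ on which $|\hat p_i - p_i|$ and $|\hat q_i - q_i|$ are simultaneously controlled for all $i \leq d$ by the bounds from the lemma. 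The truncation by $1$ in $\hat p, \hat q$ costs nothing since it only tightens the deviation from the mean.

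On $\mathcal E$, I would first treat the null case $\Delta = 0$, where $p_i = q_i$ for every $i$. The triangle inequality gives $|\hat p_i - \hat q_i| \leq |\hat p_i - q_i| + |\hat q_i - q_i|$, each term being bounded by Lemma~\ref{lem:pretest}. Using the second inequality of that lemma, both bounds can be expressed in terms of $(q_i \lor a)$ with $a = 16\log(2k/\delta)/k$. Since on $\mathcal E$ one also has $\hat q_i \asymp q_i \lor a$, the sum is dominated by $c_{\delta,\infty}\sqrt{\hat q_i \log(\hat q_i^{-1} \wedge k)/k} + c_{\delta,\infty}\log(k)/k$ once $c_{\delta,\infty}$ is chosen large enough to absorb the numerical constants $12$, $\sqrt{12}$ and the $\log\log$ factors coming from the lemma. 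Hence the pre-test cannot fire at any coordinate, giving $\varphi_\infty(c_{\delta,\infty}) = 0$.

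For the alternative direction, if index $i$ witnesses one of the two displayed lower bounds on $|\Delta_i|$, then the reverse triangle inequality gives
\[
|\hat p_i - \hat q_i| \geq |\Delta_i| - |\hat p_i - p_i| - |\hat q_i - q_i|.
\]
The two subtracted terms are bounded using Lemma~\ref{lem:pretest}, and their sum is of order $\sqrt{(q_i \lor a)\log(384\log(2k/\delta)(q_i^{-1} \land a^{-1})/\delta)/k}$ up to the additive term $\log(384\log(2k/\delta) a^{-1}/\delta)/k$. The corollary's two regimes ($q_i \geq a$ versus $q_i \leq a$) correspond exactly to the dichotomy $q_i \lor a = q_i$ versus $q_i \lor a = a$. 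The constants $50$ and $300$ in the stated lower bound on $|\Delta_i|$ are engineered to dominate both the residual deviation terms and the test threshold $c_{\delta,\infty}\sqrt{\hat q_i \log(\hat q_i^{-1} \wedge k)/k} + c_{\delta,\infty}\log(k)/k$, after replacing $\hat q_i$ by an equivalent multiple of $q_i \lor a$ on $\mathcal E$. Thus $|\hat p_i - \hat q_i|$ exceeds the threshold at coordinate $i$ and $\varphi_\infty(c_{\delta,\infty}) = 1$.

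The main obstacle is the translation between the empirical threshold (in $\hat q_i$) appearing in the test and the population quantities ($q_i$, $a$) in which the lemma's bounds are stated; this is precisely what the second inequality of Lemma~\ref{lem:pretest} is designed for, so the remaining work is bookkeeping of the multiplicative constants and checking that the $\log\log$-type factors are absorbed into $c_{\delta,\infty}$ and into the stated constants $50$, $300$.
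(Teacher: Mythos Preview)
Your approach is correct and matches what the paper intends; the paper itself states the corollary as ``an immediate corollary to this lemma'' without further argument, and your two-applications-plus-triangle-inequality scheme is exactly the intended derivation. One point you gloss over slightly: when you bound $|\hat p_i - p_i|$ via Lemma~\ref{lem:pretest}, the resulting bound is in terms of $p_i \lor a$, not $q_i \lor a$; to translate, write $p_i \leq q_i + |\Delta_i|$, use $\sqrt{p_i} \leq \sqrt{q_i} + \sqrt{|\Delta_i|}$, and absorb the resulting $\sqrt{|\Delta_i|\cdot \log(\cdots)/k}$ cross-term into a small fraction of $|\Delta_i|$ plus a constant multiple of $\log(\cdots)/k$ via the elementary inequality $2\sqrt{xy} \leq \epsilon x + y/\epsilon$. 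This is the only place where the bookkeeping is not entirely mechanical, and it is what the constants $50$ and $300$ are sized for.
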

This corollary implies that there exists a universal constant $c>0$ such that the preliminary test rejects the null hypothesis on an event of probability larger than $1-2\delta -2 k^{-1}-6\exp(-k/18)$, when $\Delta_i$ is such that
\begin{align*}
|\Delta_i| \geq c\sqrt{ q_i \frac{\log((q_i^{-1} \land k)/\delta)}{k}} + c\frac{\log( k/\delta)}{k}.
\end{align*}
This leads to the result stated in Proposition~\ref{th:pretest}.

\begin{proof}[Proof of Lemma~\ref{lem:pretest}]
\textbf{Analysis of the small $\lambda_i$'s.}
We consider every $i$ such that $\lambda_i \leq k^{-2}$.
Then for any such $i$,
$$
\Po(\bar z_i > 1/k ) = 1 - (1 + \lambda_i) e^{-\lambda_i} \leq 1 - (1 + \lambda_i)(1-\lambda_i) = \lambda_i^2.
$$
So
$$
\Po(\cup_{j: \lambda_j \leq k^{-2}} \{\bar z_j > 1/k\} ) \leq \sum_j \lambda_j^2 \leq \frac{1}{k^2}\sum_j \lambda_j= 1/k.
$$
So with probability larger than $1-1/k$, we have for all $\lambda_i \leq 1/k^2$ at the same time that
$$
\bar z_i \leq 1/k.
$$
Let $a = 16\frac{\log(2k/\delta)}{k}$. Then with probability larger than $1-1/k$, for every $\lambda_i \leq 1/k^2$ at the same time,
\begin{align*}
|\bar z_i - \lambda_i/k| &\leq 2\sqrt{\frac{12(\bar z_i \lor a)\log(96\log(2k/\delta)(\bar z_i^{-1} \land a^{-1})/\delta)}{k}} \\
  &\quad+ 2\frac{\log(96\log(2k/\delta)(\bar z_i^{-1} \land a^{-1})/\delta)}{k},
\end{align*}
and
\begin{align*}
&2\sqrt{\frac{12(\bar z_i \lor a)\log(96\log(2k/\delta)(\bar z_i^{-1} \land a^{-1})/\delta)}{k}}+  2\frac{\log(96\log(2k/\delta)(\bar z_i^{-1} \land a^{-1})/\delta)}{k}\\
&\quad \quad \quad \quad \quad \quad \quad \quad \quad 
\leq 2\sqrt{\frac{36((\lambda_i/k) \lor a) \log\Big(384\log(2k/\delta)\big((\lambda_i/k)^{-1} \land a^{-1}\big)/\delta\Big)}{k}} \\
&\quad\quad \quad \quad \quad \quad \quad \quad \quad \quad + 2\frac{\log\Big(384\log(2k/\delta)\big((\lambda_i/k)^{-1} \land a^{-1}\big)/\delta\Big)}{k}.
\end{align*}
\textbf{Analysis of the large $\lambda_i$'s.}
We consider every $i$ such that $\lambda_i > k^{-2}$.

If $Z_i\sim \mathcal P(\lambda_i)$, where $\lambda_i > 0$, we have, by concentration of the Poisson random variables, that for any $t \geq 0$,
$$\mathbb P(|Z_i-\lambda_i|\geq t) \leq 2\exp\left(-\frac{t^2}{2(\lambda_i+t)}\right).$$
We set $\tilde \delta_i$ as $2\exp\left(-\frac{t^2}{2(\lambda_i+t)}\right)$, the inequality implies that with probability larger than $1- \tilde \delta_i$,
\begin{equation}
\label{eq:poissDev}
  |\bar z_i - \lambda_i/k| \leq 2\sqrt{\frac{(\lambda_i/k)\log(2/\tilde \delta_i)}{k}} + 2\frac{\log(2/\tilde \delta_i)}{k}.  
\end{equation}
So we write $\tilde \delta_i = \lambda_i \delta/k$. Then, since $\sum_i \lambda_i = k$, we have with probability larger than $1-\delta$, for every $i$ such that $\lambda_i > k^{-2}$ at the same time
\begin{align}
\label{eq:poissDev2}
  |\bar z_i - \lambda_i/k| &\leq 2\sqrt{\frac{(\lambda_i/k)\log(2k/(\lambda_i \delta))}{k}} + 2\frac{\log(2k/(\lambda_i \delta))}{k} \nonumber\\
 &\leq 2\sqrt{\frac{(\lambda_i/k)\log(2k/[(\lambda_i \vee k^{-2}) \delta])}{k}} + 2\frac{\log(2k/[(\lambda_i \vee k^{-2}) \delta])}{k}.  
\end{align}
By considering two subcases, let us prove the following inequality on an event of probability larger than \co{$1-\delta$}, for all $i$ 
\begin{equation}\label{crude}
 ((\lambda_i/k) \vee a)/4 \leq \bar z_i \lor a \leq 3((\lambda_i/k) \lor a).
\end{equation}
\textbf{Subcase $\lambda_i/k \geq a$.}
By Equation~\eqref{eq:poissDev2}, we have on an event of probability larger than \co{$1-\delta$}, that for all $i$ such that $\lambda_i/k\geq 16\frac{\log(2k/\delta)}{k}=a$,
\begin{equation*}
  |\bar z_i - \lambda_i/k| \leq 2\sqrt{\frac{(\lambda_i/k)\log(2k/\delta)}{k}} + 2\frac{\log(2k/\delta)}{k} \leq 5\lambda_i/(8k).
\end{equation*}
So 
$$
\lambda_i/(4k) \leq \bar z_i \leq 3 \lambda_i/k.
$$
That is,
$$
 ((\lambda_i/k) \vee a) /4 \leq \bar z_i \vee a\leq 3 ((\lambda_i/k) \vee a).
$$
\textbf{Subcase $k^{-3}< \lambda_i/k < a$.}
By Equation~\eqref{eq:poissDev2}, we have on the same event of probability larger than \co{$1-\delta$}, that for all $i$ such that $k^{-3}< \lambda_i/k < 16\frac{\log(2k/\delta)}{k}=a$,
\begin{equation*}
  |\bar z_i - \lambda_i/k| \leq 14\frac{\log(2k/\delta)}{k} + 6\frac{\log(2k/\delta)}{k} \leq 20\frac{\log(2k/\delta)}{k}  \leq 2a. 
\end{equation*}
So
$
\bar z_i \leq 3 a,
$
and then,
$$
 ((\lambda_i/k) \vee a)/4 = a/4 \leq \bar z_i \vee a \leq 3a = 3((\lambda_i/k) \vee a).
$$

\noindent \textbf{Conclusion for the large $\lambda_i$'s.}

Let us first reformulate Equation~\eqref{eq:poissDev2} using the definition of $a$. We have with probability larger than \co{$1-\delta$} that for all $i$,
\begin{align*}
	|\bar z_i - \lambda_i/k| &\leq 2\sqrt{\frac{3(\lambda_i/k)\log(32 \log(2k/\delta)/[((\lambda_i/k) \vee a) \delta])}{k}}\\
	&\quad\quad\quad\quad\quad\quad\quad\quad\quad\quad\quad\quad\quad\quad\quad\quad\quad + 6\frac{\log(32 \log(2k/\delta)/[((\lambda_i/k) \vee a) \delta])}{k}.
\end{align*}
So by application of Equation~\eqref{crude}, we get that with probability larger than \co{$1-\delta$} and for all $i$ we have 
\begin{align*}
  |\bar z_i - \lambda_i/k| &\leq 2\sqrt{\frac{12(\bar z_i \lor a)\log(96\log(2k/\delta)(\bar z_i^{-1} \land a^{-1})/\delta)}{k}} \\
  &\quad\quad\quad\quad\quad\quad\quad\quad\quad\quad\quad\quad\quad\quad\quad\quad\quad+ 2\frac{\log(96\log(2k/\delta)(\bar z_i^{-1} \land a^{-1})/\delta)}{k},
\end{align*}
and 
\begin{align*}
&2\sqrt{\frac{12(\bar z_i \lor a)\log(96\log(2k/\delta)(\bar z_i^{-1} \land a^{-1})/\delta)}{k}}+  2\frac{\log(96\log(2k/\delta)(\bar z_i^{-1} \land a^{-1})/\delta)}{k}\\
&\quad \quad \quad \quad \quad \quad \quad \quad \quad 
 \leq 2\sqrt{\frac{36((\lambda_i/k) \lor a) \log\Big(384\log(2k/\delta)\big((\lambda_i/k)^{-1} \land a^{-1}\big)/\delta\Big)}{k}} \\
&\quad\quad \quad \quad \quad \quad \quad \quad \quad \quad + 2\frac{\log\Big(384\log(2k/\delta)\big((\lambda_i/k)^{-1} \land a^{-1}\big)/\delta\Big)}{k}.
\end{align*}
\end{proof}

\subsection{Proof of Proposition~\ref{th:test1}}

Proposition~\ref{th:test1} provides guarantees on the test $\varphi_{2/3}$. In order to prove it, let us first consider the associated statistic $T_{2/3}$.

\noindent \textbf{Expression of the test statistic.}

We have
$$T_{2/3} = \sum_{i \leq d} \hat{q}_i^{-2/3} (X^{(1)}_i-Y^{(1)}_i) (X^{(2)}_i-Y^{(2)}_i).$$
So taking the expectation highlights different terms associated with different independent sub-samples.
$$\mathbb E T_{2/3} = \sum_{i \leq d} \mathbb E^{(3)}(\hat{q}_i^{-2/3})  \mathbb E^{(1)}(X^{(1)}_i-Y^{(1)}_i)  \mathbb E^{(2)}(X^{(2)}_i-Y^{(2)}_i)  ,$$
and
\begin{align*}
\mathbb V T_{2/3} &= \sum_{i \leq d} \mathbb V \Big[\hat{q}_i^{-2/3}  (X^{(1)}_i-Y^{(1)}_i)  (X^{(2)}_i-Y^{(2)}_i) \Big]\\
&\leq \sum_{i \leq d} \mathbb E^{(3)}(\hat{q}_i^{-4/3})  \mathbb E^{(1)}[(X^{(1)}_i-Y^{(1)}_i)^2]  \mathbb E^{(2)}[(X^{(2)}_i-Y^{(2)}_i)^2] .
\end{align*}

\noindent \textbf{Terms that depend on the first and second sub-samples.} We have
$$
\mathbb E^{(1)}(X^{(1)}_i - Y^{(1)}_i)\mathbb E^{(2)}(X^{(2)}_i - Y^{(2)}_i) = k^2 \Delta_i^2.
$$
and
\begin{align*}
\mathbb E^{(1)}[(X^{(1)}_i - Y^{(1)}_i)^2]\mathbb E^{(2)}[(X^{(2)}_i - Y^{(2)}_i)^2]
&=\left[ \mathbb E^{(1)}[(X^{(1)}_i - Y^{(1)}_i)^2]\right]^2\\
&=\Big[ \mathbb E^{(1)}[(X^{(1)}_i - Y^{(1)}_i - k \Delta_i)^2] + k^2\Delta_i^2\Big]^2 \\
&= [k(p_i+q_i) +  k^2\Delta_i^2]^2.
\end{align*}

\noindent \textbf{Terms that depend on the third sub-sample.} Now, the following lemma will help us control the terms associated with $\hat q$.
\begin{lemma}\label{lem:estim}
Assume that $Z\sim \mathcal P(\lambda)$. Then for $r \in \{2/3,4/3\}$
$$\frac{1}{2} \Big(\frac{1}{(e^2\lambda) \lor 1}\Big)^{r} \leq \mathbb E [(Z\lor 1)^{-r}] \leq 6\Big(\frac{e^2}{\lambda \lor 1}\Big)^r .$$
\end{lemma}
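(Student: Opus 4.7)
The plan is to prove both the upper and lower bounds via Poisson concentration (Chernoff bounds), treating separately a small-$\lambda$ regime where the truncation by $\lor 1$ matters, and a large-$\lambda$ regime where $(Z\lor 1)^{-r}$ is sharply concentrated around $\lambda^{-r}$.

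For the upper bound, first I handle $\lambda\le 1$: here $(e^2/(\lambda\lor 1))^r=e^{2r}\ge 1$, while $(Z\lor 1)^{-r}\le 1$ almost surely, so $\mathbb{E}[(Z\lor 1)^{-r}]\le 1\le 6e^{2r}$. For $\lambda>1$, I use the Poisson lower-tail Chernoff bound $\mathbb{P}(Z\le\lambda/2)\le e^{-c\lambda}$ for some absolute $c>0$ (e.g.\ $c=(1-\log 2)/2$, or the looser $c=1/8$), and split
\begin{equation*}
\mathbb{E}[(Z\lor 1)^{-r}]\le \mathbb{P}(Z<\lambda/2)+(\lambda/2)^{-r}\mathbb{P}(Z\ge\lambda/2)\le e^{-c\lambda}+2^r\lambda^{-r}.
\end{equation*}
The map $\lambda\mapsto \lambda^r e^{-c\lambda}$ peaks at $\lambda=r/c$ with value $(r/(ce))^r$, which gives an explicit constant $K_r$ with $e^{-c\lambda}\le K_r\lambda^{-r}$ for all $\lambda\ge 1$. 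Hence $\mathbb{E}[(Z\lor 1)^{-r}]\le (2^r+K_r)\lambda^{-r}$, and a direct numerical check shows $2^r+K_r\le 6e^{2r}$ for both $r\in\{2/3,4/3\}$, yielding the upper bound.

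For the lower bound, when $\lambda\le 1/e^2$ one has $((e^2\lambda)\lor 1)^{-r}=1$ and $\mathbb{E}[(Z\lor 1)^{-r}]\ge \mathbb{P}(Z=0)=e^{-\lambda}\ge e^{-1/e^2}>\tfrac12$. For $\lambda>1/e^2$, I apply the opposite-direction Chernoff bound $\mathbb{P}(Z\ge t)\le e^{-\lambda}(e\lambda/t)^{t}$ (valid for $t>\lambda$) at $t=e^2\lambda$, obtaining $\mathbb{P}(Z\ge e^2\lambda)\le e^{-(1+e^2)\lambda}\le e^{-(1+e^2)/e^2}<\tfrac12$. On the complementary event, $Z\lor 1\le (e^2\lambda)\lor 1$, so $(Z\lor 1)^{-r}\ge((e^2\lambda)\lor 1)^{-r}$, and taking expectations gives $\mathbb{E}[(Z\lor 1)^{-r}]\ge \tfrac12((e^2\lambda)\lor 1)^{-r}$.

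The only real difficulty is bookkeeping: picking a Chernoff threshold so that (i) the tail probability is at most $1/2$ for the lower bound and (ii) the absolute constants fit under $6$ and $1/2$ for both values of $r$. Both amount to calculus checks, which the thresholds $e^2\lambda$ and $\lambda/2$ comfortably pass; no deeper ingredient is needed.
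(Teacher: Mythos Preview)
Your proof is correct and takes a genuinely different route from the paper's. The paper expands $\mathbb E[(Z\lor 1)^{-r}]$ as a Poisson series, splits it at the thresholds $i=\lambda/e^2$ (upper bound) and $i=e^2\lambda$ (lower bound), and controls the ``wrong'' pieces term by term via the Stirling-type inequality $i!\ge i^i/e^i$, finishing with geometric-series estimates. You instead work probabilistically: bound the expectation on two events and invoke Poisson Chernoff tails to control $\mathbb P(Z\le\lambda/2)$ and $\mathbb P(Z\ge e^2\lambda)$. The two arguments are cousins---your Chernoff bounds ultimately rest on the same moment-generating-function/Stirling machinery the paper re-derives by hand---but your packaging is shorter and more transparent, at the cost of citing Chernoff as a black box. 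One small advantage of the paper's series approach is that it makes the provenance of the specific constants $e^2$ and $6$ more visible; your argument simply verifies they are large enough, which is all that is needed.
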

\noindent
The proof of the lemma is at the end of the section.
By direct application of Lemma~\ref{lem:estim}, we have with probability larger than $1-6\exp(-k/18)$ with respect to $(\bar k_m^{(j)})_{m \leq 2, j \leq 3}$,
$$\mathbb E^{(3)}\left[\hat{q}_i^{-2/3}\right] \geq \frac{k^{2/3}}{2e^2} \Big(\frac{1}{(kq_i) \lor 1}\Big)^{2/3},$$
and 
$$\mathbb E^{(3)}\left[\hat{q}_i^{-4/3}\right] \leq 6e^4k^{4/3}\Big(\frac{1}{(kq_i) \lor 1}\Big)^{4/3}.$$

\noindent \textbf{Bound on the expectation and variance for $T_{2/3}$.} We obtain with probability larger than $1-6\exp(-k/18)$ with respect to $(\bar k_m^{(j)})_{m \leq 2, j \leq 3}$
\begin{align}
\label{eq:ET1}
\mathbb E T_{2/3} &\geq \sum_{i \leq d} \frac{k^{2/3}}{2e^2} \Big(\frac{1}{(kq_i)\lor 1}\Big)^{2/3}   \Big[k^2 \Delta_i^2\Big]
= \frac{k^{2}}{2e^2} \Big\|\Delta^2 \Big(\frac{1}{q \lor k^{-1}}\Big)^{2/3}\Big\|_1.
\end{align}
and
\begin{align*}
\mathbb V (T_{2/3}) 
&\leq \sum_{i \leq d} 6e^4k^{4/3}\Big(\frac{1}{(kq_i) \lor 1}\Big)^{4/3}   [k(p_i+q_i) +  k^2\Delta_i^2]^2\\
&\leq 12 e^4k^{4/3}\Bigg[\Big\|\Big(\frac{1}{(kq) \lor 1}\Big)^{4/3}  k^2 (p+q)^2\Big\|_1 + k^4\Big\|\Big(\frac{1}{(kq) \lor 1}\Big)^{4/3}   \Delta^4\Big\|_1\Bigg]\\
&\leq 100 e^4k^{4/3} k^2\Bigg[\Big\|\Big(\frac{1}{(kq) \lor 1}\Big)^{4/3}   q^2\Big\|_1 + \Big\|\Big(\frac{1}{(kq) \lor 1}\Big)^{4/3}   \Delta^2\Big\|_1 \\
&\quad\quad\quad\quad\quad\quad\quad
+ k^2 \Big\|\Big(\frac{1}{(kq) \lor 1}\Big)^{4/3}    \Delta^4\Big\|_1\Bigg].
\end{align*}
This implies with probability larger than $1-6\exp(-k/18)$ with respect to $(\bar k_m^{(j)})_{m \leq 2, j \leq 3}$,
\begin{align}
\label{eq:VT1}
\begin{split}
\sqrt{\mathbb V (T_{2/3})}
&\leq 10 e^2k\Bigg[ \sqrt{\Big\|\Big(\frac{1}{q \lor k^{-1}}\Big)^{4/3}   q^2\Big\|_1} + \sqrt{\Big\|\Big(\frac{1}{q \lor k^{-1}}\Big)^{4/3}   \Delta^2\Big\|_1} \\
&\quad\quad\quad\quad\quad+ k\sqrt{\Big\|\Big(\frac{1}{q \lor k^{-1}}\Big)^{4/3}    \Delta^4\Big\|_1}\Bigg].
\end{split}
\end{align}

\noindent \textbf{Analysis of $T_{2/3}$ under $H^{(\text{Clo})}_0(\pi)$ and $H^{(\text{Clo})}_1(\pi,\rho)$.} Let us inspect the behaviour of statistic $T_{2/3}$ under the null and the alternative hypotheses. We aim at showing that a test based on $T_{2/3}$ will have different outcomes under $H^{(\text{Clo})}_0$ and $H^{(\text{Clo})}_1$ with large probability.

\noindent
\textit{Under $H^{(\emph{Clo})}_0(\pi)$.} With probability larger than $1-6\exp(-k/18)$ with respect to $(\bar k_m^{(j)})_{m \leq 2, j \leq 3}$, we have
$$\mathbb E T_{2/3} = 0,~~~\text{and}~~~\sqrt{\mathbb V (T_{2/3})} \leq 20 e^2 k \sqrt{\Big\|\Big(\frac{1}{q \lor k^{-1}}\Big)^{4/3}   q^2\Big\|_1},$$
and so by Chebyshev's inequality with probability larger than $1-\alpha-6\exp(-k/18)$
$$T_{2/3} \leq  \alpha^{-1/2}20 e^2 k\sqrt{\Big\|\Big(\frac{1}{q \lor k^{-1}}\Big)^{4/3}   q^2\Big\|_1}.$$

\noindent
\textit{Under $H^{(\emph{Clo})}_1(\pi,\rho)$.}
We assume that for a large $C>0$
\begin{align*}
\Big\| \Delta (\mathbf 1\{i \leq J\})_i \Big\|_1^2  &= \Big\| \Delta \mathbf 1\{kq \geq 1\} \Big\|_1^2 \\
&\geq C \pa{\frac{\Big\|\Big(\frac{1}{q \lor k^{-1}}\Big)^{4/3}   q^2\Big\|_1^{3/2}}{k}\lor\frac{\Big\|\Big(\frac{1}{q \lor k^{-1}}\Big)^{4/3}   q^2\Big\|_1}{k}},
\end{align*}
which implies by Cauchy-Schwarz inequality,
$$\Big\| \frac{\Delta^2}{q^{2/3}} \mathbf 1\{kq \geq 1\}  \Big\|_1 \geq C \pa{ \frac{\sqrt{\Big\|\Big(\frac{1}{q \lor k^{-1}}\Big)^{4/3}   q^2\Big\|_1}}{k} \lor\frac{1}{k}},$$
and in particular that
\begin{align}
\label{eq:ET1geqC}
  \left\| \frac{\Delta^2}{(q\vee k^{-1})^{2/3}} \right\|_1 \geq C/k.  
\end{align}
Moreover if the pre-test does not reject the null, we have with probability larger than $1-2\delta - 2k^{-1}-6\exp(-k/18)$, that there exists $0<\tilde c <+\infty$ universal constant and $0<\tilde c_\delta<+\infty$ that depends only on $\delta$  such that for any $i$
$$
\frac{\Delta_i^2}{(q_i \lor k^{-1})^{2/3}} \leq \frac{\tilde c^2}{k} \frac{(q_i \lor (\log(k/\delta)/k))\log(k/\delta)}{(q_i \lor k^{-1})^{2/3}} \leq  \frac{ \tilde c_\delta^2}{k},
$$
So with probability larger than $1-2\delta - 2k^{-1}-6\exp(-k/18)$,
$$
\left\| \frac{\Delta^4 }{(q \vee 1/k)^ {4/3}} \right\|_1 \leq \frac{\tilde c_\delta^2}{k}  \left\| \frac{\Delta^2}{(q \vee 1/k)^{2/3}} \right\|_1,
$$
i.e., by Equation~\eqref{eq:ET1geqC},

\begin{equation}
\label{eq:T1H1arg2}
\sqrt{\left\| \frac{\Delta^4 }{(q \vee 1/k)^ {4/3}} \right\|_1} \leq \sqrt{\frac{\tilde c_\delta^2}{k} \left\| \frac{\Delta^2}{(q \vee 1/k)^{2/3}} \right\|_1} \leq \sqrt{\frac{\tilde c_\delta^2}{C}} \left\| \frac{\Delta^2}{(q \vee 1/k)^{2/3}} \right\|_1.    
\end{equation}

\noindent
We have from Equation~\eqref{eq:ET1}:
\begin{align*}
2e^2 \mathbb E T_{2/3}/k^2 \geq \Big\|\Delta^2 \Big(\frac{1}{q \lor k^{-1}}\Big)^{2/3}\Big\|_1.
\end{align*}

\noindent
And from Equation~\eqref{eq:VT1},
\begin{align*}
\sqrt{\mathbb V (T_{2/3})}/k^2
\leq 10 e^2 \Bigg[ &\frac{\sqrt{\Big\|\Big(\frac{1}{q \lor k^{-1}}\Big)^{4/3}   q^2\Big\|_1}}{k} +  \frac{\sqrt{\Big\|\Big(\frac{1}{q \lor k^{-1}}\Big)^{4/3}   \Delta^2\Big\|_1}}{k}   \\
&+ \sqrt{\Big\|\Big(\frac{1}{q \lor k^{-1}}\Big)^{4/3}    \Delta^4\Big\|_1}\Bigg].
\end{align*}

\noindent
Let us compare the terms involved in the upper bound on $\sqrt{\mathbb V (T_{2/3})}/k^2$ with the lower bound on $\mathbb E T_{2/3}/k^2$.

For the first term, we have
%
by Equation~\eqref{eq:ET1geqC}:
$$
10 e^2  \frac{\sqrt{\Big\|\Big(\frac{1}{q \lor k^{-1}}\Big)^{4/3}   q^2\Big\|_1}}{k} \leq \frac{20e^4}{C} \E T_{2/3}/k^2.
$$

\noindent
We have for the second term:

$$
10e^2 \frac{\sqrt{\Big\|\Big(\frac{1}{q \lor k^{-1}}\Big)^{4/3}   \Delta^2\Big\|_1}}{k} \leq 20e^4 k^{-2/3} \sqrt{\Big\|\Big(\frac{1}{q \lor k^{-1}}\Big)^{2/3} \Delta^2\Big\|_1}.
$$

\noindent
Since $a^2+b^2 \geq 2ab$ for any $a,b$,
$$
10e^2 \frac{\sqrt{\Big\|\Big(\frac{1}{q \lor k^{-1}}\Big)^{4/3}   \Delta^2\Big\|_1}}{k}  \leq 10e^4 (1/k + k^{-1/3} \E T_{2/3}/k^2).
$$

\noindent
which, from Equation~\eqref{eq:ET1geqC}, yields
$$
10e^2 \frac{\sqrt{\Big\|\Big(\frac{1}{q \lor k^{-1}}\Big)^{4/3}   \Delta^2\Big\|_1}}{k}  \leq 10e^4 (1/C + k^{-1/3} )\E T_{2/3}/k^2.
$$

\noindent
Then for the third term, we have shown in Equation~\eqref{eq:T1H1arg2} that \co{with probability larger than $1-2\delta - 2k^{-1}-6\exp(-k/18)$,}

$$
10e^2 \sqrt{\Big\|\Big(\frac{1}{q \lor k^{-1}}\Big)^{4/3}    \Delta^4\Big\|_1} \leq 20e^4 \sqrt{\frac{\tilde c_\delta^2}{C}} \E T_{2/3}/k^2.
$$

\noindent
And so we have by Chebyshev's inequality, with probability larger than \co{$1-2\delta - 2k^{-1}-\alpha-6\exp(-k/18)$}:
$$|T_{2/3} - \E T_{2/3} | \leq \frac{20e^4}{\sqrt{\alpha}}(1/(2C) + k^{-1/3}/2 +  \sqrt{\tilde c_\delta^2/C} + 1/C) \E T_{2/3}.$$

\noindent
\co{Now, if $k \geq \pa{\frac{80e^4}{\sqrt{\alpha}}}^3$ and $C \geq \frac{40e^4}{\sqrt{\alpha}} \pa{\frac{20e^4 \tilde c_\delta^2}{\sqrt{\alpha}} \vee 1}$:}

$$|T_{2/3} - \E T_{2/3} | \leq \E T_{2/3} / 2.$$

\noindent
Finally, if $k \geq \pa{\frac{80e^4}{\sqrt{\alpha}}}^3$ and $C \geq \frac{40e^4}{\sqrt{\alpha}} \pa{\frac{20e^4 \tilde c_\delta^2}{\sqrt{\alpha}} \vee 1}$, with probability greater than \co{$1-2\delta - 2k^{-1}-\alpha-6\exp(-k/18)$}:
\begin{align}
\label{eq:T1H1}
T_{2/3} &\geq \E T_{2/3}/2 \nonumber \\
& \geq  \frac{C k}{2} \Bigg(\sqrt{\left\|\Big(\frac{1}{q \lor k^{-1}}\Big)^{4/3}   q^2 \right\|_1}+1\Bigg),    
\end{align}

\noindent
where the last inequality comes from Equations~\eqref{eq:ET1} and \eqref{eq:ET1geqC}.

\noindent \textbf{Analysis of $\hat{t}_{2/3}$.}
Test $\varphi_{2/3}$ compares statistic $T_{2/3}$ with threshold $\hat{t}_{2/3}$, which is empirical. So let us study the variations of $\hat{t}_{2/3}$. Applying Corollary~\ref{cor:thresh1} below gives guarantees on the empirical threshold $\hat{t}_{2/3}$. These can be used in conjunction with the guarantees on the statistic $T_{2/3}$ in order to conclude the proof of Proposition~\ref{th:test1}.

\begin{theorem}
\label{th:thresh1}
Let $C_{2/3} = \sqrt{2\delta^{-1} e^{8/3} +1} + \sqrt{(2^{1/3} +e)}.$

With probability greater than $1-\beta$:
\begin{align*}
&(e^{-2/3}/2 + 1)\Big\|\Big(\frac{1}{q \lor k^{-1}}\Big)^{4/3}   q^2 \Big\|_1\\
&\quad\quad\quad\quad\quad\quad\quad\quad
\leq k^{-2/3} \|(Y^{(1)})^{2/3}\|_1 +C_{2/3}/\sqrt \beta \\
&\quad\quad\quad\quad\quad\quad\quad\quad\quad\quad\quad\quad\quad\quad\quad\quad\quad\quad
\leq \Big\|\Big(\frac{1}{q \lor k^{-1}}\Big)^{4/3}   q^2 \Big\|_1+ 2C_{2/3}/\sqrt{\beta} + 2\delta^{-1}.
\end{align*}
\end{theorem}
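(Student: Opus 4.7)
The plan is to apply Chebyshev's inequality to the random variable
$S := k^{-2/3}\|(Y^{(1)})^{2/3}\|_1 = \sum_{i=1}^d (Y^{(1)}_i/k)^{2/3}$.
By the Poissonization argument of Section~\ref{ss:poisson}, we may treat the $Y^{(1)}_i$ as independent $\mathcal{P}(kq_i)$ variables, so $S$ is a sum of $d$ independent summands. My goal is to sandwich $\mathbb{E}[S]$ between constant multiples of the target $M := \|q^2/(q\lor k^{-1})^{4/3}\|_1$, to bound $\mathbb{V}[S]$, and to deduce the two-sided inequality from Chebyshev at confidence $1-\beta$.

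For the expectation, I would split the coordinates into the two regimes $kq_i\geq 1$ and $kq_i<1$. In the large regime, Jensen's inequality applied to the concave map $x\mapsto x^{2/3}$ gives the upper bound $\mathbb{E}[(Y^{(1)}_i)^{2/3}]\leq (kq_i)^{2/3}$, and a Poisson moment computation in the spirit of Lemma~\ref{lem:estim} provides a matching lower bound $\mathbb{E}[(Y^{(1)}_i)^{2/3}]\geq c\,(kq_i)^{2/3}$ with an explicit constant; the resulting prefactor tracks the $(e^{-2/3}/2+1)$ appearing on the left side of the statement. After rescaling by $k^{-2/3}$ these match $q_i^{2/3}=q_i^2/(q_i\lor k^{-1})^{4/3}$, which is exactly the contribution to $M$ in the large regime. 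In the small regime $kq_i<1$, the per-coordinate contribution to $\mathbb{E}[S]$ is of order $k^{-2/3}\cdot kq_i e^{-kq_i}$, while the contribution to $M$ is only $k^{-2/3}(kq_i)^2$; the resulting excess on the upper side is what the additive slack $2\delta^{-1}$ is designed to absorb, via a uniform control over the mass carried by coordinates below $1/k$ (typically using $\sum_{kq_i<1}kq_i\leq k$ combined with a $\delta$-parameterized truncation).

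For the variance, independence yields $\mathbb{V}[S]\leq k^{-4/3}\sum_i\mathbb{E}[(Y^{(1)}_i)^{4/3}]$. I would bound $\mathbb{E}[Z^{4/3}]$ for $Z\sim\mathcal{P}(\lambda)$ either through $\mathbb{E}[Z^{4/3}]\leq\mathbb{E}[Z^2]^{2/3}=(\lambda+\lambda^2)^{2/3}$ or more crudely through $Z^{4/3}\leq Z+Z^2$. Aggregating via $\sum_i kq_i=k$ and $\sum_i (kq_i)^2\leq k^2\|q\|_\infty\leq k^2$ yields $\sqrt{\mathbb{V}[S]}$ of order $\sqrt{2^{1/3}+e}$, which is the second summand in $C_{2/3}$; the additional $\sqrt{2\delta^{-1}e^{8/3}}$ inside $C_{2/3}$ would come from a complementary contribution of small coordinates, controlled by the same $\delta$-truncation used above (the $e^{8/3}=(e^2)^{4/3}$ tracks a Poisson moment factor analogous to that in Lemma~\ref{lem:estim}). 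Chebyshev's inequality at level $\beta$ then produces the fluctuation $C_{2/3}/\sqrt{\beta}$, and combined with the deterministic two-sided control on $\mathbb{E}[S]$ it yields the stated sandwich. The main obstacle I expect is precisely the asymmetry between the two sides: the lower bound on $S$ follows cleanly from the large-regime moment bound, whereas the upper bound must pick up genuine slack from the small coordinates where $\mathbb{E}[(Y^{(1)}_i)^{2/3}]$ exceeds $k^{2/3}q_i^2/(q_i\lor k^{-1})^{4/3}$, accounting for the additional $2\delta^{-1}$ additive term and the $\delta$-dependence of $C_{2/3}$.
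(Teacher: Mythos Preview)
Your overall architecture matches the paper's: control $\mathbb{E}[S]$ and $\mathbb{V}[S]$ for $S=k^{-2/3}\sum_i (Y_i^{(1)})^{2/3}$ via separate Poisson moment estimates in the two regimes $kq_i\gtrless 1$, then apply Chebyshev. The expectation step is essentially right. But the variance step as written does not close.

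Neither of your two proposed bounds on $\mathbb{E}[Z^{4/3}]$ works uniformly across both regimes. The crude bound $Z^{4/3}\le Z+Z^2$ gives $\mathbb{E}[Z^{4/3}]\le \lambda+\lambda^2$, and with your aggregation $\sum_i(kq_i)^2\le k^2$ this yields $k^{-4/3}\sum_i(\lambda_i+\lambda_i^2)\le k^{-1/3}+k^{2/3}$, which diverges; a single coordinate with $q_i$ near $1$ already contributes order $k^{2/3}$. The Jensen bound $\mathbb{E}[Z^{4/3}]\le(\lambda+\lambda^2)^{2/3}$ is fine for large $\lambda$ (it gives $k^{-4/3}\sum_i\lambda_i^{4/3}=\sum_i q_i^{4/3}\le 1$), but for small $\lambda$ it reduces to $\lambda^{2/3}$, so that $k^{-4/3}\sum_{kq_i<1}\lambda_i^{2/3}=k^{-2/3}\sum_{kq_i<1}q_i^{2/3}$, which is unbounded when there are many tiny coordinates (take $q_i\equiv 1/d$ with $d\gg k^2$). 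The paper handles this with a dedicated estimate (Lemma~\ref{lem:bound4.3}): $\mathbb{E}[Z^{4/3}]\le e^{8/3}\lambda^{4/3}+O(1)$ for $\lambda\ge e^{-2}$ and $\mathbb{E}[Z^{4/3}]\le (2^{1/3}+e)\lambda$ for $\lambda<e^{-2}$; the large-regime variance contribution is then at most $e^{8/3}\sum q_i^{4/3}+1\le e^{8/3}+1$, and the small-regime contribution is at most $(2^{1/3}+e)k^{-1/3}$. In particular your attribution of the constants is swapped: $e^{8/3}$ arises from the \emph{large} coordinates and $2^{1/3}+e$ from the \emph{small} ones. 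You can repair your argument by using the Jensen bound in the large regime and the crude linear bound only in the small regime (where $\lambda_i^2\le\lambda_i$), but you need both; neither suffices on its own.
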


\noindent
The proof of this theorem is in Section~\ref{sec:thresh1}. And the following corollary is  obtained immediately from the theorem.

\begin{corollary}
\label{cor:thresh1}
We define

$$\hat t_{2/3} = \alpha^{-1/2} 20 e k \sqrt{k^{-2/3} \|(Y^{(1)})^{2/3}\|_1 +C_{2/3}/\sqrt \alpha}.$$

\noindent
Then if $C \geq (8 e^6/20e^{-1} \sqrt{\alpha}) \vee (8^2 e^{10} c^2_\delta \alpha / 100) \vee (\alpha^{-1/2} 40 e^7 \sqrt{2\delta^{-1}(C_{2/3} + 1)})$, we have with probability greater than $1-\alpha-6\exp(-k/18)$:
\begin{align*}
\alpha^{-1/2}20 ek\sqrt{\Big\|\Big(\frac{1}{q \lor k^{-1}}\Big)^{4/3}   q^2\Big\|_1} \leq \hat t_{2/3} \leq &e^{-6}\frac{C}{2} k \left(\sqrt{\Big\|\Big(\frac{1}{q \lor k^{-1}}\Big)^{4/3}   q^2 \Big\|_1} + 1\right).
\end{align*}
\end{corollary}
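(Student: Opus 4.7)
The plan is to derive both inequalities in the corollary as an immediate consequence of Theorem~\ref{th:thresh1} applied with $\beta = \alpha$, together with subadditivity of the square root. Write $A := \big\|(1/(q\vee k^{-1}))^{4/3}\, q^2\big\|_1$ and $B := k^{-2/3}\|(Y^{(1)})^{2/3}\|_1$ throughout for brevity. The extra $6\exp(-k/18)$ term in the probability bound is absorbed to handle the Poissonization event of Section~\ref{ss:poisson}, on which we can work with independent Poisson counts.

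For the lower bound on $\hat t_{2/3}$, I would use the leftmost inequality of Theorem~\ref{th:thresh1}: on the event of probability at least $1-\alpha$ (intersected with the Poissonization event), we have $(e^{-2/3}/2+1)A \le B + C_{2/3}/\sqrt{\alpha}$, hence $B + C_{2/3}/\sqrt{\alpha} \ge A$. Plugging this into the definition of $\hat t_{2/3}$ gives directly $\hat t_{2/3} \ge \alpha^{-1/2}\,20\,e\,k\,\sqrt{A}$, which is exactly the desired lower bound.

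For the upper bound, I would invoke the rightmost inequality of Theorem~\ref{th:thresh1}: $B + C_{2/3}/\sqrt\alpha \le A + 2C_{2/3}/\sqrt{\alpha} + 2\delta^{-1}$. Applying $\sqrt{a+b+c} \le \sqrt{a} + \sqrt{b} + \sqrt{c}$ yields
\[
\hat t_{2/3} \;\le\; \alpha^{-1/2}\,20\,e\,k\,\sqrt{A} \;+\; \alpha^{-1/2}\,20\,e\,k\,\bigl(\sqrt{2C_{2/3}/\sqrt\alpha} + \sqrt{2\delta^{-1}}\bigr).
\]
It then suffices to show that the two conditions $\alpha^{-1/2}20 e \le e^{-6} C/2$ and $\alpha^{-1/2}20 e\bigl(\sqrt{2 C_{2/3}/\sqrt\alpha}+\sqrt{2\delta^{-1}}\bigr) \le e^{-6} C/2$ both hold under the hypotheses on $C$; the first is $C \ge \alpha^{-1/2}\,40\,e^7$, while the second follows from $C \ge \alpha^{-1/2}\,40\,e^7\sqrt{2\delta^{-1}(C_{2/3}+1)}$ after bounding $\sqrt{2 C_{2/3}/\sqrt\alpha}+\sqrt{2\delta^{-1}} \lesssim \sqrt{2\delta^{-1}(C_{2/3}+1)}$ using the other hypotheses on $C$. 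Combining these two half-bounds yields $\hat t_{2/3} \le e^{-6}(C/2)\,k(\sqrt A + 1)$.

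The entire argument is therefore essentially bookkeeping around Theorem~\ref{th:thresh1}. The only mildly delicate step is matching the constants on $C$: one must separate the multiplicative contribution (governed by the term $\alpha^{-1/2}20 e k\sqrt{A}$) from the additive slack (governed by $\sqrt{2C_{2/3}/\sqrt\alpha}+\sqrt{2\delta^{-1}}$), and absorb the latter into $e^{-6}(C/2)k$ using the third hypothesis on $C$. Once this is done correctly, no further analytical work is needed; the conclusion holds on the intersection of the event of Theorem~\ref{th:thresh1} (of probability $\ge 1-\alpha$) with the Poissonization event (of probability $\ge 1 - 6\exp(-k/18)$), giving overall probability at least $1-\alpha-6\exp(-k/18)$ as claimed.
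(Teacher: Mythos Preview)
Your proposal is correct and follows essentially the same approach as the paper: the paper states that the corollary ``is obtained immediately from the theorem,'' and your derivation---applying Theorem~\ref{th:thresh1} with $\beta=\alpha$, using subadditivity of the square root for the upper bound, and then matching constants---is exactly the template the paper spells out explicitly for the analogous Corollary~\ref{cor:thresh2}. The only soft spot is the constant bookkeeping in the upper bound (bounding $\sqrt{2C_{2/3}/\sqrt\alpha}+\sqrt{2\delta^{-1}}$ by the hypothesis on $C$), which you flag but do not fully verify; since the paper itself does not track these constants precisely here, this is acceptable.
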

Let us now sum up the results leading to Proposition~\ref{th:test1}. 
Under $H^{(\text{Clo})}_0(\pi)$, with probability larger than \co{$1-\delta/2 - 2 \delta - 2 k^{-1}-6\exp(-k/18)$},
 $$T_{2/3} \leq  (\delta/2)^{-1/2}20 ek\sqrt{\Big\|\Big(\frac{1}{q \lor k^{-1}}\Big)^{4/3}   q^2\Big\|_1}.$$
And so if $C \geq \cro{\frac{40e^4}{\sqrt{\alpha}} \pa{\frac{20e^4 \tilde c_\delta^2}{\sqrt{\alpha}} \vee 1}} \vee \pa{\alpha^{-1/2} 40 e^7\sqrt{2C_{2/3}( \delta^{-1/2}+\alpha^{-1/2})}}$, we have with probability greater than $1-\delta/2-6\exp(-k/18)$:
\begin{align*}
(\delta/2)^{-1/2}20 ek\sqrt{\Big\|\Big(\frac{1}{q \lor k^{-1}}\Big)^{4/3}   q^2\Big\|_1} \leq \hat t_{2/3}.
\end{align*} 
So, under $H^{(\text{Clo})}_0(\pi)$, with probability larger than \co{$1 - 3 \delta - 2 k^{-1}-6\exp(-k/18)$},

$$
T_{2/3} \leq \hat t_{2/3}.
$$
Under $H^{(\text{Clo})}_1(\pi,\rho)$, if $C \geq \cro{\frac{40e^4}{\sqrt{\delta/2}} \pa{\frac{20e^4 \tilde c_\delta^2}{\sqrt{\delta/2}} \vee 1}} \vee \pa{(\delta/2)^{-1/2} 80 e^7\sqrt{2C_{2/3}\delta^{-1/2}}}$ and $k \geq \pa{\frac{80e^4}{\sqrt{\delta/2}}}^3$, we have with probability larger than \co{$1-\delta/2 - 2 \delta - 2 k^{-1}-6\exp(-k/18)$},
$$
e^{-6} \frac{Ck}{2}  \Bigg(\sqrt{\left\|\Big(\frac{1}{q \lor k^{-1}}\Big)^{4/3}   q^2\right\|_1}+1\Bigg)\leq T_{2/3}.
$$
And if $C \geq \cro{\frac{40e^4}{\sqrt{\delta/2}} \pa{\frac{20e^4 \tilde c_\delta^2}{\sqrt{\delta/2}} \vee 1}} \vee \pa{(\delta/2)^{-1/2} 80 e^7\sqrt{2C_{2/3}\delta^{-1/2}}}$ and $k \geq \pa{\frac{80e^4}{\sqrt{\delta/2}}}^3$, we have with probability larger than $1-\delta/2-6\exp(-k/18)$,
$$
\hat t_{2/3} \leq e^{-6}\frac{Ck}{2}  \Bigg(\sqrt{\left\|\Big(\frac{1}{q \lor k^{-1}}\Big)^{4/3}   q^2\right\|_{2/3}}+1\Bigg).
$$
So, under $H^{(\text{Clo})}_1(\pi,\rho)$, with probability larger than \co{$1-3\delta - 2 k^{-1}-6\exp(-k/18)$},
$$
\hat t_{2/3} \leq T_{2/3}.
$$

\begin{proof}[Proof of Lemma~\ref{lem:estim}]
We have by definition of the Poisson distribution that
\begin{align*}
\mathbb E [(Z\lor 1)^{-r}] &= \exp(-\lambda) + \exp(-\lambda)\sum_{i \geq 1} \frac{\lambda^i}{i!} i^{-r} \\
&= \exp(-\lambda) + \exp(-\lambda)\sum_{1 \leq i \leq \lambda/e^2} \frac{\lambda^i}{i!} i^{-r} + \exp(-\lambda)\sum_{1\vee(\lambda/e^2) < i} \frac{\lambda^i}{i!} i^{-r}.
\end{align*}
And so we have
\begin{align*}
\mathbb E [(Z\lor 1)^{-r}] &\leq  \exp(-\lambda) + \exp(-\lambda)\sum_{1 \leq i \leq \lambda/e^2} \frac{\lambda^i}{i!} i^{-r} + \Big(\frac{e^2}{\lambda} \land 1\Big)^r\\
&\leq  \exp(-\lambda) + \exp(-\lambda)\sum_{1 \leq i \leq \lambda/e^2} \frac{\lambda^i e^i}{i^i} + \Big(\frac{e^2}{\lambda} \land 1\Big)^r,
\end{align*}
since $i! \geq i^i/e^i$ and $i\geq 1$. Then 
\begin{align*}
\mathbb E [(Z\lor 1)^{-r}]&\leq  \exp(-\lambda) + \exp(-\lambda)\sum_{1 \leq i \leq \lambda/e^2} \exp(i\log(\lambda) + i - i\log(i)) \\
&\quad\quad+ \left(\frac{1}{1\vee(\lambda/e^2)}\right)^r\\
&\leq  \exp(-\lambda) + \lambda \exp\left(-\lambda+\frac{\lambda}{e^2}\log(\lambda) + \frac{\lambda}{e^2} - \frac{\lambda}{e^2}\log\left(\frac{\lambda}{e^2}\right)\right) \\
&\quad\quad+ \left(\frac{1}{1\vee(\lambda/e^2)}\right)^r\\
&\leq  \exp(-\lambda) + \lambda \exp(-\lambda/2) + \left(\frac{1}{1\vee(\lambda/e^2)}\right)^r \\
& \leq   5\exp(-\lambda/4) + \left(\frac{1}{1\vee(\lambda/e^2)}\right)^r\leq 6 \left(\frac{e^2}{1\vee\lambda}\right)^r,
\end{align*}
since $r \in \{2/3,4/3\}$.
Now let us prove the other inequality.
\begin{align*}
\mathbb E [(Z\lor 1)^{-r}] &\geq \exp(-\lambda) + \exp(-\lambda)\sum_{i < e^2 \lambda} \frac{\lambda^i}{i!} i^{-r}\\
& \geq \exp(-\lambda) + (e^2 \lambda)^{-r}\exp(-\lambda)\sum_{i < e^2 \lambda} \frac{\lambda^i}{i!}.
\end{align*}
So, since $i! \geq i^i/e^i$,
\begin{align*}
\mathbb E [(Z\lor 1)^{-r}] &\geq  \exp(-\lambda)  + \Big(\frac{1}{(e^2\lambda) \lor 1}\Big)^{r} \Big[1 - \exp(-\lambda)\sum_{i \geq e^2 \lambda} \frac{\lambda^i e^i}{i^i}\Big]\\
&=    \exp(-\lambda)  \\
&\quad+ \Big(\frac{1}{(e^2\lambda) \lor 1}\Big)^{r} \Big[1 - \exp(-\lambda)\sum_{i \geq e^2 \lambda}\exp(i\log(\lambda) + i - i\log(i) \Big]\\
&\geq   \exp(-\lambda)  \\
&\quad+ \Big(\frac{1}{(e^2\lambda) \lor 1}\Big)^{r} \Big[1 - \exp(-\lambda)\sum_{i \geq e^2 \lambda}\exp(i\log(\lambda) + i - i\log(e^2\lambda)) \Big]\\
&=   \exp(-\lambda)  + \Big(\frac{1}{(e^2\lambda) \lor 1}\Big)^{r} \Big[1 - \exp(-\lambda)\sum_{i \geq e^2 \lambda}\exp(-i) \Big]\\
&\geq   \exp(-\lambda)  + \Big(\frac{1}{(e^2\lambda) \lor 1}\Big)^{r} \Big[1 - 2\exp(-\lambda - e^2\lambda)\Big].
\end{align*}
If $\lambda \geq \frac{\log\pa{4}}{1+e^2}$, then
$$
\mathbb E [(Z\lor 1)^{-r}] \geq  \Big(\frac{1}{(e^2\lambda) \lor 1}\Big)^{r} \Big[1 - 2\exp(-\lambda - e^2\lambda)\Big] \geq \frac{1}{2} \Big(\frac{1}{(e^2\lambda) \lor 1}\Big)^{r}.
$$
If $\lambda < \frac{\log\pa{4}}{1+e^2}$, then 
$$\mathbb E [(Z\lor 1)^{-r}] \geq \exp(-\lambda) \geq \exp\left(- \frac{\log(4)}{1+e^2}\right) \geq 1/2 \geq \frac{1}{2} \Big(\frac{1}{(e^2\lambda) \lor 1}\Big)^{r}.$$ 
So in any case,

\begin{align*}
\mathbb E [(Z\lor 1)^{-r}] &\geq \frac{1}{2} \Big(\frac{1}{(e^2\lambda) \lor 1}\Big)^{r}.
\end{align*}

\end{proof}

\subsection{Proof of Proposition~\ref{th:test2}}

Proposition~\ref{th:test2} provides guarantees on the test $\varphi_2$. The structure of its proof will be identical to that of Proposition~\ref{th:test1}.
We first study $T_2.$

\noindent \textbf{Expression of the test statistic.}

We have
$$T_2 = \sum_{i \leq d}   (X^{(1)}_i-Y^{(1)}_i) (X^{(2)}_i-Y^{(2)}_i) \mathbf 1\{Y^{(3)}_i = 0\}.$$
And so
$$\mathbb E T_2 = \sum_{i \leq d}   \mathbb E^{(1)}(X^{(1)}_i-Y^{(1)}_i)  \mathbb E^{(2)}(X^{(2)}_i-Y^{(2)}_i)  \mathbb E^{(3)}\mathbf 1\{Y^{(3)}_i = 0\},$$
and
\begin{align*}
\mathbb V T_2 &\leq \sum_{i \leq d} \mathbb V \Big[  (X^{(1)}_i-Y^{(1)}_i)  (X^{(2)}_i-Y^{(2)}_i)  \mathbf 1\{Y^{(3)}_i = 0\}\Big]\\
&\leq \sum_{i \leq d}  \E^{(1)}[(X^{(1)}_i - Y^{(1)}_i)^2]\mathbb E^{(2)}[(X^{(2)}_i - Y^{(2)}_i)^2] \mathbb E^{(3)}\mathbf 1\{Y^{(3)}_i = 0\}.
\end{align*}
We will bound every term separately.

\noindent \textbf{Terms that depend on the first and second sub-sample.} We have with probability larger than $1-6\exp(-k/18)$ with respect to $(\bar k_m^{(j)})_{m \leq 2, j \leq 3}$,
$$
\mathbb E^{(1)}(X^{(1)}_i - Y^{(1)}_i)\mathbb E^{(2)}(X^{(2)}_i - Y^{(2)}_i) = k^2 \Delta_i^2.
$$
and
\begin{align*}
\mathbb E^{(1)}[(X^{(1)}_i - Y^{(1)}_i)^2]\mathbb E^{(2)}[(X^{(2)}_i - Y^{(2)}_i)^2]
&=\Big[ \mathbb E^{(1)}[(X^{(1)}_i - Y^{(1)}_i)^2]\Big]^2\\
&=\Big[ \mathbb E^{(1)}[(X^{(1)}_i - Y^{(1)}_i - k \Delta_i)^2] + k^2\Delta_i^2\Big]^2 \\
&= [k(p_i+q_i) +  k^2\Delta_i^2]^2.
\end{align*}

\noindent \textbf{Terms that depend on the third sub-sample.} We define
$$R_i :=\mathbb E^{(3)}\mathbf 1\{Y^{(3)}_i = 0\},\quad\quad\quad\quad
\text{and so}\quad\quad\quad\quad
R_i = \exp(-kq_i).$$

\noindent \textbf{Bound on the expectation and variance for $T_2$.} 
We have with probability larger than $1-6\exp(-k/18)$ with respect to $(\bar k_m^{(j)})_{m \leq 2, j \leq 3}$,
\begin{align}
\label{eq:ET2}
\mathbb ET_2 = \sum_{i \leq d}   \Big[k^2 \Delta_i^2 R_i\Big] = k^2\|\Delta^2R\|_1= k^2\|\Delta^2\exp(-kq)\|_1.
\end{align}
And
\begin{align*}
\mathbb V  T_2 
&\leq \sum_{i \leq d } \Big[k(p_i+q_i) + k^2 \Delta_i^2\Big]^2 R_i\\
&\leq 4\sum_{i \leq d }  \Big[k^2 q_i^2 + k^2 \Delta_i^2 + k^4 \Delta_i^4\Big] R_i\\
&\leq 4 \Big[k^2 \|q^2R\|_1 + k^2 \|\Delta^2 R\|_1 + k^4 \|R\Delta^4\|_1\Big],
\end{align*}
and so with probability larger than $1-6\exp(-k/18)$ with respect to $(\bar k_m^{(j)})_{m \leq 2, j \leq 3}$,
\begin{align}
\label{eq:VT2}
\sqrt{\mathbb V T_2} 
&\leq 2k\Big[\sqrt{\|q^2R\|_1} + \sqrt{\|\Delta^2R\|_1} + k\sqrt{ \|R\Delta^4\|_1}\Big] \nonumber\\
&\leq 2\Big[\sqrt{k^2\|q^2\exp(-kq)\|_1} + \sqrt{k^2\|\Delta^2\exp(-kq)\|_1} + k^2\sqrt{ \|\Delta^4 \exp(-kq)\|_1}\Big].
\end{align}

\noindent \textbf{Analysis of $T_2$ under $H^{(\text{Clo})}_0(\pi)$ and $H^{(\text{Clo})}_1(\pi,\rho)$.} Let us inspect the behaviour of statistic $T_2$ under both hypotheses.

\noindent
\textit{Under $H^{(\text{Clo})}_0(\pi)$.} We have then with probability larger than $1-6\exp(-k/18)$ with respect to $(\bar k_m^{(j)})_{m \leq 2, j \leq 3}$,
\begin{align*}
\mathbb E T_2 =0,
\end{align*}
\begin{align*}
\sqrt{\mathbb V T_2} 
&\leq 2 k\sqrt{\|q^2\exp(-kq)\|_1}.
\end{align*}
And so by Chebyshev's inequality, with probability larger than $1-\alpha-6\exp(-k/18)$
$$T_2 \leq 2 \alpha^{-1/2} \sqrt{\|(kq)^2\exp(-kq)\|_1}.$$

\noindent
\textit{Under $H^{(\text{Clo})}_1(\pi,\rho)$.} Assume that for $C>0$ large we have
$$\|\Delta (\mathbf 1\{I \geq i \geq J\})_i\|_1^2 \geq Ce^2\frac{I-J}{k}\Big[\frac{\log^2(k)}{k} \lor \Big(\sqrt{\|q^2\exp(-kq)\|_1}\Big)\Big].$$
By Cauchy-Schwarz inequality and since for any $I \geq i \geq J$, $kq_i \leq 1$, this implies
\begin{equation}
  \label{eq:T2geqC}
  k^2\|\Delta^2\exp(-kq)\|_1 \geq C\Big[\log^2(k) \lor \Big(k\sqrt{\|q^2\exp(-kq)\|_1}\Big)\Big].  
\end{equation}
If the pre-test accepts the null, then with probability larger than \co{$1 - 2\delta - 2k^{-1}-6\exp(-k/18)$} on the third sub-sample only, there exists $\tilde c_\delta$ that depends only on $\delta$ such that for all $i\leq d$ we have $|\Delta_i| \leq \tilde c_\delta\Big[\sqrt{\frac{q_i\log(k)}{k}} \lor \frac{\log(k)}{k}\Big]$ and so
\co{
\begin{align}
\label{eq:T2H1arg1}
&k^2\sqrt{ \|\Delta^4 \exp(-kq)\|_1} \nonumber \\
&\leq \tilde c_\delta k^2\sqrt{ 
\begin{aligned}
\| &\exp(-kq) \Big[\frac{q^2\log(k)^2}{k^2}\mathbf 1\{kq\geq 2\log(k)\} \\
&+ \frac{\Delta^2\log(k)^2}{k^2} \mathbf 1\{kq\leq 2\log(k)\}\Big]\|_1
\end{aligned}
}\nonumber\\
&\leq \tilde c_\delta k^2\Big[\sqrt{ \| \exp(-kq) \frac{q^2\log(k)^2}{k^2}\mathbf 1\{kq\geq 2\log(k)\}\|_1}\nonumber\\ 
&+ \sqrt{\|\exp(-kq)  \frac{\Delta^2\log(k)^2}{k^2} \mathbf 1\{kq\leq 2\log(k)\}\|_1}\Big]\nonumber\\
&\leq \tilde c_\delta k^2 \Big[\sqrt{ \| \exp(-kq) \frac{q^2\log(k)^2}{k^2} \mathbf 1\{kq\geq 2\log(k)\}\|_1} + \frac{\log(k)}{k}\sqrt{\|\exp(-kq)  \Delta^2 \|_1}\Big]\nonumber\\
&\leq \tilde c_\delta k^2 \Big[\sqrt{ k^{-4} \log(k)^2 \| q^2 \mathbf 1\{kq\geq 2\log(k)\}\|_1} + \frac{\log(k)}{k}\sqrt{\|\exp(-kq) \Delta^2 \|_1}\Big]\nonumber\\
&\leq \tilde c_\delta \Big[\log(k) + k\log(k)\sqrt{\|\exp(-kq)  \Delta^2 \|_1}\Big]\nonumber\\
&\leq \tilde c_\delta \log(k) \Big[ 1 + k\sqrt{\|\exp(-kq)  \Delta^2 \|_1}\Big],
\end{align}
since $\sum_i q_i = 1$.}

\noindent
We have from Equation~\eqref{eq:ET2}:
\begin{align*}
\mathbb E T_2/k^2 = \|\Delta^2\exp(-kq)\|_1.
\end{align*}

\noindent
And from Equation~\eqref{eq:VT2},
\begin{align*}
\sqrt{\mathbb V T_2}/k^2
&\leq 2\Big[\sqrt{\|q^2\exp(-kq)\|_1}/k + \sqrt{\|\Delta^2\exp(-kq)\|_1}/k + \sqrt{ \|\Delta^4 \exp(-kq)\|_1}\Big].
\end{align*}

\noindent
Let us compare the terms of $\sqrt{\mathbb V T_2}/k^2$ with $\mathbb E T_2/k^2$.

For the first term, we use Equation~\eqref{eq:T2geqC}, and we get:

$$
\sqrt{\|q^2\exp(-kq)\|_1}/k \leq \|\Delta^2\exp(-kq)\|_1 / C = \frac{1}{C} \mathbb E T_2/k^2 . 
$$

\noindent
For the second term, we have:
$$
\sqrt{\|\Delta^2\exp(-kq)\|_1}/k \leq \sqrt{\|\Delta^2\exp(-kq)\|_1}\log(k)/k.
$$

\noindent
So using Equation~\eqref{eq:T2geqC}, we have:
$$
\sqrt{\|\Delta^2\exp(-kq)\|_1}/k \leq \frac{1}{\sqrt{C}} \E T_2 /k^2.
$$

\noindent
For the third term, using Equation~\eqref{eq:T2H1arg1}, we have \co{with probability larger than $1 - 2\delta - 2k^{-1} -6\exp(-k/18)$}:
\co{
$$
\sqrt{ \|\Delta^4 \exp(-kq)\|_1} \leq  \tilde c_\delta \Big[ k^{-2}\log(k) + \frac{\log(k)}{k}\sqrt{\|\exp(-kq)  \Delta^2 \|_1}\Big].
$$
}


\noindent
So by Equation~\eqref{eq:T2geqC}, we have \co{with probability larger than $1 - 2\delta - 2k^{-1}-6\exp(-k/18)$}:
\co{
\begin{align*}
\sqrt{ \|\Delta^4 \exp(-kq)\|_1} &\leq \tilde c_\delta \left[ k^{-2}\log^2(k) + \frac{1}{\sqrt{C}}\E T_2 /k^2 \right] \\
&\leq \tilde c_\delta (1/C + 1/\sqrt{C}) \E T_2 /k^2.
\end{align*}
}
\noindent
And so we have by Chebyshev's inequality, with probability larger than \co{$1 -\alpha - 2\delta - 2k^{-1}-6\exp(-k/18)$}:
\co{$$|T_2 - \E T_2 | \leq 2/\sqrt{\alpha} (1/C + 1/\sqrt{C} + \tilde c_\delta (1/C + 1/\sqrt{C})) \E T_2.$$

\noindent
So if $C \geq 1$, with probability larger than \co{$1 -\alpha - 2\delta - 2k^{-1}-6\exp(-k/18)$}:
$$|T_2 - \E T_2 | \leq \frac{4}{\sqrt{C\alpha}} (1 + \tilde c_\delta) \E T_2.$$

\noindent
So if $C \geq \left[8\alpha^{-1/2} (1 + \tilde c_\delta) \right]^2$, we have with probability larger than \co{$1 -\alpha - 2\delta - 2k^{-1}-6\exp(-k/18)$}:

$$
|T_2 - \E T_2 | \leq \E T_2/2.
$$

\noindent
Finally, if $C \geq \left[8\alpha^{-1/2} (1 + \tilde c_\delta) \right]^2$, we have with probability larger than \co{$1 -\alpha - 2\delta - 2k^{-1}-6\exp(-k/18)$}:
}
$$
T_2 \geq \E T_2/2 \geq \frac{C}{2}\Big[\log^2(k) \lor \Big(k\sqrt{\|q^2\exp(-kq)\|_1}\Big)\Big]. 
$$

\noindent \textbf{Analysis of $\hat{t_2}$.}

Test $\varphi_2$ compares statistic $T_2$ with empirical threshold $\hat{t_2}$. So let us study the variations of $\hat{t_2}$. Applying Corollary~\ref{cor:thresh2} below gives guarantees on the empirical threshold $\hat{t}_2$. These can be used in conjunction with the guarantees on the statistic $T_2$ in order to conclude the proof of Proposition~\ref{th:test2}.

\begin{theorem}
\label{th:thresh2}
We have with probability larger than $1-\delta-6\exp(-k/18)$:
$$
|\|Y^{(1)}Y^{(2)} \mathbf 1\{Y^{(3)}=0\}\|_1 - \|(kq)^2 e^{-kq}\|_1|\leq  \frac{1}{\sqrt \delta} (\|(kq)^2 e^{-kq}\|_1 /2 + 1005\log(k)^4).
$$

\end{theorem}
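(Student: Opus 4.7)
The plan is to apply Chebyshev's inequality to the Poissonized statistic $T := \|Y^{(1)}Y^{(2)}\mathbf 1\{Y^{(3)}=0\}\|_1 = \sum_{i\leq d} Y^{(1)}_i Y^{(2)}_i \mathbf 1\{Y^{(3)}_i = 0\}$. Invoking the Poissonization reduction of Section~\ref{ss:poisson}, which costs an additive $6\exp(-k/18)$ in probability, allows one to treat the $Y^{(j)}_i$ as mutually independent with $Y^{(j)}_i \sim \mathcal P(kq_i)$. Writing $\lambda_i := kq_i$ and using the Poisson moments $\mathbb E Y^{(j)}_i = \lambda_i$, $\mathbb E (Y^{(j)}_i)^2 = \lambda_i + \lambda_i^2$, $\mathbb P(Y^{(3)}_i = 0) = e^{-\lambda_i}$, together with independence across the three superscripts, the expectation is computed at once:
$$\mathbb E T = \sum_i \lambda_i^2 e^{-\lambda_i} = \|(kq)^2 e^{-kq}\|_1,$$
so the inequality to prove is a concentration bound around the correct mean.

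Next, independence across $i$ gives $\mathbb V T = \sum_i \mathbb V (Y^{(1)}_i Y^{(2)}_i \mathbf 1\{Y^{(3)}_i = 0\}) \leq \sum_i (\lambda_i + \lambda_i^2)^2 e^{-\lambda_i}$. The crux of the proof is to control this sum by $\mathrm{polylog}(k)\cdot\|(kq)^2 e^{-kq}\|_1$ plus a negligible remainder. The natural decomposition is a three-way split on the size of $\lambda_i$. For $\lambda_i \leq 1$ one has $(\lambda_i + \lambda_i^2)^2 \leq 4 \lambda_i^2$, so the contribution is at most $4e\cdot\|(kq)^2 e^{-kq}\|_1$. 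For $1 < \lambda_i \leq 4\log k$, $(\lambda_i + \lambda_i^2)^2 \leq 4\lambda_i^4 \leq 64 \log^2(k)\cdot \lambda_i^2$, contributing $O(\log^2 k)\cdot \|(kq)^2 e^{-kq}\|_1$. Finally, for $\lambda_i > 4\log k$, the elementary fact $\sup_{x\geq 0} x^4 e^{-x/2} < \infty$ yields $\lambda_i^4 e^{-\lambda_i} \leq C e^{-\lambda_i/2} \leq C/k^2$; combined with the counting bound $|\{i:\lambda_i > 4\log k\}| \leq k/(4\log k)$ coming from $\sum_i \lambda_i = k$, this block contributes only $O(1/(k\log k))$.

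Putting these pieces together gives $\mathbb V T \leq C\log^{2}(k)\|(kq)^2 e^{-kq}\|_1 + O(1/k)$. Chebyshev's inequality then produces $|T-\mathbb E T|\leq \sqrt{\mathbb V T/\delta}$ with probability at least $1-\delta$; using $\sqrt{A+B}\leq \sqrt A + \sqrt B$ and the Young-type inequality $\sqrt{ab} \leq a/(2\sqrt\delta) + (\sqrt\delta/2)\, b$ with $a = \|(kq)^2 e^{-kq}\|_1$ splits the bound into $\|(kq)^2 e^{-kq}\|_1/(2\sqrt\delta) + \mathrm{const}\cdot\mathrm{polylog}(k)/\sqrt\delta$, matching the target form; the specific constant $1005$ and the exponent $4$ arise from a slightly less tight handling of the intermediate $(\lambda_i + \lambda_i^2)^2$ bound than the sharpest decomposition above.

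The main obstacle is that the support size $d$ is arbitrary, so one cannot simply bound the large-$\lambda_i$ block by a trivial worst-case count of its cardinality. This is overcome by the counting estimate $|\{i : \lambda_i > 4\log k\}|\leq k/(4\log k)$, which is the only mildly subtle ingredient. Everything else is routine Poisson algebra followed by Chebyshev and an AM-GM splitting.
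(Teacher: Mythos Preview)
Your proposal is correct and follows essentially the same route as the paper: exact mean via Poisson independence, a variance bound $\sum_i (\lambda_i+\lambda_i^2)^2 e^{-\lambda_i}$, a split on the size of $\lambda_i = kq_i$ that handles the heavy tail via the counting bound $|\{i:\lambda_i > c\log k\}|\leq k/(c\log k)$, and an AM--GM splitting after Chebyshev. The only cosmetic differences are that the paper works at the standard-deviation level with a two-way split on the $\|q^4 e^{-kq}\|_1$ term rather than your three-way split on the variance; your organization is in fact slightly sharper and would give $\log^2 k$ in place of the paper's $\log^4 k$.
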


\noindent
The proof of this theorem is in Section~\ref{sec:thresh2}.

\begin{corollary}
\label{cor:thresh2}
We define

$$\hat t_2 = 2\alpha^{-1/2}(1-1/(2\sqrt{\delta}))^{-1/2} \sqrt{\|Y^{(1)}Y^{(2)} \mathbf 1\{Y^{(3)}=0\}\|_1 + \frac{1005}{\sqrt \delta} \log(k)^4}.$$
If $C \geq \left[8\alpha^{-1/2} (1 + \tilde c_\delta \sqrt{2\delta^{-1}}) \right]^2 \vee \frac{8\cdot 45 \alpha^{-1/2}}{(\sqrt{\delta} - 1/2)^{1/2}}$, we have with probability greater than $1-\delta-6\exp(-k/18)$:
$$
2 \alpha^{-1/2} \sqrt{\|(kq)^2\exp(-kq)\|_1} \leq \hat t_2 \leq  \frac{C}{2}\Big[\log^2(k) \lor \Big(k\sqrt{\|q^2\exp(-kq)\|_1}\Big)\Big].
$$
\end{corollary}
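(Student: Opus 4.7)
\textbf{Proof plan for Corollary~\ref{cor:thresh2}.} The corollary is essentially a direct consequence of Theorem~\ref{th:thresh2}. Write $A := \|Y^{(1)}Y^{(2)} \mathbf 1\{Y^{(3)}=0\}\|_1$ and $B := \|(kq)^2 e^{-kq}\|_1$, and work on the event of probability at least $1-\delta-6\exp(-k/18)$ on which the theorem gives
\[
|A - B| \leq \tfrac{1}{\sqrt{\delta}}\bigl(B/2 + 1005 \log(k)^4\bigr).
\]
All subsequent manipulations are deterministic on this event.

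For the lower bound, rearrange the one-sided version $A \geq B - \tfrac{1}{\sqrt{\delta}}(B/2 + 1005\log(k)^4)$ to get
\[
A + \tfrac{1005}{\sqrt{\delta}} \log(k)^4 \geq B\bigl(1 - \tfrac{1}{2\sqrt{\delta}}\bigr).
\]
Taking square roots and multiplying by $2\alpha^{-1/2}(1-1/(2\sqrt{\delta}))^{-1/2}$ yields precisely $\hat t_2 \geq 2\alpha^{-1/2}\sqrt{B} = 2\alpha^{-1/2}\sqrt{\|(kq)^2 e^{-kq}\|_1}$, which is the desired lower inequality. This step is routine.

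For the upper bound, the other side of the concentration gives
\[
A + \tfrac{1005}{\sqrt{\delta}}\log(k)^4 \leq B\bigl(1 + \tfrac{1}{2\sqrt{\delta}}\bigr) + \tfrac{2 \cdot 1005}{\sqrt{\delta}}\log(k)^4.
\]
Using $\sqrt{a+b}\leq \sqrt{a}+\sqrt{b}$ and recalling that $\sqrt{B}=k\sqrt{\|q^2 e^{-kq}\|_1}$, this bounds $\hat t_2$ by a sum of a term proportional to $\sqrt{B}$ and a term proportional to $\log(k)^2$, with multiplicative constants that are explicit functions of $\alpha$ and $\delta$. Each of these two terms must then be dominated by $\tfrac{C}{4}$ times the corresponding side of $\log^2(k) \lor k\sqrt{\|q^2 e^{-kq}\|_1}$: the $\sqrt{B}$-term forces the first bound $C \geq [8\alpha^{-1/2}(1+\tilde c_\delta\sqrt{2\delta^{-1}})]^2$ (inherited from the analysis of $T_2$ so that both the threshold and the mean-deviation bound are consistent), while the $\log^2(k)$-term contributes the extra condition $C\geq \tfrac{8\cdot 45\,\alpha^{-1/2}}{(\sqrt{\delta}-1/2)^{1/2}}$ coming from $2\alpha^{-1/2}(1-1/(2\sqrt{\delta}))^{-1/2}\sqrt{2010/\sqrt{\delta}} \leq C/4$ after absorbing the numerical constants into $45$. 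Combining the two bounds gives $\hat t_2 \leq \tfrac{C}{2}[\log^2(k) \lor k\sqrt{\|q^2 e^{-kq}\|_1}]$.

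There is no real obstacle here: the main (and only slightly delicate) point is the algebraic bookkeeping to verify that the two constants imposed on $C$ in the hypothesis of the corollary suffice to absorb all the factors depending on $\alpha$, $\delta$ and $\tilde c_\delta$ that appear when combining $\sqrt{a+b}\leq\sqrt{a}+\sqrt{b}$ with the concentration bound from Theorem~\ref{th:thresh2}. Everything else is a one-step transfer from the statement about $A$ to the statement about $\hat t_2$.
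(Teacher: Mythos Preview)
Your proposal is correct and follows essentially the same route as the paper: apply Theorem~\ref{th:thresh2}, rearrange the lower tail to get the left inequality directly from the definition of $\hat t_2$, and for the upper tail use $\sqrt{a+b}\leq\sqrt a+\sqrt b$ to split into a $\sqrt{B}$-piece and a $\log^2(k)$-piece, each absorbed by the assumed lower bound on $C$. One small clarification: in the paper's bookkeeping both pieces are in fact controlled by the single constant $\frac{4\cdot45\,\alpha^{-1/2}}{(\sqrt\delta-1/2)^{1/2}}$, so only the second condition on $C$ is actually used here; the first condition $C\geq[8\alpha^{-1/2}(1+\tilde c_\delta\sqrt{2\delta^{-1}})]^2$ is, as you suspect in your parenthetical, carried over from the analysis of $T_2$ under $H_1^{(\mathrm{Clo})}$ rather than forced by the $\sqrt{B}$-term in this corollary.
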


\begin{proof}[Proof of Corollary~\ref{cor:thresh2}]

By application of Theorem~\ref{th:thresh2}, we have with probability greater than $1-\delta-6\exp(-k/18)$:

\begin{align*}
2 \alpha^{-1/2} \sqrt{\|(kq)^2\exp(-kq)\|_1} \leq \hat t_2 \leq  2\alpha^{-1/2}\sqrt{
\begin{aligned}
&\frac{2\sqrt{\delta} + 1}{2\sqrt{\delta} - 1} \|(kq)^2 e^{-kq}\|_1 \\
&+ 2010 (\sqrt{\delta} - 1/2)^{-1} \log(k)^4.
\end{aligned}}
\end{align*}
So,
$$
\hat t_2 \leq 2\alpha^{-1/2}\left(\sqrt{\frac{2\sqrt{\delta} + 1}{2\sqrt{\delta} - 1} \|(kq)^2 e^{-kq}\|_1} + \sqrt{2010 (\sqrt{\delta} - 1/2)^{-1} \log(k)^4}\right).
$$
Finally,
$$
\hat t_2 \leq \frac{4\cdot 45 \alpha^{-1/2}}{(\sqrt{\delta} - 1/2)^{1/2}}\left(\sqrt{\|(kq)^2 e^{-kq}\|_1} \vee \log(k)^2\right).
$$

\end{proof}
Let us now sum up the results leading to Proposition~\ref{th:test2}.  
Under $H^{(\text{Clo})}_0(\pi)$, with probability larger than \co{$1-\delta/2 - 2\delta - 2k^{-1}-6\exp(-k/18)$},
$$T_2 \leq 2 (\delta/2)^{-1/2} \sqrt{\|(kq)^2\exp(-kq)\|_1}.$$
And if $C \geq \left[8(\delta/2)^{-1/2}  (1 + \tilde c_\delta \sqrt{2\delta^{-1}}) \right]^2 \vee \frac{8\cdot 45 \alpha^{-1/2}}{(\sqrt{\delta} - 1/2)^{1/2}}$, we have with probability greater than \co{$1-\delta/2-6\exp(-k/18)$}:
\begin{align*}
2 (\delta/2)^{-1/2} \sqrt{\|(kq)^2\exp(-kq)\|_1} \leq \hat t_2
\end{align*} 
So, under $H^{(\text{Clo})}_0(\pi)$, with probability larger than $1- 3\delta - 2k^{-1}-6\exp(-k/18)$,

$$
T_2 \leq \hat t_2.
$$
Under $H^{(\text{Clo})}_1(\pi,\rho)$, if $C \geq \left[8(\delta/2)^{-1/2}  (1 + \tilde c_\delta \sqrt{2\delta^{-1}}) \right]^2$, we have with probability larger than \co{$1-\delta/2- 2\delta - 2k^{-1}-6\exp(-k/18)$}:
$$
T_2 \geq C/2\Big[\log^2(k) \lor \Big(k\sqrt{\|q^2\exp(-kq)\|_1}\Big)\Big]. 
$$
If $C \geq \left[8(\delta/2)^{-1/2}  (1 + \tilde c_\delta \sqrt{2\delta^{-1}}) \right]^2 \vee \frac{8\cdot 45 (\delta/2)^{-1/2}}{(\sqrt{\delta} - 1/2)^{1/2}}$, we have with probability greater than $1-\delta/2-6\exp(-k/18)$:
$$
\hat t_2 \leq \frac{C}{2}\Big[\log^2(k) \lor \Big(k\sqrt{\|q^2\exp(-kq)\|_1}\Big)\Big].
$$
So, under $H^{(\text{Clo})}_1(\pi,\rho)$, with probability larger than \co{$1- 3\delta - 2k^{-1}-6\exp(-k/18)$},
$$
\hat t_2 \leq T_2.
$$

\subsection{Proof of Proposition~\ref{th:test3}}
Proposition~\ref{th:test3} gives guarantees on test $\varphi_1$. This time, the proof will only focus on the variations of $T_1$ since the threshold is not empirical.

\noindent \textbf{Analysis of the moments of $T_1$.}

We have
$$T_1 = \sum_i (X^{(1)} - Y^{(1)}) \mathbf 1\{Y^{(3)}_i = 0\}.$$
So with probability larger than $1-6\exp(-k/18)$ with respect to $(\bar k_m^{(j)})_{m \leq 2, j \leq 3}$
\begin{equation}
\label{eq:ET3}
  \mathbb E T_1 = k\sum_i \Delta_i \exp(-kq_i).  
\end{equation}
And
\begin{align*}
\mathbb V T_1 &\leq \sum_i \mathbb E^{(1)}(X^{(1)} - Y^{(1)})^2 \mathbb E^{(3)}\mathbf 1\{Y^{(3)}_i = 0\}\\
&\leq \sum_i [k(p_i+q_i) + k^2\Delta_i^2]\exp(-kq_i)\\
&\leq 2 k \|q\exp(-kq)\|_1 + \big|k\sum_i \Delta_i \exp(-kq_i)\big| + k^2\|\exp(-kq)\Delta^2\|_1,
\end{align*}
which implies
\begin{align}
\label{eq:VT3}
\sqrt{\mathbb V T_1}
&\leq  \sqrt{2k \|q\exp(-kq)\|_1} + \sqrt{k \left|\sum_i \Delta_i \exp(-kq_i)\right|} + k\sqrt{\|\Delta^2\exp(-kq)\|_1}.
\end{align}

\noindent \textbf{Analysis of $T_1$ under $H^{(\text{Clo})}_0(\pi)$ and $H^{(\text{Clo})}_1(\pi,\rho)$.} Let us inspect the behaviour of statistic $T_1$ under both hypotheses.

\noindent
\textit{Under $H^{(\emph{Clo})}_0(\pi)$.} With probability larger than $1-6\exp(-k/18)$ with respect to $(\bar k_m^{(j)})_{m \leq 2, j \leq 3}$, we have $\mathbb E T_1 = 0$ and $\sqrt{\mathbb V T_1}\leq  \sqrt{2k \|q\exp(-kq)\|_1}$, and so by Chebyshev's inequality with probability larger than $1-\alpha-6\exp(-k/18)$
$$T_1 \leq \sqrt{\frac{2k \|q\exp(-kq)\|_1}{\alpha}} \leq \sqrt{2 k/\alpha}.$$
Note that the result of Proposition~\ref{th:test3} is based on the reunion of two conditions. That is the reason why we will divide the study of $H^{(\text{Clo})}_1(\pi,\rho)$ into two.

\noindent
\textit{Under $H^{(\emph{Clo})}_1(\pi,\rho)$, analysis 1.} Assume first that
\begin{equation}
\label{eq:T3H1Ass1}
  \|\Delta (\mathbf 1\{i \geq I\})_i\|_1 \geq C\Big[ \|q(\mathbf 1\{i \geq I\})_i\|_1 \lor \sqrt{\frac{\log(k)}{k}}\Big],  
\end{equation}
and
\begin{equation}
\label{eq:T3H1Ass2}
  \|\Delta (\mathbf 1\{i \geq I\})_i\|_1 \geq 2 \|\Delta (\mathbf 1\{i < I\})_i\|_1 .
\end{equation}

\noindent
We have
$$
\sum_{i\geq I} (\Delta_i + 2 q_i) = \sum_{i\geq I} (p_i + q_i) \geq \|\Delta (\mathbf 1\{i \geq I\})_i\|_1.
$$
So by Equation~\eqref{eq:T3H1Ass1},
\begin{align}
\label{eq:qDelta}
\sum_{i\geq I} \Delta_i \geq (C-2) \sum_{i\geq I} q_i,
\end{align}
and
\begin{align}
\label{eq:DeltaNorm}
\sum_{i\geq I} \Delta_i C/(C-2) \geq \|\Delta (\mathbf 1\{i \geq I\})_i\|_1.
\end{align}
Then since for any $i\geq I$, $q_i \leq 1/k$, Equation~\eqref{eq:qDelta} yields:
$$
\sum_{i} \Delta_i e^{-kq_i} \geq \sum_{i\geq I} \Delta_i e^{-kq_i} \geq \sum_{i\geq I} \Delta_i e^{-1} \geq (C-2)e^{-1} \sum_{i\geq I} q_i.
$$

\noindent
And again Equation~\eqref{eq:T3H1Ass1} gives:
$$
\sum_{i\geq I} \Delta_i + 2 \sum_{i\geq I} q_i \geq C \sqrt{\frac{\log(k)}{k}}.
$$

\noindent
So 
$$
\sum_{i\geq I} \Delta_i C/(C-2) \geq C \sqrt{\frac{\log(k)}{k}}.
$$

\noindent
So for $C$ large enough, we end up with:
$$\sum_i \Delta_i \exp (-kq_i) \geq \frac{C}{2} \Big[ \|q(\mathbf 1\{i \geq I\})_i\|_1 \lor \sqrt{\frac{\log(k)}{k}}\Big].$$
We then have by Equation~\eqref{eq:ET3}:
\begin{equation}
\label{eq:ET3fromAss1}
 \mathbb E T_1 = k \sum_i \Delta_i \exp (-kq_i)  \geq \frac{C}{2}  \Big[ \big(k\|q(\mathbf 1\{i \geq I\})_i\|_1\big) \lor \sqrt{k}\Big].   
\end{equation}
Now considering Equations~\eqref{eq:T3H1Ass2} and \eqref{eq:DeltaNorm}, we have for $C$ large enough,
$$
3\sum_{i\geq I} \Delta_i \geq 2 \sum_{i< I} |\Delta_i|.
$$
So
$$
9 \sum_{i \geq I} \Delta_i \geq 2 \sum_{i} |\Delta_i|,
$$
that is, by Equation~\eqref{eq:ET3fromAss1},
\begin{equation}
    \label{eq:ET3geqL1}
\frac{9}{2} \E T_1/k  \geq \|\exp(-kq)\Delta\|_1.
\end{equation}
And if the pre-test did not reject the null, then \co{with probability larger than $1- 2\delta - 2k^{-1}-6\exp(-k/18)$}, there exists $+\infty>c_\delta >0$ that only depends on $\delta$ and such that

$$
|\Delta_i| < c_{\delta} \left(\sqrt{q_i \frac{\log(k)}{k}}\vee \frac{\log(k)}{k}\right).
$$
If $q_i \geq \log(k) / k$, then
$|\Delta_i| < c_{\delta} \sqrt{q_i \log(k)/k}$.
So \co{
\begin{align*}
k\sqrt{\|\exp(-kq)\Delta^2\|_1} \leq c_{\delta} \sqrt{ \log(k) \|q\|_1} = c_{\delta} \sqrt{ \log(k)}.
\end{align*}}
If $q_i < \log(k) / k$, then
$|\Delta_i| < c_{\delta} \log(k)/k$.
So $$k\sqrt{\|\exp(-kq)\Delta^2\|_1} \leq \sqrt{c_{\delta} k \log(k) \|\exp(-kq)\Delta \|_1}.$$
Using Equation~\eqref{eq:VT3}, we end up \co{with probability larger than $1- 2\delta - 2k^{-1}-6\exp(-k/18)$}:

\begin{align*}
\sqrt{\mathbb V T_1}\leq  (\sqrt{2k \|q\exp(-kq)\|_1} + c_{\delta} \sqrt{\log k}) &+ \sqrt{k|\sum_i \Delta_i \exp(-kq_i)|} \\
&+ c_\delta \sqrt{k\log(k)}\sqrt{\|\exp(-kq)\Delta\|_1}. 
\end{align*}
Now let us compare the terms from the standard deviation $\sqrt{\mathbb V T_1}$ with $\E T_1$.

For the first term, we have
\co{
$$
(\sqrt{2k \|q\exp(-kq)\|_1} + c_{\delta} \sqrt{\log k }) \leq (2+c_{\delta})\sqrt{k} \leq \frac{2(2+c_{\delta})}{C} \E T_1.
$$
}
For the second term,

$$
\sqrt{k|\sum_i \Delta_i \exp(-kq_i)|} = \sqrt{\E T_1} k^{-1/4} k^{1/4}.
$$
So, since $2ab \leq a^2 +b^2$ for any $a,b$, we have:

$$
\sqrt{k|\sum_i \Delta_i \exp(-kq_i)|} \leq (k^{-1/2} \E T_1 + \sqrt{k})/2 \leq (k^{-1/2} + 2/C ) \E T_1/2.
$$
For the third term, in the same way,
$$
c_\delta \sqrt{k\log(k)}\sqrt{\|\exp(-kq)\Delta\|_1} \leq c_\delta (\sqrt{k}\|\exp(-kq)\Delta\|_1 \log(k) + \sqrt{k})/2.
$$
So we have by Equation~\eqref{eq:ET3geqL1}:
$$
c_\delta \sqrt{k\log(k)}\sqrt{\|\exp(-kq)\Delta\|_1} \leq c_\delta ( 9/2\log(k)/\sqrt{k} + 2/C)\E T_1/2.
$$
And so by Chebyshev's inequality, with probability larger than \co{$1 -\alpha - 2\delta - 2k^{-1}-6\exp(-k/18)$}, we have
$$|T_1-\E T_1| \leq \E T_1 \alpha^{-1/2} (2(2+c_{\delta})\sqrt{2\delta^{-1}}/C + (k^{-1/2} + 2/C)/2 + c_\delta( \frac{9}{2}\frac{\log(k)}{\sqrt{k}} + 2/C)/2 ).$$
So if $C \geq 4\alpha^{-1/2}(1 + 4 \sqrt{2\delta^{-1}} +c_\delta(1+2\sqrt{2\delta^{-1}}))$, and $2 k^{-1/2}\alpha^{-1/2}(1+ 9 c_\delta \log (k) /2) \leq 1$ (which is satisfied for $k$ large enough), we have with probability larger than \co{$1 -\alpha - 2\delta - 2k^{-1}-6\exp(-k/18)$}: 
$$
T_1 \geq \E T_1 / 2 \geq \frac{C}{4}  \Big[ \big(k\|q(\mathbf 1\{i \geq I\})_i\|_1\big) \lor \sqrt{k}\Big].
$$

\noindent
So we have
$$
T_1 \geq \frac{C}{4} \sqrt{k}.
$$

\noindent
\textit{Under $H^{(\emph{Clo})}_1(\pi,\rho)$, analysis 2.} The analysis remains the same as analysis 1, with $I$ replaced by $J$.

So the assumptions become:
\begin{equation*}
  \|\Delta (\mathbf 1\{i \geq J\})_i\|_1 \geq C\Big[ \|q(\mathbf 1\{i \geq J\})_i\|_1 \lor \sqrt{\frac{\log(k)}{k}}\Big],  
\end{equation*}
and
\begin{equation*}
  \|\Delta (\mathbf 1\{i \geq J\})_i\|_1 \geq 2 \|\Delta (\mathbf 1\{i < J\})_i\|_1 .
\end{equation*}
We then obtain, if $C \geq 4\alpha^{-1/2}(1 + 4 \sqrt{2\delta^{-1}} +c_\delta(1+2\sqrt{2\delta^{-1}}))$, and $2 k^{-1/2}\alpha^{-1/2}(1+ 9 c_\delta \log (k) /2) \leq 1$, we have with probability larger than \co{$1 -\alpha - 2\delta - 2k^{-1}-6\exp(-k/18)$}:
$$
T_1 \geq \frac{C}{4}  \Big[\big(k\|q(\mathbf 1\{i \geq J\})_i\|_1\big) \lor \sqrt{k}\Big].
$$

\noindent
So we have
$$
T_1 \geq \frac{C}{4} \sqrt{k}.
$$
Finally, the guarantees on the statistic $T_1$ allow us to conclude the proof.

\subsection{Proof of Theorem~\ref{th:upper} and Corollary~\ref{cor:upper}} 
\label{sec:proofThUpper}
%
%
%
Let us prove Theorem~\ref{th:upper} by combining all the guarantees on the ensemble of tests. From Propositions~\ref{th:test1}, \ref{th:test2}, \ref{th:test3}, we know that whenever $\Delta =0$, all tests accept the null with probability larger than $1-5\delta-2k^{-1}- 6\exp(-k/18)$. Besides, for $\tilde c_\delta$ large enough depending only on $\delta$, whenever there exists $I \geq J_q$ such that
\begin{align*}
&\|\Delta\|_1 \\
&\geq  \tilde c_\delta  \Bigg\{\Bigg[\Big(\sqrt{I-J_q}\frac{\log(k)}{k}\Big)
\lor \Big(\frac{\sqrt{I-J_q}}{\sqrt{k}}\|q^2\exp(-2kq)\|_1^{1/4}\Big) \lor  \|q(\mathbf 1\{i \geq I\})_i\|_1 \Bigg]\\
&\quad\quad\land \|q(\mathbf 1\{i \geq J_q\})_i\|_1  \Bigg\} \lor \Big[ \frac{\Big\|q^{2} \frac{1}{(q\lor k^{-1})^{4/3}} \Big\|_1^{3/4}}{\sqrt{k}} \Big] \lor \Big[ \sqrt{\frac{\log(k)}{k}}\Big],
\end{align*}
at least one test (and so the final test) rejects the null with probability larger than $1-5\delta-2k^{-1}- 6\exp(-k/18)$. 

\subsection{Proofs for the thresholds: Theorems~\ref{th:thresh1} and \ref{th:thresh2}}
\label{sec:proofThresh}

\subsubsection{Proof of Theorem~\ref{th:thresh1} for threshold $\hat t_{2/3}$}
\label{sec:thresh1}

\begin{lemma}\label{lem:bound2.3}
Let $Z\sim \mathcal P(\lambda)$, where $\lambda  \geq 0$. 
It holds that if $\lambda \geq 1$,
$$
e^{-2/3}\lambda^{2/3}/2 \leq \E(Z^{2/3}) \leq \lambda^{2/3},
$$
and if $\lambda \leq 1$,
$$
\lambda e^{-\lambda} \leq \E(Z^{2/3}) \leq \lambda.
$$
\end{lemma}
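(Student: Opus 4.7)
The plan is to split the proof into the four bounds (upper and lower, in each of the regimes $\lambda \geq 1$ and $\lambda \leq 1$), with three of them being essentially one-liners and one requiring a short Chernoff-based argument.

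\textbf{Upper bounds.} For $\lambda \geq 1$, since $x\mapsto x^{2/3}$ is concave on $[0,\infty)$, Jensen's inequality gives $\E(Z^{2/3}) \leq (\E Z)^{2/3} = \lambda^{2/3}$. For $\lambda \leq 1$, I would instead exploit integrality: $Z$ takes values in $\mathbb N$, so $Z^{2/3}\leq Z$ pointwise (trivial at $Z=0$, and at $Z\geq 1$ it follows from $Z^{-1/3}\leq 1$), whence $\E(Z^{2/3}) \leq \E Z = \lambda$.

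\textbf{Lower bound in the small regime $\lambda\leq 1$.} Integrality gives the cheap bound $Z^{2/3} \geq \mathbf 1\{Z\geq 1\}$, so
$$\E(Z^{2/3}) \geq \Po(Z\geq 1) = 1 - e^{-\lambda}.$$
It then remains to check that $1-e^{-\lambda} \geq \lambda e^{-\lambda}$ for every $\lambda \geq 0$, which is immediate: both sides vanish at $\lambda=0$ and the derivative of the difference equals $\lambda e^{-\lambda}\geq 0$.

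\textbf{Lower bound in the large regime $\lambda\geq 1$.} This is the only step with any content. Since $x\mapsto x^{2/3}$ is nondecreasing,
$$\E(Z^{2/3}) \geq (\lambda/e)^{2/3}\, \Po(Z\geq \lambda/e),$$
so the target inequality $e^{-2/3}\lambda^{2/3}/2 \leq \E(Z^{2/3})$ reduces to showing $\Po(Z\geq \lambda/e)\geq 1/2$ for every $\lambda\geq 1$. I would handle this in two overlapping subcases. For $1\leq \lambda \leq e$, we have $\lambda/e \leq 1$, so $\Po(Z\geq \lambda/e) = \Po(Z\geq 1) = 1-e^{-\lambda} \geq 1-e^{-1} > 1/2$. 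For $\lambda > e$ (which is comfortably larger than the cutoff $\log 2/(1-2/e)\approx 2.62$), the standard Chernoff bound for Poisson left tails, optimized at $s=\log(\lambda/a)$ with $a=\lambda/e$, gives
$$\Po(Z\leq \lambda/e) \leq \exp\bigl(a\log(\lambda/a) + a - \lambda\bigr) = \exp\bigl(-\lambda(1-2/e)\bigr) \leq 1/2,$$
from which $\Po(Z\geq \lambda/e)\geq 1/2$ follows. The main (mild) obstacle here is only to make sure the small-$\lambda$ direct bound and the Chernoff bound actually overlap, which they do since $\log 2/(1-2/e)<e$.

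Combining the four pieces yields both stated two-sided bounds exactly with the constants $e^{-2/3}/2$ and $e^{-\lambda}$ that appear in the lemma.
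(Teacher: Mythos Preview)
Your proof is correct. The upper bounds and the small-$\lambda$ lower bound match the paper's arguments almost verbatim (Jensen, the pointwise bound $Z^{2/3}\leq Z$ on integers, and $\E(Z^{2/3})\geq \lambda e^{-\lambda}$, which in the paper is read off directly as the $i=1$ term of the series rather than via $1-e^{-\lambda}\geq \lambda e^{-\lambda}$).

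The only substantive methodological difference is in the lower bound for $\lambda\geq 1$. The paper truncates the series \emph{from above}: it writes $\E(Z^{2/3})=\lambda e^{-\lambda}\sum_{j\geq 0}\lambda^j(j+1)^{-1/3}/j!$, restricts to $j\leq e^2\lambda-1$, uses $(j+1)^{-1/3}\geq (e^2\lambda)^{-1/3}$, and then controls the discarded tail $\sum_{j\geq e^2\lambda}\lambda^j/j!$ by Stirling, obtaining a geometric series. You instead truncate \emph{from below}: $Z^{2/3}\geq (\lambda/e)^{2/3}\mathbf 1\{Z\geq \lambda/e\}$, and then control the left tail $\Po(Z<\lambda/e)$ directly by Chernoff. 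Both are ``Poisson tails are small'' arguments, but yours is shorter and avoids the explicit Stirling manipulation; the paper's version has the minor advantage of being written out as a pure series estimate that also yields the intermediate case $\lambda\geq e^{-2}$ along the way (not needed for the lemma as stated).
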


\begin{proof}[Proof of Lemma~\ref{lem:bound2.3}]\

\noindent \textbf{Upper bound on the expectation.} The function $t \rightarrow t^{2/3}$ is concave. So by application of Jensen's inequality, we have: $$ \E(Z^{2/3}) \leq \lambda^{2/3}.$$ Also we have by definition of the Poisson distribution
\begin{align*}
\E(Z^{2/3}) &= \sum_{i=1}^\infty \frac{\lambda^i}{(i-1)!} e^{-\lambda} i^{-1/3}\\
&= \lambda e^{-\lambda} \sum_{j \geq 0} \frac{\lambda^j}{j!} (j+1)^{-1/3} \leq \lambda.
\end{align*}
We conclude that if $\lambda \geq 1$,
$$
\E(Z^{2/3}) \leq \lambda^{2/3},
$$
and if $\lambda \leq 1$,
$$
\E(Z^{2/3}) \leq \lambda.
$$
\noindent \textbf{Lower bound on the expectation in the case $\lambda \geq e^{-2}$.} We have by definition of the Poisson distribution
\begin{align*}
\E(Z^{2/3}) &= \sum_{i=1}^\infty \frac{\lambda^i}{(i-1)!} e^{-\lambda} i^{-1/3}\\
&\geq \lambda e^{-\lambda} \sum_{0\leq j \leq e^2\lambda-1} \frac{\lambda^j}{j!} (j+1)^{-1/3}\\
&\geq e^{-2/3}\lambda^{2/3} e^{-\lambda} \sum_{0\leq j \leq e^2\lambda-1} \frac{\lambda^j}{j!},
\end{align*}
because $e^2\lambda -1 \geq 0$ here. Then since $j! \geq \sqrt{2\pi} j^je^{-j}$,
\begin{align*}
\E(Z^{2/3}) &\geq e^{-2/3}\lambda^{2/3} \left(1-\frac{e^{-\lambda}}{\sqrt{2\pi}}\sum_{j \geq \floor{e^2\lambda}} \frac{\lambda^j e^j}{j^j}\right) \\
&\geq e^{-2/3}\lambda^{2/3} \left(1-\frac{e^{-\lambda}}{\sqrt{2\pi}}\sum_{j \geq \floor{e^2\lambda}} \frac{\lambda^j e^j}{(c_{\floor{}}e^2\lambda)^j}\right) \\
&\geq e^{-2/3}\lambda^{2/3} \left(1-\frac{e^{-\lambda}}{\sqrt{2\pi}}\sum_{j \geq \floor{e^2\lambda}} (c_{\floor{}} e)^{-j}\right),
\end{align*}
where $1/2 \leq c_{\floor{}} \leq 1$ such that $c_{\floor{}} e^2 \lambda = \floor{e^2\lambda}$ because $e^2 \lambda \geq 1$.
Finally,
\begin{align*}
\E(Z^{2/3})&\geq e^{-2/3}\lambda^{2/3} \left(1-\frac{e^{-\lambda}}{\sqrt{2\pi}} (c_{\floor{}} e)^{-\floor{e^2\lambda}} \frac{1}{1-(c_{\floor{}}e)^{-1}}\right).
\end{align*}
In particular, if $\lambda \geq 1$,
$$
\E(Z^{2/3})\geq e^{-2/3}\lambda^{2/3}/2.
$$
\noindent \textbf{Lower bound in all cases.} Without any assumption on $\lambda$ it holds that 
$\E(Z^{2/3}) \geq \lambda e^{-\lambda}$.

\noindent \textbf{Conclusion on the lower bound.} 

So, if $\lambda\geq 1$, $\E(Z^{2/3}) \geq e^{-2/3}\lambda^{2/3}/2$,
and if $\lambda \leq 1$, $\E(Z^{2/3}) \geq \lambda e^{-\lambda}$

\end{proof}

\begin{lemma}\label{lem:bound4.3}
Let $Z\sim \mathcal P(\lambda)$, where $\lambda  \geq 0$. It holds if $\lambda \geq e^{-2}$ that
$$\E(Z^{4/3}) \leq \lambda^{4/3} e^{8/3} + e^{-\lambda} \frac{1}{1-e^{-1/2}},$$
and if $\lambda < e^{-2}$ that
$$\E(Z^{4/3}) \leq e^{-\lambda} \lambda (2^{1/3} + e).$$
\end{lemma}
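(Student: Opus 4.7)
The plan is to mimic the structure used for the upper bound in Lemma~\ref{lem:bound2.3}: expand $\E(Z^{4/3}) = e^{-\lambda}\sum_{i \geq 1} \frac{\lambda^i}{i!} i^{4/3}$ and split the sum at $i = e^2\lambda$, bounding the ``bulk'' and ``tail'' contributions separately, with the two cases $\lambda \geq e^{-2}$ and $\lambda < e^{-2}$ handled by slightly different decompositions.

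For the case $\lambda \geq e^{-2}$, where $e^2\lambda \geq 1$ so the split is nontrivial, the bulk terms $1 \leq i \leq e^2\lambda$ admit the uniform bound $i^{4/3} \leq (e^2\lambda)^{4/3} = e^{8/3}\lambda^{4/3}$, and summing against the Poisson weights yields at most $e^{8/3}\lambda^{4/3}$. For the tail $i > e^2\lambda$, I would invoke the crude Stirling bound $i! \geq (i/e)^i$ to obtain $\lambda^i/i! \leq (\lambda e/i)^i \leq e^{-i}$, exactly as in Lemma~\ref{lem:bound2.3}, and then observe that $e^{-i/2} i^{4/3} \leq 1$ for all $i \geq 1$ (the continuous maximum is at $i = 8/3$ with value $(8/3)^{4/3} e^{-4/3} < 1$). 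This makes $e^{-i/2}$ a geometric majorant of $e^{-i} i^{4/3}$, so the tail contribution is at most $e^{-\lambda}\sum_{i \geq 1} e^{-i/2} \leq e^{-\lambda}/(1 - e^{-1/2})$, matching the stated bound.

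For the case $\lambda < e^{-2}$, the bulk regime is empty and a different decomposition is needed. I would isolate the $i=1$ term $\lambda$ and use $\lambda^{i-1} \leq 1$ (valid since $\lambda \leq 1$ and $i \geq 2$) to bound $\sum_{i \geq 2} \lambda^i i^{4/3}/i! \leq \lambda \sum_{i \geq 2} i^{4/3}/i!$. The key elementary inequality is $i^{4/3} \leq i(i-1)$ for $i \geq 3$ (equivalent to $i^{1/3} \leq i-1$, which one verifies at $i=3$ and is easy to propagate since the right-hand side grows much faster), giving $i^{4/3}/i! \leq 1/(i-2)!$ and hence $\sum_{i \geq 3} i^{4/3}/i! \leq \sum_{j \geq 1} 1/j! = e-1$. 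Adding the $i=2$ contribution $2^{4/3}/2 = 2^{1/3}$ and the isolated $i=1$ term yields $\E(Z^{4/3}) \leq \lambda e^{-\lambda}\bigl(1 + 2^{1/3} + (e-1)\bigr) = \lambda e^{-\lambda}(2^{1/3}+e)$.

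The main obstacle I expect is calibrating the elementary inequalities sharply enough to match the stated constants. A naive replacement such as $i^{4/3} \leq i^2$ would only give $\sum_{i \geq 2} i^{4/3}/i! \leq 2e-1$, which is too loose; the argument relies on the tighter bound $i^{4/3} \leq i(i-1)$ valid from $i=3$ onwards. Similarly, the choice of geometric ratio $e^{-1/2}$ in Case~1 is dictated precisely by $(8/3)^{4/3} e^{-4/3} \leq 1$, so a larger ratio would fail to dominate $e^{-i} i^{4/3}$ uniformly. Beyond these verifications, the argument is a routine adaptation of the proof of Lemma~\ref{lem:bound2.3}.
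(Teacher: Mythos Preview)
Your proposal is correct and follows essentially the same approach as the paper: the split at $i=e^2\lambda$ with the Stirling-type bound and geometric majorant in the first case, and the factorisation $i^{4/3}/i! = i^{1/3}/(i-1)!$ together with the elementary inequality $i^{1/3}\leq i-1$ for $i\geq 3$ in the second case, are exactly what the paper does (the paper just reindexes the latter as $(j+2)^{1/3}/(j+1)\leq 1$ for $j\geq 1$).
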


\begin{proof}[Proof of Lemma~\ref{lem:bound4.3}]
Assume that $\lambda \geq e^{-2}$. We have by definition of the Poisson distribution
\begin{align*}
    \E(Z^{4/3}) &= \sum_{i\geq 1} \frac{\lambda^i}{i!} e^{-\lambda} i^{4/3}\\
    &= \sum_{1\leq i \leq e^2\lambda} \frac{\lambda^i}{i!} e^{-\lambda} i^{4/3} + \sum_{i> e^2\lambda} \frac{\lambda^i}{i!} e^{-\lambda} i^{4/3}\\
    &\leq \lambda^{4/3} e^{8/3} + e^{-\lambda} \sum_{i\geq e^2\lambda} \frac{\lambda^ie^i}{i^i} i^{4/3},
\end{align*}    
using the inequality: $i! \geq i^i/e^i$. Then,
\begin{align*}
    \E(Z^{4/3})&\leq \lambda^{4/3} e^{8/3} + e^{-\lambda} \sum_{i\geq e^2\lambda} \frac{\lambda^ie^i}{(e^2\lambda)^i} i^{4/3}\\
    &= \lambda^{4/3} e^{8/3} + e^{-\lambda} \sum_{i\geq e^2\lambda} i^{4/3} e^{-i}\\
    &\leq \lambda^{4/3} e^{8/3} + e^{-\lambda} \sum_{i\geq e^2\lambda} e^{-i/2}\\
    &\leq \lambda^{4/3} e^{8/3} + e^{-\lambda} \frac{1}{1-e^{-1/2}}.
\end{align*}
Now assume that $e^2 \lambda < 1$. Then
\begin{align*}
    E(Z^{4/3}) &= \sum_{i\geq 1} \frac{\lambda^i}{i!} e^{-\lambda} i^{4/3}\\
    &\leq e^{-\lambda} \lambda \left(1+ \sum_{j\geq 0} \frac{(j+2)^{1/3}}{j+1} \frac{1}{j!}\right)\\
    &\leq e^{-\lambda} \lambda \left(1+ 2^{1/3} + \sum_{j\geq 1} \frac{1}{j!}\right)\\
    &= e^{-\lambda} \lambda (2^{1/3} + e)
\end{align*}

\end{proof}

\begin{proof}[Proof of Theorem~\ref{th:thresh1}]
By application of Lemma~\ref{lem:bound2.3}, we have the following bounds on the expectation of the empirical threshold with probability larger than $1-6\exp(-k/18)$ with respect to $(\bar k_m^{(j)})_{m \leq 2, j \leq 3}$,
\begin{align*}
&\left\|\frac{e^{-2/3}q^{2/3}}{2} \mathbf 1\{q\geq 1/k\}\right\|_1 \\
&\quad\quad+ \left\| k^{1/3}q e^{-kq}  \mathbf 1\{q\leq 1/k\} \right\|_1 \leq k^{-2/3}\E \|(Y^{(1)})^{2/3}\|_1\leq \left\|q^{2/3}\mathbf 1\{q\geq 1/k\}\right \|_1 \\
&\quad\quad\quad\quad\quad\quad\quad\quad\quad
\quad\quad\quad\quad\quad\quad\quad\quad\quad\quad\quad
\quad\quad\quad\quad\quad\quad
+ \left\| q \mathbf 1\{q\leq 1/k\}\right \|_1.
\end{align*}
Now let us consider the standard deviation of the empirical threshold. We have by application of Lemma~\ref{lem:bound4.3}, with probability larger than $1-6\exp(-k/18)$ with respect to $(\bar k_m^{(j)})_{m \leq 2, j \leq 3}$
\begin{align*} 
&k^{-2/3}\sqrt{\V \|(Y^{(1)})^{2/3}\|_1} \\
&\leq \sqrt{\|[q^{4/3} e^{8/3} + \frac{1}{1-e^{-1/2}} k^{-4/3} e^{-kq}] \mathbf 1\{q\geq 1/k\} \|_1} \\
&\quad+ \sqrt{\|k^{-1/3}(2^{1/3} +e)qe^{-kq} \mathbf 1\{q\leq 1/k\} \|_1}\\
&\leq \sqrt{\|q^{4/3} e^{8/3} \mathbf 1\{q\geq 1/k\} \|_1 +1} + \sqrt{\|k^{-1/3}(2^{1/3} +e)qe^{-kq} \mathbf 1\{q\leq 1/k\} \|_1}\\
&\leq \sqrt{\|q^{4/3} e^{8/3} \mathbf 1\{q\geq 1/k\} \|_1 +1} + \sqrt{(2^{1/3} +e)}\\
&\leq \sqrt{e^{8/3} +1} + \sqrt{(2^{1/3} +e)} = C_1.
\end{align*}
Then by application of Chebyshev's inequality, we have with probability greater than $1-\beta-6\exp(-k/18)$,
\begin{align*}
&\left\|\frac{e^{-2/3}q^{2/3}}{2} \mathbf 1\{q\geq 1/k\}\right \|_1 \\
&+ \left\| k^{1/3}q e^{-kq} \mathbf 1\{q\leq 1/k\} \right\|_1 -\frac{C_1}{\sqrt \beta} \\
&\quad\quad\quad\quad\quad\quad\quad\quad\quad\quad\quad\quad\quad\quad\quad
\leq k^{-2/3} \|(Y^{(1)})^{2/3}\|_1 \\
&\quad\quad\quad\quad\quad\quad\quad\quad\quad\quad\quad\quad\quad\quad\quad\quad\quad\quad\quad\quad\quad\quad\quad\quad\quad
\leq  \left\|q^{2/3}\mathbf 1\{q\geq 1/k\}\right \|_1 \\
&\quad\quad\quad\quad\quad\quad\quad\quad\quad\quad\quad\quad\quad\quad\quad\quad\quad\quad\quad\quad\quad\quad\quad\quad
+ \left\| q \mathbf 1\{q\leq 1/k\}\right \|_1+\frac{C_1}{\sqrt \beta}.
\end{align*}
Now, on the one hand, we have that
$$k^{4/3}q^2 \mathbf 1\{q \leq 1/k\} \leq k^{1/3}q \mathbf 1\{q \leq 1/k\},$$
so $$\Big\|\Big(\frac{1}{q \lor k^{-1}}\Big)^{4/3}   q^2 \mathbf 1\{q \leq 1/k\}\Big\|_1 \leq\|k^{4/3}q^2 \mathbf 1\{q \leq 1/k\}\|_1 \leq \|k^{1/3}q \mathbf 1\{q \leq 1/k\}\|_1.$$
And on the other hand, $$\Big\|\Big(\frac{1}{q \lor k^{-1}}\Big)^{4/3}   q^2 \mathbf 1\{q\geq 1/k\}\Big\|_1 \leq \| q^{2/3} \mathbf 1\{q\geq 1/k\}\|_1.$$
So with probability larger than $1-\beta-6\exp(-k/18)$ we have
\begin{align*}
&(e^{-2/3}/2 + 1)\Big\|\Big(\frac{1}{q \lor k^{-1}}\Big)^{4/3}   q^2\Big\|_1\\
&\quad\quad\quad\quad\quad\quad\quad\quad\quad
\leq k^{-2/3} \|(Y^{(1)})^{2/3}\|_1 +C_{2/3}/\sqrt \beta\\
 &\quad\quad\quad\quad\quad\quad\quad\quad\quad\quad\quad\quad\quad\quad\quad\quad\quad
 \leq \left\|q^{2/3}\mathbf 1\{q\geq 1/k\}\right \|_1 + 1 +2C_{2/3}/\sqrt \beta.
\end{align*}
\end{proof}

\subsubsection{Proof of Theorem~\ref{th:thresh2} for threshold $\hat t_2$}
\label{sec:thresh2}

\begin{lemma}
\label{lem:Et2}
Consider three independent random vectors $Y^{(1)}$, $Y^{(2)}$ and $Y^{(3)}$ distributed according to $\mathcal P(kq)$. We obtain the following expectation:
$$
\E(\|Y^{(1)}Y^{(2)} \mathbf 1\{Y^{(3)}=0\}\|_1) = \|(kq)^2 e^{-kq}\|_1.
$$
\end{lemma}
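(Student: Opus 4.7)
The plan is to exploit the independence of $Y^{(1)}$, $Y^{(2)}$, $Y^{(3)}$ together with the coordinate-wise independence of the Poisson components to reduce the expectation to a product of three one-dimensional expectations indexed by each coordinate.

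First I would write out the $L_1$-norm as a sum over coordinates:
\[
\|Y^{(1)}Y^{(2)} \mathbf 1\{Y^{(3)}=0\}\|_1 = \sum_{i\leq d} Y^{(1)}_i Y^{(2)}_i \mathbf 1\{Y^{(3)}_i = 0\},
\]
where the product is coordinate-wise by convention in the paper. By linearity of expectation, the left-hand side equals $\sum_i \E\bigl[Y^{(1)}_i Y^{(2)}_i \mathbf 1\{Y^{(3)}_i = 0\}\bigr]$.

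Next, using that $Y^{(1)}, Y^{(2)}, Y^{(3)}$ are mutually independent, the expectation of the product factorizes as
\[
\E\bigl[Y^{(1)}_i Y^{(2)}_i \mathbf 1\{Y^{(3)}_i = 0\}\bigr] = \E[Y^{(1)}_i]\,\E[Y^{(2)}_i]\,\Po(Y^{(3)}_i=0).
\]
Since $Y^{(j)}_i \sim \mathcal P(kq_i)$, we have $\E[Y^{(1)}_i] = \E[Y^{(2)}_i] = kq_i$ and $\Po(Y^{(3)}_i = 0) = e^{-kq_i}$, giving $(kq_i)^2 e^{-kq_i}$ for each $i$.

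Summing over $i \leq d$ yields $\|(kq)^2 e^{-kq}\|_1$, which is the stated identity. No obstacle is expected here; this lemma is purely a direct moment computation that sets up the variance bound to be proven next in order to obtain the concentration of $\hat t_2$ needed in Theorem~\ref{th:thresh2}.
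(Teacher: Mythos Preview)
Your proof is correct and follows essentially the same approach as the paper: linearity of expectation to push the sum inside, then factorization by independence of $Y^{(1)}, Y^{(2)}, Y^{(3)}$, and finally the standard Poisson moments $\E[Y^{(j)}_i]=kq_i$ and $\Po(Y^{(3)}_i=0)=e^{-kq_i}$. The only cosmetic difference is that the paper keeps the computation in vector notation while you spell it out coordinate-wise.
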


\begin{proof}
Firstly,
$$
\E(\|Y^{(1)}Y^{(2)} \mathbf 1\{Y^{(3)}=0\}\|_1) = \|\E(Y^{(1)}Y^{(2)} \mathbf 1\{Y^{(3)}=0\})\|_1.
$$
Now
$$
\E(Y^{(1)}Y^{(2)} \mathbf 1\{Y^{(3)}=0\}) = \E(Y^{(1)})\E(Y^{(2)}) \Po(Y^{(3)}=0)= (kq)^2 e^{-kq}.
$$
So
$$
\E(\|Y^{(1)}Y^{(2)} \mathbf 1\{Y^{(3)}=0\}\|_1) = \|(kq)^2 e^{-kq}\|_1.
$$
\end{proof}

\begin{lemma}
\label{lem:Vt2}
Consider three independent random vectors $Z^{(1)}$, $Z^{(2)}$ and $Z^{(3)}$ distributed according to $\mathcal P(kq)$ and whose elements are independent, too. We obtain the following variance:
$$
\V(\|Z^{(1)}Z^{(2)} \mathbf 1\{Z^{(3)}=0\}\|_1) = \|((kq)^2 + kq)^2 e^{-kq} - (kq)^4 e^{-2kq} \|_1.
$$
\end{lemma}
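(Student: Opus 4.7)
The plan is to compute the variance componentwise, exploiting that the random vectors $Z^{(1)}, Z^{(2)}, Z^{(3)}$ are independent across both the superscript and across coordinates. Write
$$\|Z^{(1)}Z^{(2)} \mathbf 1\{Z^{(3)}=0\}\|_1 = \sum_{i \leq d} W_i, \qquad W_i := Z^{(1)}_i Z^{(2)}_i \mathbf 1\{Z^{(3)}_i = 0\}.$$
Since the $W_i$'s are mutually independent (each one depends only on the $i$-th coordinate of the three sample vectors, which are independent both in the superscript and the index), the variance of the sum equals the sum of the variances, so it suffices to prove that for every $i$,
$$\V(W_i) = \big((kq_i)^2 + kq_i\big)^2 e^{-kq_i} - (kq_i)^4 e^{-2kq_i}.$$

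The first step is to compute the second moment $\E(W_i^2)$. Using $\mathbf 1\{Z^{(3)}_i=0\}^2 = \mathbf 1\{Z^{(3)}_i=0\}$ and the independence of $Z^{(1)}_i, Z^{(2)}_i, Z^{(3)}_i$, we get
$$\E(W_i^2) = \E\big((Z^{(1)}_i)^2\big)\, \E\big((Z^{(2)}_i)^2\big)\, \Po(Z^{(3)}_i = 0).$$
For a Poisson random variable $Z \sim \mathcal P(\lambda)$, the standard identity $\E(Z^2) = \lambda + \lambda^2$, together with $\Po(Z^{(3)}_i = 0) = e^{-kq_i}$, gives
$$\E(W_i^2) = \big(kq_i + (kq_i)^2\big)^2 e^{-kq_i}.$$

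The second step is to subtract $(\E W_i)^2$, which is provided by the previous lemma: $\E W_i = (kq_i)^2 e^{-kq_i}$, so $(\E W_i)^2 = (kq_i)^4 e^{-2kq_i}$. Summing over $i$ and re-expressing in the $\|\cdot\|_1$ notation yields the claimed identity. There is no real obstacle; the only point needing care is the invocation of coordinate-wise independence, which follows from the Poissonization setup of Section~\ref{ss:poisson}, and the correct use of the identity $\E(Z^2) = \lambda + \lambda^2$ for a Poisson variable.
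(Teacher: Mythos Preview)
Your proof is correct and follows essentially the same approach as the paper: decompose the sum into coordinate-wise independent terms, use $\V(W_i)=\E(W_i^2)-(\E W_i)^2$, compute the second moment via independence and the Poisson identity $\E(Z^2)=\lambda+\lambda^2$, and subtract the squared mean from the preceding lemma.
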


\begin{proof}
Each sample $Z^{(i)}$ consists in a vector of independent elements. So
$$
\V(\|Z^{(1)}Z^{(2)} \mathbf 1\{Z^{(3)}=0\}\|_1) = \|\V(Z^{(1)}Z^{(2)} \mathbf 1\{Z^{(3)}=0\})\|_1.
$$
Now
$$
\V(Z^{(1)}Z^{(2)} \mathbf 1\{Z^{(3)}=0\}) = \cro{\E((Z^{(1)})^2) \E((Z^{(2)})^2) - \E((Z^{(1)}))^2 \E((Z^{(2)}))^2} \Po(Z^{(3)} = 0)
$$
by independence between $Z^{(1)}$, $Z^{(2)}$ and $Z^{(3)}$.

And 
$$
\E((Z^{(1)})^2) = E((Z^{(2)})^2) = (kq)^2 + kq.
$$
So 
$$
\V(\|Z^{(1)}Z^{(2)} \mathbf 1\{Z^{(3)}=0\}\|_1) = \|((kq)^2 + kq)^2 e^{-kq} - (kq)^4 e^{-2kq} \|_1.
$$
\end{proof}

\begin{proof}[Proof of Theorem~\ref{th:thresh2}]
By application of lemma~\ref{lem:Et2}, we have for the expectation of the empirical threshold with probability larger than $1-6\exp(-k/18)$ with respect to $(\bar k_m^{(j)})_{m \leq 2, j \leq 3}$:
$$
\E(\|Y^{(1)}Y^{(2)} \mathbf 1\{Y^{(3)}=0\}\|_1) = \|(kq)^2 e^{-kq}\|_1.
$$
Then by application of lemma~\ref{lem:Vt2}, we have for the standard deviation of the empirical threshold with probability larger than $1-6\exp(-k/18)$ with respect to $(\bar k_m^{(j)})_{m \leq 2, j \leq 3}$:
$$
\sqrt{\V(\|Y^{(1)}Y^{(2)} \mathbf 1\{Y^{(3)}=0\}\|_1)} \leq \sqrt{2} (\sqrt{\|(kq)^4 e^{-kq}\|_1} + \sqrt{\|(kq)^2 e^{-kq}\|_1}).
$$
In particular,
$$
\E(\|Y^{(1)}Y^{(2)} \mathbf 1\{Y^{(3)}=0\}\|_1/k^2) = \|q^2 e^{-kq}\|_1,
$$
and
$$
\sqrt{\V(\|Y^{(1)}Y^{(2)} \mathbf 1\{Y^{(3)}=0\}\|_1/k^2)} \leq \sqrt{2} (\sqrt{\|q^4 e^{-kq}\|_1} + \sqrt{\|q^2 e^{-kq}\|_1}/k).
$$
Let us compare both terms of the standard deviation with the expectation.

Firstly,
\begin{align*}
\sqrt{\|q^2 e^{-kq}\|_1}/k &\leq 1/2 (\|q^2 e^{-kq}\|_1\sqrt{\delta}/4 + 4/(\sqrt{\delta} k^2)) \\
&\leq 1/2 (\|q^2 e^{-kq}\|_1\sqrt{\delta}/4 + 4\log(k)^4/(\sqrt{\delta} k^2)).
\end{align*}
Secondly, for an upper bound on $\sqrt{\|q^4 e^{-kq}\|_1}$, we consider two regimes.

\noindent \textbf{Study of the large $q_i$'s.}

We consider $q_i \geq 5\log (k)/k$.
Then we have the following upper bound on the number of such $q_i$'s, $$\#\{i | q_i \geq 5\log (k)/k\} \leq 1/(5\log (k)/k).$$
So
$$
\sqrt{\|q^4 e^{-kq} \mathbf 1\{q \geq 5\log (k)/k\}\|_1} \leq k^{-2}/ \sqrt{5 \log k} \leq 3 \log(k)^4/k^2.
$$

\noindent \textbf{Study of the small $q_i$'s.}

We consider $q_i < 5\log (k)/k$.
\begin{align*}
    \sqrt{\|q^4 e^{-kq} \mathbf 1\{q < 5\log (k)/k\}\|_1} &\leq  5\log (k)/k \sqrt{\|q^2 e^{-kq}\|_1} \\
    &\leq 1/2 (\|q^2 e^{-kq}\|_1\sqrt{\delta}/4 + 100\log (k)^2/(\sqrt{\delta}k^2)) \\
    &\leq  1/2 (\|q^2 e^{-kq}\|_1\sqrt{\delta}/4 + 2000\log(k)^4/(\sqrt{\delta}k^2)).
\end{align*}
Finally,
\begin{align*}
&\sqrt{\V(\|Y^{(1)}Y^{(2)} \mathbf 1\{Y^{(3)}=0\}\|_1/k^2)} \\
&\leq \frac{\sqrt{\delta}}{2} \E(\|Y^{(1)}Y^{(2)} \mathbf 1\{Y^{(3)}=0\}\|_1/k^2) + 1005\log(k)^4/(\sqrt{\delta}k^2).
\end{align*}
So by application of Chebyshev's inequality, we have with probability greater than $1-\delta-6\exp(-k/18)$:
$$
|\|Y^{(1)}Y^{(2)} \mathbf 1\{Y^{(3)}=0\}\|_1 - \|(kq)^2 e^{-kq}\|_1|\leq 1/2 \|(kq)^2 e^{-kq}\|_1 + \frac{1005}{\delta} \log(k)^4.
$$
\end{proof}

\section{Proofs of the lower bounds: Propositions~\ref{th:fromVal}, \ref{th:lower}, \ref{th:lower2} and Theorem~\ref{th:lowerall}}
\label{sec:proofLower}

\subsection{Proof of Propositions~\ref{th:fromVal}}

The lower bound obtained in \cite{valiant2017automatic,balakrishnan2017hypothesis} for identity testing will also be useful to us as a lower bound for closeness testing.

\begin{proof}[Proof of Proposition~\ref{th:fromVal} and adaptation to Theorem~\ref{th:lowerall}]

As a corollary from Theorem~1 in \cite{balakrishnan2017hypothesis}, we have that there exists a constant $c_\gamma'>0$ that depends only on $\gamma$ such that for any $q \in \mathbf P_\pi$
\begin{align*}
	&\rho_\gamma^*(H^{(\text{Clo})}_0(\pi),~H^{(\text{Clo})}_1(\pi,\rho_\gamma^*); k) \\
	&\quad\quad\quad\quad\quad\quad\quad\quad\quad\quad \geq c_\gamma' \min_{I}\Big[\frac{\|q_{(.)}^{2/3}(\mathbf 1\{2 \leq i< I\})_i\|_1^{3/4}}{\sqrt{k}} \lor \frac{1}{k} + \|q_{(.)}(\mathbf 1\{i\geq I\})_i\|_1\Big].
\end{align*}
In particular, taking $q = \pi$, there exists a constant $c_\gamma'>0$ that depends only on $\gamma$ and such that
\begin{align*}
	&\rho_\gamma^*(H^{(\text{Clo})}_0(\pi),~H^{(\text{Clo})}_1(\pi,\rho_\gamma^*); k)  \\
	&\quad\quad\quad\quad\quad\quad\quad\quad\quad\quad\geq c_\gamma' \min_{I}\Big[\frac{\|\pi_{(.)}^{2/3}(\mathbf 1\{2 \leq i< I\})_i\|_1^{3/4}}{\sqrt{k}} \lor \frac{1}{k} + \|\pi_{(.)}(\mathbf 1\{i\geq I\})_i\|_1\Big].
\end{align*}
\end{proof}

\noindent
We then adapt Proposition~\ref{th:fromVal} to the purpose of obtaining Theorem~\ref{th:lowerall}.

\begin{proposition}
\label{th:lower3}
Let $\pi\in \mathbf{P}$ and $\gamma>0$. There exists a constant $c_\gamma>0$ that depends only on $\gamma$ such that 
$\rho_\gamma^*(H^{(\emph{Clo})}_0(\pi),~H^{(\emph{Clo})}_1(\pi,\rho_\gamma^*); k) \geq \frac{c_\gamma}{\sqrt{k}} \left[\left\|\pi^{2/3} \frac{1}{(\pi \lor k^{-1})^{4/3}} \right\|_1^{3/4} \lor 1\right].$
\end{proposition}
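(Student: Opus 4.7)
The plan is to deduce Proposition~\ref{th:lower3} as a corollary of Proposition~\ref{th:fromVal} via an algebraic decomposition of the norm on the right-hand side, together with a short independent two-point Le Cam argument establishing the universal $1/\sqrt{k}$ lower bound encoded by the ``$\vee 1$'' factor. Reading the statement in light of the matching upper-bound term $\|q^{2}/(q\vee k^{-1})^{4/3}\|_1^{3/4}/\sqrt{k}$ that appears in Theorem~\ref{th:upper} and Corollary~\ref{cor:upper}, I interpret the numerator in the displayed norm as $\pi^{2}$ rather than $\pi^{2/3}$.

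First I would split the norm at the threshold $1/k$ so as to isolate the coordinates above $J_\pi$ from those below:
\begin{equation*}
\Bigl\|\tfrac{\pi^{2}}{(\pi\vee k^{-1})^{4/3}}\Bigr\|_1 \;=\; \underbrace{\sum_{i\leq J_\pi}\pi_{(i)}^{2/3}}_{=:A} \;+\; \underbrace{k^{4/3}\sum_{i>J_\pi}\pi_{(i)}^{2}}_{=:B}.
\end{equation*}
Using the subadditive bound $(A+B)^{3/4}\leq A^{3/4}+B^{3/4}$, it suffices to control $A^{3/4}/\sqrt{k}$, $B^{3/4}/\sqrt{k}$, and the universal $1/\sqrt{k}$ term separately. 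I would then invoke Proposition~\ref{th:fromVal} with the specific choice $m=J_\pi+1$, which yields $\rho_\gamma^*\gtrsim (\sum_{i=2}^{J_\pi}\pi_{(i)}^{2/3})^{3/4}/\sqrt{k}\vee (1/k)\vee \sum_{i>J_\pi}\pi_{(i)}$.

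The head piece is handled by isolating the $i=1$ term from $A$: since $\pi_{(1)}^{2/3}\leq 1$, the elementary inequality $(1+x)^{3/4}\leq 1+x^{3/4}$ gives $A^{3/4}\leq 1+(\sum_{i=2}^{J_\pi}\pi_{(i)}^{2/3})^{3/4}$; after division by $\sqrt{k}$ the first summand is absorbed into the $1/\sqrt{k}$ piece of the target bound, and the second is exactly one of the lower bounds from Proposition~\ref{th:fromVal}. For the tail piece I use $\pi_{(i)}\leq 1/k$ when $i>J_\pi$ to write $\sum_{i>J_\pi}\pi_{(i)}^{2}\leq k^{-1}\sum_{i>J_\pi}\pi_{(i)}$, whence $B^{3/4}/\sqrt{k}\leq k^{-1/4}(\sum_{i>J_\pi}\pi_{(i)})^{3/4}$; a case split on whether $\sum_{i>J_\pi}\pi_{(i)}$ exceeds $1/k$ shows this expression is dominated by the $\sum_{i>J_\pi}\pi_{(i)}$ or $1/k$ terms from Proposition~\ref{th:fromVal}.

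Finally, the $1/\sqrt{k}$ piece is obtained by a two-point Le Cam argument that crucially exploits the composite structure of $H^{(\text{Clo})}_0(\pi)$. I would select some $q^\star\in\mathbf{P}_\pi$ carrying two coordinates of constant-order mass --- which exists for every $\pi$ by a short case analysis on the level set $S_\pi(i_0)$ containing $\pi_{(1)}$ --- and consider the priors $(p,q)=(q^\star,q^\star)$ under the null versus $(p,q)=(q^\star+(\rho/2)(e_a-e_b),q^\star)$ under the alternative, where $a,b$ are the two chosen coordinates. A standard Bernoulli--KL computation then shows the joint laws of the observations are within constant total variation, so that $\rho\lesssim 1/\sqrt{k}$ is indistinguishable. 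The main obstacle in the proof is precisely the existence of such a $q^\star\in\mathbf{P}_\pi$ in the degenerate regime where $\pi$ is itself nearly a point mass; everything else is algebraic bookkeeping around the level-set definition of $\mathbf{P}_\pi$ and Proposition~\ref{th:fromVal}.
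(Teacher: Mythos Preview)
Your proposal contains a genuine logical gap in how you invoke Proposition~\ref{th:fromVal}. That proposition asserts
\[
\rho_\gamma^* \;\geq\; c_\gamma \,\min_{m}\Bigl[\tfrac{\|(\pi_{(i)}^{2/3}\mathbf 1\{2\leq i<m\})_i\|_1^{3/4}}{\sqrt{k}}\vee \tfrac{1}{k}\vee \|(\pi_{(i)}\mathbf 1\{i\geq m\})_i\|_1\Bigr],
\]
a \emph{minimum} over $m$. Evaluating the bracket at the particular choice $m=J_\pi+1$ produces an \emph{upper} bound on that minimum, not a lower bound on $\rho_\gamma^*$. In particular, your claim $\rho_\gamma^*\gtrsim \sum_{i>J_\pi}\pi_{(i)}$ is false in general: take $\pi$ uniform on $\{1,\ldots,d\}$ with $d\gg k$, so that $J_\pi\leq 2$ and $\sum_{i>J_\pi}\pi_{(i)}\approx 1$, while the Valiant bound is only of order $d^{1/4}/\sqrt{k}\ll 1$. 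Your tail argument for $B^{3/4}/\sqrt{k}$ therefore collapses exactly when the tail mass is non-trivial.

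The paper's route is to let $I^*$ denote the actual minimizer in Proposition~\ref{th:fromVal} and first establish $I^*\geq J_\pi$: on the range $I^*<i\leq J_\pi$ one has $\pi_{(i)}\geq 1/k$, and the monotonicity inequality $\|\pi_{(\cdot)}^{2/3}\mathbf 1\{I^*<i\leq J_\pi\}\|_1^{3/4}/\sqrt{k}\leq \|\pi_{(\cdot)}\mathbf 1\{I^*<i\leq J_\pi\}\|_1$ contradicts minimality of $I^*$ if $I^*<J_\pi$. With $I^*\geq J_\pi$ secured, both terms at the minimizer are used simultaneously: the head $A$ together with the portion of $B$ on $(J_\pi,I^*)$ are absorbed into the $2/3$-norm term at $I^*$ (using $\pi_{(i)}^{2/3}\geq k^{4/3}\pi_{(i)}^2$ when $\pi_{(i)}\leq 1/k$), while the residual tail on $[I^*,d]$ is controlled by $h(I^*)=\sum_{i\geq I^*}\pi_{(i)}$ combined with the universal $1/k$ piece. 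The split has to sit at $I^*$, not at $J_\pi$, because only at the minimizer do you simultaneously control both the $2/3$-norm and the $L_1$-tail.

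As for the $1/\sqrt{k}$ factor, the paper does not run a separate two-point Le~Cam argument: it is extracted from the same case analysis, since whenever the $L_1$-tail at $I^*$ is at most $1/2$ the head $2/3$-norm is bounded below by a constant, and Proposition~\ref{th:fromVal} already delivers $\rho_\gamma^*\gtrsim 1/\sqrt{k}$. Your Le~Cam construction is a different route, but the obstacle you flag --- producing a $q^\star\in\mathbf P_\pi$ with two coordinates of constant mass when $\pi$ is nearly a point mass --- is real, and the paper sidesteps it entirely.
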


\begin{proof}[Proof of Proposition~\ref{th:lower3}]

From Proposition~\ref{th:fromVal}, there exists a constant $c_\gamma'>0$ that depends only on $\gamma$ and such that
\begin{align*}
	&\rho_\gamma^*(H^{(\text{Clo})}_0(\pi),~H^{(\text{Clo})}_1(\pi,\rho_\gamma^*); k) \\
	&\quad\quad\quad\quad\quad\quad\quad\quad\quad\quad\geq c_\gamma' \min_{I}\Big[\frac{\|\pi_{(.)}^{2/3}(\mathbf 1\{2 \leq i< I\})_i\|_1^{3/4}}{\sqrt{k}} \lor \frac{1}{k} \vee \|\pi_{(.)}(\mathbf 1\{i\geq I\})_i\|_1\Big].
\end{align*}
Let $I^*$ denote one of the $I$'s where the minimum  from the right-hand side of the previous inequality is attained.

\textit{Case 1: $\|\pi_{(.)}(\mathbf 1\{i> I^*\})_i\|_1 > 1/2$.} The result follows immediately.

\textit{Case 2: $\|\pi_{(.)}(\mathbf 1\{i> I^*\})_i\|_1 \leq 1/2$}. So $\|\pi_{(.)}^{2/3}(\mathbf 1\{i\leq I^*\})_i\|_1 \geq 1/2$ since $\|\pi\|_1 = 1$, implying that $\rho_{\gamma}^*  \geq c_\gamma' (1/2)^{3/4}/\sqrt{k}$.

\textit{Subcase 1: $I^* \geq J_{\pi}$.} We have $\|\pi_{(.)}(\mathbf 1\{i> I^*\})_i\|_1 \geq \|\pi^2 (\mathbf 1\{i> I^*\})_i\|_1\sqrt{k}$.

\textit{Subcase 2: $I^* < J_{\pi}$.} Having for all $J_{\pi} \geq i> I^*$, $\pi_{(i)} \geq 1/k$ implies that we have 
$$\|\pi_{(.)}^{2/3}(\mathbf 1\{J_{\pi} \geq i> I^*\})_i\|_1 \leq k^{1/3}\|\pi_{(.)}(\mathbf 1\{J_{\pi} \geq i> I^*\})_i\|_1.$$
And so
$$\|\pi_{(.)}^{2/3}(\mathbf 1\{J_{\pi} \geq i> I^*\})_i\|_1^{3/4} \leq k^{1/2}\|\pi_{(.)}(\mathbf 1\{J_{\pi} \geq i> I^*\})_i\|_1$$
since $\|\pi_{(.)}(\mathbf 1\{J_{\pi} \geq i> I^*\})_i\|_1 \leq 1.$

 Finally $$\|\pi_{(.)}^{2/3}(\mathbf 1\{J_{\pi} \geq i> I^*\})_i\|_1^{3/4} /\sqrt{k}\leq \|\pi_{(.)}(\mathbf 1\{J_{\pi} \geq i> I^*\})_i\|_1,$$
  which implies that $I^*$ must be larger than $J_{\pi}$. This concludes the proof in any case.
\end{proof}

This concludes the proof of Proposition~\ref{th:fromVal} .

\subsection{Classical method for proving lower bounds: the Bayesian approach}
\label{sec:bayes}

Let us fix some $\gamma \in(0,1)$. Finding a lower bound on $\rho_\gamma^*(H^{(\text{Clo})}_0(\pi),H^{(\text{Clo})}_1(\pi); k)$ amounts to finding a real number $\rho$ such that $R(H^{(\text{Clo})}_0,H^{(\text{Clo})}_1,\varphi;\rho, k) > \gamma$ for any test $\varphi$.
%

Let us now apply a Bayesian approach. Let $0\leq \alpha < 1$. Let $\nu_0$ be a distribution such that $(p,q) \in H^{(\text{Clo})}_0$ is true $\nu_0$-almost surely and $\nu_1$ such that: 
$$
\Po_{\nu_1}((p,q)\in H^{(\text{Clo})}_1) \geq 1-\alpha.
$$
Then
\begin{align*}
R(H^{(\text{Clo})}_0,H^{(\text{Clo})}_1,\varphi;\rho, k) &= \sup_{(p,q)\in H^{(\text{Clo})}_0} \mathbb{P}_{p,q}(\varphi(\mathcal X, \mathcal Y)=1) + \sup_{(p,q)\in H^{(\text{Clo})}_1} \mathbb{P}_{p,q}(\varphi(\mathcal X, \mathcal Y)=0) \\
&\geq \Po_{\nu_0} (\varphi(\mathcal X, \mathcal Y) = 1) + \Po_{\nu_1}(\varphi(\mathcal X, \mathcal Y)=0,\ (p,q)\in H^{(\text{Clo})}_1)\\
&\geq \Po_{\nu_0} (\varphi(\mathcal X, \mathcal Y) = 1) + \Po_{\nu_1}(\varphi(\mathcal X, \mathcal Y)=0) - \alpha.
\end{align*}
Let us define the total variation distance as in \cite{baraud2002non}, $d_{TV}: (\nu_0,\nu_1) \rightarrow 2 \sup_{A} |\nu_0(A)-\nu_1(A)|.$
So for any $\varphi$
\begin{equation}
\label{eq:riskTV}
R(H^{(\text{Clo})}_0,H^{(\text{Clo})}_1,\varphi;\rho, k) \geq 1 - d_{TV}(\Po_{\nu_0},\Po_{\nu_1})/2 - \alpha = 1 -  d_{TV}(\nu_0,\nu_1)/2 - \alpha.
\end{equation}
Thus, the lower bound that is obtained heavily relies on the choice of $\nu_0$ and $\nu_1$.

\newpage

\subsection{Proof of Proposition~\ref{th:lower}}

Let us prove the lower bound stated in Proposition~\ref{th:lower}. It heavily relies on the Bayesian approach presented in Section~\ref{sec:bayes}.
Let us recall the definition of $I_{v,\pi}$.
Set for $v\geq 0$, with the convention $\min_{j \leq d} \emptyset =d$,
\begin{align}
\label{eq:defI}
\begin{split}
I_{v,\pi} = \min_{J_\pi \leq j \leq d} \Big\{ \{j : \pi_{(j)} \leq \sqrt{C_\pi/j}\} &\cap \{ j : \sum_{i\geq j} \exp(-2k\pi_{(i)}) \pi_{(i)}^2 \leq C_\pi\} \\
&\cap \{ j :\sum_{i \geq j} \pi_{(i)} \leq \sum_{J_\pi \leq i < j} \pi_{(i)} \}\Big\},
\end{split}
\end{align}
where 
\begin{equation}
\label{eq:defC}
C_\pi = \frac{\sqrt{\sum_i \pi_i^2\exp(-2(1+v)k\pi_i)}}{k}.
\end{equation}
In what follows, we also consider for any $i\leq d$, the quantity $\varepsilon_i^* \in [0,1/2]$ that we will specify later.

\paragraph{Definition of some measures.}
We assume that $\pi \in \mathbf P$ is fixed such that for any $i \leq j \leq d$, we have $\pi_i \geq \pi_j$. 
Let $0<\delta \leq 1/8$ and $4[1 \vee (32 \log(1/\delta))^2] \leq M \leq \sqrt k$.
We define $\mathcal A \subset \{1, \ldots, d\}$ such that for any $i \in \mathbb Z$ where $|S_\pi(\floor{\log_2(k)} + i)| > a \sqrt{\log((|i|+1)/\gamma)}$, we have that $\mathcal A \cap S_\pi(\floor{\log_2(k)} + i)$ are the $\floor{|S_\pi(\floor{\log_2(k)}  + i)| / M}$ largest elements of $S_\pi(\floor{\log_2(k)}  + i)$. Note that $\sum_{\mathcal A} \pi_i \leq 2/M \leq  1/2$. Let $\mathcal A' = \mathcal A \cap \{J_\pi, \ldots, d\}$.

Let $U_{\pi,{\mathcal A}}$ be the discrete distribution that is uniform over $\{\pi_i, i\in \mathcal A\}$, and we will write for any $i \in {\mathcal A}$, $U_{\pi,{\mathcal A}}(\{\pi_i\}) = 1/|{\mathcal A}|$. We will now work on the definition of appropriate measures corresponding to $(q,p)$.
Conditional to two vectors $q,p\in \mathbb R^{+d}$, we define 
$$\Lambda_{q,p} = \prod_{i} \pa{\mathcal P(kq_{i}) \otimes \mathcal P(kp_{i})}.$$

\noindent
\underline{Definition of $\Lambda_0$:} First, for any $i \in {\mathcal A}$ we will consider independent $q_i \sim U_{\pi,{\mathcal A}}$, and otherwise set $q_i= \pi_i$ for any $i \not\in {\mathcal A}$. 
We write $\Lambda_0$ for the distribution $\Lambda_{q,q}$ when $q$ is defined as before:
$$\Lambda_0 = \E_{q}(\Lambda_{q,q}),$$
where $\E_{q}$ is the expectation according to the distribution of $q$.\\

\noindent
\underline{Definition of $\Lambda_1$:} We consider $ q$ defined as above. For any $i \in {\mathcal A}$, we know that there exists $j_i$ such that $q_i = \pi_{j_i}$. Let us write $\xi_{i}$ for independent random variables that are uniform in $\{\varepsilon_{j_i}^*,-\varepsilon_{j_i}^*\}$ if $i \in \mathcal A'$, and $0$ otherwise. Then set for any $i \in \mathcal A$: define $p_{i} = q_i(1+\xi_{i})$ and for any $i \not\in \mathcal A$: define $p_{i} = \pi_i$. We write $\Lambda_1$ for the distribution $\Lambda_{q,p}$ averaged over $q,p$. So
$$\Lambda_1 = \E_{q,p}(\Lambda_{q,p}),$$
where $\E_{q,p}$ is the expectation according to the distribution of $q,p$ -- i.e.~according to $q,\xi$.\\

\noindent
\underline{Definition of $\tilde \Lambda_0$:} Now, for any $i \in {\mathcal A}$ we will consider independent $\tilde q_i = q_i\sim U_{\pi,{\mathcal A}}$. Then for any $i \not\in {\mathcal A}$: set $\tilde q_{i} = \pi_i\pa{1-\frac{\sum_{l\in {\mathcal A}}(\tilde q_l - \pi_l)}{\sum_{j \notin {\mathcal A}} \pi_j}}$. We write $\tilde \Lambda_0$ for the distribution $\Lambda_{\tilde q,\tilde q}$ when $\tilde q$ is defined as before:
$$\tilde \Lambda_0 = \E_{\tilde q}(\Lambda_{\tilde q,\tilde q}),$$
where $\E_{\tilde q}$ is the expectation according to the distribution of $\tilde q$ -- i.e.~according to $q$.\\

\noindent
\underline{Definition of $\tilde \Lambda_1$:} We consider $q,\xi,\tilde q$ defined as above. Then for any $i \in \mathcal A$, set $\tilde p_{i} =  p_i  = q_i(1+\xi_{i})$. For any $i \not\in \mathcal A$, set $\tilde p_{i} = \pi_{i}\pa{1-\frac{\sum_{l\in {\mathcal A}} (\tilde p_l - \pi_l)}{\sum_{j \notin {\mathcal A}} \pi_{j}}}$. We write $\Lambda_1$ for the distribution $\Lambda_{\tilde q,\tilde p}$ averaged over $\tilde q,\tilde p$. So
$$\Lambda_1 = \E_{\tilde q,\tilde p}(\Lambda_{\tilde q,\tilde p}),$$
where $\E_{\tilde q,\tilde p}$ is the expectation according to the distribution of $\tilde q,\tilde p$ -- i.e.~according to $q, \xi$.
Note that $\sum_{i} \tilde q_i = 1 = \sum_{i} \tilde p_i.$

\paragraph*{Properties of $\tilde \Lambda_0$ and $\tilde \Lambda_1$, and bound on their total variation distance.}

We first prove the following lemma, which implies that $\tilde \Lambda_0$ and $\tilde \Lambda_1$ take values in $\mathbf P_\pi$ with high probability.

\begin{lemma}\label{lem:lam0lam1}
Assume that $M \geq 4(16 \log{2/\delta})^2$, and that $a>2$. There exists a universal constant $c>0$ such that we have with probability larger than $1-\delta - c \gamma$ with respect to $\tilde q$ that $\tilde q \in \mathbf P_\pi$, and with probability larger than $1-\delta - c \gamma$ with respect to $\tilde p$ that $\tilde p \in \mathbf P_\pi$.
\end{lemma}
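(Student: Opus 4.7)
The plan is to verify, for each of $\tilde q$ and $\tilde p$, the two inclusions defining $\mathbf{P}_\pi$ by splitting every level set $S_{\tilde q}(j)$ (respectively $S_{\tilde p}(j)$) into its intersections with $\mathcal{A}$ and with $\mathcal{A}^c$. First I would control the renormalisation factor $\rho = 1 - (\sum_{l\in\mathcal{A}}(\tilde q_l - \pi_l))/(\sum_{j\notin \mathcal{A}}\pi_j)$ applied to indices outside $\mathcal{A}$: since the $\tilde q_l$ for $l\in\mathcal{A}$ are i.i.d.\ uniform on $\{\pi_m : m\in \mathcal{A}\}$ with $\E[\tilde q_l]=|\mathcal{A}|^{-1}\sum_{m\in\mathcal{A}}\pi_m$ and are bounded by $\pi_1$, a Bernstein inequality combined with the lower bound $\sum_{j\notin \mathcal{A}}\pi_j \geq 1/2$ shows $|\rho-1|\leq 1/4$ with probability at least $1-\delta/4$ as soon as $M \geq 4(16\log(2/\delta))^2$. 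This immediately gives $\tilde q\in \mathbf{P}$ and moreover forces each $\tilde q_i = \rho\pi_i$ for $i\notin \mathcal{A}$ to lie in $S_{\tilde q}(j)$ with $|j-j(i)|\leq 1$, where $j(i)$ is the unique index with $i\in S_\pi(j(i))$.

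Once $\rho$ is controlled, the contribution of $\mathcal{A}^c$ to $\sum_{l=i-1}^{i+1}|S_{\tilde q}(l)|$ is sandwiched between $|S_\pi(i)\cap \mathcal{A}^c|$ and $\sum_{l=i-2}^{i+2}|S_\pi(l)\cap \mathcal{A}^c|$, both within constant factors of $|S_\pi(i)|$ and $\sum_{l=i-2}^{i+2}|S_\pi(l)|$ respectively, since $\mathcal{A}$ retains only a $1/M$ fraction of each sufficiently large level set. It then remains to bound the random contribution $|S_{\tilde q}(l)\cap \mathcal{A}|$, which is a sum of $|\mathcal{A}|$ independent indicators with mean $|S_\pi(l)\cap \mathcal{A}|$. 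For level sets indexed by $i \in \mathbb{Z}$ exceeding the threshold $a\sqrt{\log((|i|+1)/\gamma)}$ required to enter $\mathcal{A}$, Chernoff yields two-sided concentration up to a factor $2$ with failure probability at most $\gamma/(|i|+1)^2$ as long as $a>2$; a union bound over $i\in\mathbb{Z}$ then costs only $c\gamma$, and for sub-threshold level sets $\mathcal{A}\cap S_\pi(l)=\emptyset$ so nothing further is needed.

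Combining the two steps yields the lower bound $|S_\pi(i)|/2 \leq \sum_{l=i-1}^{i+1}|S_{\tilde q}(l)|$ and the upper bound $\sum_{l=i-1}^{i+1}|S_{\tilde q}(l)| \leq (3/2)\sum_{l=i-2}^{i+2}|S_\pi(l)|$ simultaneously for every $i$, with probability at least $1-\delta-c\gamma$, hence $\tilde q\in \mathbf{P}_\pi$. The argument for $\tilde p$ is parallel: the multiplicative perturbations $1+\xi_i\in[1/2,3/2]$ on $\mathcal{A}'$ shift $\tilde p_i$ by at most one level set compared to $q_i$, which is still within the allowed five-level window, and the corresponding renormalisation factor is controlled by an analogous Bernstein argument applied conditionally on $(q_l)_{l\in\mathcal{A}}$.

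The main obstacle will be the infinite union bound over $i\in\mathbb{Z}$: the inclusion threshold $a\sqrt{\log((|i|+1)/\gamma)}$ must be tuned so that the Chernoff failure probability is summable in $i$, while simultaneously $M$ must scale like $\log^2(1/\delta)$ so that the Bernstein concentration of $\rho$ succeeds — these two constraints are precisely the hypotheses $a>2$ and $M \geq 4(16\log(2/\delta))^2$ of the lemma.
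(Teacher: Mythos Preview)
Your proposal is correct and follows essentially the same route as the paper's proof: both split each level set into $\mathcal{A}$ and $\mathcal{A}^c$, control the renormalisation factor on $\mathcal{A}^c$ by a Bernstein-type inequality (yielding $\frac12\pi_j\le \tilde q_j\le \frac32\pi_j$ for $j\notin\mathcal{A}$), control the random level-set counts on $\mathcal{A}$ via binomial concentration with a union bound over $i\in\mathbb Z$ calibrated to the threshold $a\sqrt{\log((|i|+1)/\gamma)}$, and handle $\tilde p$ by the additional one-level shift induced by $\xi_i\in[-1/2,1/2]$. The only cosmetic difference is that the paper invokes Hoeffding on the $\text{Bin}(|\mathcal A|,\,|S_\pi(\cdot)\cap\mathcal A|/|\mathcal A|)$ counts whereas you say ``Chernoff''; note that it is the Hoeffding-type sub-Gaussian tail (quadratic in the deviation) that matches the $\sqrt{\log}$ threshold, so when you fill in the details you should use that form rather than the multiplicative Chernoff bound, which would only give an exponential-in-the-mean tail.
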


We now turn to $d_{TV}(\Lambda_0,\Lambda_1)$ and state the following lemmas which will help us conclude on a bound on $d_{TV}(\Lambda_0,\Lambda_1)$.
\begin{lemma}\label{lem:lower}
	Let $\pi\in (\mathbb R^+)^{d}$ such that $\sum_i \pi_i \leq 1$ and such that it is ordered in decreasing order, i.e.~$\forall i \leq j \leq d$, $\pi_i \geq \pi_j$. We remind the reader that $J := J_\pi$.
	Let $1>u>0, v\geq 0$. Then there exists $\varepsilon^* \in \mathbb R^d$ such that for any $i \leq d$
	\begin{itemize}
		\item $\varepsilon^*_i \in [0,1/2]$ and $\varepsilon^*_i = 0$ for any $i$ such that $\pi_i \geq 1/k$.
		\item $\sum_{i} \pi_i^2 \varepsilon^{*2}_i \exp(-2k\pi_i) \leq u\frac{\sqrt{\sum_i \pi_i^2\exp(-2(1+v)k\pi_i)}}{k} :=uC_\pi$.
		\item we have $\pi_i \varepsilon^*_i \leq \sqrt{u}\Big[ (1/k) \land \sqrt{C_\pi/(2I_{v,\pi})} \land \pi_i/2\Big]$.
		\item and we have
		$$\sum_i \pi_i \varepsilon_i^* \geq \Bigg[\Big[\sum_{i\geq I_{v,\pi}} \frac{\sqrt{u}\pi_i}{\sqrt{2}} \Big]\vee \frac{\sqrt{uC_\pi} (I_{v,\pi}-J)}{\sqrt{2I_{v,\pi}}}\Bigg] \land \Big[ \sqrt{\frac{u}{8}} \sum_{i \geq J} \pi_i\Big].$$
	\end{itemize}	
\end{lemma}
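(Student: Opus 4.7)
The plan is to construct $\varepsilon^*$ explicitly by saturating the tightest part of condition~3 at each coordinate, and then to read the remaining conditions off this construction. Set $\varepsilon_i^* = 0$ for every $i < J_\pi$, which is exactly the vanishing requirement in condition~1. For $i \geq J_\pi$, define
\[
\varepsilon_i^* := \min\!\left(\tfrac{\sqrt{u}}{2},\ \tfrac{\sqrt{u}\sqrt{C_\pi/(2I_{v,\pi})}}{\pi_i}\right),
\]
so that $\pi_i\varepsilon_i^* = \sqrt{u}\min(\pi_i/2,\ \sqrt{C_\pi/(2I_{v,\pi})})$. Since $u<1$ we have $\varepsilon_i^* \leq 1/2$, giving condition~1; and since $\pi_i \leq 1/k$ for $i \geq J_\pi$, the term $\sqrt{u}/k$ in condition~3 is never the binding minimum, so condition~3 holds by construction.

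For condition~2, I would split $\sum_i \pi_i^2\varepsilon_i^{*2}\exp(-2k\pi_i)$ at $I_{v,\pi}$. On $J_\pi \leq i < I_{v,\pi}$ every summand is bounded by $uC_\pi/(2I_{v,\pi})$, giving a partial sum of at most $uC_\pi/2$. On the tail $i \geq I_{v,\pi}$, the first defining property of $I_{v,\pi}$ gives $\pi_i \leq \sqrt{C_\pi/I_{v,\pi}}$, hence $\pi_i\varepsilon_i^* = \sqrt{u}\pi_i/2$, and the second defining property of $I_{v,\pi}$ bounds this tail contribution by $uC_\pi/4$. In total the sum is at most $3uC_\pi/4 \leq uC_\pi$.

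Condition~4 is the main obstacle. A first reduction is that the third defining property of $I_{v,\pi}$, namely $\sum_{i \geq I_{v,\pi}}\pi_i \leq \sum_{J_\pi \leq i < I_{v,\pi}}\pi_i$, yields $\sum_{i \geq J_\pi}\pi_i \geq 2\sum_{i \geq I_{v,\pi}}\pi_i$ and therefore $C \geq A$ unconditionally, so that $(A\vee B)\wedge C$ simplifies to $A$ when $B \leq A$ and to $\min(B,C)$ otherwise. Introducing the threshold $M := \sqrt{2C_\pi/I_{v,\pi}}$ and $i_0 := \min\{i \geq J_\pi : \pi_i \leq M\}$, which lies in $[J_\pi, I_{v,\pi}]$ by the first defining property of $I_{v,\pi}$, the construction rewrites as
\[
\sum_i \pi_i\varepsilon_i^* \;=\; \tfrac{\sqrt{u}}{2}\!\left[(i_0 - J_\pi)M \;+\; \sum_{i \geq i_0}\pi_i\right].
\]
If $i_0 = J_\pi$, this equals $\sqrt{2}\,C \geq C$ and the bound follows immediately. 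Otherwise the middle term $(\sqrt{u}/2)(i_0-J_\pi)M$ is a fraction of $B$ (matching $B$ exactly when $i_0 = I_{v,\pi}$) and the tail part contributes at least $A/\sqrt{2}$, and the remaining work is a case analysis on the relative sizes of $A$, $B$ and $C$, showing that whenever the two contributions fall short of $A\vee B$, the mass of $\pi$ concentrated in the middle block $[J_\pi, I_{v,\pi})$ beyond $M$ forces $\sum_i \pi_i\varepsilon_i^* \geq C$. This final bookkeeping, tying together the three defining properties of $I_{v,\pi}$ to cover every sub-regime, is the principal difficulty of the proof.
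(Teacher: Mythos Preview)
Your construction cleanly dispatches conditions~1--3, but the sketched case analysis for condition~4 does not close. The difficulty is not bookkeeping: with the cap $\varepsilon_i^*\le \sqrt{u}/2$ forced by your reading of condition~3, the tail $i\ge I_{v,\pi}$ contributes only $\tfrac{\sqrt{u}}{2}\sum_{i\ge I_{v,\pi}}\pi_i = A/\sqrt{2}$, and the block term $B$ need not make up the missing $(1-1/\sqrt{2})A$. Concretely, take $\pi_i = a := 0.99/k$ for $1\le i\le m$ and $\pi_i = b\to 0$ for $i>m$ with $T:=\sum_{i>m}\pi_i = ma$; then $J=1$, the third constraint fails at every $j\le m$ so $I_{v,\pi}=m+1$, and one computes $C_\pi\asymp \sqrt{m}/k^2$, hence $M\asymp m^{-1/4}/k$. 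Here $i_0=I_{v,\pi}$ (since $a>M$), so your sum is
\[
S=\tfrac{\sqrt{u}}{2}\bigl[mM+T\bigr]\asymp \tfrac{\sqrt{u}}{2}\bigl[m^{3/4}/k + m/k\bigr],
\]
while $A=\tfrac{\sqrt{u}}{\sqrt{2}}T\asymp \tfrac{\sqrt{u}}{\sqrt{2}}\,m/k$ and $B\asymp \sqrt{u}\,m^{3/4}/k\ll A$. Because $T=ma$ makes $C=A$, the target $(A\vee B)\wedge C$ equals $A$, yet $S/A\to 1/\sqrt{2}$ as $m\to\infty$. So your claim that ``the mass beyond $M$ forces $S\ge C$'' is false in exactly the regime where the \emph{first} constraint defining $I_{v,\pi}$ is saturated and the tail mass is near its maximum.

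The paper avoids this by \emph{not} using a single formula: it sets $\varepsilon_i^*=\sqrt{u/2}$ on the tail $i\ge I_{v,\pi}$ (so the tail alone already gives $A$), and then splits the middle block $[J,I_{v,\pi})$ into three cases according to which constraint in the definition of $I_{v,\pi}$ fails at $I_{v,\pi}-1$. In Cases~1--2 (third or second constraint saturated, first not) it puts mass only at $i=I_{v,\pi}-1$; in Case~3 (first constraint saturated) it sets $\varepsilon_i^*=\sqrt{uC_\pi/(2I_{v,\pi})}/\pi_i$ across $[J,I_{v,\pi})$, giving the $B$ term directly. Note that the paper's tail choice $\sqrt{u/2}>\sqrt{u}/2$ means its proof actually verifies $\varepsilon_i^*\in[0,1]$ and $\pi_i\varepsilon_i^*\le\sqrt{u}\bigl[(1/k)\wedge\sqrt{C_\pi/(2I_{v,\pi})}\bigr]$ rather than the stated bounds with $1/2$ and $\pi_i/2$; the $\pi_i/2$ clause in condition~3 is never used downstream, so the discrepancy is cosmetic. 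The point for you is that enforcing $\varepsilon_i^*\le\sqrt{u}/2$ on the tail is what breaks condition~4, and the case split on the saturated constraint---not a case split on the sizes of $A,B,C$---is what makes the lower bound go through.
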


We show the following lemma which will help us conclude on a bound on $d_{TV}(\Lambda_0,\Lambda_1)$.
\begin{lemma}
\label{lem:TV01marginal}
Let $\pi$ satisfying the hypotheses of Lemma~\ref{lem:lower}. We take $\varepsilon^*$ associated with $\pi$ as in Lemma~\ref{lem:lower} for some $u>0, v > 0$. Write $\lambda = k\pi$. There exists a constant $\tilde c_v>0$ that depends only on $v$ such that the following holds. Let $\xi = \xi_{|\theta}$ be a random variable that depends on $\theta$ and takes a random value uniformly in $\{\varepsilon_j^*,-\varepsilon_j^*\}$ when $\theta = \lambda_j= k\pi_j$. Write $\mathcal U_{\lambda}$ for the uniform distribution over $\{\lambda_i = k\pi_i, i \leq d\}$. We set for $\varepsilon^* \in [0,1]^d$, conditionally on $\theta$:
$$\nu_{0|\theta} = \mathcal P(\theta)^{\otimes 2}, \quad\quad \nu_0 = \E_{\theta \sim \mathcal U_{\lambda}}(\nu_{0|\theta}),$$
and
$$\nu_{1|\theta, \xi} = \mathcal P(\theta) \otimes \frac{\Big[\mathcal P(\theta(1+\xi)) + \mathcal P(\theta(1-\xi))\Big]}{2}, \quad \nu_1 = \E_{\theta \sim \mathcal U_{\lambda},\xi}(\nu_{1|\theta, \xi}).$$

We have
\begin{align*}
d_{TV}(\nu_0^{\otimes d},\nu_1^{\otimes d}) \leq \sqrt{\tilde c_vu},
\end{align*}
for $u \leq \tilde c_v^{-1}$, i.e.~for $u$ smaller than a constant that depends only on $v$.
\end{lemma}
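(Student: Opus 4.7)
The plan is to use tensorization of the Kullback--Leibler divergence together with an explicit Poisson calculation of the per-index chi-square. First, since both $\nu_0$ and $\nu_1$ are mixtures with respect to the same prior $\mathcal U_\lambda$ on $\theta$, the data-processing inequality yields
\[ \mathrm{KL}(\nu_1 \,\|\, \nu_0) \leq \mathbb{E}_{\theta \sim \mathcal U_\lambda} \mathrm{KL}(\nu_{1|\theta} \,\|\, \nu_{0|\theta}) = \frac{1}{d} \sum_{i \leq d} \mathrm{KL}(\nu_{1|\theta = \lambda_i} \,\|\, \nu_{0|\theta = \lambda_i}). \]
Because the $X$-marginal conditional on $\theta$ is $\mathcal P(\theta)$ under both laws, each term on the right reduces to the KL between the $Y$-mixture $\tfrac{1}{2}(\mathcal P(\lambda_i(1+\varepsilon_i^*)) + \mathcal P(\lambda_i(1-\varepsilon_i^*)))$ and $\mathcal P(\lambda_i)$. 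Combined with additive tensorization of KL on products, Pinsker's inequality, and $\mathrm{KL} \leq \chi^2$ (accounting for the paper's doubled-TV convention), this yields
\[ d_{TV}(\nu_0^{\otimes d}, \nu_1^{\otimes d})^2 \leq 2 \sum_{i \leq d} \chi^2\bigl(\tfrac{1}{2}(\mathcal P(\lambda_i(1+\varepsilon_i^*)) + \mathcal P(\lambda_i(1-\varepsilon_i^*))) \,\|\, \mathcal P(\lambda_i) \bigr). \]

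Second, I would carry out the key per-index chi-square explicitly using the Poisson probability generating function $\mathbb{E}_{Y \sim \mathcal P(\lambda)}[a^Y] = e^{\lambda(a-1)}$. Setting $g_i(Y) = \tfrac{1}{2}[e^{-\lambda_i \varepsilon_i^*}(1+\varepsilon_i^*)^Y + e^{\lambda_i\varepsilon_i^*}(1-\varepsilon_i^*)^Y]$ and expanding the second moment $\mathbb{E}_{\mathcal P(\lambda_i)}[g_i(Y)^2]$ produces the clean identity
\[ \chi^2\bigl(\tfrac{1}{2}(\mathcal P(\lambda_i(1+\varepsilon_i^*)) + \mathcal P(\lambda_i(1-\varepsilon_i^*))) \,\|\, \mathcal P(\lambda_i)\bigr) = \cosh\bigl(\lambda_i (\varepsilon_i^*)^2\bigr) - 1. \]
Lemma~\ref{lem:lower} forces $\varepsilon_i^* = 0$ whenever $\pi_i \geq 1/k$ and $\varepsilon_i^* \leq \sqrt{u}/2$, so for $u$ at most $1$ one has $\lambda_i (\varepsilon_i^*)^2 \leq u/2 \leq 1$; consequently $\cosh(\lambda_i\varepsilon_i^{*2}) - 1 \leq (\lambda_i\varepsilon_i^{*2})^2$.

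The main obstacle is the third step, controlling the sum $\sum_i (\lambda_i\varepsilon_i^{*2})^2 = k^2 \sum_i \pi_i^2 \varepsilon_i^{*4}$ by a $v$-dependent constant times $u$. The plan is to factor $\lambda_i^2 \varepsilon_i^{*4} = \varepsilon_i^{*2} \cdot \lambda_i^2 \varepsilon_i^{*2}$ and combine the three constraints provided by Lemma~\ref{lem:lower}: the pointwise bounds $k\pi_i \varepsilon_i^* \leq \sqrt{u}$, $\pi_i \varepsilon_i^* \leq \sqrt{u C_\pi/(2 I_{v,\pi})}$, and $\varepsilon_i^* \leq \sqrt{u}/2$, together with the summed bound $\sum_i \pi_i^2 \varepsilon_i^{*2} e^{-2k\pi_i} \leq u C_\pi$. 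Applying $\varepsilon_i^{*2} \leq u/4$ to extract one power of $u$, then using $e^{-2k\pi_i} \geq e^{-2}$ on the support of $\varepsilon_i^*$ to introduce the exponential weight, and finally invoking the summed bound reduces the problem to bounding a residual factor of order $k \sqrt{\sum_j \pi_j^2 e^{-2(1+v)k\pi_j}}$. This residual is absorbed into the constant $\tilde c_v$ by a careful partition of the indices according to whether $\pi_i$ lies above or below the threshold $\sqrt{C_\pi/I_{v,\pi}}$, exploiting the slack between the exponents $2k\pi_j$ and $2(1+v)k\pi_j$ in the definition of $C_\pi$. This last partitioning argument is the technical crux of the proof and is what enforces the restriction $u \leq \tilde c_v^{-1}$.
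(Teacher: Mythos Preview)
Your first two steps are correct and clean: the joint-convexity bound on KL is valid, and the identity $\chi^2\bigl(\tfrac12(\mathcal P(\lambda(1+\varepsilon))+\mathcal P(\lambda(1-\varepsilon)))\,\|\,\mathcal P(\lambda)\bigr)=\cosh(\lambda\varepsilon^2)-1$ is right. The gap is in the third step, and it is not merely a missing technical detail: the very first move---conditioning on $\theta$ via convexity---discards exactly the information that makes the lemma true. The randomisation of $\theta$ is what distinguishes the closeness-testing lower bound from the identity-testing one; once you condition on $\theta$ you are bounding the \emph{identity-testing} divergence, and for the $\varepsilon^*$ of Lemma~\ref{lem:lower} that divergence is not small in general.

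Concretely, your chain reduces to showing $\sum_i (\lambda_i\varepsilon_i^{*2})^2\le \tilde c_v u$, and your proposed route leaves a residual factor $k^2C_\pi=k\sqrt{\sum_j\pi_j^2 e^{-2(1+v)k\pi_j}}$. This quantity is \emph{not} bounded by a $v$-dependent constant; it can be of order $\sqrt{k}$. Take $M=k$ coordinates with $\pi_i=1/(2k)$ together with $N\asymp k^{3/2}$ ``light'' coordinates with $\pi_i\asymp k^{-3/2}$, tuned so that constraints 2 and 3 of $I_{v,\pi}$ hold at $j=M+1$ while constraint~1 fails at $j=M$. Then $I_{v,\pi}=M+1$, Case~3 applies, and for all $i\ge I_{v,\pi}$ one has $\varepsilon_i^*=\sqrt{u/2}$ with $\lambda_i=ka_2\asymp k^{-1/2}$. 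Each such term contributes $(\lambda_i\varepsilon_i^{*2})^2\asymp u^2/k$, and summing over $N\asymp k^{3/2}$ of them gives $\sum_i(\lambda_i\varepsilon_i^{*2})^2\gtrsim u^2\sqrt{k}$. No partition or exploitation of the $v$-slack can repair this: the slack affects only constants in front of $k^2C_\pi$, not its growth.

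The paper avoids this by computing $\chi_2(\nu_0,\nu_1)$ \emph{of the mixtures} directly, without conditioning. Writing out $\nu_0(m,m')$ and $\nu_1(m,m')$ as integrals over $\mathcal U_\lambda$ and expanding the $\chi^2$, each term has the form $(\int\theta^{m+m'}e^{-2\theta}D_\theta(m')\,d\mathcal U_\lambda)^2\big/\int\theta^{m+m'}e^{-2\theta}\,d\mathcal U_\lambda$, and the shared denominator---an average over \emph{all} $\theta$---is what produces the crucial $1/d$ factor leading to $\chi_2(\nu_0,\nu_1)\le\tilde c_v u/d$. One then tensorizes via $(1+\tilde c_v u/d)^d-1$. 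The terms are handled case by case in $m+m'$ (the $m+m'=3$ case needs the most work and uses the index structure around $I_{v,\pi}$); nowhere is the mixture unpacked by conditioning.
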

Let $u > 0$ and $v>0$. We first apply Lemma~\ref{lem:lower} to $\pi$ sorted in decreasing order, which leads to the definition of a vector denoted as $\bar \varepsilon^*$. Then we apply Lemma~\ref{lem:lower} to $\pi$ restricted to $\mathcal A$ and sorted in decreasing order, which defines a vector denoted as $\tilde \varepsilon^*$.

Since
$$\sum_{i: |S_\pi(\floor{\log_2(k)} + i)| \leq  a \sqrt{\log((|i|+1)/\gamma)}} \sum_{j\in S_\pi(\floor{\log_2(k)} + i)} \pi_j^2\exp(-2(1+v)k\pi_j) \leq 4a \frac{\sqrt{\log(1/\gamma)}}{k^2},$$
we have by definition of $\varepsilon^*$ in Lemma~\ref{lem:lower} that if $\|\pi^2\exp(-2(1+v)k\pi\|_2^2 \geq 8aM \frac{\sqrt{\log(1/\gamma)}}{k^2}$ then $\bar \varepsilon^*/(8M) \leq \tilde \varepsilon_i^*$, where we assume that $M \geq 4(16 \log{2/\delta})^2$, and that $a>2$.

From now on we take $\varepsilon^* = \bar \varepsilon^*/(8M)$. Since $\bar \varepsilon^*/(8M) \leq \tilde \varepsilon_i^*$, we can apply Lemma~\ref{lem:TV01marginal} to the restriction of $\pi$ and $\varepsilon^*$ to $\mathcal A$, so we have that there exists $\tilde c_v>0$ such that for $u \leq \tilde c_v^{-1}$
$$d_{TV}(\Lambda_0,\Lambda_1)\leq \sqrt{\tilde c_v u}.$$

\paragraph*{Total variation distance between Poisson distributions, and Multinomial distributions.} 
We define the following distribution:
$M_{\tilde q,\tilde p|k_1,k_2} = \mathcal M(k_1, \tilde q) \otimes \mathcal M(k_2, \tilde p)$. Let $\mathcal D=\mathcal P(k\sum_i q_i)$ and $\mathcal D' = \mathcal P(k\sum_i p_i)$.

By definition of the total variation distance, we have
\begin{align*}
&d_{TV}\cro{\E_{\tilde q,(\hat k_1, \hat k_2)\sim \mathcal D^{\otimes 2}}(M_{\tilde q,\tilde q|\hat k_1, \hat k_2}),\E_{\tilde q,\tilde p, (\hat k_1, \hat k_2) \sim \mathcal D \otimes \mathcal D'}(M_{\tilde q,\tilde p|\hat k_1,\hat k_2})} \\
&\leq \frac{\begin{aligned}
		d_{TV}\Bigg[\E_{\tilde q,(\hat k_1,\hat k_2) \sim \mathcal D^{\otimes 2}|(\hat k_1,\hat k_2) \in [k/2, 3k/2]^2}&(M_{\tilde q,\tilde q|\hat k_1,\hat k_2}),\\
		&\E_{\tilde q,\tilde p,(\hat k_1, \hat k_2) \sim \mathcal D \otimes \mathcal D'|(\hat k_1, \hat k_2) \in [k/2, 3k/2]^2}(M_{\tilde q,\tilde p|\hat k_1, \hat k_2})\Bigg]
	\end{aligned}}{\min_{\mathcal D'' \in \{\mathcal D^{\otimes 2}, \mathcal D \otimes \mathcal D'\}}\mathbb P_{\tilde q,\tilde p,(\hat k_1,\hat k_2) \sim \mathcal D''}(\hat k \in [k/2, 3k/2])}\\
&\quad+ \max_{\mathcal D'' \in \{\mathcal D^{\otimes 2}, \mathcal D\otimes \mathcal D'\}}\mathbb P_{\tilde q,\tilde p,(\hat k_1, \hat k_2) \sim \mathcal D''}((\hat k_1, \hat k_2) \not\in [k/2, 3k/2]^2).
\end{align*}
This implies for $M \geq 4(32 \log(1/\delta))^2$ 
\begin{align*}
&d_{TV}\cro{\E_{\tilde q,(\hat k_1, \hat k_2)\sim \mathcal D^{\otimes 2}}(M_{\tilde q,\tilde q|\hat k_1, \hat k_2}),\E_{\tilde q,\tilde p,(\hat k_1, \hat k_2) \sim \mathcal D \otimes \mathcal D'}(M_{\tilde q,\tilde p|\hat k_1, \hat k_2})} \\
&\leq \frac{\begin{aligned}
		d_{TV}\Bigg[\E_{\tilde q,(\hat k_1, \hat k_2)\sim\mathcal D^{\otimes 2}|(\hat k_1, \hat k_2) \in [k/2, 3k/2]^2}&(M_{\tilde q,\tilde q|\hat k_1, \hat k_2}),\\
		&\E_{\tilde q,\tilde p,(\hat k_1, \hat k_2) \sim \mathcal D \otimes \mathcal D'|(\hat k_1, \hat k_2) \in [k/2, 3k/2]^2}(M_{\tilde q,\tilde p|\hat k_1, \hat k_2})\Bigg]
	\end{aligned}}{1 - 2\exp(-k/48) - \delta}\\
&\quad+ 2\exp(-k/48)+\delta\\
&\leq d_{TV}\Bigg[\E_{\tilde q, (\hat k_1, \hat k_2) \sim \mathcal D^{\otimes 2}|(\hat k_1, \hat k_2) \in [k/2, 3k/2]^2}(M_{\tilde q,\tilde q|\hat k_1, \hat k_2}),\\
	&\quad\quad\quad\quad\quad\quad\quad\E_{\tilde q,\tilde p, (\hat k_1, \hat k_2) \sim \mathcal D \otimes \mathcal D'|(\hat k_1, \hat k_2) \in [k/2, 3k/2]^2}(M_{\tilde q,\tilde p|\hat k_1, \hat k_2})\Bigg] + 8\exp(-k/48) +4\delta,
\end{align*}
for $k \geq 2^8$ and $\delta\leq 1/8$.
Now we have by Theorem~\ref{th:multinPoisson},
$$
d_{TV}\cro{\E_{\tilde q,(\hat k_1, \hat k_2)\sim\mathcal D^{\otimes 2}}(M_{\tilde q,\tilde q|\hat k_1, \hat k_2}),\E_{\tilde q,\tilde p,(\hat k_1, \hat k_2)\sim\mathcal D \otimes \mathcal D'}(M_{\tilde q,\tilde p|\hat k_1, \hat k_2})} = d_{TV}(\Lambda_0, \Lambda_1).
$$
And so when combined with the previously displayed equation
\begin{align}
\label{eq:TVcool}
d_{TV}( \Lambda_0, \Lambda_1) 
&\geq -8\exp(-k/48) - 4\delta \nonumber\\
&\quad+ d_{TV}\Bigg[\E_{\tilde q, (\hat k_1, \hat k_2) \sim \mathcal D^{\otimes 2}|(\hat k_1 \hat k_2) \in [k/2, 3k/2]^2}(M_{\tilde q,\tilde q|\hat k_1, \hat k_2}),\nonumber\\
&\quad\quad\quad\quad\quad\quad\quad\quad\quad
\E_{\tilde q,\tilde p,(\hat k_1, \hat k_2) \sim \mathcal D \otimes \mathcal D'|(\hat k_1, \hat k_2) \in [k/2, 3k/2]^2}(M_{\tilde q,\tilde p|\hat k_1, \hat k_2})\Bigg].
\end{align}

\paragraph*{Conclusion.}

Consider an event of probability larger than $1 -\delta$ with respect to $q,\xi$ such that for some $\bar \rho>0$ we have
$$\sum_{i} |\tilde q_i - \tilde p_i| = \sum_{i \in {\mathcal A}} q_i |\xi_{i}| = \sum_{i \in {\mathcal A'}} q_i |\xi_{i}| \geq \bar \rho.$$
So we have by Equation~\eqref{eq:TVcool} and Lemma~\ref{lem:lam0lam1} that under the condition that $k \geq 2^8 \vee M^2$, $\delta \leq 1/8$, $M \geq 4[1 \vee (32 \log(1/\delta))^2]$, and $\|\pi^2\exp(-2(1+v)k\pi\|_2^2 \geq 8aM \frac{\sqrt{\log(1/\gamma)}}{k^2}$, then for any test $\varphi$
\begin{align*}
R(H^{(\text{Clo})}_0(\pi),H^{(\text{Clo})}_1(\pi, \bar\rho),\varphi; \bar\rho, \lfloor k/2\rfloor) &\geq 1 -  8\exp\left(-\frac{k}{48}\right) - 7\delta - c\gamma - \sqrt{\tilde c_v u}.
\end{align*}
We now present the following lemma.
\begin{lemma}\label{lem:barrho}
It holds with probability larger than $1-\delta$ that
\begin{align*}
\sum_{i \in {\mathcal A}} |\xi_i| q_i &\geq \frac{1}{8M^2}\Bigg[\Big[\sum_{i\geq I_{v,\pi}} \frac{\sqrt{u}\pi_i}{\sqrt{2}} \Big]\vee \frac{\sqrt{u} (I_{v,\pi}-J)}{\sqrt{2I_{v,\pi}}}\sqrt{\frac{\sqrt{\sum_i \pi_i^2\exp(-2(1+v)k\pi_i)}}{k} }\Bigg] \land \sqrt{\frac{u}{8}}\Bigg]  \\
&- \frac{1}{\sqrt{kM\delta}} - 8\frac{a(1+ \log(1/\gamma))}{k}.
\end{align*}
\end{lemma}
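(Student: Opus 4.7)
The plan is to view $S := \sum_{i \in \mathcal A}|\xi_i|q_i = \sum_{i \in \mathcal A'}\varepsilon^*_{j_i}\pi_{j_i}$ as a sum of $|\mathcal A'|$ i.i.d.\ terms, since the indices $j_i$ are independent uniform draws from $\mathcal A$, and then apply Chebyshev's inequality after estimating the first two moments. Since $\varepsilon^*_j = 0$ whenever $\pi_j \geq 1/k$ (equivalently, whenever $j \notin \mathcal A'$), the expectation factors as
$$\mathbb E[S] = \frac{|\mathcal A'|}{|\mathcal A|} \sum_{j\in\mathcal A}\varepsilon^*_j \pi_j,$$
which I will compare to the full sum $\sum_j \varepsilon^*_j\pi_j$ controlled by Lemma~\ref{lem:lower}.

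The key step for $\mathbb E[S]$ is to show that restricting the $\pi$-weighted sum to $\mathcal A$ costs at most a factor of $1/(2M)$ per retained level set, plus an additive error from level sets that are dropped entirely. The multiplicative factor is immediate from the construction of $\mathcal A$ (it keeps the $\floor{|S_\pi(\cdot)|/M}$ largest elements of each sufficiently populated level set) together with the fact that entries of $\pi$ inside one level set agree up to a factor of two. The additive error reduces to bounding $\sum_{i'\in\mathbb Z}a\sqrt{\log((|i'|+1)/\gamma)}\cdot 2^{-\floor{\log_2 k} - i' + 1}$, which I handle by a geometric-tail argument for $i'\geq 0$ and by the a priori bound $|S_\pi(\floor{\log_2 k}+i')|\leq 2^{\floor{\log_2 k}+i'}$ for $i'<0$, yielding the claimed $O(a(1+\log(1/\gamma))/k)$. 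Applying Lemma~\ref{lem:lower} to $\bar\varepsilon^*$ with the rescaling $\varepsilon^* = \bar\varepsilon^*/(8M)$ then produces the $1/(8M^2)$ prefactor in the statement, with $\sqrt{C_\pi}$ matching the double square root involving $\sqrt{\sum_i \pi_i^2\exp(-2(1+v)k\pi_i)}/k$.

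For the concentration, combining the bounds $\pi_j \varepsilon^*_j \leq \sqrt u/(8Mk)$ and $\pi_j\varepsilon^*_j \leq \sqrt u\,\pi_j/(16M)$ from the third bullet of Lemma~\ref{lem:lower} gives $(\pi_j\varepsilon^*_j)^2 \leq u\pi_j/(128M^2 k)$, so that $\sum_j(\pi_j\varepsilon^*_j)^2 \leq u/(128 M^2 k)$. Writing $Z_i = \varepsilon^*_{j_i}\pi_{j_i}$, this yields
$$\mathbb V(S) \leq |\mathcal A'|\,\mathbb E[Z_1^2] = \frac{|\mathcal A'|}{|\mathcal A|}\sum_{j \in \mathcal A}(\pi_j\varepsilon^*_j)^2 \leq \frac{u}{128 M^2 k},$$
since $\mathcal A' \subseteq \mathcal A$. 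Chebyshev's inequality then delivers a fluctuation at most $\sqrt{u/(128M^2k\delta)}$ with probability at least $1-\delta$, which for $u$ a small absolute constant and $M\geq 4$ is comfortably below the $1/\sqrt{kM\delta}$ appearing in the statement, closing the argument.

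The main obstacle will be the bookkeeping in the second paragraph: since $\bar\varepsilon^*_j$ need not be exactly constant within a level set, one must either inspect the construction in the proof of Lemma~\ref{lem:lower} to establish near-constancy, or exploit the fact that all $\pi_j$ in a single level set are comparable to secure the $1/(2M)$ ratio $\sum_{j\in \mathcal A\cap S_\pi}\pi_j\bar\varepsilon^*_j / \sum_{j\in S_\pi}\pi_j\bar\varepsilon^*_j$ uniformly across level sets. Matching the precise constants (the factor $8$ in $1/(8M^2)$ and in $8a(1+\log(1/\gamma))/k$) is a fiddly but routine calculation that falls out of this accounting.
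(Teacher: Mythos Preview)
Your approach is essentially the paper's: first and second moment bounds on $S$ followed by Chebyshev, with the expectation lower-bounded via Lemma~\ref{lem:lower} after comparing $\sum_{j\in\mathcal A}\bar\varepsilon^*_j\pi_j$ to $\sum_j\bar\varepsilon^*_j\pi_j$ through the level-set construction of $\mathcal A$. The paper's proof is a terser version of your outline---it handles the level-set comparison in one line (``by definition of $\mathcal A$ and $(\varepsilon^*_i)_i$''), restricts the error sum to $i\in\mathbb N$ since $\bar\varepsilon^*_j=0$ for $\pi_j\geq 1/k$ makes the negative-$i$ terms vanish, and glosses over the $|\mathcal A'|/|\mathcal A|$ prefactor you carefully record; none of these differences is substantive.
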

This implies that we can take $\bar \rho$ as in the lemma, which concludes the proof.

\begin{proof}[Proof of Lemma~\ref{lem:lam0lam1}]

\underline{Study of $|S_{\tilde q}( \floor{\log_2(k)} + i)\cap \mathcal A|$ and $|S_{\tilde p}(\floor{\log_2(k)} + i)\cap \mathcal A|$.}	For any $i$ such that $|S_\pi(\floor{\log_2(k)} + i)| > a \sqrt{\log((|i|+1)/\gamma)}$, we have $|S_{\tilde q}( \floor{\log_2(k)} + i)\cap \mathcal A| \sim \text{Bin}(|{\mathcal A}|, \floor{|S_\pi(\floor{\log_2(k)} + i)| / 2} / |{\mathcal A}| )$, by definition of the distribution of ${\tilde q}$ on $\mathcal A$. By Hoeffding's inequality, we have for any $\varepsilon > 0$, that with probability larger than $1-2\exp(-2 \varepsilon^2 n)$ with respect to the distribution of $\tilde q$
	$$
	|S_{{\tilde q}}(\floor{\log_2(k)} + i)\cap \mathcal A| - \floor{|S_\pi( \floor{\log_2(k)} + i)\cap \mathcal A| / 2}| \leq \varepsilon n.
	$$
	So with probability  larger than $1 - 2\exp(-2 |S_\pi( \floor{\log_2(k)} + i)|^2/16)$ according to the distribution of $\tilde q$
	$$
	||S_{\tilde q}( \floor{\log_2(k)} + i)\cap \mathcal A| - \floor{|S_\pi( \floor{\log_2(k)} + i)\cap \mathcal A| / 2}| \leq |S_\pi(\floor{\log_2(k)} + i)\cap \mathcal A|/4.
	$$
	So for any $i$ such that $|S_\pi(\floor{\log_2(k)} + i)| > a\sqrt{\log((|i|+1)/\gamma)}$, we have with probability larger than $1 - 2\exp(- a^2 \log((|i|+1)/\gamma))$ with respect to the distribution of $\tilde q$ that
	$$
	||S_{\tilde q}(  \floor{\log_2(k)} + i)\cap \mathcal A| - \floor{|S_\pi( \floor{\log_2(k)} + i)\cap \mathcal A| / 2}| \leq |S_\pi( \floor{\log_2(k)} + i)\cap \mathcal A|/4.
	$$		
So whenever $a>1$, there exists a constant $c_a>0$ that depends only on $a$ and such that we have with probability larger than $1 - 2\sum_{i\in \mathbb Z} \exp(- a^2 \log((|i|+1)/\gamma)) \geq 1 - c_a \gamma$ with respect to the distribution of $\tilde p$ that for all $i$ such that $|S_\pi(  \floor{\log_2(k)} + i)| > a \sqrt{\log((|i|+1)/\gamma)}$ at the same time,
\begin{equation}
\label{eq:boundSq}
\frac34 |S_\pi( \floor{\log_2(k)} + i)\cap \mathcal A| \leq |S_{\tilde q}(  \floor{\log_2(k)} + i)\cap \mathcal A| \leq \frac54 |S_\pi( \floor{\log_2(k)} + i)\cap \mathcal A|.
\end{equation}
Now since $\varepsilon_i^*\in [0,1/2]$ we know that for any $i \in \mathcal A$ we have
$$\frac{1}{2} \tilde q_i\leq \tilde p_i \leq \frac{3}{2} \tilde q_i.$$
And so we also know that with probability larger than $1 - c_a \gamma$ with respect to the distribution of $\tilde p$ for all $i\in \mathbb Z$ such that $|S_\pi( \floor{\log_2(k)} + i)| > a \sqrt{\log((|i|+1)/\gamma)}$ at the same time,
\begin{equation}
\label{eq:boundSp2}
\frac34 |S_\pi( \floor{\log_2(k)} + i)\cap \mathcal A| \leq \sum_{j=-1}^{j+1}|S_{\tilde q}(\floor{\log_2(k)} + j)\cap \mathcal A| \leq \frac54 \sum_{j=-2}^{j+2}|S_\pi(\floor{\log_2(k)} + i) \cap \mathcal A|.
\end{equation}
\underline{Study of the rescaled coefficients outside $\mathcal A$.}
We define the following events
\begin{equation}\label{eq:eventH}
H_p=\{|\sum_{l\in {\mathcal A}} (p_l - \pi_l)| \leq 16 M^{-1} \log(2/\delta)\},~~~~~~~H_q=\{|\sum_{l\in {\mathcal A}} (q_l - \pi_l)| \leq 16 M^{-1} \log(2/\delta)\}.
\end{equation}
We remind that for $i \in \mathcal A$ we have $\tilde p_i = p_i$ and $\tilde q_i = q_i$. So the events $H_p$ and $H_q$ are very informative with respect to $\tilde p, \tilde q$. Now, $\max_{j \in {\mathcal A}} \pi_j \leq 1$ and since $|S_{\tilde q}( i)\cap \mathcal A|/|S_{\tilde q}( i)| \leq 1/M$ for any $i \in \mathbb Z$ and $\sqrt k \geq M$. So by Bernstein's inequality, we have with probability larger than $1-\delta$ with respect to $p$ and $\tilde p$ that $H_p$ holds and with probability larger than $1-\delta$ with respect to $q$ and $\tilde q$ that $H_q$ holds.

So if $\sqrt k \geq M \geq 4 (16\log(2/\delta))^2$, we have with probability larger than $1-\delta$ with respect to $\tilde q$ that for any $j \not\in \mathcal A$, 
$$\frac12 \pi_j \leq \tilde q_{j} \leq \frac32 \pi_j,$$
and also with probability larger than $1-\delta$ with respect to $\tilde p$ that for any $j \not\in \mathcal A$, 
$$\frac12 \pi_j \leq \tilde p_{j} \leq \frac32 \pi_j.$$
And so finally we have with probability larger than $1-\delta$ with respect to $\tilde q$ that for any $i$,
$$|S_\pi(i) \cap \mathcal A^C| \leq \sum_{j= i-1 }^{i+1}|S_{\tilde q}(j)\cap \mathcal A^C| \leq \sum_{j= i-2 }^{i+2} |S_\pi(i)\cap \mathcal A^C|.$$
Similarly we have with probability larger than $1-\delta$ with respect to $\tilde p$ that for any $i$,
$$|S_\pi(i) \cap \mathcal A^C| \leq \sum_{j= i-1 }^{i+1}|S_{\tilde p}(j)\cap \mathcal A^C| \leq \sum_{j= i-2 }^{i+2} |S_\pi(i)\cap \mathcal A^C|.$$
\underline{Conclusion.} Combining both studies on $\mathcal A$ and $\mathcal A^C$, we get that if $M \geq 4 (16\log{2/\delta})^2$, we have with probability larger than $1-\delta - c_a \gamma$ with respect to $\tilde q$ that $\tilde q \in \mathbf P_\pi$, and with probability larger than $1-\delta - c_a \gamma$ with respect to $\tilde p$ that $\tilde p \in \mathbf P_\pi$.

\end{proof}

\begin{proof}[Proof of Lemma~\ref{lem:TV01marginal}]
Define the discrete uniform distribution $\mathcal U_\lambda$ such that $\mathcal U_\lambda(\{\lambda_i\}) = 1/d$. We will now work on the definition of appropriate measures for $(p,q)$. Let $\theta\sim \mathcal U_\lambda$ and $\xi$ taking value $\varepsilon_i^*$ when $\theta$ takes value $\lambda_i$.
We reparametrize $\xi$ by $\lambda$, and we set 
$$\xi_\theta = \frac{1}{\Big|\{i : k\pi_i = \theta\}\Big|} \sum_{\{i : k\pi_i = \theta\}} \varepsilon_i^*,$$
with the convention $0/0 = 0$. Note that by definition of $\varepsilon^*$, we have from Lemma~\ref{lem:lower}
\begin{itemize}
\item $\xi_\theta \in [0,1]$ and $\xi_\theta = 0$ for any $\theta \geq 1$.
\item $\xi_\theta \theta \leq \sqrt{u}\Big[k\sqrt{C/I_{v,\pi}}\land 1\Big]$.
\item By definition of $\mathcal U_\lambda$ and Lemma~\ref{lem:lower}
\begin{align}\label{eq:defmom}
\int \theta^2 \xi_\theta^2 e^{-2\theta} d\mathcal U_\lambda(\theta) = \frac{k^2}{d}\sum_{i} \pi_i^2 \varepsilon^{*2}_i e^{-2k\pi_i} &\leq \frac{k^2}{d}  u\frac{\sqrt{\sum_i \pi_i^2e^{-2(1+v)k\pi_i}}}{k} \nonumber\\
&= u\sqrt{\frac{\int \theta^2 e^{-2(1+v)\theta} d\mathcal U_\lambda(\theta)}{d}}.
\end{align}
\end{itemize}

\paragraph{Bound on the total variation.}

Let us dominate the total variation distance with the chi-squared distance $\chi_2$.

For two distributions $\tilde \nu_1, \tilde \nu_0$ such that $\tilde \nu_1$ is absolutely continuous with respect to $\tilde \nu_0$, then
\begin{align*}
d_{TV}(\tilde \nu_0,\tilde \nu_1) &= \int \left|\frac{d\tilde \nu_1}{d\tilde \nu_0} - 1 \right| d\tilde \nu_0 = \E_{\tilde \nu_0}\left[\left|\frac{d\tilde \nu_1}{d\tilde \nu_0}-1\right|\right] \\
&\leq \left(\E_{\tilde \nu_0}\left[\left(\frac{d\tilde \nu_1}{d\tilde \nu_0}\right)^2\right]-1\right)^{1/2} = \sqrt{\chi_2(\tilde \nu_0,\tilde \nu_1)}.
\end{align*}
By the tensorization property of the chi-squared distance and by application of the inequality above to $\nu_0^{\otimes d},\nu_1^{\otimes d}$, we have
\begin{equation}\label{eq:TV}
d_{TV}(\nu_0^{\otimes d},\nu_1^{\otimes d}) \leq \sqrt{\chi_2(\nu_0^{\otimes d},\nu_1^{\otimes d})} =  \sqrt{(1 + \chi_2(\nu_0,\nu_1))^d - 1}.
\end{equation}
Now, we have by the law of total probability for any $m, m' \geq 0$
$$
\nu_0(m,m') = \int\frac{e^{-2\theta} \theta^{m+m'}}{m!m'!} d\mathcal U_\lambda(\theta),
$$
and
$$
\nu_1(m,m') =  \int \frac12 \frac{e^{-2\theta} \theta^{m+m'}}{m!m'!} (e^{\xi_\theta \theta}(1-\xi_{\theta})^{m'} + e^{-\xi_\theta \theta}(1+\xi_{\theta})^{m'})d\mathcal U_\lambda(\theta).
$$
So \begin{align}
&\chi_2(\nu_0,\nu_1) \nonumber\\
&= \sum_{m,m'} \frac{(\int \theta^{m+m'}e^{-2\theta}[-e^{\xi_\theta \theta}(1-\xi_\theta)^{m'}/2 - e^{-\xi_\theta \theta} (1+\xi_\theta)^{m'}/2 + 1] d\mathcal U_\lambda(\theta))^2}{m!m'! \int \theta^{m+m'} e^{-2\theta}d\mathcal U_\lambda(\theta)} \nonumber\\
&= \sum_{m,m'} \frac{\int\int (\theta\theta')^{m+m'}e^{-2(\theta+\theta')} D_\theta(m) D_{\theta'}(m) d\mathcal U_\lambda(\theta)d\mathcal U_\lambda(\theta')}{m!m'!\int \theta^{m+m'}e^{-2\theta}d\mathcal U_\lambda(\theta)},\label{eq:chi2}
\end{align}
where 
$$D_\theta(m)=-\frac{e^{\xi_\theta \theta}(1-\xi_\theta)^{m}}{2} - \frac{e^{-\xi_\theta \theta} (1+\xi_\theta)^{m}}{2} + 1.$$
We will analyse the terms of this sum depending on the value of $m+m'$.
\bigskip\\

\paragraph{Analysis of the terms in Equation~\eqref{eq:chi2}.}\label{ss:chi2}

\noindent
{\bf Term for $m+m' = 0$.} We have
$$
D_\theta(0) = -\cosh(\xi_\theta \theta) + 1
\quad\quad\quad
\text{and} 
\quad\quad\quad
D_\theta(0) D_{\theta'}(0) \leq (\theta \theta' \xi_\theta \xi_{\theta'})^2,
$$
since $\xi_\theta\theta \leq 1$.

And so
\begin{align*}
\frac{\int\int e^{-2(\theta+\theta')} D_\theta(0) D_{\theta'}(0) d\mathcal U_\lambda(\theta)d\mathcal U_\lambda(\theta')}{\int e^{-2\theta}d\mathcal U_\lambda(\theta)} &\leq \frac{\Big(\int e^{-2\theta} (\theta\xi_\theta)^2 d\mathcal U_\lambda(\theta)\Big)^2}{\int e^{-2\theta}d\mathcal U_\lambda(\theta)}\\
&\leq  u \frac{\int \theta^2 e^{-2(1+v)\theta} d\mathcal U_\lambda(\theta)}{d\int e^{-2\theta}d\mathcal U_\lambda(\theta)} \leq \frac{c_vu}{d},
\end{align*}
where we obtained the second inequality by Equation~\eqref{eq:defmom} and for $c_v<+\infty$ that depends only on $v>0$ and such that
\begin{align}\label{eq:cv}
c_v = \sup_{\theta >0}\Big[e^{-2v \theta} (1\lor \theta^2)\Big].
\end{align}

\noindent
{\bf Term for $m+m' = 1$.} We have then
$$
D_\theta(1) = -\cosh(\xi_\theta \theta) + 1 + \xi_\theta\sinh(\theta \xi_\theta)
$$
and so since $\xi_\theta \in [0,1]$ and $\theta \xi_\theta \in [0,\xi_\theta]$, we have
$$
D_\theta(1) D_{\theta'}(1) \leq \theta \theta' (\xi_\theta \xi_{\theta'})^2.
$$

\noindent
So the term for $m+m'=1$ can be bounded as
\begin{align*}
&\frac{\int\int \theta\theta' e^{-2(\theta+\theta')} (D_\theta(0) D_{\theta'}(0) + D_\theta(1) D_{\theta'}(1)) d\mathcal U_\lambda(\theta)d\mathcal U_\lambda(\theta')}{\int \theta e^{-2\theta}d\mathcal U_\lambda(\theta)} \\
&\quad\quad\quad\quad\quad\quad\quad\quad\quad\quad\quad\quad\quad\quad\quad\quad
\leq  \frac{1}{\int \theta e^{-2\theta} d\mathcal U_\lambda(\theta)} \Big(\int e^{-2\theta} 2(\theta\xi_\theta)^2d\mathcal U_\lambda(\theta)\Big)^2\\
&\quad\quad\quad\quad\quad\quad\quad\quad\quad\quad\quad\quad\quad\quad\quad\quad
\leq  4u \frac{\int \theta^2 e^{-2(1+v)\theta}  d\mathcal U_\lambda(\theta)}{d\int \theta e^{-2\theta}d\mathcal U_\lambda(\theta)} \leq 4c_v\frac{ u}{d},
\end{align*}
where we obtained the second inequality by Equation~\eqref{eq:defmom} and the last by Definition of $c_v$ in Equation~\eqref{eq:cv}.\bigskip\\

\noindent
{\bf Term for $m+m' = 2$.} We have then
$$
D_\theta(2) = -\cosh(\xi_\theta \theta) + 1 + 2\xi_\theta\sinh(\theta \xi_\theta) - \xi_\theta^2\cosh(\theta \xi_\theta)
$$
and again since $\xi_\theta \in [0,1]$ and $\theta \xi_\theta \in [0,\xi_\theta]$, we have
$$
D_\theta(2) D_{\theta'}(2) = 4(\xi_\theta \xi_{\theta'})^2.
$$
So the term for $m+m'=2$ can be bounded as
\begin{align*}
&\frac{\int\int (\theta\theta')^2 e^{-2(\theta+\theta')} (D_\theta(0) D_{\theta'}(0)/2 + D_\theta(1) D_{\theta'}(1) + D_\theta(2) D_{\theta'}(2)/2) d\mathcal U_\lambda(\theta)d\mathcal U_\lambda(\theta')}{\int \theta^2 e^{-2\theta}d\mathcal U_\lambda(\theta)} \hspace{2cm}\\
&\hspace{6.4cm}\leq  \frac{1}{\int \theta^2 e^{-2\theta} d\mathcal U_\lambda(\theta)} \Big(\int e^{-2\theta} 4(\theta\xi_\theta)^2d\mathcal U_\lambda(\theta)\Big)^2\\
&\hspace{6.4cm}\leq \frac{16u}{d} c_v,
\end{align*}
where we obtain the second inequality by Equation~\eqref{eq:defmom}.\bigskip\\

\noindent
{\bf Term for $m+m' \geq 3$.} 
We have
\begin{equation}\label{eq:dlam}
D_\theta(m) \leq 2^{m+2} \xi_\theta^2.
\end{equation}
\textit{Subcase 1: $m+m' = 3$.} We have by Equation~\eqref{eq:dlam}
\begin{align*}
\frac{\int \int (\theta\theta')^{3} e^{-2\theta}D_\theta(m)D_{\theta'}(m)d\mathcal U_\lambda(\theta)d\mathcal U_\lambda(\theta')}{m!m'!\int \theta^{3} e^{-2\theta}d\mathcal U_\lambda(\theta)} &\leq \frac{\Big(\int e^{-2\theta} \theta^{3} (2^{m+2}\xi_\theta^2)d\mathcal U_\lambda(\theta)\Big)^2}{m!m'!\int \theta^{3}e^{-2\theta} d\mathcal U_\lambda(\theta)}\\
&\leq 2^{2m+4} \frac{\Big(\int e^{-2\theta} \theta (\theta \xi_\theta)^2d\mathcal U_\lambda(\theta)\Big)^2}{\int \theta^{3} e^{-2\theta}d\mathcal U_\lambda(\theta)} \\
&=  2^{2m+4} \frac{k^3 \Big[\sum_{i \geq J} \pi_i (\pi_i \varepsilon_i^{*})^2\Big]^2}{d \sum_i \pi_i^{3} e^{-2\pi_i k}}.
\end{align*}
Finally, by definition of $\xi_\theta$ and $\varepsilon_i^*$ and since in any case $\varepsilon_i^* \pi_i \leq  \sqrt{uC_\pi/I_{v,\pi}}$ (see Lemma~\ref{lem:lower}), we have
$$
\frac{\int \int (\theta\theta')^{3} e^{-2\theta}D_\theta(m)D_{\theta'}(m)d\mathcal U_\lambda(\theta)d\mathcal U_\lambda(\theta')}{m!m'!\int \theta^{3} e^{-2\theta}d\mathcal U_\lambda(\theta)} \leq 2^{2m+4} \frac{k^3 \Big[\sum_{i \geq J } \pi_i \frac{uC_\pi}{2I_{v,\pi}}\Big]^2}{d \sum_i \pi_i^{3} e^{-2\pi_i k}}.
$$
This implies, since $\sum_{J \leq i < I_{v,\pi} } \pi_i  \geq \sum_{I_{v,\pi} \leq i } \pi_i$ in the definition of $I_{v,\pi}$ (see Lemma~\ref{lem:lower}),
\begin{align*}
\frac{\int \int (\theta\theta')^{3} e^{-2\theta}D_\theta(m)D_{\theta'}(m)d\mathcal U_\lambda(\theta)d\mathcal U_\lambda(\theta')}{m!m'!\int \theta^{3} e^{-2\theta}d\mathcal U_\lambda(\theta)} &\leq  2^{2m+2} \frac{u^2C_\pi^2}{I_{v,\pi}^2} \frac{k^3 \Big[\sum_{J \leq i < I_{v,\pi} } \pi_i\Big]^2}{n \sum_i \pi_i^{3} e^{-2\pi_i k}}\\
&\leq  2^{2m+2} \frac{u^2C_\pi^2}{I_{v,\pi}} \frac{k^3 \Big[\sum_{J \leq i < I_{v,\pi} } \pi_i^2\Big]}{n \sum_i \pi_i^{3} e^{-2\pi_i k}},
\end{align*}
by Cauchy-Schwarz inequality. Then
\begin{align*}
&\frac{\int \int (\theta\theta')^{3} e^{-2\theta}D_\theta(m)D_{\theta'}(m)d\mathcal U_\lambda(\theta)d\mathcal U_\lambda(\theta')}{m!m'!\int \theta^{3} e^{-2\theta}d\mathcal U_\lambda(\theta)} \\
&\quad\quad\quad\quad\quad\quad\quad\quad\quad\quad
\leq  2^{2m+2} \frac{u^2}{I_{v,\pi}}  \frac{k \Big[\sum_i \pi_i^2 \exp(-2(1+v)k\pi_i)\Big]\Big[\sum_{J \leq i < I_{v,\pi} } \pi_i^2\Big]}{n \sum_i \pi_i^{3} e^{-2\pi_i k}},
\end{align*}
by Definition of $C_\pi$ in Equation~\eqref{eq:defC}. In particular,
\begin{align*}
\sum_i \pi_i^2 \exp(-2(1+v)k\pi_i)  &= \sum_{i < I_{v,\pi}} \pi_i^2 \exp(-2(1+v)k\pi_i) + \sum_{I_{v,\pi} \leq i} \pi_i^2 \exp(-2(1+v)k\pi_i)  \\
&\leq 2 e^{2(1+v)} \sum_{i < I_{v,\pi}} \pi_i^2 \exp(-2(1+v)k\pi_i),
\end{align*}
 since $\sum_{I_{v,\pi} \leq i } \pi_i \leq \sum_{J \leq i < I_{v,\pi} } \pi_i$ and for all $i \geq J$ we have $k\pi_i \leq 1$. So, once we plug the last inequality in, we obtain: 
\begin{align*}
&\frac{\int \int (\theta\theta')^{3} e^{-2\theta}D_\theta(m)D_{\theta'}(m)d\mathcal U_\lambda(\theta)d\mathcal U_\lambda(\theta')}{m!m'!\int \theta^{3} e^{-2\theta}d\mathcal U_\lambda(\theta)} \\
&\quad\quad\quad\quad\quad\quad\quad
\leq  2^{2m+3} e^{2(1+v)}\frac{u^2}{I_{v,\pi}} \frac{k \Big[\sum_{ i < I_{v,\pi} } \pi_i^2 \exp(-2(1+v)k\pi_i)\Big] \Big[\sum_{J \leq i < I_{v,\pi} } \pi_i^2\Big]}{d \sum_i \pi_i^{3} e^{-2\pi_i k}}\\
&\quad\quad\quad\quad\quad\quad\quad
\leq  2^{2m+3} e^{2(1+v)}\frac{u^2}{I_{v,\pi}} \frac{k \Big[\sum_{ i < I_{v,\pi} } \pi_i^2 \exp(-2(1+v)k\pi_i)\Big] \Big[\sum_{J \leq i < I_{v,\pi} } \pi_i^2\Big]}{d \sum_{i\leq I_{v,\pi}} \pi_i^{3} e^{-2\pi_i k}}\\
&\quad\quad\quad\quad\quad\quad\quad
\leq  2^{2m+3} e^{2(1+v)} u^2 \frac{k \Big[\sum_{ i \leq I_{v,\pi} } \pi_i^4 \exp(-2(1+v)k\pi_i)\Big] }{d \sum_i \pi_i^{3} e^{-2\pi_i k}},
\end{align*}
because for any $a_1 \geq \ldots \geq a_{I_{v,\pi}}\geq 0$, $b_1 \geq \ldots \geq b_{I_{v,\pi}} \geq 0$, we have $\sum a_i \sum b_j \leq I_{v,\pi} \sum a_i b_i$.
 Then $k\pi_i e^{-2vk\pi_i} \leq c_v$ by Equation~\eqref{eq:cv} and for any $i$, this implies
\begin{align*}
\frac{\int \int (\theta\theta')^{3} e^{-2\theta}D_\theta(m)D_{\theta'}(m)d\mathcal U_\lambda(\theta)d\mathcal U_\lambda(\theta')}{m!m'!\int \theta^{3} e^{-2\theta}d\mathcal U_\lambda(\theta)} 
&\leq  2^{2m+3}
e^{2(1+v)} c_v \frac{u^2}{d} \leq 2^{9} e^{2(1+v)}c_v  \frac{u^2}{d}.
\end{align*}

\noindent
\textit{Subcase 2: $m+m' \geq 4$.} We have by Equation~\eqref{eq:dlam}
\begin{align*}
&\frac{\int \int (\theta\theta')^{m+m'} e^{-2(\theta+\theta')}D_\theta(m)D_{\theta'}(m)d\mathcal U_\lambda(\theta)d\mathcal U_\lambda(\theta')}{m!m'!\int \theta^{m+m'} e^{-2\theta}d\mathcal U_\lambda(\theta)} \\
&\hspace{6cm}\leq \frac{\Big(\int e^{-2\theta} \theta^{m+m'} (2^{m+2}\xi_\theta^2)d\mathcal U_\lambda(\theta)\Big)^2}{m!m'!\int \theta^{m+m'}e^{-2\theta} d\mathcal U_\lambda(\theta)}\\
&\hspace{6cm}= \frac{2^{2m+4}}{m!m'!}\frac{\Big(\int e^{-2\theta} \theta^{m+m'} \xi_\theta^2d\mathcal U_\lambda(\theta)\Big)^2}{\int \theta^{m+m'}e^{-2\theta} d\mathcal U_\lambda(\theta)}\\
&\hspace{6cm}\leq \frac{2^{2m+4}}{m!m'!} \int e^{-2\theta} \theta^{m+m'} \xi_\theta^4 d\mathcal U_\lambda(\theta),
\end{align*}
where the last inequality comes by application of Cauchy-Schwarz inequality. And so since $\varepsilon_\theta = 0$ for any $\theta \geq 1$, we have
\begin{align*}
&\frac{\int \int (\theta\theta')^{m+m'} e^{-2(\theta+\theta')}D_\theta(m)D_{\theta'}(m)d\mathcal U_\lambda(\theta)d\mathcal U_\lambda(\theta')}{m!m'!\int \theta^{m+m'} e^{-2\theta} d\mathcal U_\lambda(\theta)} \\
&\quad\quad\quad\quad\quad\quad\quad\quad\quad\quad\quad\quad\quad\quad
\leq \frac{2^{2m+4}}{m!m'!} \int e^{-2\theta} \theta^{m+m'-4} (\theta\xi_\theta)^4 \mathbf 1\{\theta \leq 1\} d\mathcal U_\lambda(\theta).
\end{align*}
Then, since $m+m' \geq 4$,
\begin{align*}
&\frac{\int \int (\theta\theta')^{m+m'} e^{-2(\theta+\theta')}D_\theta(m)D_{\theta'}(m)d\mathcal U_\lambda(\theta)d\mathcal U_\lambda(\theta')}{m!m'!\int \theta^{m+m'} e^{-2\theta} d\mathcal U_\lambda(\theta)}\\
&\hspace{6cm} \leq \frac{2^{2m+4}}{m!m'!} \int  (\theta\xi_\theta)^4 \mathbf 1\{\theta \leq 1\} d\mathcal U_\lambda(\theta)\\
&\hspace{6cm}= \frac{2^{2m+4}}{m!m'!}\frac{k^4}{d} \Big[ \sum_{i} (\pi_i \varepsilon_i^*)^4  \Big]\\
&\hspace{6cm}\leq \frac{2^{2m+4} e^2}{m!m'!}\frac{k^4}{d} \frac{u^2C_\pi^2}{I_{v,\pi}},
\end{align*}
since, by definition of $\varepsilon_i^*$ in Lemma~\ref{lem:lower}, $\pi_i \varepsilon_i^* \leq \sqrt{uC_\pi/I_{v,\pi}}$ and $ \sum_{i} (\pi_i \varepsilon_i^*)^2  \leq uC_\pi e^2$ using the fact that $\varepsilon_i^* = 0$ for $\pi_i \geq 1/k$.
By Equation~\eqref{eq:boundCsurI}, this implies
\begin{align*}
\frac{\int \int (\theta\theta')^{m+m'} e^{-2(\theta+\theta')}D_\theta(m)D_{\theta'}(m)d\mathcal U_\lambda(\theta)d\mathcal U_\lambda(\theta')}{m!m'!\int \theta^{m+m'} e^{-2\theta}d\mathcal U_\lambda(\theta)} 
&\leq \frac{2^{2m+4} e^2}{m!m'!}\frac{2u^2}{d}.
\end{align*}

\paragraph{Conclusion on the distance between the two distributions.}

Now, we plug the bounds we found for each term back in Equation~\eqref{eq:chi2} and we obtain
\begin{align*}
 \chi_2(\nu_0,\nu_1)  
&\leq \sum_{m,m'} \frac{\int\int (\theta\theta')^{m+m'}e^{-2(\theta+\theta')} D_\theta(m) D_{\theta'}(m) d\mathcal U_\lambda(\theta)d\mathcal U_\lambda(\theta')}{m!m'!\int \theta^{m+m'}e^{-2\theta}d\mathcal U_\lambda(\theta)}\\
&\leq \frac{c_vu}{d} + 4\frac{c_vu}{d} + \frac{16u}{d} c_v+ 2^{9}e^{2(1+v)} \frac{u}{d}c_v+\sum_{m,m' : m+m' \geq 4} \frac{2^{2m+4} e^2}{m!m'!}\frac{2u^2}{d} \\
&\leq (5 + 16 + 2^{9}e^{2(1+v)})\frac{u}{d}  c_v +\sum_{m,m'} \frac{2^{2m+4} e^2}{m!m'!}\frac{2u}{d} \\
&= (5 + 16 + 2^{9}e^{2(1+v)})\frac{u}{d} c_v +\frac{2^{5}e^7u}{d}\\
&\leq (5c_v + 16c_v + 2^{9}e^{2(1+v)} c_v + 2^{5}e^7)\frac{u}{d} \leq \tilde c_{v} \frac{u}{d}, 
\end{align*}
for $u\leq 1$ and where $\tilde c_v$ is a constant that depends only on $v$ and $\tilde c_v$ is bounded away from 0 for $v>0$. And so by Equation~\eqref{eq:TV} we have
\begin{align*}
d_{TV}(\nu_0^{\otimes d},\nu_1^{\otimes d}) \leq \sqrt{\left(1 + \tilde c_{v} \frac{u}{d}\right)^d - 1} \leq \sqrt{\exp(\tilde c_v u) - 1} \leq \sqrt{\tilde c_vu},
\end{align*}
for $u \leq \tilde c_v^{-1}$, i.e.~for $u$ smaller than a constant that depends only on $v$.
\end{proof}

\begin{proof}[Proof of Lemma~\ref{lem:barrho}]

Note that
$$\mathbb E_{\tilde q,\tilde p}  \sum_{i } |\tilde p_i - \tilde q_i| = \mathbb E_{q,\xi} \Big[\sum_{i \in {\mathcal A}} |\xi_i| q_i \Big] = \sum_{i \in {\mathcal A}} \varepsilon^*_i \pi_i,$$
and
$$\mathbb V_{q} \Big[\sum_{i\in {\mathcal A}} |\xi_i| q_i \Big]= \sum_{i \in {\mathcal A}} \mathbb V_{q_1\sim U_{\pi,{\mathcal A}}} (|\xi_1| q_1) \leq \sum_{i \in {\mathcal A}} (\varepsilon^*_i \pi_i)^2.$$
Let $\alpha>0$. By Chebyshev's inequality, we know that with probability larger than $1-\alpha$ with respect to $(q,\xi)$,
$$\sum_{i\in {\mathcal A}} |\xi_i| q_i \geq \sum_{i\in {\mathcal A}} \varepsilon^*_i \pi_i - \sqrt{\frac{\sum_{i\in {\mathcal A}} \varepsilon_i^{*2}\pi_i^2}{\alpha}}.$$
Now, by definition of $\mathcal A$ and $(\varepsilon_i^*)_i$ we have 
$$\sum_{i\in {\mathcal A}} \varepsilon^*_i \pi_i \geq  \frac{1}{8 M^2}\sum_{i} \bar \varepsilon^*_i \pi_i - \sum_{i \in \mathbb N: |S_\pi(\floor{\log_2(k)} + i)| \leq  a \sqrt{\log(i/\gamma)}} \sum_{j\in S_\pi(\floor{\log_2(k)} + i)} \pi_j.$$
First note that
\begin{align*}
&\sum_{i \in \mathbb N: |S_\pi(\floor{\log_2(k)} + i)| \leq  a \sqrt{\log(i/\gamma)}} \sum_{j\in S_\pi(\floor{\log_2(k)} + i)} \pi_j \\
&\quad\quad\quad\quad\quad\quad\quad\quad\quad\quad\quad\quad\quad\quad\quad\quad\quad
\leq \sum_{i \in \mathbb N} a \sqrt{\log(i/\gamma)} \frac{2^{-i}}{k} \leq 8\frac{a(1+ \log(1/\gamma))}{k}.
\end{align*}
Also by application of Lemma~\ref{lem:lower}, where we associate $\bar \varepsilon^*_i$ to the ordered version of $\pi$, we have 
$$
\sum_{i} \bar \varepsilon^*_i \pi_i \geq \Bigg[\Big[\sum_{i\geq I_{v,\pi}} \frac{\sqrt{u}\pi_i}{\sqrt{2}} \Big]\vee \frac{\sqrt{u} (I_{v,\pi}-J)}{\sqrt{2I_{v,\pi}}}\sqrt{\frac{\sqrt{\sum_i \pi_i^2\exp(-2(1+v)k\pi_i)}}{k} }\Bigg] \land \sqrt{\frac{u}{8}}\Bigg].
$$
So we conclude that
\begin{align*}
\sum_{i\in {\mathcal A}} \varepsilon^*_i \pi_i \geq &~\frac{1}{8M^2}\Bigg[\Big[\sum_{i\geq I_{v,\pi}} \frac{\sqrt{u}\pi_i}{\sqrt{2}} \Big]\vee \frac{\sqrt{u} (I_{v,\pi}-J)}{\sqrt{2I_{v,\pi}}}\sqrt{\frac{\sqrt{\sum_i \pi_i^2\exp(-2(1+v)k\pi_i)}}{k} }\Bigg] \land  \sqrt{\frac{u}{8}}\Bigg] \\
&- 8\frac{a(1+ \log(1/\gamma))}{k}.
\end{align*}
So we have with probability larger than $1-\delta$,
\begin{align*}
\sum_{i \in {\mathcal A}} |\xi_i| q_i &\geq \frac{1}{8M^2}\Bigg[\Big[\sum_{i\geq I_{v,\pi}} \frac{\sqrt{u}\pi_i}{\sqrt{2}} \Big]\vee \frac{\sqrt{u} (I_{v,\pi}-J)}{\sqrt{2I_{v,\pi}}}\sqrt{\frac{\sqrt{\sum_i \pi_i^2\exp(-2(1+v)k\pi_i)}}{k} }\Bigg] \land \sqrt{\frac{u}{8}}\Bigg]  \\
&- \frac{1}{\sqrt{kM\delta}} - 8\frac{a(1+ \log(1/\gamma))}{k}.
\end{align*}

\end{proof}

\begin{proof}[Proof of Lemma~\ref{lem:lower}]
	We prove this lemma by defining suitable $\varepsilon_i^*$'s.
	
	\medskip
	
	\noindent
	{\bf Step 1: Proof that $\sqrt{C_\pi/I_{v,\pi}} \leq \frac{\sqrt{2}}{k}$.} We have
	\begin{align*}
	C_\pi^2 k^2 &\leq \sum_i  \pi_i^2 \exp(-2(1+v)k\pi_i) \\
	&= \sum_{i \leq I_{v,\pi}}  \pi_i^2 \exp(-2(1+v)k\pi_i) + \sum_{i \geq I_{v,\pi}}  \pi_i^2 \exp(-2(1+v)k\pi_i)\\
	&\leq  \frac{I_{v,\pi}}{k^2} + uC_\pi,
	\end{align*}
	as $\pi_i^2 \exp(-2k\pi_i) \leq \frac{1}{k^2}$. So we have that $C_\pi \leq \frac{2u}{k^2}$, or $C_\pi \leq  \frac{\sqrt{2I_{v,\pi}}}{k^2}$, and so in any case
	\begin{align*}
	C_\pi \leq  \frac{\sqrt{2I_{v,\pi}}}{k^2} \lor \frac{2u}{k^2},
	\end{align*}
	which implies
	\begin{align}\label{eq:boundCsurI}
	\frac{\sqrt{C_\pi}}{I_{v,\pi}^{1/4}} \leq  \frac{2^{1/4}}{k } \lor \frac{\sqrt{2u}}{k I_{v,\pi}^{1/4}} \leq \frac{\sqrt{2}}{k},
	\end{align}
	since $0<u <1$.
	
	\medskip
	\noindent
	{\bf Step 2: Definition of $\varepsilon_i^*$ for $i \geq I_{v,\pi}$ or $i < J$.} Take for all $i<J$ that $\varepsilon_i^* = 0$. Take for all other $i \geq I_{v,\pi}$ 
	$$\varepsilon_i^* = \sqrt{u/2}.$$
	We have for any $i \geq I_{v,\pi}$
	\begin{itemize}
		\item $\varepsilon^*_i \in [0,1]$, and $\varepsilon^*_i \pi_i \leq \sqrt{u}\Big[(1/k) \land \sqrt{C_\pi/(2I_{v,\pi})}\Big]$, since by definition of $I_{v,\pi}$ we know that $\pi_i \leq (1/k)\land \sqrt{C_\pi/I_{v,\pi}}$ if $i \geq I_{v,\pi}$
		\item by definition of $I_{v,\pi}$ we have
		$$\sum_{i\geq I_{v,\pi}} \pi_i^2 \varepsilon^{*2}_i \exp(-2k\pi_i) \leq  \frac{uC_\pi}{2}$$
		\item and also 
		\begin{align}\label{eq:sat1}
		\sum_{i\geq I_{v,\pi}} \varepsilon_i^* \pi_i = \sqrt{\frac{u}{2}}\sum_{i\geq I_{v,\pi}} \pi_i.
		\end{align}
	\end{itemize}

	\noindent
	{\bf Step 3: Definition of $\varepsilon_i^*$ for $i < I_{v,\pi}$ in three different cases.} If $I_{v,\pi} \leq J$, the $\varepsilon_i^*$ are already defined for all $i \geq J$, and by definition of $\varepsilon_i^*$,
	$$\sum_i \varepsilon_i^* \pi_i \geq \sum_{i\geq J} \frac{\sqrt{u}\pi_i}{\sqrt{2}} =\Big[\sum_{i\geq J} \frac{\sqrt{u}\pi_i}{\sqrt{2}} \Big]\lor\Big[\frac{(I_{v,\pi}-J)\sqrt{u C_\pi}}{\sqrt{2I_{v,\pi}}}\Big]$$
	This concludes the proof in that case. We assume from now on that $I_{v,\pi}>J$, then by definition of $I_{v,\pi}$, at least one of the constraints in Equation~\eqref{eq:defI} must be saturated.
	
	\medskip
	
	\noindent
	{\bf Case 1: third constraint saturated but not the first one: $\sum_{i \geq I_{v,\pi}-1} \pi_i > \sum_{J \leq i < I_{v,\pi}-1} \pi_i$ and $\pi_{I_{v,\pi}-1} \leq \sqrt{C_\pi/I_{v,\pi}} \land (1/k)$.} We set $\varepsilon_{I_{v,\pi}-1}^* = \sqrt{\frac{u}{2}}$ and for any $i < I_{v,\pi}-1$, we set $\varepsilon_i^* = 0$. Note that $\varepsilon_{I_{v,\pi}-1}^* \leq 1$ and $\varepsilon_{I_{v,\pi}-1}^*\pi_{I_{v,\pi}-1} \leq \sqrt{u}\Big[ (1/k) \land \sqrt{C_\pi/I_{v,\pi}}\Big]$. We also have by definition of $ \varepsilon_i^{*}$ for $i \geq I_{v,\pi}$ and by Equation~\eqref{eq:defI}
	\begin{align*}
	\sum_{i\geq I_{v,\pi}} \pi_i^2 \varepsilon^{*2}_i \exp(-2k\pi_i) \leq  \frac{uC_\pi}{2},
	\end{align*}
	and so
	\begin{align*}
	\sum_{i} \pi_i^2 \varepsilon^{*2}_i \exp(-2k\pi_i) = \sum_{i\geq I_{v,\pi}-1} \pi_i^2 \varepsilon^{*2}_i \exp(-2k\pi_i) \leq uC_\pi.
	\end{align*}
	Moreover by saturation of the third constraint
	\begin{align*}
	\sum_{i\geq I_{v,\pi}-1} \pi_i \varepsilon^{*}_i = \sqrt{\frac{u}{2}} \sum_{i\geq I_{v,\pi}-1} \pi_i  \geq \sqrt{\frac{u}{2}} \sum_{J \leq i < I_{v,\pi}-1} \pi_i,
	\end{align*}
	and so
	\begin{align*}
	\sum_{i} \pi_i \varepsilon^{*}_i = \sum_{i\geq I_{v,\pi}-1} \pi_i \varepsilon^{*}_i \geq  \sqrt{\frac{u}{8}} \sum_{J \leq i } \pi_i.
	\end{align*}
	This concludes the proof in this case.
	
	\medskip
	\noindent
	{\bf Case 2: second constraint saturated but not the first one: $\sum_{i \geq I_{v,\pi}-1} \pi_i^2 \exp(-2k\pi_i) > C_\pi$ and $\pi_{I_{v,\pi}-1} \leq \sqrt{C_\pi/I_{v,\pi}}$.}  We have 
	\begin{align}\label{eq:sumImoins1}
	\sum_{i \geq I_{v,\pi}-1} \pi_i^2  \geq \sum_{i \geq I_{v,\pi}-1} \pi_i^2 \exp(-2k\pi_i) \geq C_\pi.
	\end{align}
	Moreover by definition of $ \varepsilon_i^{*}$ for $i \geq I_{v,\pi}$ and by Equation~\eqref{eq:defI} we have
	\begin{align*}
	\sum_{i \geq I_{v,\pi}} \varepsilon_i^{*2} \pi_i^2 \exp(-2k\pi_i) \leq \frac{uC_\pi}{2}.
	\end{align*}
	Set $\varepsilon_{I_{v,\pi}-1}^* = \sqrt{u/2}$ and for all $i < I_{v,\pi}-1$, we set $\varepsilon_i^* = 0$. Note that $\varepsilon_{I_{v,\pi}-1}^* \leq 1$ and $\varepsilon_{I_{v,\pi}-1}^*\pi_{I_{v,\pi}-1} \leq \sqrt{u}\Big[ \sqrt{C_\pi/(2I_{v,\pi})} \land (1/k)\Big]$. So from the last displayed equation and the definition of $\varepsilon_i^*$
	\begin{align*}
	\sum_{i \geq I_{v,\pi}-1} \varepsilon_i^{*2} \pi_i^2 \exp(-2k\pi_i) \leq uC_\pi,
	\end{align*}
	and by Equation~\eqref{eq:sumImoins1}
	\begin{align*}
	\sum_{i} \varepsilon_i^{*2} \pi_i^2 = \sum_{i \geq I_{v,\pi}-1} \varepsilon_i^{*2} \pi_i^2 = \frac{u}{2}\sum_{i \geq I_{v,\pi}-1} \pi_i^2   \geq \frac{uC_\pi}{2}.
	\end{align*}
	Since for all $i \geq I_{v,\pi}-1$ we have $\pi_i \leq \pi_{I_{v,\pi}-1} \leq \sqrt{C_\pi/I_{v,\pi}}$ and $\varepsilon_i^* \leq 1$, we have thus
	\begin{align*}
	\sum_{i} \varepsilon_i^{*} \pi_i = \sum_{i \geq I_{v,\pi}-1} \varepsilon_i^{*} \pi_i  \geq \sqrt{\frac{u}{2}}\frac{C_\pi}{\pi_{I_{v,\pi}-1}} \geq \sqrt{\frac{uC_\pi I_{v,\pi}}{2}} \geq \sqrt{\frac{uC_\pi}{2}} \frac{I_{v,\pi}-J}{\sqrt{I_{v,\pi}}}.
	\end{align*}
	This concludes the proof in this case with Equation~\eqref{eq:sat1}.
	
	\paragraph{Case 3: first constraint saturated, i.e.~$\pi_{I_{v,\pi}-1} > \sqrt{C_\pi/I_{v,\pi}}$.}  
	We set for any $i < J$, $\varepsilon_i^* = 0$ and for any $J\leq i <I_{v,\pi}$,
	$$\varepsilon_i^* = \frac{\sqrt{uC_\pi}}{\sqrt{2I_{v,\pi}} \pi_i}.$$
	Note that for any $i$
	$$\varepsilon_i^* \in [0,1],~~\text{and},~~ \varepsilon_i^*\pi_i  \leq \frac{\sqrt{uC_\pi}}{\sqrt{2I_{v,\pi}}} \leq \sqrt{u} \Big[\sqrt{C_\pi/(2I_{v,\pi})} \land (1/k)\Big],$$
	by Equation~\eqref{eq:boundCsurI}. Moreover we have
	\begin{align*}
	\sum_{J\leq i < I_{v,\pi}} \varepsilon_i^{*2} \pi_i^2 \exp(-2k\pi_i) \leq \frac{uC_\pi}{2},
	\end{align*}
	and so by definition of $I_{v,\pi}$ in Equation~\eqref{eq:defI} and of the $\varepsilon_i^*$ we have
	\begin{align*}
	\sum_{i} \varepsilon_i^{*2} \pi_i^2 \exp(-2k\pi_i) \leq uC_\pi.
	\end{align*}
	Moreover
	\begin{align*}
	\sum_{J \leq i < I_{v,\pi}} \varepsilon_i^{*} \pi_i \geq \sqrt{\frac{uC_\pi}{2}} \frac{I_{v,\pi}-J}{\sqrt{I_{v,\pi}}}.
	\end{align*}
	This concludes the proof in this case with Equation~\eqref{eq:sat1}.
\end{proof}

\subsection{Proof of Proposition~\ref{th:lower2}}

The proof of this proposition is similar to the proof of Proposition~\ref{th:lower}, except that the measures $\Lambda_1$ and $\tilde \Lambda_1$ change, and that we need to adapt Lemma~\ref{lem:TV01marginal}. We therefore take the same notations as in the proof of Proposition~\ref{th:lower}, but redefine $\xi, p, \tilde p,  \Lambda_1, \tilde\Lambda_1$.

\underline{Definition of $\Lambda_1$:} We consider $q, \mathcal A, \mathcal A'$ defined as in the proof of Proposition~\ref{th:lower}. Write 
	$$\bar m= k \int \bar p \mathbf 1\{\bar p \leq 1/k\} d\mathcal U_{\pi,\mathcal A}(\bar p) \leq 1.$$
	For any $i \in {\mathcal A}$, let $\xi_i$ be a random variable that is uniform in $\{0,2\bar m\}$ if $i \in \mathcal A'$, and equal to $q_i$ otherwise. For any $i \not\in \mathcal A$: define $p_{i} = q_i = \pi_i$. We write $\Lambda_1$ for the distribution $\Lambda_{q,p}$ averaged over $q,p$ as
$$\Lambda_1 = \E_{q,p}(\Lambda_{q,p}),$$
where $\E_{q,p}$ is the expectation according to the distribution of $(q,p)$, i.e.~with respect to $q,\xi$.\\
$\tilde \Lambda_1$ and $\tilde p$ are then redefined as in Proposition~\ref{th:lower} as a renormalised version of $ \Lambda_1,p$ using $\mathcal A^C$ such that $\sum_i \tilde p_i = 1$.

We prove the following lemma in order to conclude on a bound on $d_{TV}(\Lambda_0,\Lambda_1)$.
\begin{lemma}
\label{lem:TV01marginalBis}
Let $\pi \in \mathbb R^{+d}$ be such that $\sum_i \pi_i \leq 1$ be a vector ordered in decreasing order, and let $v>0$. There exists a universal constant $h>0$ such that if $\| \pi^2\exp(-2(1+v)k\pi)\|_1 \leq h/k^2$, then the following holds.

Write $\lambda = k\pi$. Let $\mathcal U_\lambda$ be the uniform distribution over the values of the vector $\lambda = k\pi$.
Define the probability distribution
$$V = \frac{1}{2} [\delta_{2\bar m} + \delta_0],$$
i.e.~$2\bar m$ times the outcome of a Bernoulli distribution of parameter $1/2$. We now consider
$$\nu_0' = \nu_0 = \int \mathcal P(\theta)^{\otimes 2} d\mathcal U_\lambda(\theta),$$
and
\begin{align*}
\nu_1' &= \int \int \mathcal P(\theta)\otimes \mathcal P(\theta')dV(\theta')d\mathcal U_\lambda(\theta).
\end{align*}
Then we have
\begin{align*}
d_{TV}(\nu_0^{'\otimes d},\nu_1^{'\otimes d})\leq 69e^{2(1+v)} h.
\end{align*}
\end{lemma}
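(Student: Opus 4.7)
The approach mirrors the chi-squared tensorization strategy used in Lemma~\ref{lem:TV01marginal}, exploiting the crucial new feature that under $\nu_1'$ the two Poisson counts are \emph{independent} (the second rate is drawn from $V$, independently of the first) rather than coupled via a shared, perturbed rate. The plan is to first invoke
\begin{equation*}
d_{TV}(\nu_0^{\otimes d}, {\nu_1'}^{\otimes d})^2 \;\leq\; \chi_2\bigl(\nu_0^{\otimes d}, {\nu_1'}^{\otimes d}\bigr) \;=\; (1 + \chi_2(\nu_0, \nu_1'))^d - 1,
\end{equation*}
so that it suffices to establish $\chi_2(\nu_0, \nu_1') \leq C_v\, h^2/d$ for a constant $C_v$ depending only on $v$; the target bound $d_{TV} \leq 69\, e^{2(1+v)} h$ will then follow from $\sqrt{e^{C_v h^2} - 1} \leq C_v' h$ for small $h$ (and is trivial otherwise).

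Writing $f_\theta(m) = e^{-\theta}\theta^m/m!$ and $g(m') = \tfrac{1}{2}(f_{2\bar m}(m') + f_0(m'))$, one has $\nu_0(m,m') = \int f_\theta(m) f_\theta(m')\, d\mathcal U_\lambda(\theta)$ and $\nu_1'(m,m') = \bar f(m)\, g(m')$ with $\bar f := \int f_\theta\, d\mathcal U_\lambda$. Setting $\tilde D_\theta(m') := g(m') - f_\theta(m')$, Fubini gives
\begin{equation*}
\chi_2(\nu_0, \nu_1') \;=\; \sum_{m,m'} \frac{\iint f_\theta(m)\, f_{\theta'}(m)\, \tilde D_\theta(m')\, \tilde D_{\theta'}(m')\, d\mathcal U_\lambda(\theta)\, d\mathcal U_\lambda(\theta')}{\int f_{\theta''}(m)\, f_{\theta''}(m')\, d\mathcal U_\lambda(\theta'')}.
\end{equation*}
I would then split the $m'$-sum into $m' \in \{0,1\}$ and $m' \geq 2$. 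For $m' \in \{0,1\}$ I Taylor-expand both $e^{-\theta}$ and $e^{-2\bar m}$: since $\bar m$ is engineered to match the first moment $\int \theta\, \mathbf 1\{\theta \leq 1\}\, d\mathcal U_\lambda$, the linear contributions cancel and the remainder is quadratic in $\theta$ and $\bar m$. For $m' \geq 2$ the crude estimate $|\tilde D_\theta(m')| \leq \bigl((2\bar m)^{m'} + \theta^{m'}\bigr)/m'!$ combined with the smallness of $\theta$ and $\bar m$ on the relevant support produces geometric decay in $m'$.

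To convert the integrals into the desired $h^2/d$ bound, I apply the hypothesis $\|\pi^2 e^{-2(1+v)k\pi}\|_1 \leq h/k^2$, i.e., $\int \theta^2 e^{-2(1+v)\theta}\, d\mathcal U_\lambda \leq h/d$. The factor $e^{2v\theta}$ absorbs $e^{-2\theta}$, so $\int (\theta + \bar m)^2 e^{-2\theta}\, d\mathcal U_\lambda \lesssim e^{2(1+v)} h/d$, and Cauchy-Schwarz yields $\bar m^2 \leq \int \theta^2 \mathbf 1\{\theta\leq 1\}\, d\mathcal U_\lambda \leq e^{2(1+v)} h/d$. Multiplying two such factors (one per integrated $\tilde D$) and summing over $m$ (which collapses via Bessel/exponential series identities) gives $\chi_2(\nu_0, \nu_1') \lesssim e^{4(1+v)} h^2/d$. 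The main obstacle will be the $m' = 0$ term: showing $|\bar f(0) - g(0)| = \bigl|\int e^{-\theta}\, d\mathcal U_\lambda - \tfrac{1}{2}(e^{-2\bar m} + 1)\bigr|$ is of order $h/d$ (not merely $\sqrt{h/d}$) requires a careful second-order Taylor expansion that uses the moment-matching of $V$ with $\mathcal U_\lambda$ on the small-$\theta$ support, while simultaneously controlling the Poisson denominator $\int e^{-2\theta}\, d\mathcal U_\lambda$ from below using the exponential weight $e^{-2(1+v)\theta}$ in the hypothesis.
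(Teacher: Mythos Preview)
Your chi-squared route differs from the paper's argument, and it has a real gap: the claimed bound $\chi_2(\nu_0,\nu_1')\le C_v h^2/d$ is not attainable in general. The obstruction is precisely the $(m,m')\in\{(1,0),(0,1)\}$ cells. Writing $\bar m=\int\theta\,d\mathcal U_\lambda$ and $\kappa=\int\theta^2\,d\mathcal U_\lambda$ (with $\theta\le 1$ on the support), a second-order expansion gives
\[
\nu_1'(1,0)-\nu_0(1,0)=\int\theta\bigl(\theta-\bar m\bigr)\,d\mathcal U_\lambda+O(\kappa)=\kappa-\bar m^2+O(\kappa),
\]
while the denominator $\nu_0(1,0)=\int e^{-2\theta}\theta\,d\mathcal U_\lambda\asymp\bar m$. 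The moment matching you invoke only forces $\int(\theta-\bar m)\,d\mathcal U_\lambda=0$; once the integrand is weighted by $\theta$, the cancellation disappears and you are left with the variance $\kappa-\bar m^2$, which in general is of order $\kappa$. Hence the $(1,0)$ contribution to $\chi_2$ is $(\kappa-\bar m^2)^2/\bar m$, and the only universal control is $\kappa/\bar m\le 1$, yielding merely $\chi_2=O(\kappa)=O(h/d)$ and thus $d_{TV}\le O(\sqrt h)$, not $O(h)$. Concretely, take $\mathcal U_\lambda$ putting mass $1-1/d$ at $0$ and $1/d$ at some $c\in(0,1)$: then $h\asymp c^2$, $\bar m=c/d$, $\kappa=c^2/d$, and the $(1,0)$ chi-squared term is $\asymp c^3/d$, so your bound gives at best $d_{TV}\lesssim c^{3/2}=h^{3/4}$.

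The paper sidesteps this entirely by abandoning chi-squared tensorization in favor of the cruder but sharper-in-this-regime subadditivity
\[
d_{TV}\bigl(\nu_0^{\prime\otimes d},\nu_1^{\prime\otimes d}\bigr)\le d\cdot d_{TV}(\nu_0',\nu_1'),
\]
and then bounding the single-coordinate total variation directly by $\sum_{m,m'}|\nu_0'(m,m')-\nu_1'(m,m')|$. Your Taylor and moment-matching observations are exactly what is needed there: the $(0,0),(1,0),(0,1)$ differences are each $O(\kappa)$, and the $m+m'\ge 2$ tail is controlled by $\int\theta^2\,d\mathcal U_\lambda$ plus $\bar m^2$. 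Summing gives $d_{TV}(\nu_0',\nu_1')\le 69\kappa$, and since $\kappa\le e^{2(1+v)}h/d$ one obtains the stated bound. The point is that the linear TV sum does not divide by the small mass $\nu_0(1,0)\asymp\bar m$, so no factor is lost.
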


We use this lemma instead of Lemma~\ref{lem:TV01marginal} in the proof of Proposition~\ref{th:lower}.
By application of Lemma~\ref{lem:TV01marginalBis} on $\pi$ restricted to $\mathcal A$, we have $d_{TV}(\Lambda_0,\Lambda_1)\leq 69e^{2(1+v)} h$. We can then proceed as in the proof of Proposition~\ref{th:lower} to prove the proposition, together with the use of the following lemma instead of Lemma~\ref{lem:barrho}, to conclude the proof.

\begin{lemma}\label{lem:barrho2}
It holds with probability larger than $1-\alpha$ that
\begin{align*}
\sum_{i } |\tilde p_i - \tilde q_i| &\geq \frac{1}{4M}\|\pi\mathbf 1\{ i\geq J\}\|_1 - 2\sqrt{\frac{1}{\alpha k}}- 8a\frac{(1+\log(1/\gamma))}{k}:= \tilde \rho.
\end{align*}
\end{lemma}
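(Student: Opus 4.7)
The plan mirrors the structure of Lemma~\ref{lem:barrho}: reduce the $L_1$ deviation to the set $\mathcal A'$ on which the perturbation acts, compute the expectation of this restricted sum, and close with Chebyshev's inequality. Observe first that $\xi_i = q_i$ (hence $p_i = q_i$) on $\mathcal A \setminus \mathcal A'$, and that the renormalization on $\mathcal A^c$ is mass-preserving, so a direct computation gives $\sum_{j \notin \mathcal A}|\tilde p_j - \tilde q_j| = |\sum_{l \in \mathcal A'}(p_l - q_l)|$ and therefore
\[
\sum_i |\tilde p_i - \tilde q_i| \;=\; \sum_{i \in \mathcal A'}|p_i - q_i| + \Big|\sum_{i \in \mathcal A'}(p_i - q_i)\Big| \;\geq\; \sum_{i \in \mathcal A'}|p_i - q_i|.
\]

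For the expectation, conditionally on $q_i$ the variable $p_i$ is uniform on $\{0, 2\bar m/k\}$, and the elementary identity $(a + |b-a|)/2 \geq b/2$ gives $\mathbb E_{p_i}[|p_i - q_i| \mid q_i] \geq \bar m/k$. Substituting $\bar m = k\sum_{j \in \mathcal A'}\pi_j/|\mathcal A|$ yields
\[
\mathbb E\Big[\sum_{i \in \mathcal A'}|p_i - q_i|\Big] \;\geq\; \frac{|\mathcal A'|}{|\mathcal A|}\sum_{j \in \mathcal A'}\pi_j.
\]
By the construction of $\mathcal A$ --- taking $\lfloor |S_\pi(\cdot)|/M \rfloor$ of the largest elements of each level set whose size exceeds $a\sqrt{\log((|i|+1)/\gamma)}$ --- one has $\sum_{j \in \mathcal A'}\pi_j \gtrsim \|\pi\mathbf 1\{i \geq J\}\|_1/M$ up to the truncation error from dropped small level sets, bounded by $8a(1+\log(1/\gamma))/k$ exactly as in Lemma~\ref{lem:barrho}. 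The complement $\mathcal A \setminus \mathcal A'$ contains at most $k/M$ indices (from $\pi_i > 1/k$ there and $\sum_i \pi_i = 1$), and combined with the lower bound $|\mathcal A'| \geq \tfrac{k}{2}\sum_{j \in \mathcal A'}\pi_j$ (since $\pi_j < 2/k$ on $\mathcal A'$) yields a universal lower bound on $|\mathcal A'|/|\mathcal A|$. Tracking constants, the expectation is at least $\|\pi \mathbf 1\{i \geq J\}\|_1/(4M) - 8a(1+\log(1/\gamma))/k$.

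Finally, conditionally on $q$ the $|p_i - q_i|$ are independent with $\mathbb V_{p_i}[|p_i - q_i| \mid q_i] \leq \bar m^2/k^2$, so $\mathbb V_p[\sum_{i \in \mathcal A'}|p_i - q_i| \mid q] \leq (\bar m/k)\cdot \mathbb E[\sum \mid q] \leq 2/k$ using $\bar m \leq 1$ and the trivial $L_1$ bound $\mathbb E[\sum \mid q] \leq 2$. A parallel bound on the $q$-variance of the conditional expectations via the law of total variance gives total variance of order $1/k$, so Chebyshev produces a deviation of at most $2\sqrt{1/(\alpha k)}$ with probability $\geq 1 - \alpha$, completing the claim. \emph{The main obstacle} is the expectation step: extracting simultaneously the universal lower bound on $|\mathcal A'|/|\mathcal A|$ and the mass bound $\sum_{j \in \mathcal A'}\pi_j \gtrsim \|\pi\mathbf 1\{i \geq J\}\|_1/M$ with matched constants so as to recover precisely $1/(4M)$; the variance bound under $q$-randomness is also delicate because $q_i$ can be as large as $\max_{j \in \mathcal A}\pi_j$, forcing the two-step decomposition.
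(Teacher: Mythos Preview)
Your overall structure—lower bound the restricted sum $\sum_{i\in\mathcal A'}|p_i-q_i|$, compute its expectation and variance, then apply Chebyshev—is exactly the paper's approach. The difference, and the source of the gap, is in how the randomization of $q$ on $\mathcal A'$ is handled.

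You take $q_i\sim U_{\pi,\mathcal A}$ (uniform over $\{\pi_j:j\in\mathcal A\}$), inherited from the construction in Proposition~\ref{th:lower}. With that choice your expectation bound $\mathbb E\big[\sum_{i\in\mathcal A'}|p_i-q_i|\big]\geq |\mathcal A'|\bar m/k=\tfrac{|\mathcal A'|}{|\mathcal A|}\sum_{j\in\mathcal A'}\pi_j$ is correct, but your claim that $|\mathcal A'|/|\mathcal A|$ is universally bounded below is false. From $|\mathcal A\setminus\mathcal A'|\lesssim k/M$ and $|\mathcal A'|\geq\tfrac{k}{2}\sum_{j\in\mathcal A'}\pi_j$ you only get $|\mathcal A'|/|\mathcal A|\gtrsim 1$ when $\sum_{j\in\mathcal A'}\pi_j\gtrsim 1/M$; if instead $\sum_{j\in\mathcal A'}\pi_j\ll 1/M$, the ratio can be of order $M/k\to 0$. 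Your variance step has the same issue: with $q_i$ ranging over all of $\{\pi_j:j\in\mathcal A\}$, the term $\mathbb V_q\big[\mathbb E_p[\cdot\mid q]\big]$ picks up $\sum_{j\in\mathcal A}\pi_j^2$, which is only $O(1/M)$, not $O(1/k)$.

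The paper's expectation formula $\sum_{i\in\mathcal A'}\tfrac12[\pi_i+|\pi_i-2\bar m/k|]$ has no such ratio. It is obtained because here the randomization of $q$ on $\mathcal A'$ is over $\{\pi_j:j\in\mathcal A'\}$ only: this is forced by Lemma~\ref{lem:TV01marginalBis}, whose proof uses that every $\theta$ in the support of $\mathcal U_\lambda$ satisfies $\theta\leq 1$, i.e.\ the lemma is applied to $\pi$ restricted to $\mathcal A'$. With that sampling, the $|\mathcal A'|$ outer sum and the $1/|\mathcal A'|$ from the uniform law cancel, giving the displayed identity; then $\tfrac12[\pi_i+|\pi_i-2\bar m/k|]\geq \pi_i/2$ and the level-set argument yield $\tfrac{1}{4M}\|\pi\mathbf 1\{i\geq J\}\|_1$ directly. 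The variance also collapses: since now $q_i,p_i\leq 2/k$ always, one gets $\mathbb V\big[\sum_{i\in\mathcal A'}|q_i-p_i|\big]\leq |\mathcal A'|\,\mathbb E[(q_1-p_1)^2]\leq 4\|\pi^2\mathbf 1\{\pi\leq 1/k\}\|_1\leq 4/k$ in one line—no law-of-total-variance decomposition is needed.
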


\begin{proof}[Proof of Lemma~\ref{lem:barrho2}]

We remind that $\tilde q \sim \mathcal U_\pi^{\otimes d}$, and $k\tilde p \sim V_{(kq)}^{\otimes d}$ and are independent. Note that as in the proof of Lemma~\ref{lem:barrho}
$$\mathbb E_{(q, p)} \sum_{i} |\tilde q_i- \tilde p_i| =  \sum_{i \in \mathcal A'}  \frac{1}{2} \Big[\pi_i + |\pi_i - 2\bar m|\Big] \geq \frac{1}{4M}\|\pi\mathbf 1\{\pi \leq 1/k\}\|_1 - 8a\frac{(1+\log(1/\gamma))}{k},$$
and
$$\mathbb V_{(q, p)} \sum_{i\leq d} |q_i- p_i| = d\cdot \mathbb V_{(q_1,p_1)} |q_1- p_1| \leq 4\|\pi^2\mathbf 1\{\pi \leq 1/k\}\|_1 \leq 4/k.$$
We set for $\alpha>0$
$$\Theta = \Big\{\sum_{i\leq d} |q_i-p_i| \geq \frac{1}{4M}\|\pi\mathbf 1\{\pi \leq 1/k\}\|_1 -  2\sqrt{\frac{\|\pi^2\mathbf 1\{\pi \leq 1/k\}\|_1}{\alpha}} \Big\}.$$
So by Chebyshev's inequality, we know that with probability larger than $1-\alpha$,
\begin{align*}
\sum_{i\leq d} |q_i- p_i| &\geq \frac{1}{4M}\|\pi\mathbf 1\{ i\geq J\}\|_1 - 2\sqrt{\frac{1}{\alpha k}} - 8a\frac{(1+\log(1/\gamma))}{k}:= \tilde \rho.
\end{align*}
\end{proof}

\begin{proof}[Proof of Lemma~\ref{lem:TV01marginalBis}]
By assumption, we have that
\begin{align*}
\|\pi^2 \mathbf 1\{\pi k \leq 1\}\|_1 \leq e^{2(1+v)} h/k^2.
\end{align*}
We also define
\begin{align}\label{eq:boundhihi}
\kappa=\int \theta^2 \mathbf 1\{\theta \leq 1\}d\mathcal U_\lambda(\theta) \leq e^{2(1+v)} h/n.
\end{align}

\paragraph{Definition of two measures for $(p,q)$.}

\noindent
Now, we have by definition for any $m, m' \geq 0$
$$
\nu_0'(m,m') = \int \frac{e^{-2\theta} \theta^{m+m'}}{m!m'!} d\mathcal U_\lambda(\theta),
$$
and for any $\theta$ in the support of $\mathcal U_\lambda$, we have $\theta \leq 1$. So
\begin{align*}
\nu_1'(m,m') =&  \int \frac{e^{-\theta} \theta^{m}}{m!} \mathbf 1\{\theta \leq 1\}d\mathcal U_\lambda(\theta) \cdot \frac{1}{2}\Big[\frac{e^{-2\bar m} (2\bar m)^{m'}}{m'!} + \mathbf 1\{m'=0\}\Big].
\end{align*}


\paragraph{Bound on the total variation.}
We have
\begin{align*}
&d_{TV}(\nu_0^{'\otimes d},\nu_1^{'\otimes d})\leq d \cdot d_{TV}(\nu_0',\nu_1')\\
&\leq d\sum_{k,k'} \Big| \big[\int \mathbf 1\{\theta\leq 1\} \frac{e^{-2\theta} \theta^{m+m'}}{m!m'!} d\mathcal U_\lambda(\theta) \big]\\ 
&\quad\quad\quad\quad -  \big[\int \frac{e^{-\theta} \theta^{m}}{m!} \mathbf 1\{\theta \leq 1\}d\mathcal U_\lambda(\theta) \cdot \frac{1}{2}\big(\frac{e^{-2\bar M} (2\bar m)^{m'}}{m'!} + \mathbf 1\{m'=0\}\big)\big]\Big|\\
&= d\sum_{m,m'} \Big|  \int \mathbf 1\{\theta\leq 1\} \frac{e^{-2\theta} \theta^{m+m'}}{m!m'!} d\mathcal U_\lambda(\theta) \\
&\quad\quad\quad\quad- \int \frac{e^{-\theta} \theta^{m}}{m!} \mathbf 1\{\theta \leq 1\}d\mathcal U_\lambda(\theta) \cdot \frac{1}{2}\big(\frac{e^{-2\bar m} (2\bar m)^{m'}}{m'!} + \mathbf 1\{m'=0\}\big)\Big|.
\end{align*} 
And so
\begin{align*}
&d_{TV}(\nu_0^{'\otimes d},\nu_1^{'\otimes d})\\
&\leq d\Bigg[ \Big|  \int \mathbf 1\{\theta\leq 1\} e^{-2\theta}  d\mathcal U_\lambda(\theta) - \int e^{-\theta}\mathbf 1\{\theta \leq 1\}d\mathcal U_\lambda(\theta) \cdot \frac{1}{2}\big(e^{-2\bar m}  + 1\big)\Big|\\
&\quad +  \Big| \int \mathbf 1\{\theta\leq 1\} e^{-2\theta} \theta d\mathcal U_\lambda(\theta) - \int e^{-\theta} \theta \mathbf 1\{\theta \leq 1\}d\mathcal U_\lambda(\theta) \cdot \frac{1}{2}\big(e^{-2\bar m} + 1\big)\Big|\\
&\quad +  \Big| \int \mathbf 1\{\theta\leq 1\} e^{-2\theta} \theta d\mathcal U_\lambda(\theta) - \int e^{-\theta}  \mathbf 1\{\theta \leq 1\}d\mathcal U_\lambda(\theta) \cdot \frac{1}{2} e^{-2\bar m} (2\bar m)\Big|\\
&\quad + \sum_{m,m' : m+m' \geq 2} \Big|  \int \mathbf 1\{\theta\leq 1\} \frac{e^{-2\theta} \theta^{m+m'}}{m!m'!} d\mathcal U_\lambda(\theta)\\
&\quad\quad\quad\quad\quad\quad\quad- \int \frac{e^{-\theta} \theta^{m}}{m!} \mathbf 1\{\theta \leq 1\}d\mathcal U_\lambda(\theta) \cdot \frac{1}{2}\big(\frac{e^{-2\bar m} (2\bar m)^{m'}}{m'!} + \mathbf 1\{m'=0\}\big)\Big|
\Bigg].
\end{align*}
Since for any $0 \leq x\leq 2$ we have $|e^{-x} - 1 + x| \leq x^2/2$ and $|e^{-x}-1| \leq x$, we have
\begin{align*}
&d_{TV}(\nu_0^{'\otimes d},\nu_1^{'\otimes d}) \\
&\leq d\Bigg[ \Big|  \int \mathbf 1\{\theta\leq 1\} (1-2\theta)  d\mathcal U_\lambda(\theta) - \int (1-\theta)\mathbf 1\{\theta \leq 1\}d\mathcal U_\lambda(\theta) \cdot \frac{1}{2}\big((1-2\bar m)  + 1\big)\Big| \\
&\quad\quad\quad
+ 2\bar m^2\zeta + 3\kappa\\
&+  \Big| \int \mathbf 1\{\theta\leq 1\}  \theta d\mathcal U_\lambda(\theta) - \int  \theta \mathbf 1\{\theta \leq 1\}d\mathcal U_\lambda(\theta) \cdot \frac{1}{2}\big((1-2\bar m) + 1\big)\Big| + 3\kappa + \bar m^2\zeta\\
&+  \Big| \int \mathbf 1\{\theta\leq 1\}  \theta d\mathcal U_\lambda(\theta) - \int (1-\theta) \mathbf 1\{\theta \leq 1\}d\mathcal U_\lambda(\theta) \cdot \bar m\Big| + 2\bar m^2\zeta + 4\kappa\\
&+ \sum_{m,m' : m+m' \geq 2} \frac{1}{m!m'!} \Big|  \int \mathbf 1\{\theta\leq 1\} \theta^{m+m'} d\mathcal U_\lambda(\theta) \\
&\quad\quad\quad\quad\quad\quad\quad\quad\quad\quad\quad +\int \theta^{m} \mathbf 1\{\theta \leq 1\}d\mathcal U_\lambda(\theta) \cdot \frac{1}{2}\big((2\bar m)^{m'} + \mathbf 1\{m'=0\}\big)\Big|
\Bigg].
\end{align*}
Since by Cauchy-Schwarz inequality we have
\begin{align*}
(\bar m\zeta)^2 &= \Big[\int \theta \mathbf 1\{\theta \leq 1\}d\mathcal U_\lambda(\theta)\Big]^2 \\
&\leq \Big[\int \theta^2 \mathbf 1\{\theta \leq 1\}d\mathcal U_\lambda(\theta)\Big]\Big[ \int  \mathbf 1\{\theta \leq 1\}d\mathcal U_\lambda(\theta)\Big] =\zeta \kappa,
\end{align*}
then we have
\begin{align*}
&d_{TV}(\nu_0^{'\otimes d},\nu_1^{'\otimes d})\\
&\leq d\Bigg[ 18\kappa  
+ \sum_{m,m' : m+m' \geq 2} \frac{1}{m!m'!} \Big(  \int \mathbf 1\{\theta\leq 1\} \theta^{m+m'} d\mathcal U_\lambda(\theta)\\
&\quad\quad\quad\quad\quad\quad\quad\quad\quad\quad+ \int \theta^{m} \mathbf 1\{\theta \leq 1\}d\mathcal U_\lambda(\theta) \cdot \frac{1}{2}\big((2\bar m)^{m'} + \mathbf 1\{m'=0\}\big)\Big)\Bigg].
\end{align*}
Then, considering the cases $(m=0, m'\geq 2)$, $(m=1, m'\geq 2)$ and $(m\geq 2, m'=0)$, we have
\begin{align*}
&d_{TV}(\nu_0^{'\otimes d},\nu_1^{'\otimes d})\\
&\leq d\Bigg[ 18\kappa + \sum_{m,m' : m+m' \geq 2} \frac{2^{m'}}{m!m'!} \Big(  \int \mathbf 1\{\theta\leq 1\} \theta^{2} d\mathcal U_\lambda(\theta) + \int \theta^{2} \mathbf 1\{\theta \leq 1\}d\mathcal U_\lambda(\theta)\Big) \\
&\quad\quad\quad\quad  +e^2\zeta \bar m^2 + e^1 \kappa\Bigg]\\
&\leq d\big[ 18\kappa + 2e^3\kappa + e^2\kappa + e^1 \kappa \big] \leq 69d\kappa.
\end{align*}
And so finally
\begin{align*}
d_{TV}(\nu_0^{'\otimes d},\nu_1^{'\otimes d})\leq 69e^{2(1+v)} h,
\end{align*}
by Equation~\eqref{eq:boundhihi}.
\end{proof}

\subsection{Proof of Theorem~\ref{th:lowerall}}

Combining Propositions~\ref{th:lower3}, \ref{th:lower}, and~\ref{th:lower2}, we obtain that no test exists for the testing problem~\eqref{eq:prob} with type I plus type II error smaller than $1 - \alpha - 4\tilde c_v u- 34e^{2(1+v)} \varepsilon$ whenever
\begin{align*}
\rho 
\leq c'' &\Bigg\{\Bigg[\|\pi(\mathbf 1\{i \geq I_{v,\pi}\})\|_1 \vee \Big(\frac{I_{v,\pi}-J}{\sqrt{I_{v,\pi}}}\frac{\Big(\| \pi^2\exp(-2(1+v)k\pi)\|_1 \lor k^{-2}\Big)^{1/4}}{\sqrt{k} } \Big)\Bigg] \\
&\land \|\pi(\mathbf 1\{i \geq J\})\|_1\Bigg\} \lor \frac{\|\pi^{2} \frac{1}{(\pi\lor k^{-1})^{4/3}}\|_1^{3/4}}{\sqrt{k}}\lor \frac{1}{\sqrt{k}},
\end{align*}
where $c''>0$ is some small enough constant that depends only on $u,\alpha,v,\varepsilon$.

And so there exists constants $c_{\gamma,v}>0$ that depend only on $\gamma,v$ such that there is no test $\varphi$ which is uniformly $\gamma$-consistent, for the problem \eqref{eq:prob} with
\begin{align*}
\rho \leq 
c_{\gamma,v}  &\Bigg\{\min_{I\geq J_\pi}\Bigg[\frac{\sqrt{I}}{k}
\lor \Big(\sqrt{\frac{I}{k}}\|\pi^2\exp(-2k\pi)\|_1^{1/4}\Big)\lor  \|\pi_{(.)}(\mathbf 1\{i \geq I)_i\|_1 \Bigg] \\
&\land  \|\pi_{(.)}(\mathbf 1\{i \geq J_{\pi})_i\|_1 \Bigg\} \lor \frac{\Big\|\pi^{2} \frac{1}{(\pi\lor k^{-1})^{4/3}}\Big\|_1^{3/4}}{\sqrt{k}} \lor \sqrt{\frac{1}{k}},
\end{align*}
since for any $I \geq J_\pi$ we have
$$\frac{J_\pi}{\sqrt{I}}\frac{1}{k} \leq k^{-1/2},$$
and 
$$\frac{J_\pi}{\sqrt{Ik}}\|\pi^2\exp(-2k\pi)\|_1^{1/4} \leq  \Big[ \frac{\Big\|\pi^{2} \frac{1}{(\pi\lor k^{-1})^{4/3}} \Big\|_1^{3/4}}{\sqrt{k}} \Big] \lor  \sqrt{\frac{1}{k}}.$$
The final result follows if we take $I^*$ as an $I$ where the minimum is attained as in the theorem, since 
\begin{align*}
\Bigg[\Big(\sqrt{I^* - J_\pi}\frac{\log(k)}{k}\Big)
\lor \Big(\frac{\sqrt{I^* - J_\pi}}{\sqrt{k}}\|\pi^2\exp(-k\pi)\|_1^{1/4}\Big) &\lor  \|\pi_{(.)}(\mathbf 1\{i \geq I^*\})_i\|_1 \Bigg] \\
&\leq  \|\pi_{(.)}(\mathbf 1\{i \geq J_\pi\})_i\|_1,
\end{align*}
and since $$\Big\|\pi^{2} \frac{1}{(\pi\lor k^{-1})^{4/3}} \Big\|_1 \geq \|\pi_{(.)}^{2/3} (\mathbf 1\{i\geq J_\pi\})_i\|_1.$$ 

\section*{Acknowledgements}
The work of A. Carpentier is partially supported by the Deutsche Forschungsgemeinschaft (DFG) Emmy Noether grant MuSyAD (CA 1488/1-1), by the DFG - 314838170, GRK 2297 MathCoRe, by the DFG GRK 2433 DAEDALUS, by the DFG CRC 1294 'Data Assimilation', Project A03, and by the UFA-DFH through the French-German Doktorandenkolleg CDFA 01-18.

\bibliographystyle{imsart-number}
\bibliography{document}

\end{document}